\numberwithin{equation}{subsection} 
\numberwithin{figure}{subsection} 
\let\c@equation\c@figure
\newcommand{\Aff}{\mathbb{A}}
\newcommand{\C}{\mathbb{C}}
\newcommand{\M}{\mathbb{M}}
\newcommand{\N}{\mathbb{N}}
\newcommand{\PP}{\mathbb{P}}
\newcommand{\Q}{\mathbb{Q}}
\newcommand{\R}{\mathbb{R}}
\newcommand{\Z}{\mathbb{Z}}
\newcommand{\ZZ}{\mathbb{Z}}
\newcommand{\cC}{\mathcal{C}}
\newcommand{\cD}{\mathcal{D}}
\newcommand{\cE}{\mathcal{E}}
\newcommand{\cO}{\mathcal{O}}
\newcommand{\cP}{\mathcal{P}}
\newcommand{\cX}{\mathcal{X}}
\newcommand{\sC}{\mathscr{C}}
\newcommand{\sD}{\mathscr{D}}
\newcommand{\Ga}{\operatorname{G}_a}
\newcommand{\Gm}{\operatorname{G}_m}
\newcommand{\Spec}{\operatorname{Spec}}
\newcommand{\tSigma}{{\tilde{\Sigma}}}
\newcommand{\tP}{{\tilde{P}}}
\DeclareMathOperator{\DLog}{DLog}
\DeclareMathOperator{\End}{End}
\DeclareMathOperator{\Hom}{Hom}
\DeclareMathOperator{\recc}{rec}
\DeclareMathOperator{\Res}{Res}
\DeclareMathOperator{\Trop}{Trop}
\DeclareMathOperator{\trop}{trop}
\DeclareMathOperator{\Span}{Span}
\DeclareMathOperator{\Star}{Star}
\DeclareMathOperator{\Vect}{Vec}
\DeclareMathOperator{\Ext}{Ext}
\newtheorem{theorem}[equation]{Theorem}
\newtheorem{corollary}[equation]{Corollary}
\newtheorem{lemma}[equation]{Lemma}
\newtheorem{prop}[equation]{Proposition}
\theoremstyle{definition}
\newtheorem{rem}[equation]{Remark}
\newtheorem{example}[equation]{Example}
\theoremstyle{definition}
\newtheorem{definition}[equation]{Definition}
\newcommand{\dR}{{\operatorname{dR}}}
\newcommand{\un}{{\operatorname{un}}}
\newcommand*\ps[1]{{\llbracket#1\rrbracket}}
\newcommand\nc{\newcommand}
\nc\on{\operatorname}
\newcommand{\exterior}[1]{\mathop{\mathpalette\exterior@{#1}}}
\newcommand{\exterior@}[2]{%
  \raisebox{\depth}{%
  \fontsize{\sf@size}{0}%
  \m@th
  $\ifx#1\displaystyle\textstyle\else#1\fi\bigwedge$}%
  ^{\mspace{-2mu}#2}%
  \kern-\scriptspace
}
\title{The Unipotent Tropical Fundamental Group}
\author{Kyle Binder and Eric Katz}
\date{}
\begin{document}

\begin{abstract}
We define the unipotent tropical fundamental group of a polyhedral complex in $\R^n$ as the Tannakian fundamental group of the category of unipotent tropical vector bundles with integrable connection. We show that it is  computable in that it satisfies a Seifert--Van Kampen theorem and has a description for fans in terms of a bar complex. We then review an analogous classical object, the unipotent de Rham fundamental group of a sch\"{o}n subvariety of a toric variety. Our main result is a correspondence theorem between classical and tropical unipotent fundamental groups: there is an isomorphism between the unipotent completion of the fundamental group of a generic fiber of a tropically smooth family over a disc and the tropical unipotent fundamental group of the family's tropicalization. This theorem is established using Kato--Nakayama spaces and a descent argument. It requires a slight enlargement of the relevant categories, making use of enriched structures and partial compactifications. 
\end{abstract}

\maketitle

\section{Introduction}
Tropical geometry attaches a balanced weighted polyhedral complex $\Trop(X)\subseteq\R^n$, called a tropical variety, to a subvariety of an algebraic torus over a field $K$, $X\subseteq 
(K^*)^n$. The complex  captures some of the geometry of $X$, but it is not always obvious how to extract it. One useful tool in this direction is the Tropical Homology of Itenberg--Katzarkov--Mikhalkin--Zharkov \cite{IKMZ} which attaches homology groups to $\Trop(X)$ and, when $K$ is the quotient field of germs of holomorphic functions on a disc and $X$ obeys a tropical smoothness condition, computes the classical homology of a generic fiber of $X$. It is natural to ask if the fundamental group can be computed combinatorially, that is, via $\Trop(X)$. The answer to this question in the most na\"{i}ve phrasing is negative. Indeed, this is the case even when $X=V\cap (\C^*)^n$ for a linear subspace $V$. Then, $\Trop(X)$ is the Bergman fan attached to the matroid of $V$, and it is known by a counterexample due to Rybnikoff \cite{Rybnikov,Bartolo:invariants} that the matroid of $V$ does not determine the fundamental group of $V\cap (\C^*)^n$. For more about the fundamental group of arrangements and their complements, see \cite{Schenck:hyperplanes, Suciu:fundamental}

We ask a more modest version of this question by replacing the fundamental group by its pro-unipotent completion, the {\em unipotent fundamental group}. We can reformulate this approach using the Tannakian formalism. Following Grothendieck's approach \cite{Grothendieck:pi1} which studies the fundamental group through coverings, Deligne \cite{Deligne:groupefondamental} studied the fundamental group through neutral Tannakian categories \cite{Saavedra,DM:tannakian}. Neutral Tannakian categories axiomatically generalize local systems of vector spaces on a topological space. By a  reconstruction theorem, such a category is equivalent to representations of a pro-algebraic {\em fundamental group}. By specializing to unipotent objects, the resulting fundamental group realizes the pro-unipotent completion.  The main goal of this paper is to demonstrate that under a tropical smoothness hypothesis, the unipotent fundamental group of a subvariety of an algebraic torus can be computed tropically.

We begin by introducing the unipotent tropical fundamental group using the category of unipotent tropical vector bundles with integrable connections. A tropical vector bundle is a locally constant sheaf $E$ of $\C$-vector spaces on a rational polyhedral complex $\Sigma$ in $\R^n$ in the poset topology while an integrable connection on $E$ is an element 
\[\theta\in\Gamma(\Sigma,\Omega^1_{\Sigma}\otimes\on{End}(E))\]
(where $\Omega^1_{\Sigma}$ is the sheaf of tropical differential forms) satisfying $\theta\wedge\theta=0$. We show that such a fundamental group can be computed on fans by the bar construction and can be described on complexes by a generalized Seifert--Van Kampen theorem. This fundamental group is much richer than the pro-unipotent completion of the more familiar topological fundamental group of $\Sigma$ in that it incorporates piecewise-linear data coming from the embedding $\Sigma\hookrightarrow \R^n$. 

We relate the unipotent tropical fundamental group to an invariant on the algebraic side, the unipotent log de Rham fundamental group of a log scheme. This is the Tannakian fundamental group of vector bundles equipped with an integrable log connection. Here, ``log'' means that one works with log differential forms instead of regular ones. This is a natural choice, as tropical varieties are closely related to the dual complex of a semistable degeneration of a scheme over a DVR, and the closed fiber of such a degeneration carries a log structure. This fundamental group is not so far removed from classical invariants: it is  isomorphic to the pro-unipotent completion of the usual topological fundamental group of the Kato--Nakayama space, which is, under certain circumstances, homeomorphic to a generic fiber of the semistable family. 

Our main theorem is an isomorphism between the unipotent fundamental group of a fiber of a family of varieties and the unipotent tropical fundamental group of its tropicalization. Here, we work with a {\em tropically smooth} algebraic subvariety $X$ of an algebraic torus $(K^*)^n$ over a valued field $K$ whose residue field is $\C$. Tropical smoothness is a very restrictive condition on $X$. It requires that there is a polyhedral structure on $\Trop(X)$ such that the star of every face looks like a Bergman fan. Tropical smoothness implies that $X$ is sch\"{o}n, a strong smoothness property. Moreover, it allows one to define a semistable family $\cX$ over an extension $\cO'$ of the DVR of $K$. The closed fiber of this semistable family, $X_0$ is a strict normal crossings scheme whose closed strata are compactifications of hyperplane arrangement complements. By employing Kato--Nakayama spaces, when $K$ is the field of germs of meromorphic functions in a neighborhood of the origin (whose valuation is given by vanishing order at the origin), one can relate the unipotent fundamental group of a generic fiber of $\cX$ to the unipotent log de Rham fundamental group of $X_0$. It, in turn is compared to the unipotent tropical fundamental group of $\Trop(X)$. Philosophically, the de Rham fundamental group is sensitive to connections while the tropical fundamental group only sees the residues of those connections; for tropically smooth varieties residues are enough. In that sense, our main theorem is very closely related to the  Orlik--Solomon theorem on the cohomology of hyperplane arrangement complements \cite{OS:Hyperplanes}.  Our main theorem (with notation deferred) is the following:

\begin{theorem} \label{t:maintheoremintro}
Let $K$ be the field of fractions of the ring $\cO$ of germs of holomorphic functions on $\C$ in a neighborhood of the origin. Let $X\subseteq (K^*)^n$ be a tropically smooth subvariety. Let $\Delta$ be a projective rational polyhedral fan supporting the recession fan of $\Trop(X)$, and let $R$ be a set of rays of $\Delta$ contained in the recession fan of $\Trop(X)$. For sufficiently small nonzero $t\in\C$, let $\overline{X}_t$ be the closure in $\PP(\Delta)$ of $X_t\subseteq (\C^*)^n$. Set
\[\mathring{X}_t\coloneqq \overline{X}_t\setminus\left(\bigcup_{\rho \in \Delta_{(1)}\setminus R} D_\rho\right)\]
where $D_\rho$ is the divisor corresponding to $\rho$. 

Let $x_t\in \mathring{X}_t\cap (\C^*)^n$. Then there is a vertex $v$ of $\Sigma$,an attached fiber functor $\omega_{\Sigma,v}$, and an isomorphism
of unipotent and tropical fundamental groups
\[\pi_1^{\un}(\mathring{X}_t,x_t)\cong \pi_1^{\un}((\Trop(X),\Z^n,R),\omega_{\Sigma,v})\]
for any polyhedral structure on $\Trop(X)$.
\end{theorem}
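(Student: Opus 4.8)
The plan is to establish the stated isomorphism as a composite of two comparisons, both passing through the unipotent log de Rham fundamental group $\pi_1^{\dR}(X_0)$ of the special fiber $X_0$ of the semistable model $\cX$. Schematically, I would prove
\[\pi_1^{\un}(\mathring{X}_t,x_t)\;\cong\;\pi_1^{\dR}(X_0)\;\cong\;\pi_1^{\un}((\Trop(X),\Z^n,R),\omega_{\Sigma,v}),\]
where the first isomorphism is topological and Hodge-theoretic in nature while the second is combinatorial. Tropical smoothness enters twice: it produces the semistable model $\cX$ over the extension $\cO'$ whose special fiber $X_0$ is a strict normal crossings scheme with strata that are compactified hyperplane arrangement complements, and it guarantees that the residues of the relevant connections carry all the information seen on the tropical side.

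For the first isomorphism I would use the Kato--Nakayama construction. The partial compactification $\mathring{X}_t$ is cut out of $\overline{X}_t$ by removing exactly the boundary divisors $D_\rho$ for $\rho\in\Delta_{(1)}\setminus R$, so the retained boundary data is indexed by the same ray set $R$ that appears in the tropical fundamental group. For sufficiently small $t$, I would identify $\mathring{X}_t$ with the Kato--Nakayama space of the log scheme obtained from $X_0$ together with this boundary bookkeeping, so that $\pi_1(\mathring{X}_t,x_t)$ is identified with the topological fundamental group of that Kato--Nakayama space. Applying pro-unipotent completion and invoking the identification, cited above, of the unipotent log de Rham fundamental group with the pro-unipotent completion of the fundamental group of the Kato--Nakayama space yields the first isomorphism. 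The base point $x_t\in\mathring{X}_t\cap(\C^*)^n$, chosen with $\trop(x_t)$ near the vertex $v$, fixes the compatible fiber functor on the de Rham side.

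For the second isomorphism I would argue by descent, matching the stratification of $X_0$ to the face poset of the polyhedral structure $\Sigma$ on $\Trop(X)$. By tropical smoothness the star of each face of $\Sigma$ is a Bergman fan, and the corresponding closed stratum of $X_0$ is a compactified hyperplane arrangement complement governed by the associated matroid. Over each such stratum the Orlik--Solomon description \cite{OS:Hyperplanes} shows that the unipotent log de Rham fundamental group depends only on the residues of the log connections, which are exactly the data recorded by the tropical forms $\Omega^1_\Sigma$ and hence by a tropical integrable connection. I would first establish this comparison locally, on the star of each vertex, using the bar construction description of the tropical fundamental group on fans, and then glue: the Seifert--Van Kampen theorem assembles the local tropical groups into $\pi_1^{\un}((\Trop(X),\Z^n,R),\omega_{\Sigma,v})$, while a parallel descent statement on $X_0$ assembles the local de Rham contributions. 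Tracking the common residue data through both gluings identifies the global groups, with the vertex $v$ and its fiber functor $\omega_{\Sigma,v}$ corresponding to $x_t$.

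The main obstacle is making the descent on the de Rham side compatible with the tropical Seifert--Van Kampen theorem. The naive categories of unipotent bundles with integrable (log) connection are not stable under the gluing operations required, which is exactly why the enlargement advertised in the abstract---enriched structures recording boundary monodromy and partial compactifications indexed by $R$---is needed. The delicate point is to verify that passing to these enriched categories leaves the Tannakian fundamental group unchanged, so that the enlargement is harmless for the final statement, while simultaneously making the local-to-global comparison functorial. I expect that controlling fiber functors along the boundary divisors indexed by $R$, and checking that the enriched descent data on $X_0$ agree with the tropical gluing along the lower-dimensional faces of $\Sigma$, will be the technical heart of the proof.
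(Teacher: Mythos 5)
Your proposal is, in outline, the paper's own proof: both factor the isomorphism through the unipotent log de Rham fundamental group of the special fiber, $\pi_1^{\un}((X_0,M)/S^\dagger)$, obtain the topological half from Kato--Nakayama spaces (Theorem~\ref{t:knhomotopy} and Corollary~\ref{c:kniso}), and prove the de Rham/tropical half (Proposition~\ref{p:mainmaintheorem}) by descent over the bounded faces of $\Sigma$, with the local input being that on the strata---which are subspace compactifications by Proposition~\ref{p:matroidtrop}---log connections are determined by their residues, i.e., by tropical data.

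One step of your first comparison fails as literally stated. The semistable model produced by Proposition~\ref{p:dilatesnc} lives over a polyhedral complex $\Sigma$ whose recession fan $\Sigma_0$ is only a subfan of a \emph{refinement} of $\Delta$; consequently the generic fiber of $\cX\setminus\cD$ is not $\mathring{X}_t$ (defined via the closure in $\PP(\Delta)$) but a birational modification $\mathring{X}'_t\subseteq\PP(\Sigma_0)$ of it, and it is $\mathring{X}'_t$, not $\mathring{X}_t$, that is homotopy equivalent to the Kato--Nakayama fiber $(X_0^{\log})_\zeta$. The paper bridges this by observing that the toric morphism $\PP(\Sigma_0)\to\PP(\Delta)$ restricts to a birational morphism $\mathring{X}'_t\to\mathring{X}_t$ inducing an isomorphism on $\pi_1$, and correspondingly invokes invariance of the tropical fundamental group under refinement (Remark~\ref{r:refinement}) to recover the statement for an arbitrary polyhedral structure on $\Trop(X)$. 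Relatedly, your anticipated ``delicate point''---verifying that passing to enriched categories leaves the Tannakian fundamental group unchanged---is not something the paper proves or needs: the enrichment and partial compactification are intrinsic to the local pieces (inverse image log structures on strata correspond to induced enriched structures on star-quotients, Lemma~\ref{l:logstructureontransversetoorbits}), and the two groups in the final statement are defined with no such comparison. The actual technical heart on the de Rham side is the descent theorem for vector bundles on strict normal crossings schemes (Theorem~\ref{t:amainresult}), together with the fact that glued objects are only locally unipotent, forcing passage to the unipotent completion of the descent category. Both repairs fit inside your outline, so the plan is sound once these are supplied.
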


When $R$ is empty, this theorem describes the fundamental group of a generic fiber $X_t\subset (\C^*)^n$.

By employing a homotopy equivalence between $\mathring{X}_t$ and a particular Kato--Nakayama space, our main theorem reduces to a comparison theorem (Proposition~\ref{p:mainmaintheorem}) between the log de Rham fundamental group and tropical fundmanetal group. We prove this comparison theorem by a Seifert--Van Kampen argument. We can decompose both $X_0$ and $\Sigma$ into simpler pieces: strata and stars, respectively. The Seifert--Van Kampen theorem follows from descent statements which on the tropical side are immediate but on the log side require a 
 descent theorem for locally free sheaves on strictly normal crossing schemes in Appendix~\ref{a:appendix}. We then produce an equivalence of Tannakian categories for the de Rham fundamental groups of the strata and the tropical fundamental groups of the stars. Our arguments here are a slight generalization of Kohno's description of the unipotent fundamental group of a hyperplane arrangement complement \cite{Kohno:holonomy}, albeit phrased in more combinatorial language. 

 This proof requires much machinery to keep track of the relevant log structures and differential forms. Indeed,
 the strata in the log setting are given the inverse image log structure which includes a bit more data than the underlying scheme. To incorporate the data on the tropical side, we introduce the notion of enriched fans. Moreover, because we would like to consider only partial compactifications, i.e.,~removing only the intersection with some toric divisors from $\overline{X}$, we must keep track of certain divisors, parameterized by a set of rays. Again, this has to be accounted for on the combinatorial side by a {\em partial compactification} which, in turn, is reflected by differential forms. 
 The language of partial compactifications is suitable for our purposes, but more general language can be found in the work of Amini--Piquerrez \cite{AP:Homology}. Also, we expect that our theorem holds in mixed characteristic although one must replace the log de Rham setting with a log crystalline or log rigid one.

We outline this paper. In Section~\ref{s:tannakianfundamentalgroups}, we review Tannakian fundamental groups and their descent theory, while 
Section~\ref{s:enrichedfans} introduces enriched fans with partial compactifications and tropical vector bundles with connections. The tropical fundamental group is defined, shown to obey a Seifert--Van Kampen theorem, and is computed for fans in Section~\ref{s:tropicalpi1}. Section~\ref{s:logstructures} studies log structures on subschemes of toric varieties and toric schemes. The subject of Section~\ref{s:logpi1} is the unipotent log de Rham fundamental group which is shown to obey descent in the strict normal crossings scheme setting.  Our main theorem is proved locally and then globalized in Section~\ref{s:correspondencetheorem}. Appendix~\ref{a:appendix} proves a necessary descent statement on strict normal crossing schemes.

\subsection{Acknowledgements}

This work arose from a question asked by Daniel Litt. The works of Omid Amini and his collaborators were an inspiration. The authors also benefited from conversations with Dan Abramovich, Piotr Achinger, Brian Conrad, Chris Eur, Matt Satriano, Martin Ulirsch, and Jonathan Wise.
Kyle Binder was partially supported by NSF Grant DMS-1700194 while
Eric Katz was partially supported by NSF Grant DMS-1748837.



\section{Tannakian Fundamental Groups} \label{s:tannakianfundamentalgroups}

In this section, we define the fundamental group of a Tannakian category and recall descent theory and its connection to the generalized Seifert--Van Kampen theorem.

\subsection{Definitions}

Let $K$ be a field. 
A {\em neutral Tannakian category} $\mathscr{C}$ over $K$  (see e.g.,~\cite{Saavedra,Deligne:CT} for definitions)  is an essentially small rigid abelian tensor category with unit object $\mathbf{1}$ satisfying $\on{End}_{\mathscr{C}}(\mathbf{1})=K$ and which possesses a fiber functor $\omega\colon \mathscr{C}\to \on{Vect}_K$ (i.e.,~ a faithful exact $K$-linear tensor functor). For a fiber functor $\omega$ on $\sC$, let $\pi(\sC,\omega)$ denote the Tannakian fundamental group (see, e.g.,~\cite{Deligne:groupefondamental,Szamuely}), that is, the pro-algebraic group isomorphic to the group $\on{Aut}^\otimes(\omega)$ of tensor automorphisms of $\omega$. Then, $\sC$ is equivalent to the category of representations of $\pi(\sC,\omega)$ such that $\omega$ corresponds to the forgetful functor taking representations to the underlying vector space. Recall that an equivalence of categories $\sC$ and $\sD$ is a pair of functors $F\colon \sC\to\sD$ and $G\colon \sD\to \sC$ together with isomorphisms of functors $\varepsilon_{\sC} \colon G\circ F\to \on{Id}_{\sC}$ and $\varepsilon_{\sD} \colon F\circ G\to \on{Id}_{\sD}$. It is well-known that $\sC$ and $\sD$ are equivalent if and only if there is a fully faithful and essentially surjective functor $F\colon \sC\to\sD$. 

Given a neutral Tannakian category $\sC$ with unit object $\mathbf{1}$, we say that an object $E$ of $\sC$ is {\em unipotent} if there is a filtration
\[E=E^0\supset E^1\supset \dots\supset E^{n+1}=0.\]
whose associated gradeds are direct sums of $\mathbf{1}$.
Write $\sC^{\un}$ for the full subcategory of unipotent objects in $\sC$. It is also a neutral Tannakian category, and we may write $\pi^{\un}(\sC,\omega)$ for $\pi(\sC^{\un},\omega)$,
as $ \pi(\sC^{\un}, \omega) $ is isomorphic to the pro-unipotent completion of $\pi(\sC, \omega) $ \cite{vezzani2012pro}.

\subsection{Descent categories}

In this section, we review descent categories, a natural framework for the Seifert--Van Kampen theorem for Tannakian fundamental groups. They will be used to globalize a special case of our main theorem.  Here, we will follow \cite{Stix:SvK} which is phrased in terms of Galois, rather than Tannakian categories.

We  will use the  language of categories over simplicial complexes. Let $\Sigma$ be a connected two-dimensional simplicial complex.  A {\em neutral Tannakian category} $\sC_{\Sigma}$ over $\Sigma$ is the data of a neutral Tannakian category $\sC_F$ for each face $F\in \Sigma$ together with an exact $K$-linear faithful tensor functor $i_{F_1,F_2}\colon \sC_{F_1}\to \sC_{F_2}$ for each inclusion of faces $F_1\subset F_2$ obeying the cocycle condition $i_{F_1,F_3}=i_{F_2,F_3}\circ i_{F_1,F_2}$ for $F_1\subset F_2\subset F_3$.

Such neutral Tannakian categories arise by considering $\sD$-spaces over $\Sigma$ where $\sD$ is a category of ``spaces,'' for example, schemes. A $\sD$-space over $\Sigma$ is the data of an object $X_F$ in $\sD$ for each face $F\in \Sigma$ together with a morphism $j_{F_1,F_2}\colon X_{F_2}\to X_{F_1}$ for each inclusion of faces $F_1\subset F_2$ (satisfying the cocycle condition). For example, given a covering of a connected space by open sets $\{U_v\}$, we can let $\Sigma$ be the $2$-skeleton of the  nerve of the covering and set $U_F=\bigcap_{v\in F} U_v$.
Given some type of ``local system'' (i.e.,~local systems of vector spaces, vector bundles with integrable connections, etc.) and a $\sD$-space $(X_F,j_{F_1,F_2})$, one can define a neutral Tannakian category $\sD_{\Sigma}$ over $\Sigma$ by
\begin{enumerate}
\item letting $\sC_F$ be the category of ``local systems'' over $X_F$ equipped with some fiber functor $\omega_F$, and 
\item setting $i_{F_1,F_2}=j_{F_1,F_2}^*$, provided that it is faithful.
\end{enumerate}
Often, the condition $\on{End}_{\mathscr{C}}(\mathbf{1})=K$ will require the connectedness of $X_F$.

\begin{definition}
  The {\em descent category} $\sC(\Sigma)$ attached to $\sC_\Sigma$, a Tannakian category over $\Sigma$, is the category whose objects are tuples  $((\xi_v),(\phi_{e}))$ where
  \begin{enumerate}
      \item for each vertex $v\in \Sigma_0$, $\xi_v\in \on{Obj} \sC_v$,
      \item for each directed edge $e=vw$, an isomorphism
    \[\phi_e\colon i_{v,e}(\xi_v)\to i_{w,e}(\xi_w)\]
    in $\sC_e$ such that $\phi_{\overline{e}}=\phi_e^{-1}$ and for all $2$-faces $F=uvw$, we have the following equality in $\sC_F$:
    \[i_{uw,F}(\phi_{uw})=i_{vw,F}(\phi_{vw})\circ i_{uv,F}(\phi_{uv}).\]
  \end{enumerate}
  A morphism $\psi\colon ((\xi_v),(\phi_{e}))\to ((\xi'_v),(\phi'_{e}))$ in $\sC(\Sigma)$ is a collection of morphisms $\psi_v\colon \xi_v\to \xi'_v$ in $\sC_v$ such that for each edge $e=vw$, the following diagram commutes
  \[\xymatrix{
  i_{v,e}(\xi_v)\ar[d]^{\phi_e}\ar[r]^{i_{v,e}(\psi_v)}& i_{v,e}(\xi'_v)\ar[d]^{\phi'_e}\\
  i_{w,e}(\xi_w)\ar[r]^{i_{w,e}(\psi_w)}& i_{w,e}(\xi'_w).
  }\]
\end{definition}

\begin{prop} For a face $F$ of $\Sigma$, a vertex $v$ of $F$, and a fiber functor $\omega_F$ on $\sC_F$, the fiber functor
\[\omega\colon \sC(\Sigma)\to \on{Vec}\]
given by $\omega((\xi_v),(\phi_e))=\omega_F(i_{v,F}(\xi_v))$ makes $\sC(\Sigma)$ into a neutral Tannakian category.
\end{prop}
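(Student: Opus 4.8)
The plan is to verify directly that the proposed functor $\omega$ satisfies the axioms making $\sC(\Sigma)$ neutral Tannakian, namely that $\omega$ is a faithful, exact, $K$-linear tensor functor to $\on{Vec}$, and that $\on{End}_{\sC(\Sigma)}(\mathbf 1)=K$. Since $\sC(\Sigma)$ is assembled from the $\sC_v$ via gluing data $(\phi_e)$, most of the abelian-category and tensor structure on $\sC(\Sigma)$ is inherited componentwise; the first task is to pin this structure down explicitly. I would first observe that $\sC(\Sigma)$ is abelian with kernels, cokernels, and images computed vertexwise: given a morphism $\psi=(\psi_v)$, one sets the kernel object to be $(\ker\psi_v)$ with the $\phi_e$ restricted to kernels, using that the transition functors $i_{F_1,F_2}$ are exact so that $i_{v,e}(\ker\psi_v)=\ker i_{v,e}(\psi_v)$ and the commuting squares in the morphism definition guarantee that $\phi_e$ restricts. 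The tensor structure is likewise componentwise, $(\xi_v)\otimes(\xi'_v)=(\xi_v\otimes\xi'_v)$ with $\phi_e\otimes\phi'_e$ as gluing, using that each $i_{F_1,F_2}$ is a tensor functor so the cocycle identity is preserved; the unit $\mathbf 1=((\mathbf 1_v),(\mathrm{id}))$ works because the $i_{F_1,F_2}$ send units to units. Rigidity (existence of duals) is inherited the same way.

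**Next** I would check the properties of $\omega$ itself. It is $K$-linear and a tensor functor because it is the composite $\omega_F\circ i_{v,F}$ of a tensor functor $i_{v,F}$ with the fiber functor $\omega_F$, both of which are $K$-linear tensor functors. Exactness of $\omega$ follows from the two facts just used: kernels/cokernels in $\sC(\Sigma)$ are computed vertexwise, $i_{v,F}$ is exact, and $\omega_F$ is exact, so the composite preserves exact sequences. The one genuinely nontrivial point is \emph{faithfulness}: I must show that if $\omega(\psi)=\omega_F(i_{v,F}(\psi_v))=0$ then $\psi=0$, i.e.\ every $\psi_w=0$. This is where connectedness of $\Sigma$ enters. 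The plan is a propagation argument: $\omega_F(i_{v,F}(\psi_v))=0$ together with faithfulness of $\omega_F$ forces $i_{v,F}(\psi_v)=0$, and then faithfulness of the transition functor $i_{v,F}$ (which holds by hypothesis on $\sC_\Sigma$) gives $\psi_v=0$; to reach an arbitrary vertex $w$ one uses the commuting squares along a path from $v$ to $w$ in $\Sigma^1$, in which the vertical maps $\phi_e,\phi'_e$ are isomorphisms, so $\psi_v=0$ at one end of an edge forces $i_{w,e}(\psi_w)=0$, hence $\psi_w=0$ by faithfulness of $i_{w,e}$. Connectedness guarantees such a path exists for every $w$.

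**Finally**, to see $\on{End}_{\sC(\Sigma)}(\mathbf 1)=K$, an endomorphism of $\mathbf 1$ is a tuple $(\psi_v)$ with each $\psi_v\in\on{End}_{\sC_v}(\mathbf 1_v)=K$ (each $\sC_v$ being Tannakian over $K$), subject to the compatibility that $\phi_e$ intertwines the $\psi_v$; since the $\phi_e$ for $\mathbf 1$ are identities and the $i_{v,e}$ are $K$-linear, the compatibility forces $\psi_v=\psi_w$ whenever $v,w$ span an edge, and connectedness again propagates this to a single scalar across all vertices, giving $\on{End}(\mathbf 1)=K$. I would also note that $\sC(\Sigma)$ is essentially small since each $\sC_v$ is and the gluing data over the finitely many edges of a simplicial complex form a set. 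The main obstacle is the faithfulness step, as it is the only place requiring a global argument (the path-propagation using connectedness) rather than a purely componentwise check; everything else reduces to invoking the vertexwise structure together with the exactness and faithfulness already assumed of the transition functors $i_{F_1,F_2}$.
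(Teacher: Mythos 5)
Your proof is correct and takes essentially the same route as the paper: the paper's entire proof is the faithfulness propagation argument along the connected edge-graph (faithfulness of $\omega_F$ and $i_{v,F}$ gives $\psi_v=0$, then the commuting squares with the isomorphisms $\phi_e$ push this to every vertex), which is exactly your key step. Your remaining componentwise verifications (vertexwise abelian and tensor structure, exactness, $\End_{\sC(\Sigma)}(\mathbf{1})=K$) are the routine checks the paper leaves implicit with ``we only need to verify faithfulness.''
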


\begin{proof}
We only need to verify faithfulness of $\omega$. If a morphism 
\[\psi\colon ((\xi_v),(\phi_{e}))\to ((\xi'_v),(\phi'_{e}))\]
maps to $0$ under $\omega_F$, then by faithfulness of $\omega_F$ and $i_{v,F}$, $\psi_v=0$. For any edge $e=vw$, $i_{v,e}(\psi_v)=0$. By the faithfulness of $i_{w,e}$, $\psi_w=0$. By travelling along the connected edge-graph of $\Sigma$, we conclude $\psi_{v'}=0$ for all vertices $v'$.
\end{proof}

A functor $A\colon \sC_\Sigma\to \sD_\Sigma$ between neutral Tannakian categories over $\Sigma$ is a collection of functors $\{A_F\}_{F\in\Sigma}$ such that $A_F\colon\sC_F\to \sD_F$ is compatible with restriction maps in the following sense: for any inclusion of faces $F_1\subset F_2$, the following diagram commutes
\[\xymatrix{
\sC_{F_1}\ar[d]\ar[r]^{A_{F_1}}&\sD_{F_1}\ar[d]\\
\sC_{F_2}\ar[r]^{A_{F_2}}&\sD_{F_2}.}\]

Such a functor $A\colon \sC_\Sigma\to \sD_\Sigma$ naturally induces a functor $A\colon \sC(\Sigma)\to \sD(\Sigma)$ between descent categories. The following is a straightforward verification:
\begin{lemma} \label{l:descentequivalence} Let $A\colon\sC_{\Sigma}\to \sD_{\Sigma}$ be a functor between Tannakian categories over $\Sigma$ such that
\begin{enumerate}
  \item for all vertices $v\in\Sigma$, $A_v$ is an equivalence of categories,
  \item for all edges $e\in\Sigma$, $A_e$ is fully faithful, and
  \item for all $2$-dimensional faces $F\in\Sigma$, $A_F$ is faithful.
\end{enumerate}
Then, the functor $A\colon \sC(\Sigma)\to \sD(\Sigma)$ is an equivalence of categories.
\end{lemma}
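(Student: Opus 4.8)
The plan is to verify that the induced functor $A\colon \sC(\Sigma)\to\sD(\Sigma)$ is fully faithful and essentially surjective, which suffices by the criterion recalled above. Throughout I will write the restriction functors as $i^{\sC}_{F_1,F_2}$ and $i^{\sD}_{F_1,F_2}$ and use the defining compatibility of $A$ with restriction in the form $i^{\sD}_{F_1,F_2}\circ A_{F_1}=A_{F_2}\circ i^{\sC}_{F_1,F_2}$. Faithfulness is immediate: a morphism $\psi=(\psi_v)$ that becomes zero has each $A_v(\psi_v)=0$, so $\psi_v=0$ because each $A_v$ is faithful. For fullness, given a morphism $(\eta_v)\colon A((\xi_v),(\phi_e))\to A((\xi'_v),(\phi'_e))$ I would first use that each $A_v$ is full and faithful to produce unique $\psi_v\colon \xi_v\to\xi'_v$ with $A_v(\psi_v)=\eta_v$. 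It then remains to check that $(\psi_v)$ is compatible with the gluing isomorphisms $\phi_e,\phi'_e$. I would verify the required square in $\sC_e$ by applying the faithful functor $A_e$: using the intertwining relation, $A_e$ carries that square to the square expressing that $(\eta_v)$ is a morphism in $\sD(\Sigma)$, which holds by hypothesis, and faithfulness of $A_e$ then lifts the identity back to $\sC_e$.

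For essential surjectivity, let $((\zeta_v),(\theta_e))$ be an object of $\sD(\Sigma)$. For each vertex $v$ I would use essential surjectivity of $A_v$ to choose $\xi_v\in\sC_v$ together with an isomorphism $\alpha_v\colon A_v(\xi_v)\xrightarrow{\sim}\zeta_v$. For a directed edge $e=vw$, transport $\theta_e$ through these isomorphisms to obtain the morphism $i^{\sD}_{w,e}(\alpha_w)^{-1}\circ\theta_e\circ i^{\sD}_{v,e}(\alpha_v)$, which by the intertwining relation is a morphism $A_e(i^{\sC}_{v,e}(\xi_v))\to A_e(i^{\sC}_{w,e}(\xi_w))$ in $\sD_e$. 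Since $A_e$ is fully faithful, there is a unique $\phi_e\colon i^{\sC}_{v,e}(\xi_v)\to i^{\sC}_{w,e}(\xi_w)$ mapping to it; as a fully faithful functor reflects isomorphisms and the transported morphism is invertible, $\phi_e$ is an isomorphism.

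The remaining content is to check that $((\xi_v),(\phi_e))$ is a genuine object of $\sC(\Sigma)$ and that $(\alpha_v)$ exhibits its image under $A$ as isomorphic to $((\zeta_v),(\theta_e))$. The relation $\phi_{\overline{e}}=\phi_e^{-1}$ follows from $\theta_{\overline{e}}=\theta_e^{-1}$ after applying faithfulness of $A_e$, and the compatibility of $(\alpha_v)$ with the gluing data is a direct substitution of the definition of $\phi_e$. The hard part will be the cocycle condition on a $2$-face $F=uvw$: I would apply $A_F$ to both sides of the desired identity $i^{\sC}_{uw,F}(\phi_{uw})=i^{\sC}_{vw,F}(\phi_{vw})\circ i^{\sC}_{uv,F}(\phi_{uv})$, expand each term using the intertwining relation together with the cocycle relations among the restriction functors, and watch the interior factors $i^{\sD}_{v,F}(\alpha_v)$ cancel, reducing the whole identity to the cocycle relation already satisfied by $(\theta_e)$ in $\sD_F$. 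Faithfulness of $A_F$ then yields the cocycle in $\sC_F$. This bookkeeping of how the vertex isomorphisms $\alpha_v$ thread through the edge and face restrictions is the only delicate point; everything else is formal.
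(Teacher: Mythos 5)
Your proposal is correct: it carries out exactly the fully-faithful-plus-essentially-surjective verification that the paper declares a ``straightforward verification'' and leaves unwritten, using each hypothesis precisely where it is needed (equivalence at vertices for fullness and for choosing preimages $\xi_v$, full faithfulness at edges for lifting the transported gluing maps $i^{\sD}_{w,e}(\alpha_w)^{-1}\circ\theta_e\circ i^{\sD}_{v,e}(\alpha_v)$, and faithfulness at $2$-faces to descend the cocycle identity). Nothing is missing, and the bookkeeping with the $\alpha_v$'s through the restriction functors is handled correctly.
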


 We may attach group data to a neutral Tannakian category $\sC$ over $\Sigma$ 
 (together with paths between the restrictions of fiber functors to faces) by taking the Tannakian fundamental groups $\pi(\sC_F,\omega_F)$. This gives a presentation of $\pi(\sC_\Sigma,\omega)$ as a subgroup of the amalgamation of the groups $ \pi(\sC_{v}, \omega_{v})$, generalizing the more familiar graph of groups construction \cite{Serre:trees,Oda:ramification2}. See, e.g.,~\cite{Stix:SvK} for  details. If each $\sC_F$ is a unipotent category, to obtain the fundamental group of $\sC_\Sigma^\un$, it will be necessary to pro-unipotently complete the group $\pi(\sC_\Sigma,\omega)$.

\section{Enriched Fans and Tropical Bundles} \label{s:enrichedfans}

\subsection{Enriched Fans}

We will use a slight extension of the category of rational polyhedral fans and complexes. Our definitions are motivated by toric and logarithmic geometry. 
Rational polyhedral fans and complexes correspond to toric varieties and schemes, respectively, and vectors in the complexification of the dual lattice correspond to log differentials on the toric schemes with the usual log structure. However, we will need to impose some slightly unusual log structures and  log differentials on these schemes necessitating the concept of {\em enriched fans and complexes with partial compactifications}.

We will let $M$ be a finitely generated free abelian group and $N=M^\vee\coloneqq \Hom(M,\Z)$.  Write $N_\R$ (resp.,~$N_{\C}$) for $N\otimes\R$ (resp.,~$N\otimes\C$). We view $N$ as a lattice in $N_\R$.

\begin{definition} 
A {\em rational polyhedron} in $N_\R$ is the intersection of finitely many half-spaces of the form
$ \langle x, m \rangle \geq a $ for some $m \in M$ and $ a \in \mathbb{R} $.
The faces of such a polyhedron are obtained by replacing some of the 
inequalities with equalities.  It is said to be a {\em rational cone} if each $a$ is equal to $0$. The relative interior $\mathring{P}$ of a rational polyhedron $P$ is the topological interior of $P$, considered as a subspace of its affine span.

For a rational polyhedron $P$ given by
\[P\coloneqq\{x\in N_\R| \langle x,m_i\rangle \geq a_i\}\]
for $m_i\in M$ and $a_i\in\Q$, the {\em recession cone} $\recc(P)$ of $P$ is given by
\[\recc(P)\coloneqq \{x\in N_\R| \langle x,m_i\rangle \geq 0\}.\]
\end{definition}


\begin{definition}
A {\em rational polyhedral complex} $\Sigma$ in $N_\R$ is a collection of rational polyhedra in $N_\R$ that contains each of its faces and for which the non-empty intersection of two polyhedra is a face of each. For a polyhedron $P$ of $\Sigma$, write $P\in\Sigma$. It is {\em complete} if the union of its polyhedra is $N_\R$, and it is {\em completable} if it is a subcomplex of a complete polyhedral complex. Write $\Sigma_{(k)}$ for the polyhedra of $\Sigma$ of dimension $k$. For a set $X\subseteq N_\R$, we say $\Sigma$ supports $X$ (alt.,~$X$ is supported on $\Sigma$) if $X$ is a union of polyhedra of $\Sigma$. The {\em support} of $\Sigma$ is the union $|\Sigma|$ of the polyhedra of $\Sigma$. 
For a polyhedron $P$, write $N_{P,\R}=\Span_{\R}(P-w)$ and $N_P=N\cap N_{P,\R}$ for the {\em tangent space} and {\em tangent lattice} of $P$ where $w\in \mathring{P}$ is a point in the relative interior of $P$.
For a point $x\in|\Sigma|$, let the {\em tangent cone of $\Sigma$ at $x$} be defined by
\[T_x\Sigma\coloneqq \{v\in N_{\R}| \text{for all sufficiently small } \varepsilon>0, x+\varepsilon v\in|\Sigma|\}.\]
A {\em rational fan} is a rational polyhedral complex, each of whose members is a rational cone. It is {\em pointed} if the origin is a cone of it. Henceforth, all fans will be pointed.
A rational fan is {\em unimodular} if all of its cones are spanned by a subset of a basis of the lattice $N$. 
It is {\em projective} if it is the normal fan of an integral polytope. It is {\em quasi-projective} if it is the subfan of a projective fan.
The {\em open star} $\Star^\circ_P(\Sigma)$ of a polyhedron $P$ in a rational polyhedral complex $\Sigma$ is the union of the relative interiors of polyhedra of $\Sigma$ containing $P$:
$\Star^\circ_P(\Sigma)\coloneqq \bigcup_{Q\supseteq P} \mathring{Q}.$
The {\em poset topology} on the set $|\Sigma|$ underlying a rational polyhedral complex $\Sigma$ is the topology whose closed sets are the unions of closed polyhedra of $\Sigma$. Consequently, a basis for the topology is given by open stars of faces.
\end{definition}

By \cite{BGS}, the recession cones of a completable rational polyhedral complex form a rational fan.

\begin{definition} \label{d:starquotientfan}
Let $\Delta$ be a fan, and let $\tau$ be a cone of $\Delta$. Define the {\em star-quotient fan} $\Delta_\tau$ to be the fan in $N_{\R}/N_{\tau,\R}$ given by   
\[\Delta_{\tau}\coloneqq \bigcup_{\sigma\supseteq \tau} (\sigma+N_{\tau,\R})/N_{\tau,\R}.\]
When $ \tau $ is understood, we will write $\overline{\sigma}=(\sigma+N_{\tau,\R})/N_{\tau,\R}$ for the cone in $\Delta_\tau$ corresponding to $\sigma\in\Delta$.
\end{definition}

\begin{definition} An {\em enriched fan} $(\Delta,\pi\colon N\to N')$ is the following data:
\begin{enumerate}
\item a rational fan $\Delta$ in $N'_\R$; and
\item a surjection of finitely generated free abelian groups $\pi\colon N\to N'$ with dual inclusion $\pi^*\colon M'\hookrightarrow M$ (as a saturated sub-lattice).
\end{enumerate}
The trivial enrichment is given by the identity $\pi=\operatorname{Id}\colon N\to N'=N$.
\end{definition}

\begin{definition} A {\em morphism of enriched fans} 
\[(\Delta_1,\pi_1\colon N_1\to N_1')\to (\Delta_2,\pi_2\colon N_2\to N_2')\] is a homomorphism $f\colon N_1\to N_2$ that fits into a 
commutative diagram 
\[\xymatrix{
N_1\ar[d]_{\pi_1}\ar[r]^f &N_2\ar[d]^{\pi_2}\\
N'_1\ar[r]^{f'}& N'_2 }\]
such that $f'$ induces a morphism of fans $\Delta_1\to\Delta_2$. 
\end{definition}

\begin{definition} A {\em partial compactification} of an enriched fan $(\Delta,\pi\colon N\to N')$ is a finite set of rays $R$ in $N_{\R}$ that are rational, i.e.,~spanned by vectors in $N$. We will denote this data by $(\Delta,\pi\colon N\to N',R)$.
For a cone $\sigma\in\Delta$, write $R_{\mathring{\sigma}}\coloneqq R\cap \pi^{-1}(\mathring{\sigma}).$ A morphism of partial compactifications
\[f\colon (\Delta_1,\pi_1\colon N_1\to N_1',R)\to (\Delta_2,\pi_2\colon N_2\to N_2',\varnothing)\] is a morphism of enriched fans 
\[(\Delta_1,\pi_1\colon N_1\to N_1')\to (\Delta_2,\pi_2\colon N_2\to N_2')\] 
with $f(R)=\{0\}$.
\end{definition}

\begin{example}
  The following enriched fans with partial compactification will play the role of base objects:
  \begin{enumerate}
      \item $\Delta_S\coloneqq (\{0\},\pi\colon \{0\}\to\{0\},\varnothing)$; and 
      \item $\Delta_{S^\dagger}\coloneqq (\{0\},\pi\colon\Z\to \{0\},\varnothing)$.
  \end{enumerate}
\end{example}

Any enriched fan with partial compactification has a trivial morphism to $\Delta_S$.

\begin{definition}
Let $(\Delta,\operatorname{Id}\colon N\to N,R)$ be a trivially enriched fan with partial compactification.
Let $\tau$ be a cone of $\Delta$. 
 Define the {\em induced enriched structure with partial compactification} on the star-quotient 
 \[(\Delta,\operatorname{Id}\colon N\to N,R)_\tau\coloneqq (\Delta_\tau,\pi\colon N\to N/N_\tau, R')\]
 by setting $\pi$ to be the projection
 and letting $R'$ be the subset of $R$ consisting of the vectors $\rho$ contained in some cone $\sigma$ containing $\tau$, i.e.,~$R'\coloneqq R\cap \bigcup_{\sigma\supseteq \tau} \sigma.$
The {\em induced enriched structure with partial compactification} on the open star $\Star^{\circ}_{\tau}(\Delta)$ is given by $(\Star_\tau^\circ(\Delta),\operatorname{Id}\colon N\to N,R')$
\end{definition}

While $\Star_\tau^\circ(\Delta)$ is not a polyhedral complex, the constructions that we will apply to it, in particular, construction of tropical differentials will still be valid.

\begin{definition}
    A {\em rational polyhedral complex with partial compactification} $(\Sigma,N,R)$ is the following data:
    \begin{enumerate}
      \item A finitely generated free abelian group $N$;
      \item a completable rational polyhedral complex $\Sigma$ in $N_{\R}$; and
      \item a partial compactification (i.e.,~a finite set of rational rays) $R$ in $N_{\R}$.
    \end{enumerate}
  We will often suppress the triple and use $\Sigma$ to refer to the above object.
\end{definition}

\begin{definition}
  Let $\Sigma$ be a completable polyhedral complex in $N_{\R}$. For a (poset) open subset $U$ of $\Sigma$, the {\em cone over $U$}, $\tilde{U}$ is the union of cones in $N_\R \times \R$ of the following types:
  \begin{enumerate}
    \item for each $P \in U$, the cone $\tP$ is the closure in $N_\R \times \R_{\geq 0}$ of the set
    \[\{(x,a) \subset N_\R \times \R_{>0} : \frac{x}{a} \in P\}; \text{ and}\]
    \item for each $P$ in $\Sigma$, the cone $P_0\coloneqq \tP\cap (N_\R\times\{0\})$.
    \end{enumerate}
    For a partial compactification  $(\Sigma,N,R)$, the induced enriched fan with partial compactification on $\tSigma$ is the following: the enriched structure is given by $\operatorname{Id}\colon N\times \Z\to N\times \Z$; and the partial compactification is given by the set $R\times\{0\}\subset N_\R\times\R$. 
    There is a natural morphism of enriched fans with partial compactifications
    \[f\colon (\tSigma,N\times \Z\to N\times \Z,R\times\{0\})\to \Delta_{S^\dagger}=(\{0\},\Z\to \{0\},\varnothing)\]
    where the map is given by projection onto the second factor $N\times\Z\to\Z$.
\end{definition}

For $U=\Sigma$, by the main result of \cite{BGS}, we can conclude that $\tSigma$ is a fan. In this case, the set of cones $\{P_0\}$ are the recession fan $\Sigma_0\subset N\times \{0\}\cong N$.
We say $\Sigma$ is {\em unimodular} if $\tSigma$ is unimodular and all vertices of $\Sigma$ belong to $N$.

\begin{definition} \label{d:starquotientcomplex} The {\em star-quotient of a polyhedron $P$ in $(\Sigma, N, R)$ over $\Delta_{S^\dagger}$} is the star-quotient of $\tilde{P}$ in $\tSigma$,
\[(\tilde{\Sigma}_{\tilde{P}},N\times \Z\to ((N\times\Z)/(N\times\Z)_{\tilde{P}}),R')\]
as per Definition~\ref{d:starquotientfan}. This comes equipped with a morphism to $\Delta_{S^\dagger}$.
Similarly, the {\em enriched structure with partial compactification on the star-quotient over $\Delta_S$} is
 $(\Sigma_P,N\to N/N_P,R')$.
 \end{definition}

A refinement of a rational polyhedral complex with partial compactification $(\Sigma,N,R)$ is one of the form $(\Sigma',N,R)$ where $\Sigma'$ is a refinement of $\Sigma$.

\subsection{Bergman fans}

\begin{definition}
    A {\em weighted balanced polyhedral complex} is a pure-dimensional polyhedral complex where each top-dimensional polyhedron is given an integer weight, and the weights satisfy the balancing condition (see, e.g.,~\cite[\S 3.4]{MaclaganSturmfels}). 
\end{definition}

The property of being a weighted balanced polyhedral complex is inherited by star-quotients of polyhedra.

The {\em Bergman fan} \cite{AK:Bergman} attached
to a loopless matroid $\M$ is a central example of a weighted balanced rational polyhedral fan in Euclidean space.
We may assume the ground set $ \left| \M \right| $ of $\M$ is the set $\{0,1,\dots,n\}$. 
To define the Bergman fan attached to $\M$, let $N=\Z^{n+1}/\Z\mathbf{1}$ where $\mathbf{1}$ is the diagonal vector. Let $e_0,\dots,e_n$ be the images in $N$ of the standard basis vectors of $\Z^{n+1}$.
The {\em Bergman fan} $\Sigma(\M) $ in $N_\R$ has rays $ \mathbb{R}_{\geq 0} e_{I} $  for flats $I$ of $\M$, where 
$e_{I} = \sum_{i \in I } e_{i}$.
The $k$-dimensional cones of $ \Sigma(\M) $ are given by $\Span_{\geq 0}(e_{I_{1}}, \dots, e_{I_{k}})$
where
$I_{1} \subsetneq \dots \subsetneq I_{k} $ is a flag of flats.
Bergman fans satisfy the balancing condition when their top-dimensional cones are equipped with multiplicity $1$.

\begin{definition} \label{d:smoothcomplex}
A balanced weighted rational polyhedral complex $\Sigma$ is {\em smooth} if all top-dimensional polyhedra have weight $1$ and the star-quotient of every face $F$ has the same support set as the Bergman fan of some matroid with respect to some choice of basis of $N/N_F$. 
\end{definition}

See \cite{KS:TMNF} for more details. Smoothness is a property of the support set of the polyhedral complex provided every top-dimensional face has multiplicity $1$:

\begin{lemma} \label{l:anystructure}
  Let $\Sigma$ be a balanced weighted polyhedral complex such that every top-dimensional polyhedron has weight $1$ and contains no non-trivial linear subspace. Then $\Sigma$ is smooth if and only if for all $x\in |\Sigma|$, $T_x\Sigma$ has support equal to that of a Bergman fan with respect to some basis of $N$.
\end{lemma}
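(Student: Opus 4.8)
The plan is to reduce the global \emph{if and only if} to a single structural fact about Bergman fan supports under cylinders over coordinate subspaces, exploiting that both conditions are indexed by the faces of $\Sigma$. The motivating point is that $T_x\Sigma$ depends only on the set $|\Sigma|$ and not on the chosen polyhedral structure, so the lemma will show that smoothness (defined via the star-quotients $\Sigma_F$ of a fixed structure) is in fact intrinsic to $|\Sigma|$.

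First I would record the local dictionary between tangent cones and star-quotients. For a point $x$ in the relative interior of a face $F$, unwinding the definitions of $T_x\Sigma$ and of the star-quotient $\Sigma_F$ over $\Delta_S$ (Definition~\ref{d:starquotientcomplex}) shows that $N_{F,\R}$ lies in the lineality of $T_x\Sigma$ and that
\[
|\Sigma_F| = T_x\Sigma/N_{F,\R}, \qquad T_x\Sigma = \pi_F^{-1}(|\Sigma_F|),
\]
where $\pi_F\colon N_\R\to N_\R/N_{F,\R}$ is the projection; in particular $T_x\Sigma$ is constant as $x$ ranges over $\mathring F$. Since every point of $|\Sigma|$ lies in the relative interior of a unique face and every face is realized this way, the condition ``$T_x\Sigma$ is a Bergman support for all $x$'' becomes ``$\pi_F^{-1}(|\Sigma_F|)$ is a Bergman support for all faces $F$,'' while smoothness (together with the standing weight-one hypothesis, which is the one appearing in Definition~\ref{d:smoothcomplex}) is exactly ``$|\Sigma_F|$ is a Bergman support for all $F$.'' Thus the lemma follows once the equivalence is proved face-by-face.

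The heart of the argument is therefore the following claim about a saturated sublattice $N'\subseteq N$ with projection $\pi\colon N_\R\to N_\R/N'_\R$: for a fan $C'$ in $N_\R/N'_\R$, the cylinder $\pi^{-1}(C')$ is the support of a Bergman fan with respect to some basis of $N$ if and only if $C'$ is the support of a Bergman fan with respect to some basis of $N/N'$. I would prove this using the behavior of Bergman fans under matroid operations, drawn from \cite{AK:Bergman,KS:TMNF}: the lineality of $|\Sigma(\M)|$ is the span of the coloop rays of $\M$, a direct sum of matroids has Bergman fan the product of the factors' Bergman fans, and adjoining coloops multiplies the support by a coordinate linear space. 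For the ``if'' direction, writing $C'=|\Sigma(\M')|$ and choosing a splitting $N\cong N'\oplus(N/N')$, one realizes $\pi^{-1}(C')=C'\times N'_\R$ as the support of $\Sigma(\M'\oplus\M_0)$ with $\M_0$ a free matroid contributing coloops spanning $N'_\R$. For the ``only if'' direction, $\pi^{-1}(C')=|\Sigma(\M)|$ has $N'_\R$ inside its lineality, hence inside the coloop span $L$; writing $|\Sigma(\M)|=|\Sigma(\M'')|\times L$ with $\M''$ the non-coloop part, one quotients to get $C'=|\Sigma(\M'')|\times(L/N'_\R)$, and since $L/N'_\R$ is again a rational coordinate linear space one reads off that $C'$ is a Bergman support with respect to the induced basis of $N/N'$.

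The main obstacle I anticipate is the bookkeeping in this structural claim. The Bergman-fan convention places matroids in $\Z^{E}/\Z\mathbf{1}$, so the product and quotient operations must be matched carefully against the splitting of $0\to N'\to N\to N/N'\to 0$ and against the rank count from the $\mathbf{1}$-quotient. Two further wrinkles require care: $N'_\R$ need not be spanned by a sub-collection of coloop rays but only contained in $L$, so the quotient $L/N'_\R$ must be re-expressed as a coordinate subspace for a \emph{changed} basis of $N/N'$; and when $C'$ fails to be full-dimensional (for instance at a generic point of a top-dimensional face, where $|\Sigma_F|=\{0\}$) the realizing matroids $\M'$ are non-simple, so parallel classes enter. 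The line-free hypothesis is used to keep these tangent cones and star-quotients non-degenerate so that the dictionary of the first step is valid and compatible with Definition~\ref{d:smoothcomplex}. Once the structural claim is established, assembling the three steps yields the stated equivalence.
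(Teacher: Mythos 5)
Your reduction and your forward direction are sound: the dictionary $T_x\Sigma=\pi_F^{-1}(|\Sigma_F|)$ for $x\in\mathring{F}$ is correct and does reduce the lemma to the face-by-face ``cylinder'' claim, and the ``if'' half of that claim (a cylinder over a Bergman support is a Bergman support, realized by $\Sigma(\M'\oplus\M_0)$ with $\M_0$ free) is fine. The genuine gap is in the ``only if'' half, i.e., exactly the direction needed to conclude smoothness from the tangent-cone condition: the matroid fact you invoke --- that the lineality space of $|\Sigma(\M)|$ is the span of the coloop rays --- is false. Take $\M=U_{2,3}\oplus U_{2,3}$ on $E=E_1\sqcup E_2$: it has no coloops at all, yet $\widetilde{\Trop}(\M)=\widetilde{\Trop}(U_{2,3})\times\widetilde{\Trop}(U_{2,3})$ is invariant under translation by $\mathbf{1}_{E_1}$, so $|\Sigma(\M)|$ contains the lineality line $\R\, e_{E_1}$. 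In general the lineality space of $|\Sigma(\M)|$ is spanned by the rays $e_{E_1},\dots,e_{E_r}$ attached to the connected components of $\M$ (its dimension is $r-1$), and this strictly contains the coloop span whenever $\M$ has at least two non-coloop components. Consequently your step ``$N'_\R$ lies in the lineality, hence in the coloop span $L$'' fails, and with it the decomposition $|\Sigma(\M)|=|\Sigma(\M'')|\times L$ on which the quotient argument rests: with $N'=\Z e_{E_1}$ in the example above your argument cannot even start, although the conclusion is true there ($|\Sigma(\M)|/N'_\R$ is a product of two tropical lines, which is a Bergman support).

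Repairing this direction needs more than the corrected lineality statement. Writing $\M=\M_1\oplus\dots\oplus\M_r$ into connected components and $\Lambda$ for the lineality, one gets $C'\cong \prod_i |\Sigma(\M_i)| \times (\Lambda/N'_\R)$ with $N'_\R$ an arbitrary rational subspace of $\Lambda$; since the number of ``free'' factors drops, one must know that a product of pointed Bergman supports is again a Bergman support in order to reassemble $C'$ as $|\Sigma(\M^\sharp)|$ for a single matroid $\M^\sharp$. That fact is true but not free: it follows, for instance, from parallel connection --- the circuit description of $P(\M_i,\M_j)$ shows $\widetilde{\Trop}(P(\M_i,\M_j))$ is the fiber product of $\widetilde{\Trop}(\M_i)$ and $\widetilde{\Trop}(\M_j)$ over the basepoint coordinate, whence $|\Sigma(P(\M_i,\M_j))|\cong|\Sigma(\M_i)|\times|\Sigma(\M_j)|$ --- but your proposal never supplies this ingredient. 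For comparison, the paper's (terse) proof routes through vertices and cites instead that star quotients and tangent cones of Bergman supports are again Bergman supports, which by Ardila--Klivans \cite{AK:Bergman} reduce to Bergman fans of direct sums of minors; that is the correct substitute for your coloop lemma, and it is the one nontrivial matroid input your write-up gets wrong.
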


\begin{proof}
  Observe that $\Sigma$ is smooth if and only if for every vertex $v\in\Sigma_{(0)}$, $T_v\Sigma$ is the support  set of $\Sigma(\M)$, the Bergman fan of a matroid $M$. This is a consequence of the support of the star quotient of a Bergman fan being the support of a Bergman fan.
  Since every tangent cone of the support of a Bergman fan is the support of a  Bergman fan, the conclusion follows.
\end{proof}

\subsection{Tropical differential forms}

We will define tropical differential forms (see, e.g.,~\cite{grossShokrieh,IKMZ}) for enriched fans with partial compactifications.  

\begin{definition}
Let
$f\colon (\Delta_1,\pi_1\colon N_1 \to N_1',R)\to (\Delta_2,\pi_2\colon N_2\to N'_2,\varnothing)$
be a morphism of enriched fans with partial compactifications. Give $\Delta_1$ the poset topology. The sheaf of {\em complex-valued tropical relative differential forms attached to $f$} is induced by the presheaf on $\Delta_1$,
\begin{align*}
    \Omega^1_{\Delta_1/\Delta_2}(U)&\coloneqq \left(\frac{\bigcap_{\sigma:\sigma^\circ\subseteq U} \Span_{\R}(R_{\mathring{\sigma}})^{\perp_{M_{1,\R}}}
+\Span_{\R}(U)^{\perp_{M'_{1,\R}}}}{f^*M_{2,\R}+\Span_{\R}(U)^{\perp_{M_{1,\R}'}}}\right)_{\C}\\
&\subseteq \left(\frac{M_{1,\R}}{f^*M_{2,\R}+\Span_{\R}(U)^{\perp_{M_{1',\R}}}}\right)_{\C}.
\end{align*}
where the perpendiculars are taken in $M_{1,\R}$ and $M'_{1,\R}$, as noted. For $U_2\subseteq U_1$, the restriction map is given by
taking the further quotient. We will write $\Omega^1_{\Delta_1/\Delta_2}$ for the induced sheaf as well.
\end{definition}

Here, we view tropical differential forms as restrictions of elements of $M_{1,\R}$, i.e.,~elements of the dual space to $N_{1,\R}$. They differ from usual tropical differentials in two ways:
\begin{enumerate}
    \item we allow them to be non-trivial on hidden directions, i.e.,~those in $\ker(\pi\colon N\to N')$,
    \item we force them to vanish on the rays in $R$.
\end{enumerate}
We consider elements of $M_{1,\R}/\Span_\R(U)^{\perp_{M'_{1,\R}}}$ to be differential forms on $U$ (that are allowed to be non-trivial on hidden directions). All of our examples will be over $\Delta_S$ (where $M_2=\{0\}$) or $\Delta_{S^\dagger}$ (where $M_2=\Z$). The examples over $\Delta_{S^\dagger}$ will occur as cones over polyhedral complexes.

\begin{definition} \label{d:tropdiffpoly}
    For a rational polyhedral complex with partial compactification, $(\Sigma,N,R)$, we define a sheaf of tropical differentials $\Omega^1_{\Sigma}$ on the poset topology of $\Sigma$. Let $\tilde{\Sigma}\subseteq N_{\R}\times\R$ be the cone over $\Sigma$ equipped with the morphism of enriched fans with partial compactifications
    \[f\colon (\tSigma,N\times \Z\to N\times \Z,R\times\{0\})\to \Delta_{S^\dagger}=(\{0\},\Z\to \{0\},\varnothing).\]
    For an open $U$ in the poset topology, we set 
    $\Omega^1_{\Sigma}(U)\coloneqq \Omega^1_{\tSigma/\Delta_{S^\dagger}}(\tilde{U})$
    where $\tilde{U}$ is considered to be an open set in $\tilde{\Sigma}$. The sheaf $\Omega^1_{\Sigma}$ is the sheafification of the above.
\end{definition}

By unpacking the above definition, we see that for $U$, the open star of a polyhedron,
\[\Omega^1_{\Sigma}(U)=\bigcap_{P\subseteq U} \Span(R_{\mathring{P}})^{\perp_{N_{U,\R}^\vee}}\subseteq N_{U,\R}^\vee\]
where $N_{U,\R}=\sum_{\mathring{P}\subset U} N_{P,\R}.$
In other words, $\Omega^1_{\Sigma}(U)$ is the vector space of tropical differentials on $U$ that vanish on the vectors in $R$ that are contained in the recession cone of a polyhedron in $U$.

\begin{definition} \label{d:tropidifffan}
    For an enriched fan with partial compactification, $(\Delta,\pi\colon N\to N',R)$, the {\em sheaf of tropical differentials} is defined to be $\Omega^1_{\Delta/\Delta_S}$.
\end{definition}

\begin{rem} \label{r:triviallyenriched1forms}
    For a trivially enriched fan $(\Delta,N\to N,R)$, there are two candidate sheaves of tropical differentials: the one from considering it as a polyhedral complex (Definition~\ref{d:tropdiffpoly}); or the one from considering it as a fan (Definition~\ref{d:tropidifffan}). These are easily seen to be isomorphic.
    Moreover, for a polyhedron $P$ in $(\Sigma,N,R)$, the sheaves of tropical differentials on $\Sigma_P$ attached to the star quotients $\Omega^1_{\tilde{\Sigma}_{\tilde{P}}/\Delta_{S^\dagger}}$ and $\Omega^1_{\Sigma_P/\Delta_S}$ are canonically isomorphic.
\end{rem}


\begin{definition}
Let $\Sigma_{\max}$ be the (poset) open subset of $\Sigma$ consisting of points $x\in\Sigma$ whose tangent cone $ T_{x}\Sigma $ is a linear subspace, and write $\iota\colon\Sigma_{\max}\to \Sigma$ for the inclusion.
The {\em sheaf of tropical $p$-forms}, denoted $ \Omega_{\Sigma}^{p}$ is the image of the morphism
\[ \bigwedge\nolimits^p\Omega_{\Sigma}^{1} \to \iota_{*}\left(\bigwedge\nolimits^p\Omega_{\Sigma}^{1}
\big|_{\Sigma_{\max}}\right). \]
\end{definition}

We define tropical differential forms on fans analogously.
In other words, we allow tropical differential forms to be determined by their restriction to $\Sigma_{\max}$.
As per \cite{grossShokrieh}, there is no ambiguity in the notation for denoting $\Omega^1_{\Sigma}$.

\begin{figure}
    \begin{center}
    \begin{tikzpicture}
     \filldraw (-2,0) circle (2pt) ;
     \filldraw (2,0) circle (2pt) node[anchor = south east]{$v$} ;
     \draw (-2,0) -- (2,0);
     \draw[->] (-2,0) -- (-3.4, -1.4);
     \draw[->] (-2,0) -- (-2, 2.2);
     \draw[->] (2,0) -- (2,2.2) node[anchor = south]{$e_{2}$};
     \draw[->] (2,0) -- (3.4,-1.4) node[anchor = west]{$e_{1}-e_{2} $};
     \fill[color = gray,semitransparent] (-2,0) -- (-2,2) -- (2,2)-- (2,0) -- (-2,0);
     \fill[color = gray,semitransparent] (2,0) -- (2,2) -- (3.2,-1.2)-- (2,0);
     \fill[color = gray,semitransparent] (-2,0) -- (-2,2) -- (-3.2,-1.2)-- (-2,0);
     \fill[color = gray,semitransparent] (-2,0) -- (2,0) -- (3.2,-1.2)-- (-3.2,-1.2) -- (-2,0);
    \end{tikzpicture}
    \caption{A Rational Polyhedral Complex in $\mathbb{R}^{2} $}
    \label{f:exampleComplex}
    \end{center}
\end{figure}
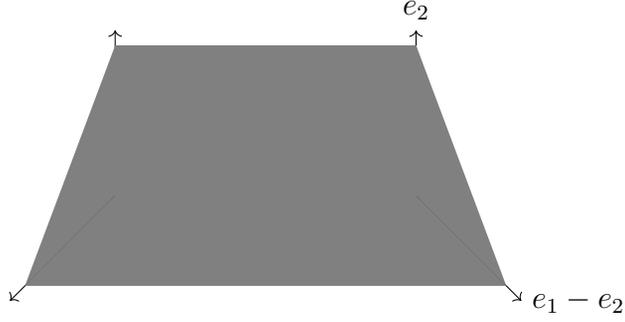

\begin{example}
    Let $ \Sigma \subseteq \mathbb{R}^{2} $ be the rational polyhedral complex in Figure \ref{f:exampleComplex} with partial compactification $ (\Sigma, \mathbb{Z}^{2}, \{\mathbb{R}_{\geq 0} \cdot e_{1} \}) $. Note that in the complex, all $ 2 $-cells are unbounded.

    We will compute the sheaves of tropical differentials on the 
    open set $ \Star^{\circ}_{v}(\Sigma) $. The cone over $ \Sigma $
    is a subset of $ \mathbb{R}^{2} \times \mathbb{R} $; call the
    unit vector in the last factor $ e_{3} $.
    Then \[
        \Omega^{1}_{\Sigma}(\Star^{\circ}_{v}(\Sigma)) \coloneqq
        \Omega^{1}_{\widetilde{\Sigma}/ \Delta_{S^{\dagger}}}\left(
        \widetilde{\Star_{v}^{\circ}(\Sigma)} \right)
        = \frac{\mathbb{R} e_{2}^{*}  + \mathbb{R} e_{3}^{*}}{\mathbb{R} e_{3}^{*}}
        \cong \mathbb{R} e_{2}^{*}
    \] 
    As this is of real dimension $ 1$, $ \Omega_{\Sigma}^{i}\left( 
    \Star^{\circ}_{v}(\Sigma) \right) = 0 $  for $ i > 1 $.
\end{example}

We can pull back differential forms to star-quotients of faces. The following is proven by unwinding definitions.

\begin{lemma}\label{l:starAndStarQuotientDifferentials}
  Let $(\Sigma,N,R)$ be a rational polyhedral complex with partial compactification. Let  $(\Sigma_P,N\to N/N_P,R')$ be the enriched structure on the star-quotient fan over $\Delta_S$. 
  Let $U$ be any open subset of $\Sigma$ containing the open star of $P$ in $\Sigma$. Then 
  there is a natural homomorphism
  \[\Omega^1_{\Sigma}(U)\to \Omega^1_{\Sigma_P}(\Sigma_P)\]
  which is an isomorphism when $U$ is equal to the open star of $P$.
\end{lemma}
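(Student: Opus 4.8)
The plan is to first reduce to the case $U=\Star^\circ_P(\Sigma)$ and then realize the comparison map as an equality of subspaces of one common vector space. Since open stars form a basis for the poset topology, $\Star^\circ_P(\Sigma)$ is open and, by hypothesis, contained in $U$, so the sheaf $\Omega^1_\Sigma$ supplies a restriction homomorphism $\Omega^1_\Sigma(U)\to\Omega^1_\Sigma(\Star^\circ_P(\Sigma))$. It then suffices to construct a canonical isomorphism $\Omega^1_\Sigma(\Star^\circ_P(\Sigma))\xrightarrow{\sim}\Omega^1_{\Sigma_P}(\Sigma_P)$: the composite is the asserted natural homomorphism, and it reduces to the isomorphism itself when $U=\Star^\circ_P(\Sigma)$.

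Next I would unwind both sides into subspaces of a single ambient space. Writing $U=\Star^\circ_P(\Sigma)$, the polyhedra with relative interior in $U$ are exactly the $Q\supseteq P$, so $N_{U,\R}=\sum_{Q\supseteq P}N_{Q,\R}$, and the explicit description following Definition~\ref{d:tropdiffpoly} places $\Omega^1_\Sigma(U)\subseteq N_{U,\R}^\vee$. For the star-quotient side I would compute the global sections $\Omega^1_{\Sigma_P}(\Sigma_P)=\Omega^1_{\Sigma_P/\Delta_S}(\Sigma_P)$ directly from the definition of tropical relative differentials (with base $\Delta_S$, so $f^*M_{2,\R}=0$). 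Here the enriched structure $\pi\colon N\to N/N_P$ is decisive: the hidden directions $N_{P,\R}$ exactly offset the quotient. Concretely, with $M'=(N/N_P)^\vee=N_{P,\R}^\perp$ one has $N_{\bar Q,\R}=N_{Q,\R}/N_{P,\R}$, hence $\Span_\R(\Sigma_P)=N_{U,\R}/N_{P,\R}$ and, using $N_{P,\R}\subseteq N_{U,\R}$, $\Span_\R(\Sigma_P)^{\perp_{M'_\R}}=N_{U,\R}^{\perp_{M_\R}}$. Thus the ambient quotient for the star-quotient side is $M_\R/N_{U,\R}^{\perp_{M_\R}}$, which is canonically $N_{U,\R}^\vee$ via restriction of functionals. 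So both sheaves are realized inside the single space $N_{U,\R}^\vee$, and the comparison map is exactly this canonical identification.

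Finally I would match the defining linear conditions. Each recession ray in $R$ lying in $\recc(Q)\subseteq N_{Q,\R}\subseteq N_{U,\R}$ imposes a vanishing condition that is intrinsic to $N_{U,\R}^\vee$, so the ``there exists a lift to $M_\R$'' description of $\Omega^1_{\Sigma_P/\Delta_S}(\Sigma_P)$ collapses to an honest vanishing condition on $N_{U,\R}^\vee$. By the remark after Definition~\ref{d:tropdiffpoly}, $\Omega^1_\Sigma(U)$ consists of those $\omega\in N_{U,\R}^\vee$ vanishing on every ray of $R$ contained in the recession cone of some $Q\supseteq P$; by Definition~\ref{d:starquotientcomplex} the set $R'$ attached to the star-quotient is precisely this set of rays, so $\Omega^1_{\Sigma_P/\Delta_S}(\Sigma_P)$ is cut out by the identical conditions. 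Hence the two subspaces of $N_{U,\R}^\vee$ coincide and the canonical identification is an isomorphism. Alternatively, one can run the entire argument through the cone $\tilde\Sigma$, invoking Remark~\ref{r:triviallyenriched1forms} to replace $\Omega^1_{\Sigma_P/\Delta_S}$ by $\Omega^1_{\tilde\Sigma_{\tilde P}/\Delta_{S^\dagger}}$.

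The main obstacle is the bookkeeping in the middle step: correctly threading the partial-compactification rays $R$ and $R'$ together with the enriched hidden directions $N_{P,\R}$ through the star-quotient, so that the two a priori distinct dual spaces, and the two a priori distinct families of linear conditions, are recognized as literally the same. Once the ambient space is pinned down as $N_{U,\R}^\vee$ and $R'$ is identified with the recession rays of $R$ meeting cones above $P$, the equality of subspaces is forced, and naturality is automatic since the map is an identification of functionals.
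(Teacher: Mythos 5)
Your proposal is correct and takes essentially the same approach as the paper: the paper offers no written argument beyond the remark that the lemma ``is proven by unwinding definitions,'' and your proof is exactly that unwinding --- reduce to $U=\Star^\circ_P(\Sigma)$ via sheaf restriction, identify both sides canonically inside $N_{U,\R}^\vee$ (using $\Span_\R(\Sigma_P)^{\perp_{M'_\R}}=N_{U,\R}^{\perp_{M_\R}}$), and observe that both subspaces are cut out by vanishing on the same rays, namely $R'=R\cap\bigcup_{Q\supseteq P}\recc(Q)$.
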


In the case of trivial partial compactification, sheaves of tropical differential 
forms may be defined on the Euclidean topology and hence depend only on the support of the polyhedral complex (see \cite{grossShokrieh}). 
For non-trivial partial compactifications, proof 
is required to show tropical differentials are unchanged under refinement.

\begin{lemma}\label{l:tropDiffRefinement}
    Let $ (\Sigma, N, R) $ be a polyhedral complex with partial compactification, and let $ \Sigma' $ be a refinement of $ \Sigma $. For a face
    $ F \in \Sigma $, we have 
    \[ 
        \Omega^{i}_{\Sigma}\left( \Star^{\circ}_{F}(\Sigma) \right)
        \cong \Omega^{i}_{\Sigma'}\left( \Star^{\circ}_{F}(\Sigma) \right)
    \]
    for $ i \geq 0 $.
\end{lemma}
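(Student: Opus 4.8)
\emph{Proof proposal.} The plan is to settle the case \(i=1\) and deduce the rest formally. For higher \(i\), recall that \(\Omega^i_\Sigma\) is the image of \(\bigwedge^i\Omega^1_\Sigma\to\iota_*\bigl(\bigwedge^i\Omega^1_\Sigma|_{\Sigma_{\max}}\bigr)\), and that \(\Sigma_{\max}\) (the points whose tangent cone is a linear subspace) depends only on the support \(|\Sigma|=|\Sigma'|\). Thus, once the \(i=1\) isomorphism is established compatibly with restriction to \(\Sigma_{\max}\) and with wedge powers, the isomorphism for \(\Omega^i\) follows by functoriality of \(\wedge\) and of taking images. So I would concentrate on \(\Omega^1\). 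Set \(U\coloneqq\Star^\circ_F(\Sigma)\); note first that, since the complement of \(U\) in \(|\Sigma|\) is a union of closed faces of \(\Sigma\) and hence of \(\Sigma'\), the set \(U\) is also open in the poset topology of \(\Sigma'\), so both sides of the asserted isomorphism make sense.

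For \(i=1\) I would compare both sides inside the trivially compactified sheaf. Write \(\Omega^1_{\Sigma,\mathrm{triv}}\) for the sheaf of tropical \(1\)-forms attached to the empty partial compactification. Enlarging \(R\) only imposes extra vanishing, so \(\Omega^1_\Sigma\hookrightarrow\Omega^1_{\Sigma,\mathrm{triv}}\) is a subsheaf, namely the forms vanishing on those rays of \(R\) contained in recession cones. Put
\[\mathcal R_\Sigma\coloneqq\{\rho\in R:\rho\subseteq\recc(P)\text{ for some }P\in\Sigma\text{ with }\mathring P\subset U\},\]
and define \(\mathcal R_{\Sigma'}\) by the same recipe with faces \(Q'\) of \(\Sigma'\). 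Because \(U\) is a basic open of \(\Sigma\), the formula recorded just after Definition~\ref{d:tropdiffpoly} yields \(\Omega^1_\Sigma(U)=\{\eta\in\Omega^1_{\Sigma,\mathrm{triv}}(U):\eta(\mathcal R_\Sigma)=0\}\). On the \(\Sigma'\) side \(U\) need not be a single open star, but left exactness of \(\Gamma(U,-)\) applied to the subsheaf inclusion, combined with the fact that a section of \(\Omega^1_{\Sigma',\mathrm{triv}}\) lies in the subsheaf exactly when each of its germs does (which on each basic open star \(\Star^\circ_{G'}\subseteq U\) is the condition of vanishing on \(\mathcal R_{G'}\)), identifies \(\Omega^1_{\Sigma'}(U)=\{\eta\in\Omega^1_{\Sigma',\mathrm{triv}}(U):\eta(\mathcal R_{\Sigma'})=0\}\).

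Two inputs remain. The first is routine: for empty partial compactification the sheaf of tropical forms is intrinsic to the support, being computable in the Euclidean topology and depending only on \(|\Sigma|\) (the discussion preceding this lemma, cf.~\cite{grossShokrieh}); since \(U\) is the same Euclidean-open subset of \(|\Sigma|=|\Sigma'|\), we get a canonical identification \(\Omega^1_{\Sigma,\mathrm{triv}}(U)=\Omega^1_{\Sigma',\mathrm{triv}}(U)\). The second input, which I expect to be the main obstacle, is the equality \(\mathcal R_\Sigma=\mathcal R_{\Sigma'}\), proved by two inclusions. For \(\mathcal R_{\Sigma'}\subseteq\mathcal R_\Sigma\): given \(\rho\subseteq\recc(Q')\) with \(\mathring{Q'}\subset U\), let \(P\) be the face of \(\Sigma\) carrying \(\mathring{Q'}\); then \(Q'\subseteq P\) gives \(\recc(Q')\subseteq\recc(P)\), while \(\mathring{Q'}\subseteq\mathring P\) together with \(\mathring{Q'}\subset U\) forces \(\mathring P\subset U\), i.e.\ \(F\subseteq P\), so \(\rho\in\mathcal R_\Sigma\). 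For \(\mathcal R_\Sigma\subseteq\mathcal R_{\Sigma'}\): given \(\rho\subseteq\recc(P)\) with \(P\supseteq F\), the restriction \(\Sigma'|_P\) subdivides \(P\), and its recession cones form a fan subdividing \(\recc(P)\); hence the ray \(\rho\) lies in \(\recc(Q')\) for some top-dimensional cell \(Q'\) of \(\Sigma'|_P\), and any such \(Q'\) satisfies \(\mathring{Q'}\subseteq\mathring P\subset U\), giving \(\rho\in\mathcal R_{\Sigma'}\).

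Combining these, both \(\Omega^1_\Sigma(U)\) and \(\Omega^1_{\Sigma'}(U)\) are the subspace of the common ambient space \(\Omega^1_{\Sigma,\mathrm{triv}}(U)=\Omega^1_{\Sigma',\mathrm{triv}}(U)\) cut out by vanishing on \(\mathcal R_\Sigma=\mathcal R_{\Sigma'}\), and the desired isomorphism is the identity on this ambient space; this is manifestly compatible with restriction to \(\Sigma_{\max}\) and with wedge powers, closing the reduction to \(i=1\). The delicate points in a full write-up are the sheaf-theoretic identification of \(\Omega^1_{\Sigma'}(U)\) over the non-basic open \(U\) (handled by the germ-wise criterion, which pushes everything onto the basic open stars where the explicit formula applies) and the carrier-and-tiling bookkeeping for recession cones; I would also verify that the convention for \(R_{\mathring P}\) (rays in the recession cone versus in its relative interior) is applied identically on both sides, so that the comparison \(\mathcal R_\Sigma=\mathcal R_{\Sigma'}\) is independent of that choice.
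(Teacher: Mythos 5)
Your proposal is correct and follows essentially the same route as the paper's own proof: reduce to $i=1$, view both spaces inside the trivially compactified forms $\Omega^1_{\Sigma',\varnothing}\left(\Star^\circ_F(\Sigma)\right)$ (which depend only on the support), and match the two $R$-vanishing conditions, with your inclusions $\mathcal{R}_{\Sigma'}\subseteq\mathcal{R}_\Sigma$ and $\mathcal{R}_\Sigma\subseteq\mathcal{R}_{\Sigma'}$ corresponding exactly to the paper's ``Notice that'' containment and to its contrapositive step producing a cell $Q\in\Sigma'$ with $r\in\mathring{Q}_0$. If anything, your write-up is more careful than the paper's (which asserts the existence of such a $Q$ without argument), and your closing caveat about the relative-interior versus containment convention for rays of $R$ is warranted, since the paper's proof uses the former while its prose description of $\Omega^1_\Sigma(U)$ uses the latter.
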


\begin{proof}
    It suffices to show this for $i=1 $. We may suppose that the affine span of $\Sigma$ is $N_\R$.
    Let $ \Omega^{1}_{\Sigma',\varnothing} $ be the sheaf of tropical 
    differential forms on $ (\Sigma', N, \varnothing) $. 
    Notice that, viewed as subspaces of $ \Omega^{1}_{\Sigma', \varnothing}
    \left( \Star_{F}^{\circ}(\Sigma)\right) $, we have
    \[ 
         \Omega^{1}_{\Sigma}\left( \Star_{F}^{\circ}(\Sigma)\right)
         \subseteq \Omega^{1}_{\Sigma'} \left( \Star_{F}^{\circ}
         (\Sigma) \right) 
    \]
    To show the containment
    \[ 
    \Omega^{1}_{\Sigma}\left( \Star_{F}^{\circ}(\Sigma)\right)
        \supseteq \Omega^{1}_{\Sigma'} \left( \Star_{F}^{\circ}
         (\Sigma) \right),
    \]
    let $ a \in \Omega^{1}_{\Sigma', \varnothing}
    \left( \Star_{F}^{\circ}(\Sigma)\right) \setminus \Omega^{1}_{\Sigma}\left( \Star_{F}^{\circ}(\Sigma)\right).$
    Unwinding definitions, there is some $P \in \Sigma $ with $ \mathring{P} \subseteq \Star_{
    \widetilde{F}}(\widetilde{\Sigma}) $ and some $ r \in 
    R_{\mathring{P}_0} $ such that $ \langle a, r \rangle \neq 0 $.
    As $r$ is in the relative interior of the recession cone of $P$ with $\mathring{P}\subseteq \Star_{F}^{\circ}(\widetilde{\Sigma}) $, there is some $ Q \in \Sigma' $ with $\mathring{Q}\subseteq  \Star_{F}^{\circ}(\widetilde{\Sigma})$ such that $r\in \mathring{Q}_0$.
    Therefore $a$ is not an element of 
    $\Omega_{\Sigma'}^{1}\left(\Star_{F}^{\circ}(
    \Sigma)\right) $.
\end{proof}

\section{The Unipotent Tropical Fundamental Group} \label{s:tropicalpi1}

\subsection{Definitions}
We now define the unipotent tropical fundamental group as the Tannakian fundamental group of tropical vector bundles equipped with integrable connection.

\begin{definition}
Let $\Sigma$ be a connected rational polyhedral complex in $N_\R$. A rank $n$ {\em tropical vector bundle} on $\Sigma$ is a locally constant sheaf $E$ (in the poset topology) of $n$-dimensional $\C$-vector spaces.
Given a rational polyhedral complex with partial compactification $(\Sigma,N,R)$, a {\em connection} on $E$ is a global section
\[ 
    \theta \in \Gamma\left(\Sigma, \Omega_{\Sigma}^{1} \otimes \End(E) \right),
\] 
and $ \theta $ is said to be {\em integrable} if $ \theta \wedge \theta = 0 $.

Given two tropical vector bundles with integrable connection $(E_1,\theta_1)$, $(E_2,\theta_2)$, a homomorphism from $E_1$ to $E_2$ is a morphism of sheaves $T\colon E_1\to E_2$ such that for all local sections $s$ of $E_1$, $\theta_2 T(s)=T(\theta_1 s)$ in $E_2\otimes \Omega^1_{\Sigma}$.

For an enriched fan with partial compactification $(\Delta,N\to N',R)$, tropical vector bundles with integrable connections are defined analogously by using the sheaf of tropical differentials $\Omega^1_{(\Delta,N\to N',R)/\Delta_S}$.
\end{definition}

By Remark~\ref{r:triviallyenriched1forms}, there is no ambiguity about whether to consider a fan as a polyhedral complex.
A tropical vector bundle on a fan is always a constant sheaf because the fan itself is the minimal open set containing the origin.
The unit tropical vector bundle with integrable connection $\mathbf{1}$ is the trivial bundle with fiber $\C$ and connection form $\theta=0$. 

Let $\sC^{\trop}(\Sigma,N,R)$ (abbreviated $\sC(\Sigma)$)  denote the abelian tensor category of tropical vector bundles with integrable connection 
 under tensor product and duality as follows:
\[(E_1,\theta_1)\otimes (E_2,\theta_2)=(E_1\otimes E_2,\theta_1\otimes 1+1\otimes \theta_2),\  (E_1,\theta_1)^\vee=(E_1^\vee,-\theta_1^\vee).\]
The space of {\em horizontal sections of $(E,\theta)$ on $\Sigma$} is
\[\Gamma(\Sigma,(E,\theta))=\{s\in\Gamma(\Sigma,E)\mid \theta s=0\}.\]
One can see that
\begin{equation} \label{e:tropicalhoms}
\Hom\left((E_1,\theta_1),(E_2,\theta_2)\right)=\Gamma(\Sigma,(E_1,\theta_1)^\vee\otimes (E_2,\theta_2)).    
\end{equation}
A {\em unipotent tropical vector bundle with connection} is a unipotent object in that category. 
To a polyhedron $P$ of $\Sigma$, we define a fiber functor $\omega_P\colon \sC(\Sigma)\to \Vect_{\C}$ by
\[\omega_P(E)\coloneqq E(\Star^\circ_P(\Sigma)),\]
yielding the following:

\begin{prop} The category $\sC^{\trop,\un}(\Sigma,N,R)$ is a neutral Tannakian category with fiber functor $\omega_P$, and analogously for $\sC^{\trop,\un}(\Delta,N\to N',R)$ for enriched fans with partial compactifications over $\Delta_S$. 
\end{prop}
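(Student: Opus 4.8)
The plan is to first show that the \emph{full} category $\sC^{\trop}(\Sigma,N,R)$ of tropical vector bundles with integrable connection is a neutral Tannakian category with fiber functor $\omega_P$, and then to deduce the statement for the unipotent subcategory from the principle recalled in \S\ref{s:tannakianfundamentalgroups} that $\sC^{\un}$ is neutral Tannakian with the restricted fiber functor whenever $\sC$ is. Essential smallness is clear, since up to isomorphism an object is finite‑dimensional data indexed by the set of faces, so I focus on the remaining axioms. The load‑bearing observation, which I would isolate first, is that since $E$ is locally constant and $\Star^\circ_Q(\Sigma)\subseteq\Star^\circ_P(\Sigma)$ whenever $P\subseteq Q$, every restriction map $E(\Star^\circ_P)\to E(\Star^\circ_Q)$ is an isomorphism. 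Hence a tropical vector bundle is equivalent to an assignment of a vector space $E_P$ to each face together with compatible isomorphisms along face inclusions, and a morphism $T$ is a family $T_P$ of linear maps commuting with these isomorphisms.

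With this facewise description in hand, abelianness is checked facewise: the facewise kernels and cokernels of $T$ are again locally constant, because the restriction isomorphisms carry $\ker T_P$ to $\ker T_Q$, and likewise for images and cokernels. When $T$ is connection‑compatible, the identity $\theta_2 T(s)=T(\theta_1 s)$ shows that $s\in\ker T$ forces $\theta_1 s\in\Omega^1_\Sigma\otimes\ker T$, so $\theta_1$ restricts to a connection on $\ker T$ and descends to $\operatorname{coker} T$ and $\operatorname{im} T$; integrability is inherited by restriction of $\theta_1\wedge\theta_1=0$. Thus $\sC^{\trop}(\Sigma,N,R)$ is abelian with kernels and cokernels computed facewise, which is exactly the stalk at a point of $\mathring P$, i.e.\ $\omega_P$.

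For the rigid tensor structure I would verify that the connections $\theta_1\otimes 1+1\otimes\theta_2$ and $-\theta_1^\vee$ written in the text are integrable: in the tensor case the cross terms cancel by antisymmetry of the wedge on $\Omega^1_\Sigma$ together with the commuting of $A\otimes 1$ with $1\otimes B$, and a parallel computation gives $\theta^\vee\wedge\theta^\vee=0$; the usual evaluation and coevaluation maps are horizontal, supplying a dual for every object. The unit $\mathbf 1=(\C,0)$ satisfies $\End(\mathbf 1)=\Gamma(\Sigma,\mathbf 1)=\C$ via \eqref{e:tropicalhoms}, since horizontal sections of the trivial bundle are the locally constant functions, which are constant as $\Sigma$ is connected. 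Finally $\omega_P(E)=E(\Star^\circ_P)$ is $\C$‑linear; it is a tensor functor because on $\Star^\circ_P$ both factors are constant, so $\omega_P(E_1\otimes E_2)=\omega_P(E_1)\otimes\omega_P(E_2)$ and $\omega_P(\mathbf 1)=\C$; it is exact because it is the stalk functor and kernels and cokernels were computed facewise; and it is faithful because if $\omega_P(T)=T_P=0$, then the restriction isomorphisms give $T_Q=0$ for every $Q$ comparable to $P$ in the face poset, and connectedness of $\Sigma$ propagates this to all faces. Hence $\sC^{\trop}(\Sigma,N,R)$ is neutral Tannakian.

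The case of an enriched fan $(\Delta,N\to N',R)$ runs identically but more simply: as noted in the text a tropical vector bundle on a fan is a constant sheaf, so $\sC^{\trop}(\Delta,N\to N',R)$ is equivalent to the category of pairs $(V,\theta)$ with $\theta\in\Omega^1_{(\Delta,N\to N',R)/\Delta_S}(\Delta)\otimes\End(V)$ and $\theta\wedge\theta=0$, and $\omega_P(V,\theta)=V$; faithfulness, exactness, and the tensor property of $\omega_P$ are then immediate, while the same integrability and $\End(\mathbf 1)=\C$ computations apply. In both settings, once the full category is shown to be neutral Tannakian with fiber functor $\omega_P$, the claim for $\sC^{\trop,\un}$ follows from the general fact recalled in \S\ref{s:tannakianfundamentalgroups}. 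I expect the only genuinely nontrivial step to be the abelianness–faithfulness package—the facewise description of bundles and morphisms together with the connectedness propagation—since the tensor, duality, and unit verifications are formal consequences of the linear algebra in each fiber.
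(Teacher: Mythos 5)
Your proof is correct. The paper gives no proof of this proposition at all — it is stated as an immediate consequence of the definition of $\omega_P$ — and your verification (the minimal-open-star property of the poset topology yielding the facewise description of locally constant sheaves, facewise kernels and cokernels with connections inherited via $\theta_2 T(s)=T(\theta_1 s)$, the antisymmetry computations for integrability of tensor and dual connections, $\End(\mathbf 1)=\C$ and faithfulness via connectedness, and reduction of the unipotent case to the general fact recalled in Section~\ref{s:tannakianfundamentalgroups}) is precisely the routine argument the paper leaves implicit.
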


Because the above category is defined in reference to sheaves in the poset topology, we may extend the definition to (poset) open subspaces of connected rational polyhedral complexes.

The above construction of the category of unipotent tropical vector bundles with integrable connection holds for enriched fans with partial compactification relative to a base fan by using the sheaf of relative tropical differential forms. Indeed, if we have a morphism 
\[f\colon (\Delta_1,\pi_1\colon N_1\to N'_1,R)\to (\Delta_2,\pi_2\colon N_2\to N'_2,\varnothing),\]
we define tropical vector bundles on $\Delta_1$ (which all must be trivial) together with connections by making use of $\Omega^1_{\Delta_1/\Delta_2}$ in place of $\Omega^1_{\Sigma}$.

\begin{definition}
  The {\em unipotent tropical fundamental group} $\pi_1^{\un}(\Sigma,\omega_P)$ is the Tannakian fundamental group of $\sC^{\un}(\Sigma)$ with respect to the fiber functor $\omega_P$ and analogously in the fan case.
\end{definition}

\begin{lemma} \label{l:starquotientequivalence}
  Let $(\Sigma,N,R)$ be a rational polyhedral complex with partial compactification. 
  For a polyhedron  $P\in\Sigma$, let  $\Star_{\tilde{P}}^\circ(\tilde{\Sigma})$ denote the induced enriched structure with partial compactification on the open star, and let  $\Sigma_P$ denote the enriched structure of the star-quotient over $\Delta_S$ (as in Definition~\ref{d:starquotientcomplex}). Then there is a natural equivalence of categories between $\sC^{\un}(\Star_P^\circ(\Sigma))$ and $\sC^{\un}(\Sigma_P)$.
  Moreover for an inclusion of polyhedra $P_1\subset P_2$, the inclusion
      $\Star^\circ_{\tilde{P}_2}(\tilde{\Sigma})\hookrightarrow\Star^\circ_{\tilde{P}_1}(\tilde{\Sigma})$
  induces a pullback functor
  \[\sC^{\un}(\Sigma_{P_1})\cong \sC^{\un}(\Star^\circ_{P_1}(\Sigma))\to
   \sC^{\un}(\Star^\circ_{P_2}(\Sigma))\cong\sC^{\un}(\Sigma_{P_2}).\]
\end{lemma}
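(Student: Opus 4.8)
The plan is to strip both categories down to the same concrete data---a finite-dimensional $\C$-vector space together with an integrable connection form---and then identify these data using the comparison of sheaves of tropical differentials in Lemma~\ref{l:starAndStarQuotientDifferentials}. First I would observe that on each side the underlying tropical vector bundle is forced to be constant. A locally constant sheaf is constant on the minimal open neighborhood of any point; in the poset topology the minimal open neighborhood of $\mathring{P}$ inside $\Star^\circ_P(\Sigma)$ is all of $\Star^\circ_P(\Sigma)$ (since $P$ is a face of every polyhedron in its star, so the face poset $\{Q:Q\supseteq P\}$ has $P$ as least element). Hence every tropical vector bundle $E$ on $\Star^\circ_P(\Sigma)$ is constant, determined by its global sections $V\coloneqq E(\Star^\circ_P(\Sigma))$; the same holds on the fan $\Sigma_P$, exactly as the text argues for fans. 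Since $\End(E)$ is then the constant sheaf with fiber $\End(V)$ and global sections commute with tensoring by a finite-dimensional vector space, a connection on $E$ is precisely an element $\theta\in\Omega^1_{\Sigma}(\Star^\circ_P(\Sigma))\otimes\End(V)$ (respectively $\theta\in\Omega^1_{\Sigma_P}(\Sigma_P)\otimes\End(V)$) with $\theta\wedge\theta=0$. Thus an object on either side is a pair $(V,\theta)$, a morphism is a linear map $T\colon V_1\to V_2$ with $\theta_2\,T=(T\otimes 1)\,\theta_1$, and tensor product, duality, and unit are computed fiberwise.

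Next I would transport the connection data across the isomorphism
\[\Omega^1_{\Sigma}(\Star^\circ_P(\Sigma))\xrightarrow{\ \sim\ }\Omega^1_{\Sigma_P}(\Sigma_P)\]
provided by Lemma~\ref{l:starAndStarQuotientDifferentials} with $U=\Star^\circ_P(\Sigma)$, together with the identification of the two star-quotient differential sheaves in Remark~\ref{r:triviallyenriched1forms}. The candidate functor sends $(V,\theta)$ to $(V,\bar\theta)$, where $\bar\theta$ is the image of $\theta$ under this isomorphism tensored with $\End(V)$. The point to verify is that the isomorphism of $1$-forms is compatible with the wedge product, i.e.\ that it is induced by an isomorphism of the exterior algebras $\Omega^\bullet$. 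Granting this, the integrability condition $\theta\wedge\theta=0$ is preserved, as are the tensor connection $\theta_1\otimes 1+1\otimes\theta_2$ and the dual connection $-\theta^\vee$; the morphism condition $\theta_2\,T=(T\otimes 1)\,\theta_1$ is likewise preserved, so the functor is fully faithful, and it is essentially surjective because the form map is an isomorphism. As it is a tensor functor carrying $\mathbf{1}$ to $\mathbf{1}$, it preserves unipotent objects together with their filtrations, yielding the equivalence $\sC^{\un}(\Star^\circ_P(\Sigma))\cong\sC^{\un}(\Sigma_P)$.

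For the second statement I would use that $P_1\subset P_2$ forces $\Star^\circ_{\tilde{P}_2}(\tilde\Sigma)\subseteq\Star^\circ_{\tilde{P}_1}(\tilde\Sigma)$, so restriction of sheaves gives a pullback functor $\sC^{\un}(\Star^\circ_{P_1}(\Sigma))\to\sC^{\un}(\Star^\circ_{P_2}(\Sigma))$ sending $(V,\theta)$ to $(V,\theta')$, where $\theta'$ is the restriction of $\theta$ along the further-quotient map $\Omega^1_{\Sigma}(\Star^\circ_{P_1}(\Sigma))\to\Omega^1_{\Sigma}(\Star^\circ_{P_2}(\Sigma))$. Transporting through the equivalences of the first part, this corresponds to the functor induced by $\Omega^1_{\Sigma_{P_1}}(\Sigma_{P_1})\to\Omega^1_{\Sigma_{P_2}}(\Sigma_{P_2})$, the required compatibility being the commutativity of the square of differential-form maps relating the two star-quotients, which follows from the naturality built into Lemma~\ref{l:starAndStarQuotientDifferentials}. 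I expect the main obstacle to be precisely the wedge-compatibility of the $1$-form identification---confirming that the comparison of Lemma~\ref{l:starAndStarQuotientDifferentials} extends to an isomorphism of exterior algebras so that the integrability and tensor structures transport---rather than the formal bookkeeping of objects, morphisms, and functoriality.
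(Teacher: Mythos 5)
Your proposal is correct and is essentially the paper's own argument: both reduce each category to pairs $(V,\theta)$ by trivializing the underlying locally constant sheaf and then transporting connections through the isomorphism of $1$-forms from Lemma~\ref{l:starAndStarQuotientDifferentials} (together with the identification in Remark~\ref{r:triviallyenriched1forms}). Your triviality argument via the minimal open neighborhood of $\mathring{P}$ is the same observation the paper applies to fans (its proof of this lemma instead trivializes unipotent bundles by induction on length using $H^1(|U|,\C)=0$), and the wedge-compatibility you flag as the main obstacle does hold by unwinding definitions---both comparison maps are induced by the identity on ambient functionals, and their restrictions to corresponding maximal faces of the open star and the star-quotient agree, so the $\Omega^2$'s and hence the integrability conditions match.
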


This lemma is a consequence of Lemma~\ref{l:starAndStarQuotientDifferentials} which states that the open star and star-quotient have isomorphic sheaves of tropical differential forms. Because on either space, the unipotent locally constant sheaves of $\C$-vector spaces are trivial, the categories of tropical vector bundles with integrable connections are equivalent. This follows from the observation that any unipotent vector bundle on $ \Sigma$ is trivialized on the support of an open star $U$ in $ \Sigma $. Indeed, this is proven by induction on the length of the tropical vector bundle as a unipotent object, as extensions on $U$ are classified by direct sums of $ H^{1}(\left| U
\right|, \mathbb{C}) =0 $.

\begin{rem} \label{r:refinement} Given a refinement $\Sigma'$ of $\Sigma$, the pullback functor $\sC^{\un}(\Sigma)\to \sC^{\un}(\Sigma')$ is an equivalence of categories. 
Indeed, the tropical differentials on the support of open stars in $ \Sigma $ are unchanged by the refinement (Lemma \ref{l:tropDiffRefinement}) and the categories of unipotent locally constant sheaves of $\C$-vector spaces on  $ \Sigma' $ and $ \Sigma $ are equivalent. \end{rem}

\subsection{Descent for the unipotent tropical category}

By gluing tropical vector bundles with integrable connections, we will give a descent statement and hence an equivalence of categories.
Let $(\Sigma,N,R)$ be a connected  rational polyhedral complex with partial compactification, and suppose that no face of $\Sigma$ contains a line.
 Let $\Gamma$ be the $2$-skeleton of the union of the bounded faces of $\Sigma$. Suppose that $\Gamma$ is a simplicial complex which is true if $\Sigma$ is unimodular. Define a Tannakian category $\sC_\Gamma$ over $\Gamma$ by setting $\sC_F\coloneqq\sC^{\un}(\Sigma_F)$ with transition morphisms  for $F_1\subset F_2$ $\sC_{F_1}\to \sC_{F_2}$ given by Lemma~\ref{l:starquotientequivalence}. 
The following is a matter of unpacking definitions together with the observation that the union of the open stars of the bounded faces of $\Sigma$ cover $\Sigma$:

\begin{prop} \label{p:tropicaldescent}
    There is an equivalence of categories between $\sC^{\un}(\Sigma,N,R)$ and the unipotent completion of the descent category, $\sC(\Gamma)^{\un}$.
\end{prop}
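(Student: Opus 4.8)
The plan is to recognize that Proposition~\ref{p:tropicaldescent} is an instance of the general descent formalism set up in Section~\ref{s:tannakianfundamentalgroups}, so the work lies in verifying the hypotheses of that formalism rather than reproving it. Concretely, I would first exhibit $\sC_\Gamma$ as a genuine neutral Tannakian category over the simplicial complex $\Gamma$, then show that the descent category $\sC(\Gamma)$ computes $\sC^{\un}(\Sigma,N,R)$ via a comparison functor, and finally pass to unipotent completions. The key geometric input is the remark (following Lemma~\ref{l:starquotientequivalence}) that the open stars of the \emph{bounded} faces of $\Sigma$ cover all of $|\Sigma|$: since no face contains a line and $\Gamma$ is the $2$-skeleton of the bounded faces, every point of $\Sigma$ lies in the open star of some bounded face, so this is a legitimate covering in the sense that yields a $\sD$-space over $\Gamma$ in the language of Section~\ref{s:tannakianfundamentalgroups}.

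The first main step is to build the comparison functor $A\colon \sC^{\un}(\Sigma)\to \sC(\Gamma)$. Given a unipotent tropical vector bundle with integrable connection $(E,\theta)$ on $\Sigma$, I would restrict it to the open star $\Star_F^\circ(\Sigma)$ of each bounded face $F$ and apply the equivalence of Lemma~\ref{l:starquotientequivalence} to land in $\sC^{\un}(\Sigma_F)=\sC_F$; the transition isomorphisms $\phi_e$ on edges $e=vw$ come from the fact that the restriction of $(E,\theta)$ to $\Star_e^\circ(\Sigma)$ can be compared, through the two inclusions of stars, to its restrictions over $v$ and $w$. The cocycle condition on $2$-faces is automatic because all of these identifications arise from restricting a \emph{single} global object $(E,\theta)$ along the poset-open inclusions. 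The upshot is that $A$ is well-defined and lands in the descent category.

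The second step is to show $A$ is an equivalence onto $\sC(\Gamma)^{\un}$. For essential surjectivity and full faithfulness I would appeal directly to Lemma~\ref{l:descentequivalence}: I need that over each vertex the functor is an equivalence, over each edge it is fully faithful, and over each $2$-face it is faithful. The vertex-level equivalence is exactly Lemma~\ref{l:starquotientequivalence} identifying $\sC^{\un}(\Star_v^\circ(\Sigma))\cong\sC^{\un}(\Sigma_v)$; the edge and face conditions follow because the transition functors are pullbacks along inclusions of open stars, and pullback of unipotent tropical bundles is faithful (indeed the restriction maps $i_{F_1,F_2}$ in the construction preceding Lemma~\ref{l:descentequivalence} are required to be faithful, and here they are, since a morphism of locally constant sheaves that vanishes on a larger open set vanishes on the smaller one). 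Thus gluing compatible local objects $((\xi_v),(\phi_e))$ reconstructs a global object on $\Sigma$: the sheaf $E$ is reassembled from the $\xi_v$ using the $\phi_e$ as transition data, and the connections glue because the sheaf $\Omega^1_\Sigma$ is itself a sheaf on the poset topology, so local connection forms agreeing on overlaps patch to a global $\theta$.

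\emph{The main obstacle} I anticipate is the bookkeeping needed to pass between the open-star picture and the star-quotient picture uniformly across all faces while respecting the partial compactification $R$: Lemma~\ref{l:starquotientequivalence} provides the pointwise equivalences and the pullback functors for a single inclusion $P_1\subset P_2$, but to feed these into Lemma~\ref{l:descentequivalence} I must check they assemble into a genuine functor $\sC_\Gamma\to\sC_\Gamma$ of Tannakian categories over $\Gamma$, i.e.\ that the equivalences commute with all the transition maps $i_{F_1,F_2}$ strictly enough to satisfy the cocycle condition. This is the ``matter of unpacking definitions'' alluded to in the statement, and the one subtlety worth flagging is that Lemma~\ref{l:starquotientequivalence} produces the equivalence only when $U$ is exactly the open star of $P$, so along an inclusion $P_1\subset P_2$ one compares the equivalence at $P_2$ with the natural homomorphism of Lemma~\ref{l:starAndStarQuotientDifferentials} for the \emph{non-strict} open set $\Star_{P_1}^\circ(\Sigma)\supseteq \Star_{P_2}^\circ(\Sigma)$; checking these agree is what makes the diagram commute. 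Finally, because each $\sC_F=\sC^{\un}(\Sigma_F)$ is already unipotent and $A$ preserves unit objects and unipotent filtrations, the unipotent completion on the descent side matches $\sC^{\un}(\Sigma)$, giving the claimed equivalence with $\sC(\Gamma)^{\un}$.
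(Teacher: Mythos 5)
Your proposal is correct and takes essentially the same route as the paper, whose entire proof consists of the observation that the open stars of the bounded faces cover $\Sigma$ (your key geometric input), the phrase ``a matter of unpacking definitions,'' and the remark that objects of $\sC(\Gamma)$ are only locally unipotent; your expansion via Lemma~\ref{l:starquotientequivalence}, gluing of locally constant sheaves and connection forms in the poset topology, and passage to $\sC(\Gamma)^{\un}$ is exactly that unpacking. Two small repairs: Lemma~\ref{l:descentequivalence} compares the descent categories of two Tannakian categories over $\Gamma$, so it cannot be applied \emph{directly} to $A\colon \sC^{\un}(\Sigma)\to\sC(\Gamma)$—it only serves to pass between the open-star and star-quotient descent categories, and your gluing argument is what actually establishes the equivalence with $\sC^{\un}(\Sigma)$—and faithfulness of the restrictions $i_{F_1,F_2}$ requires that vanishing on the \emph{smaller} star forces vanishing on the larger one (the opposite of what you wrote), which holds because a morphism of locally constant sheaves on a connected set is determined by any single stalk.
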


Observe that objects of $\sC(\Gamma)$ are {\em locally unipotent}, that is, they only restrict to unipotent ones on each face, so one must pass to the unipotent subcategory. We can apply the above proposition to obtain a presentation for the fundamental group of $(\Sigma,N,R)$. 
Then, $\pi_1^{\un}(\Sigma,\omega_P)$ is isomorphic the unipotent completion of the group described in \cite[Section~3.2]{Stix:SvK} that is built out of the groups $\pi_1^{\un}(\Sigma_v,\omega_v)$ for vertices $v$ of $\Gamma$.

\subsection{The tropical fundamental group of a fan}

We describe the tropical fundamental group of a fan using the bar construction (see, e.g.,~\cite[Chapter~8.3.3]{Peters-Steenbrink}). Let $(\Delta,\pi\colon N\to N',R)$ be an enriched fan with partial compactification. Let $0$ be the origin in $N'$, considered as a cone in $\Delta$, and let $\omega_0$ be its attached fiber functor. 
Let $\Omega^\bullet(\Delta)$ be the tropical de Rham complex on $\Delta$, that is, the differential graded algebra with zero differential whose $p$th graded piece is the space of sections, $\Omega^p_{(\Delta,\pi\colon N\to N',R)}(\Delta)$. Let $I\Omega\coloneqq \Omega^{\geq 1}(\Delta)$ be its augmentation ideal. Define 
\[B\coloneqq \bigoplus_{s=0}^\infty (I\Omega)^{\otimes s}\]
to be the free tensor algebra on $I\Omega$ where we write $\eta_1\otimes\cdots \otimes\eta_s$ as $[\eta_1|\cdots|\eta_s]$. Give $B$ a grading where
\[\deg([\eta_1|\cdots|\eta_s])=\deg(\eta_1)+\cdots+\deg(\eta_s)-s,\]
so the degree $0$ elements are of the form $[\eta_1|\cdots|\eta_s]$ for $\eta_i\in\Omega^1$.
Let $d$ be the ``combinatorial'' differential on $B$ given by
\[d([\eta_1|\cdots|\eta_s])=\sum_{i=1}^s (-1)^{i+1}[J\eta_1|\cdots| J\eta_{i-1}| J\eta_i\wedge\eta_{i+1}|\eta_{i+2}|\cdots|\eta_s]\]
where $J\eta_i=(-1)^{\deg(\eta_i)}\eta_i.$
Equip $B$ with the coproduct
\[\Delta_B\colon B\to B\otimes B\]
given by 
\[[\eta_1|\cdots|\eta_s]\mapsto \sum_{i=0}^s [\eta_1|\cdots|\eta_i]\otimes
[\eta_{i+1}|\cdots|\eta_s]\]
and multiplication given by the shuffle product:
\[[\eta_1|\cdots|\eta_r]\otimes
[\eta_{r+1}|\cdots|\eta_{r+s}]=\sum_\sigma \operatorname{sgn}(\sigma)[\eta_{\sigma(1)}|\cdots|\eta_{\sigma(r+s)}]\]
where the sum is over shuffles $\sigma$ of type $(r,s)$, i.e.,~permuations of $\{1,\dots,r+s\}$ such that $\sigma^{-1}(1)<\dots<\sigma^{-1}(r)$ and $\sigma^{-1}(r+1)<\dots<\sigma^{-1}(r+s)$, and the sign is given by giving $\eta_i$ weight $\deg(\eta_i)-1$.
Now $H^0(B)=\ker(d\colon B^0\to B^1)$ is generated by elements of the form $[\eta_1|\cdots|\eta_s]$ where for $1\leq i\leq s-1$, $\eta_i\wedge \eta_{i+1}=0$. 

Analogues of the following theorem have appeared in the literature and owe much to the work of Chen \cite{Chen:bulletin}, but we include a proof here for completeness. See \cite{Terasoma} for an extension. Below, the Hopf algebra structure on $H^0(B)$ makes $\Spec H^0(B)$ into a group scheme.

\begin{theorem}
    Let $(\Delta,N\to N',R)$ be an enriched fan with partial compactification. There is an isomorphism
    \[\pi_1^{\un}((\Delta,N,R),\omega_0)\cong\Spec H^0(B).\]
\end{theorem}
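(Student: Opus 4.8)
The plan is to establish the isomorphism by identifying the unipotent tropical category $\sC^{\trop,\un}(\Delta,N\to N',R)$ with the category of comodules over the Hopf algebra $H^0(B)$, and then invoking Tannakian reconstruction. Since $\Delta$ is a fan, every tropical vector bundle is a constant sheaf, so an object of the category is simply a finite-dimensional $\C$-vector space $V$ together with an integrable connection, i.e.\ an element $\theta\in\Omega^1(\Delta)\otimes\End(V)$ with $\theta\wedge\theta=0$. First I would reinterpret the integrability condition $\theta\wedge\theta=0$ as the flatness that makes the formal expression $1+\theta+\theta\otimes\theta+\cdots$ (the iterated-integral expansion) a well-defined group-like element, and then show that specifying a unipotent such $(V,\theta)$ is the same as specifying a comodule structure $V\to V\otimes H^0(B)$ over the bar algebra.

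The technical heart is the explicit matching between the connection data and the bar complex. The key computation is that the condition $d([\eta_1|\cdots|\eta_s])=0$ (defining $H^0(B)$) corresponds, term by term, to the integrability constraint $\theta\wedge\theta=0$ once one writes $\theta=\sum_\alpha \eta_\alpha\otimes A_\alpha$ in a basis $\{\eta_\alpha\}$ of $\Omega^1(\Delta)$; the wedge products $\eta_i\wedge\eta_{i+1}$ appearing in the differential $d$ are precisely the obstructions that must vanish for the iterated integrals to define a functional on the connection. Concretely, I would construct the functor sending $(V,\theta)$ to the comodule whose coaction reads off the matrix coefficients of the transport, using that $H^0(B)$ is spanned by classes $[\eta_1|\cdots|\eta_s]$ with $\eta_i\wedge\eta_{i+1}=0$, so that pairing such a class against the tensor powers of $\theta$ is unambiguous and respects the shuffle product (which encodes $\otimes$ of connections) and the deconcatenation coproduct (which encodes composition).

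Next I would verify that this assignment is a tensor equivalence. Faithfulness and fullness reduce to the statement that horizontal homomorphisms between $(V_1,\theta_1)$ and $(V_2,\theta_2)$, computed via \eqref{e:tropicalhoms} as horizontal sections of the internal Hom, match comodule morphisms; essential surjectivity amounts to realizing every finite-dimensional $H^0(B)$-comodule as arising from a connection, which follows because $H^0(B)$ is a connected graded Hopf algebra, hence its comodules are automatically unipotent and are built up by iterated extensions of the trivial comodule $\mathbf{1}$---exactly mirroring the filtration defining unipotent objects. Finally, $\Spec H^0(B)$ is a pro-unipotent affine group scheme whose category of representations is the category of $H^0(B)$-comodules, so by the reconstruction theorem $\pi_1^{\un}((\Delta,N,R),\omega_0)\cong\Spec H^0(B)$, with $\omega_0$ corresponding to the forgetful functor.

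The main obstacle I anticipate is making the bar-complex-to-connection dictionary fully rigorous at the level of the differential and the two algebra structures simultaneously: one must check that $d$ vanishing is equivalent to integrability \emph{and} that the shuffle/deconcatenation operations are compatible with the tensor/duality structure on the tropical category, all while tracking the degree-shift convention $\deg([\eta_1|\cdots|\eta_s])=\sum\deg(\eta_i)-s$ and the sign operator $J$. Since $\Omega^\bullet(\Delta)$ carries the zero differential, the usual analytic subtleties of Chen's iterated-integral theory collapse, which is what makes a clean combinatorial proof possible; the cost is that all the content is pushed into the bookkeeping of signs in $d$ and the shuffle product, and I expect verifying the coalgebra axioms and the equivalence on morphisms to be the most delicate, if ultimately routine, step.
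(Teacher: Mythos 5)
Your proposal is correct and follows essentially the same route as the paper: both reduce the statement to an equivalence between unipotent tropical bundles $(V,\theta)$ on the fan and $H^0(B)$-comodules, with the coaction given by the finite (by nilpotency of $\theta$) expansion $v\mapsto v + v\otimes[\theta] + v\otimes[\theta|\theta]+\cdots$, and then invoke the comodule--representation correspondence. The paper's converse direction extracts $\theta$ from an abstract coaction by writing it in a basis of $\Omega^1$ and inducting on the comodule axiom, which is the rigorous form of your appeal to the connected graded structure; otherwise the two arguments coincide.
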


\begin{proof}
  By the equivalence between modules and comodules (see, e.g.,~\cite[Chapter~2]{Jantzen:representations}), it suffices to show that there is an equivalence of categories between $H^0(B)$-comodules and unipotent tropical vector bundles on $\Delta$. Write $\Omega^1$ for $\Omega^1(\Delta)$.

  By applying $\omega_0$, we may treat a unipotent tropical vector bundle $(E,\eta)$ as a vector space $E_{0}$ equipped with an element $\theta\in\Omega^1\otimes \End(E_{0})$ satisfying  
  $\theta\wedge\theta=0$, and such that there exists $n\in\Z_{\geq 1}$ such that $\theta^n$ maps to $0$ under the map
  \[
  (\Omega^1\otimes\End(E_0))^{\otimes n}\cong (\Omega^1)^{\otimes n}\otimes\End(E_0)^{\otimes n}\to (\Omega^1)^{\otimes n}\otimes\End(E_0)
  \]
  given by multiplication of elements of $\End(E_0)$.
  We give $E_{0}$ a comodule structure $\Delta_\theta\colon E_{0}\to E_{0}\otimes H^0(B)$ by
  \[v\mapsto v+ c_1(v \otimes [\theta]) + c_2(v \otimes [\theta|\theta])+\dots\]
  where the contraction $c_k\colon E_0\otimes (\Omega^1\otimes\End(E_{0}))^{\otimes k}\to E_{0}\otimes (\Omega^1)^{\otimes k}$ is given by 
  \[v\otimes A_1\otimes A_2\otimes\dots\otimes A_k\mapsto vA_1A_2\dots A_k.\]
  Note that the above sum is finite because $\theta$ is nilpotent.

  Given an $H^0(B)$-comodule $E_0$, treat the comodule structure as a map 
  \[E_0\to E_0\otimes H^{0}(B),\] and thus as an element $\Delta_\theta\in \End(E_0)\otimes H^{0}(B)$. Pick a basis $\eta_1,\dots,\eta_m$ of $\Omega^1$, and write
  \[\Delta_{\theta}=\sum_s \sum_{(i_1,\dots,i_s)} A_{i_1 \dots i_s}\otimes [\eta_{i_1}| \cdots| \eta_{i_s}]\]
  for $A_{i_1 \dots i_s}\in \End(E_0)$ where all but finitely many summands are $0$. The comodule condition (i.e.,~the intertwining of $\Delta_B$ and $\Delta_\theta$) gives
  \[\begin{split}
    \sum_{s,t}\sum_{(i_1,\dots,i_s)}\sum_{(j_1,\dots,j_t)}A_{i_1 \dots i_s}A_{j_1 \dots j_t} \otimes  [\eta_{i_1}| \cdots| \eta_{i_s}] \otimes  [\eta_{j_1}| \cdots| \eta_{j_t}]\\
    =\sum_{s,t}\sum_{(i_1,\dots,i_s)}\sum_{(j_1,\dots,j_t)} A_{i_1 \dots i_s j_1 \dots j_t}\otimes [\eta_{i_1}| \cdots| \eta_{i_s}] \otimes  [\eta_{j_1}| \cdots| \eta_{j_t},]
  \end{split}\]
   which implies, by induction, 
  \[\Delta_{\theta}=I+[\theta]+[\theta|\theta]+[\theta|\theta|\theta]+\dots\]
  for $\theta=\sum A_i\otimes \eta_i$. Since this sum is finite, $\theta$ corresponds to a nilpotent connection.
  Now, $\Delta_\theta\in \End(E_0)\otimes H^0(B)$ implies $\theta\wedge\theta=0$. Hence $\theta$ corresponds to an integrable unipotent connection on the tropical vector corresponding to $E_0$. 

  It is straightforward that the above correspondence induces a fully faithful functor.
\end{proof}

By specializing this construction to Bergman fans and identifying tropical differential forms with the Orlik--Solomon algebra \cite{Zharkov:Orlik}, one obtains a description of the tropical unipotent fundamental group analogous to Kohno's description of the unipotent fundamental group of a hyperplane arrangement complement \cite{Kohno:holonomy,Kohno:bar}.

\subsection{Examples}

For a generic tropical line $L$ in $N_\R=\R^2$ with $R=\varnothing$ , $\Omega^1$ is generated by the tropical differentials $\eta_1=dx,\eta_2=dy$. Since $\eta_1\wedge \eta_2=0$, $H^0(B)=\C\langle x,y\rangle$, the polynomial ring in two non-commuting variables under the shuffle product and the coproduct dual to concatenation. Hence, $\pi_1^{\un}(L,\omega_0)$ is the free pro-unipotent group on two generators. 

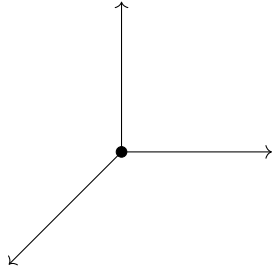
\begin{figure}
\centering
\begin{tikzpicture}
\filldraw[black] (0,0) circle (2pt);
\draw[->] (0,0) --(2,0);
\draw[->] (0,0) -- (0,2);
\draw[->] (0,0) -- (-1.5,-1.5);
\end{tikzpicture}
\caption{The Tropical Line}\label{tropicalLine}
\end{figure}

Now, consider a tropical elliptic curve $\Sigma$ in the plane $N_\R=\R^2$. That is, $\Sigma$ is a tropical hypersurface with $b_1(\Sigma)=1$, so that it corresponds to a cycle with some trees attached at vertices
(see Figure \ref{fig:ellipticCurve}).

\begin{lemma}
    For a tropical elliptic curve $(\Sigma,N,R)$ in $N_{\R}=\R^2$ where $R$ contains the directions of all rays of $\Sigma$, the category $\sC^{\un}(\Sigma)$ is equivalent to finite-dimensional vector spaces equipped with a pair of commuting unipotent automorphisms. Consequently, for $v$, a vertex of $\Sigma$, $\pi_1^{\un}(\Sigma,\omega_v)\cong\operatorname{G}_a^2$.
\end{lemma}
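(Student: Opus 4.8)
The plan is to use the descent and star-quotient machinery developed earlier, combined with the bar-complex computation for fans, to reduce the statement to a concrete computation on the cycle. First I would analyze the local structure: every vertex $v$ of a tropical elliptic curve $\Sigma$ in $\R^2$ is trivalent, and its star-quotient fan $\Sigma_v$ is (up to the basis of $N/N_v = N$) the Bergman fan of a rank-$2$ matroid on three elements, i.e.\ a tropical line. Since all rays of $\Sigma$ point in the directions collected in $R$, the partial compactification forces the tropical $1$-forms to vanish on every ray direction. For a trivalent vertex whose three edges span $N_\R$, after imposing vanishing on the recession directions in $R_{\mathring\sigma}$ the space $\Omega^1_{\Sigma}(\Star^\circ_v(\Sigma))$ collapses: by Lemma~\ref{l:starAndStarQuotientDifferentials} it equals $\Omega^1_{\Sigma_v}(\Sigma_v)$, and the condition of vanishing on the compactified rays kills the one-forms at each leaf vertex. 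I expect each leaf (unbounded) vertex to contribute a trivial category and each interior vertex on the cycle to contribute a one-dimensional space of one-forms, so that $\sC^{\un}(\Sigma_v)$ is equivalent to vector spaces with a single unipotent automorphism at such a vertex.

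Next I would globalize via Proposition~\ref{p:tropicaldescent}. After refining so that $\Gamma$, the $2$-skeleton of the bounded part, is simplicial, the descent category assembles the local categories $\sC^{\un}(\Sigma_v)$ along the edges of the cycle. Because each $1$-form vanishes on ray directions and the only nontrivial combinatorial data is carried by the single loop $b_1(\Sigma)=1$, the gluing produces a descent datum that amounts to choosing, at each vertex of the cycle, a unipotent automorphism, with the edge-isomorphisms $\phi_e$ forcing these to be compatible monodromies around the loop. The trees attached to the cycle contribute nothing since their terminal rays lie in $R$ and hence carry no one-forms, so their local categories are trivial and the descent along a tree edge is an equivalence. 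The upshot is that an object of $\sC^{\un}(\Sigma)$ is a finite-dimensional $\C$-vector space together with the monodromy around the cycle; but the two-dimensional ambient lattice $N=\Z^2$ means there are two independent ray-classes worth of residue data, yielding \emph{two} commuting unipotent automorphisms rather than one.

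More precisely, I would argue that on $\Star^\circ_v(\Sigma)$ for a cycle vertex $v$ the space $\Omega^1$ has dimension $2$ (the full $M_\R = \R^2$, since the two cycle edges at $v$ span the plane and the compactification only constrains the recession rays, which for bounded cycle edges contribute nothing extra), and the wedge vanishes because $\Omega^2_\Sigma(\Star^\circ_v(\Sigma)) = 0$ as the star is one-dimensional as a complex. Then the bar-construction theorem for the fan $\Sigma_v$ gives $H^0(B) = \C\langle x,y\rangle$ with the shuffle product, so $\pi_1^{\un}(\Sigma_v,\omega_v)\cong \Ga^2$ locally; but globally the loop identifies the monodromies and the residue relation $\eta_1\wedge\eta_2=0$ forces the two resulting automorphisms to commute. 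Assembling the descent data around the single loop yields exactly a vector space with a pair of commuting unipotent automorphisms, whence $\pi_1^{\un}(\Sigma,\omega_v)\cong\Ga^2$.

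The main obstacle I anticipate is bookkeeping the partial compactification correctly to pin down the precise dimension of $\Omega^1$ on the relevant open stars: one must verify that imposing vanishing on the rays of $R$ leaves exactly a two-dimensional space of one-forms at the cycle vertices (so that one gets two generators, matching $\Ga^2$) while simultaneously killing the contributions of the attached trees, and that the descent gluing around the single cycle correctly identifies these into a \emph{commuting} pair rather than a free pro-unipotent group on two generators. Getting the interplay between $b_1(\Sigma)=1$ and the rank-$2$ lattice to produce commuting (rather than free) monodromy is the delicate point, and I would handle it by an explicit comparison of the bar complex $H^0(B)$ for the whole complex against the shuffle algebra on the two residue classes, checking that the combinatorial differential $d$ enforces commutativity through the $\eta_i\wedge\eta_{i+1}=0$ relations.
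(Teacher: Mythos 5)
Your overall route---local star-quotient analysis plus descent via Proposition~\ref{p:tropicaldescent}---could in principle be made to work, but as written the proposal fails at exactly the point the lemma is about: why the two automorphisms \emph{commute}. You claim that the relation $\eta_1\wedge\eta_2=0$ (equivalently $\Omega^2_\Sigma=0$ on a one-dimensional complex) ``forces the two resulting automorphisms to commute,'' and correspondingly that $H^0(B)=\C\langle x,y\rangle$ gives $\pi_1^{\un}(\Sigma_v,\omega_v)\cong\Ga^2$. This is backwards. Integrability of $\theta=\eta_1\otimes S_1+\eta_2\otimes S_2$ reads $\theta\wedge\theta=\eta_1\wedge\eta_2\otimes[S_1,S_2]=0$, so it is precisely when $\eta_1\wedge\eta_2\neq 0$ that $[S_1,S_2]=0$ is forced; when the wedge vanishes, integrability is vacuous and \emph{no} relation is imposed. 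Accordingly $\C\langle x,y\rangle$ with the shuffle product is the Hopf algebra of the \emph{free} pro-unipotent group on two generators---this is exactly the tropical line computation in Section~\ref{s:tropicalpi1}---not of $\Ga^2$. The actual source of commutativity is monodromy equivariance, which your proposal never invokes: a connection is a global section of $\Omega^1_\Sigma\otimes\End(E)$, where $\End(E)$ is a locally constant sheaf whose monodromy around the cycle is conjugation by $T$; since the space of \emph{global} tropical $1$-forms on $\Sigma$ is one-dimensional (forms must vanish on every ray direction in $R$ and be compatible along the cycle), any connection has the form $\theta=\eta\otimes S$ for a single endomorphism $S$, and globality of this section forces $TST^{-1}=S$. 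Thus one generator is the monodromy $T$ of the underlying sheaf and the other is the connection coefficient $S$; your count of ``two independent ray-classes worth of residue data'' coming from $N=\Z^2$ misplaces both generators in the connection and never produces the relation $ST=TS$.

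A second, related problem is that the proposal is internally inconsistent about the local computation it leans on: in your first paragraph each cycle vertex contributes a \emph{one}-dimensional space of $1$-forms, while in your third paragraph you assert $\Omega^1_\Sigma(\Star^\circ_v(\Sigma))$ is \emph{two}-dimensional because ``the compactification only constrains the recession rays, which for bounded cycle edges contribute nothing extra''---this ignores the third edge at $v$. If that edge is an unbounded ray (as at every cycle vertex in Figure~\ref{fig:ellipticCurve}), its direction lies in $R$ and cuts the local forms down to one dimension; only when the third edge is a bounded tree edge is the local space two-dimensional, and in that case the local fundamental group is free on two generators, not $\Ga^2$. What the proof actually needs is neither local statement but the global fact that $\Gamma(\Sigma,\Omega^1_\Sigma)$ is one-dimensional. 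With that count and the equivariance argument above, your descent framework does assemble correctly: the $\phi_e$ compose to the monodromy $T$, the local coefficients $S_v$ glue to a single $S$, and going around the loop imposes $ST=TS$; this is essentially the paper's argument, which works directly with monodromy and global $1$-forms rather than through the descent category.
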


\begin{proof}
For now, pick a vertex $v$ in the cycle of $\Sigma$. By applying $\omega_v$ to a tropical vector bundle $E$ on $\Sigma$, we obtain a finite-dimensional vector space $V$. The tropical vector bundle, as a locally constant sheaf, is determined by its monodromy along the cycle of $\Sigma$, and thus its data is equivalent to a unipotent automorphism $T\colon V\to V$.
Pick an edge $e$ at $v$ contained in the cycle and a local coordinate $t$ along $e$.
The space of tropical $1$-forms on $\Sigma$ is $1$-dimensional, and let $\eta$ be the unique $1$-form inducing $dt$ on $e$. Then, the tropical connection $\eta$ restricts to $e$ as $S \otimes dt$ for $S$, a nilpotent endormorphism of $V$. The condition that $S$ comes from an element of $\Gamma(\Omega^1\otimes \End(E))$ is equivalent to $ST=TS$. Thus, we see that the data of $E$ is determined by $V$ and a pair of commuting unipotent matrices, $I + S$ and $T$. Hence, $\pi^{\un}_1(\Sigma,\omega_v)\cong \Ga^2$, the pro-unipotent completion of $\Z^2$.
For an arbitrary $v$, we note that we have an isomorphism between fundamental groups at different base-points.
\end{proof}

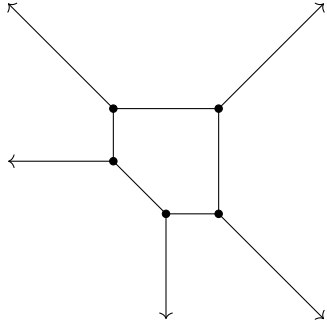
\begin{figure}
    \begin{center}
    \begin{tikzpicture}[scale=0.7]
        \filldraw[black] (-1,1) circle (2pt);
        \filldraw[black] (1,1) circle (2pt);
        \filldraw[black] (-1,0) circle (2pt);
        \filldraw[black] (0,-1) circle (2pt);
        \filldraw[black] (1,-1) circle (2pt);
        \draw[-] (-1,1) --(1,1) -- (1,-1) -- (0,-1) -- (-1,0) -- (-1,1);
        \draw[->] (-1,1) -- (-3,3);
        \draw[->] (-1,0) -- (-3,0);
        \draw[->] (0,-1) -- (0,-3);
        \draw[->] (1,-1) -- (3,-3);
        \draw[->] (1,1) -- (3,3);
    \end{tikzpicture}
    \caption{A Tropical Elliptic Curve.}
    \label{fig:ellipticCurve}
    \end{center}
\end{figure}

\section{Log Structures, Toric Varieties, and subschemes} \label{s:logstructures}

In this section, we will discuss log structures induced on subschemes of toric varieties and subschemes. Here, we will use some slightly nonstandard log structures induced by enriched fans with partial compactifications. This will give us the flexibility to work with the inverse image log structure on closed strata of toric schemes. The log structures will induce a sheaf of log differentials which can be recovered directly from combinatorial data. It is this technical observation that will make our correspondence theorem possible.

\subsection{Log structures}

We recall log schemes and recommend \cite{Ogus:logbook,Kato:Logstructures} for more details. For our purposes, it is enough to
work on the Zariski site rather than the \'{e}tale site.

\begin{definition}
    A {\em log scheme} $(X,M)$ is a scheme $X$ together with a sheaf of monoids $M$ on $X$ and a morphism of sheaves of monoids $\alpha\colon M\to\cO_X$ such that 
    $\alpha^{-1}(\cO_X^*)\to \cO^*_X$ is an isomorphism. A {\em morphism of log schemes} $(X,M_X)\to (Y,M_Y)$ is a morphism $f\colon X\to Y$ together with a homomorphism $f^\flat\colon M_Y\to f_*M_X$ that intertwines $\alpha_Y$ and $f_*\alpha_X$.
\end{definition}

The {\em trivial log structure} on $X$ is $(X,\cO_X^*)$ where $\alpha\colon \cO_X^*\to \cO_X$ is the usual inclusion.
We will typically specify a {\em pre-log structure} (i.e. a morphism of sheaves of monoids $\alpha\colon P\to \cO_X$) to induce the desired log structure $M \coloneqq (P\oplus \cO_X^*)/K$ where
$K$ is the subsheaf of monoids defined by $K\coloneqq \{(x,\alpha(x)^{-1})\mid x\in\alpha^{-1}(\cO_X^*)\}.$

\begin{definition} \cite[Definition~1.1.5]{Ogus:logbook}
    Given a log scheme $(X,M)$ and a morphism of schemes $f\colon X'\to X$, the {\em inverse image log structure} 
    \[f^*\alpha\colon f^*M\to \cO_{X'}\] 
    is the log structure structure attached to the pre-log structure
    \[f^{-1}(M)\to f^{-1}(\cO_X)\to \cO_{X'}.\] 
\end{definition}
For a closed immersion $i\colon Z\hookrightarrow X$ where $X$ underlies a log scheme $(X,M)$, we will write $M|_Z$ for $i^*M$. 

\begin{example}
    The {\em trivial point} is $S=\Spec \C$ with $M=\C^*\to \C=\cO_S$. The {\em standard log point} is $S^\dagger$ given by $\Spec \C$ equipped with the log structure  induced by $\N=\langle e\rangle\to \cO_S$ sending $e\mapsto 0$ (where $\N$ denotes the nonnegative integers). There is a natural morphism $S\to S^\dagger$ given by the identity on the underlying scheme and induced by $\N\to 0$ on the monoid. This morphism is analogous to the inclusion of a closed point into $\Spec \C\ps{t}$. On the other hand, there is a morphism $S^\dagger\to S$ (given by the usual inclusion of monoids) analogous to the structure morphism $\Spec \C\ps{t}\to\Spec \C$. 

    We can generalize the above to arbitrary DVRs. If $\cO$ is a DVR, we  equip $\Spec \cO$ with the log structure induced by $\N\to \cO$ sending $e\mapsto \pi$ for a uniformizer $\pi$. The log structure is independent of the choice of uniformizer.
\end{example}


We discuss {\em strict normal crossings divisors and schemes} following the discussion in \cite[Section~1.8]{Ogus:logbook}. 
 For a regular scheme $X$, a {\em divisor with strict normal crossings} is an effective divisor in $X$ that has the property that the scheme-theoretic intersection of every collection of its irreducible components is regular. We call such intersections {\em (closed) strata}. A scheme $X_0$ is a {\em strict normal crossings scheme} if every point of $X_0$ has a Zariski neighborhood $U$ and a closed immersion $i\colon U\to X'$ into some regular scheme such that $i(U)$ is a strict normal crossings divisor.  We will also need a notion of a transverse divisor $D$ in a strict normal crossings scheme $X_0$. 

 \begin{definition} \label{d:ncpair}
  A scheme $X_0$ and divisor $D\subset X_0$ form a {\em normal crossings pair} $(X_0,D)$ if every point of $X_0$ has a Zariski neighborhood $U$ and a closed immersion $i\colon U\to X'$ into some regular scheme such that
  \begin{enumerate}
    \item there exists divisors $Y'$ and $D'$ in $X'$ such that $Y'\cup D'$ is strict normal crossings in $X'$,
    \item $i(U)=Y'$ and $i(D\cap U)=D'\cap Y'$.
\end{enumerate}
\end{definition}

Observe that in this case, $D$ intersects the strata of $X_0$ transversely. We will conceptualize $D$ as a {\em divisor at infinity} and act as if the pair $(X_0,D)$ were the scheme $X_0\setminus D$. A pair $(X_0,D)$ can arise from the following situation: let $\cX$ be a semistable scheme over $\cO$ and $\mathcal{D}$ be a divisor on 
$\cX$ flat over $\cO$  such that $X_0\cup \mathcal{D}$ is a strict normal crossings divisor; then let $X_0$ and $D$ be the closed fibers of $\cX$ and $\cD$, respectively.

\begin{example} \label{e:snclog}
The simplest example of a normal crossings divisor has a natural log structure.
Consider $\Aff^n=k[x_1,\dots,x_n]$, and let $D\subset \Aff^n$ be the divisor cut out by $x_1x_2\cdots x_m=0$. The log structure $M_D$ on $\Aff^n$ attached to $D$ is obtained from the pre-log structure $\N^{m}\to \cO_{\Aff_n}$ where $\N^{m}$ is generated by $f_1,\dots,f_m$ with $f_i\mapsto x_i$. 
\end{example}

\begin{example} \label{e:semistablelog}
A log structure on the normal crossings scheme 
\[Y_{n,m}=\Spec k[x_0,\dots,x_n]/(x_0x_1\cdots x_m)\] 
is induced by $\N^{m+1}\to \cO_{Y_{n,m}}$ with $\N^{m+1}$ generated by $f_0,f_1,\dots,f_m$ with $f_i\mapsto x_i$.
This is naturally a log scheme over $S^\dagger$ with the morphism $Y_{n,m}\to S^\dagger$ induced by 
\[e\mapsto f_0+f_1+\dots+f_m.\]
This example arises by considering the morphism $X_n=\Spec k[x_0,\dots,x_n]\to \Spec k[t]$, given by $t\mapsto x_0\cdots x_m$ and taking the fiber over $S^\dagger$, considered as the log subscheme $\{t=0\}$. Suppose a divisor $D\subset X_n$ is transverse to coordinate subspaces given by intersecting subsets of  $\{x_0=0\},\dots,\{x_m=0\}$. Then $D$ induces a normal crossings pair $(Y_{n,m},D\cap Y_{n,m})$.
\end{example}

Locally, there is a natural log structure over $S^\dagger$  attached to normal crossing pairs $(X_0,D)$ \cite[Proposition~1.8.2]{Ogus:logbook}. Specifically, given $U\hookrightarrow X'$ together with $Y'$ and $D'$ as in Definition \ref{d:ncpair}, we can put the log structure $M_{X'}$ on $X'$ attached to the divisor $Y'\cup D'$.
Now, after possibly shrinking $U$, we may pick a function $t$ cutting out $Y'$, inducing a log morphism $(X',M_{X'})\to (\Aff^1,M_{\Aff^1})$ where $\Aff^1$ is given the log structure as in Example~\ref{e:snclog}. Here, the section $e$ of  $M_{\Aff^1}$ is mapped to $t$ in $\cO_U$.
Taking the inverse image log structures on $U$ and $0\in\Aff^1$, respectively, we obtain a morphism of log schemes $(U,M_U)\to S^\dagger$. While there are obstructions to globalizing this structure, we say $(X_0,D)\to S^\dagger$ is equipped with {\em the log structure of a strict normal crossings scheme with a divisor at infinity} if it is Zariski-locally locally isomorphic to this structure.


\begin{definition}
    Let $(X,D)$ be a smooth variety with a strict normal crossings divisor $D$. A subvariety $Z\subset X$ is {\em transverse-to-strata} if it is smooth and it intersects each stratum of $D$ transversely.
\end{definition}

\begin{definition}
  Let $(X_0,D)$ be a strict normal crossings pair. A closed subscheme $Z\subset X_0$ is {\em transverse-to-strata} if for any point of $Z$, there is a Zariski neighborhood $U$ in $X$ and a 
  closed immersion $i\colon U\to X'$ as in Definition~\ref{d:ncpair} such that there is a regular subscheme $Z'\subseteq X'$ such that 
  \begin{enumerate} 
    \item $Z'$ intersects the strata of $Y'\cup D'$ transversely, and 
    \item $i(Z\cap U)=Z'\cap Y'$.
  \end{enumerate}
\end{definition}

In the above, $Z$ is itself a strict normal crossings scheme. It is natural to give $Z$ the inverse image log structure $(Z,M|_Z)$ pulled back by the closed embedding $Z\to X_0$ from a log structure of a strict normal crossings scheme with a divisor at infinity. Observe that for a strict normal crossings pair $(X_0,D)$ and a transverse-to-strata subscheme $Z\subset X_0$, for any closed stratum $X_F$ of $X_0$, $X_F\cap Z$ is a transverse-to-strata subvariety of $(X_F, X_F \cap D)$.
The following is a straightforward local computation:
\begin{lemma} \label{r:inducedlogstructure}
   Let $(X,D)$ (resp.,~$(X_0,D)$) be a regular scheme with a normal crossings divisor (resp.,~a normal crossings pair), and let $Z\subseteq X$ (resp.,~$Z\subseteq X_0$) be a closed transverse-to-strata subscheme. Let $M$ be the log structure relative to $D$ (resp.,~of a strict normal crossings scheme with divisor at infinity). Then the inverse image log scheme $(Z,M|_Z)$  has the log structure attached to the normal crossings divisor $D\cap Z$ (resp.,~a normal crossings log structure with divisor $D\cap Z$ at infinity). 
\end{lemma}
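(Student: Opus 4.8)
The plan is to reduce everything to an explicit computation in local coordinates, since both the inverse image log structure and the property of \emph{being} the log structure attached to a normal crossings divisor (resp., a strict normal crossings scheme with divisor at infinity) are Zariski-local and are only ever specified up to local isomorphism. Working Zariski-locally around a point of $Z$, I would invoke the local models of Example~\ref{e:snclog} for the absolute case $(X,D)$, and of Example~\ref{e:semistablelog} together with Definition~\ref{d:ncpair} for the pair case $(X_0,D)$. These present the ambient regular scheme as $\Spec k[x_1,\dots,x_N]$ with the relevant normal crossings divisor cut out by a product of coordinate functions and the log structure induced by a chart $\N^\bullet\to\cO_X$, $f_i\mapsto x_i$; in the pair case the uniformizer $t=x_0\cdots x_a$ cuts out $Y'$ and induces the morphism to $S^\dagger$ via $e\mapsto f_0+\dots+f_a$.

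The heart of the argument is a normal-form statement for the transverse subscheme. Using the transverse-to-strata hypothesis I would show that one may choose the coordinates above so that, simultaneously, the normal crossings divisor is still cut out by the same product of the divisor coordinates while $Z$ (resp., its regular lift $Z'$ of Definition~\ref{d:ncpair}) becomes a coordinate subspace $\{x_{j+1}=\dots=x_{j+c}=0\}$ defined by coordinates \emph{disjoint} from the divisor-defining ones. The key point is that transversality of $Z$ to the deepest stratum through the point---the common zero locus $W$ of all divisor coordinates---forces $dx_1|_{T_pZ},\dots,dx_j|_{T_pZ}$ to be independent; hence if $g_1,\dots,g_c$ cut out $Z$ with independent differentials, a short linear-algebra check shows $dx_1,\dots,dx_j,dg_1,\dots,dg_c$ are independent at $p$, so the $g_\ell$ complete the divisor coordinates to a regular system of parameters. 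Relabelling $g_\ell=x_{j+\ell}$ keeps the divisor in standard form. In the pair case I would additionally record that $t$ restricts on $Z'$ to a function cutting out $Z=Y'\cap Z'$, so that the induced morphism to $S^\dagger$ is preserved under restriction.

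With this normal form in hand the computation is immediate. The inverse image log structure $M|_Z=i^*M$ is associated to the pre-log structure obtained by composing the chart $\N^\bullet\to\cO_X$ with restriction to $Z$, that is $f_i\mapsto x_i|_Z$. Since the divisor coordinates are disjoint from those defining $Z$, each $x_i|_Z$ is again a genuine coordinate function on $Z$, so $M|_Z$ is associated to $\N^\bullet\to\cO_Z$, $f_i\mapsto x_i$. This is exactly the log structure attached to $D\cap Z=\{x_1\cdots x_j=0\}$ on $Z$ in the absolute case, and in the pair case it is the normal crossings log structure on $Z=\{x_0\cdots x_a=0\}$ with divisor $D\cap Z=\{x_{a+1}\cdots x_{a+b}=0\}$ at infinity, carrying the same $S^\dagger$-structure $e\mapsto f_0+\dots+f_a$. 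Comparing charts then yields the claimed identification.

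The step I expect to be the main obstacle is establishing the local normal form: verifying that transversality to the stratum through the point---and not merely smoothness of $Z$---is precisely what decouples the coordinates defining $Z$ from those defining the divisor while leaving the divisor in standard form, and, in the pair case, confirming that this decoupling is compatible with the chosen uniformizer $t$ and hence with the morphism to $S^\dagger$. Everything after that is bookkeeping with charts for inverse image log structures, which is why the statement is recorded as a straightforward local computation.
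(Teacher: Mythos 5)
Your proof is correct and is exactly the ``straightforward local computation'' that the paper asserts without writing out: the paper gives no argument beyond that phrase, and your write-up supplies it in the intended way, via local charts as in Example~\ref{e:snclog} and Example~\ref{e:semistablelog}. The normal-form step (transversality to the deepest stratum through the point making $x_1,\dots,x_j,g_1,\dots,g_c$ part of a regular system of parameters) and the chart comparison for the inverse image log structure, including the $S^\dagger$-structure $e\mapsto f_0+\dots+f_a$ in the pair case, are all sound.
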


All our examples will arise in the above fashion as transverse-to-strata subschemes of smooth toric varieties (resp.~toric schemes) and thus be log smooth over $S$ (resp.~$S^\dagger$).



\subsection{Log differentials}

\begin{definition} Let $f\colon  (X,M_X)\to (Y,M_Y)$ be a morphism of log schemes where $\varphi\colon f^{-1}M_Y\to M_X$ is the morphism of sheaves of monoids. The {\em sheaf of log differentials of $(X,M_X)$ over $(Y,M_Y)$}, denoted $\Omega^1_{(X,M_X)/(Y,M_Y)}$, is defined to be the $\cO_X$-module 
\[\Omega^1_{(X,M_X)/(Y,M_Y)}\coloneqq (\Omega^1_{X/Y}\oplus (\cO_X\otimes_{\Z} M_X))/K\]
where $K$ is the $\cO_X$-submodule generated by 
\[(d\alpha_X(a),0)-(0,\alpha_X(a)\otimes a)\text{ and }(0,1\otimes \varphi(b))\]
for $a\in M_X$ and $b\in f^{-1}M_Y$.
\end{definition}

By construction, there is a natural morphism of $\cO_X$-modules $\Omega^1_{X/Y}\to \Omega^1_{(X,M_X)/(Y,M_Y)}$.

\begin{rem}
    Elements of the form $(0,1\otimes a)$ can be written as $\DLog(a)$ and interpreted as $\frac{d\alpha_X(a)}{\alpha_X(a)}$ where the above relation imposes $a\DLog(a)=d\alpha_X(a).$ Note that $\DLog(a)$ makes sense even when $\alpha(a)$ is not a unit.
\end{rem}

\begin{rem}
    In Example~\ref{e:snclog}, the sheaf of log differentials of $(\Aff^n,M_D)$ over the trivial point $S$, $\Omega^1_{(\Aff^n,M_D)/S}$ is the locally free sheaf generated by 
    \[\frac{dx_1}{x_1},\dots,\frac{dx_m}{x_m},dx_{m+1},\dots,dx_n,\]
    giving a local model for working relative to a simple normal crossing divisor $D$. Such sheaves of log differentials are denoted by $\Omega^1(\log D)$.

    In Example~\ref{e:semistablelog}, the sheaf of log differentials over the standard log point, $\Omega^1_{Y_{n,m}/S^\dagger}$ is 
    the quotient of the locally free sheaf generated by $\frac{dx_0}{x_0},\dots,\frac{dx_m}{x_m},dx_{m+1},\dots,dx_n$ by the subsheaf generated by 
    \[\frac{dx_0}{x_0}+\dots+\frac{dx_m}{x_m}.\]
\end{rem}

We have the following observations that can be proved locally:
\begin{lemma} \label{l:transverselogstructure}
Let $(X_0,M)$ be a log scheme with the log structure of a strict normal crossings scheme with divisor at infinity $(X_0,D)$.
\begin{enumerate}
    \item If $j\colon X_F\to X_0$ is the closed immersion of a stratum, equipped with the inverse image log structure $(X_F,M_F)$, then there is a canonical isomorphism,
    \[\Omega^1_{(X_F,M_F)/S^{\dagger}}\cong j^*\Omega^1_{(X_0,M)/S^\dagger}. \]
    
    \item If $i\colon Z\hookrightarrow X_F$ is a closed transverse-to-strata subscheme of $X_F$ (where $(X_F,M_F)$ is the inverse image log structure as above) and $Z$ is given the inverse image log structure $(Z,M_Z)$, then $\Omega^1_{(Z,M_Z)/S^{\dagger}}$ is the pushout in the following diagram of locally free sheaves on $Z$
     \[\xymatrix{i^*\Omega^1_{X_F/S}\ar[r]\ar[d]&i^*\Omega^1_{(X_F,M_F)/S^\dagger}\ar[d]\\
     \Omega^1_{Z/S}\ar[r]&\Omega^1_{(Z,M_Z)/S^\dagger}.}\]
     \end{enumerate}
\end{lemma}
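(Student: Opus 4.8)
The plan is to reduce both assertions to the explicit local models of Examples~\ref{e:snclog} and \ref{e:semistablelog} and then match generators and relations. Since the log structure of a strict normal crossings scheme with divisor at infinity is by definition Zariski-locally isomorphic to the standard model, and since both the formation of $\Omega^1$ and the pushout in (2) commute with restriction to opens, it suffices to work on $Y_{n,m}=\Spec k[x_0,\dots,x_n]/(x_0\cdots x_m)$ over $S^\dagger$ (the case of a regular scheme with normal crossings divisor over $S$ from Example~\ref{e:snclog} being entirely analogous and slightly simpler). Recall that there $\Omega^1_{(X_0,M)/S^\dagger}$ is free on $\DLog(f_0),\dots,\DLog(f_m),dx_{m+1},\dots,dx_n$ modulo the single relation $\sum_{i=0}^m\DLog(f_i)=0$ coming from the structure map to $S^\dagger$.

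For part (1), a closed stratum is $X_T=\{x_i=0:i\in T\}$ for some nonempty $T\subseteq\{0,\dots,m\}$, which is the affine space $\Spec k[x_j:j\notin T]$. The inverse image log structure $M_F$ is generated by the $f_i$ with $\alpha(f_i)=x_i$, so that $\alpha(f_i)=0$ precisely for $i\in T$. I would compute $\Omega^1_{(X_F,M_F)/S^\dagger}$ directly from the definition: the relation $\alpha(f_i)\DLog(f_i)=d\alpha(f_i)$ reads $x_i\DLog(f_i)=dx_i$ for $i\in\{0,\dots,m\}\setminus T$ and $0=0$ for $i\in T$. Eliminating the $dx_i$ with $i\notin T$, $i\le m$, one is left with exactly the free module on $\DLog(f_0),\dots,\DLog(f_m),dx_{m+1},\dots,dx_n$ modulo $\sum\DLog(f_i)=0$, which is canonically $j^*\Omega^1_{(X_0,M)/S^\dagger}$, the natural pullback map identifying the two presentations. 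The conceptual content -- and the one place a little care is needed -- is that the directions $\DLog(f_i)$ with $i\in T$, i.e.\ those transverse to the stratum, survive as free generators even though $\alpha(f_i)=0$; this is exactly the fact that $\DLog$ is defined for non-units, and it is these ``hidden'' directions that account for $\Omega^1_{(X_F,M_F)/S^\dagger}$ having the same rank as $j^*\Omega^1_{(X_0,M)/S^\dagger}$ rather than the smaller rank $\dim X_F$.

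For part (2), I would combine the ordinary and logarithmic conormal sequences for the strict closed immersion $i\colon(Z,M_Z)\to(X_F,M_F)$ (strict because $M_Z$ is the inverse image log structure). Since $Z$ is smooth and transverse-to-strata, $i$ is a regular immersion with locally free conormal sheaf $\mathcal{C}_{Z/X_F}$, and one obtains
\[0\to\mathcal{C}_{Z/X_F}\to i^*\Omega^1_{X_F/S}\to\Omega^1_{Z/S}\to 0,\qquad \mathcal{C}_{Z/X_F}\to i^*\Omega^1_{(X_F,M_F)/S^\dagger}\to\Omega^1_{(Z,M_Z)/S^\dagger}\to 0,\]
the second using that relative log differentials along a strict immersion reduce to ordinary ones. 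The transverse-to-strata hypothesis is what guarantees that the second conormal map is injective as well: locally $Z=\{g_1=\dots=g_c=0\}$, and transversality says that the classes $dg_1,\dots,dg_c$ remain linearly independent in $i^*\Omega^1_{(X_F,M_F)/S^\dagger}$, i.e.\ independent of the logarithmic directions $\DLog(f_i)$. Granting this, the universal property of the pushout produces a natural map from $\Omega^1_{Z/S}\oplus_{i^*\Omega^1_{X_F/S}}i^*\Omega^1_{(X_F,M_F)/S^\dagger}$ to $\Omega^1_{(Z,M_Z)/S^\dagger}$ compatible with the two structure maps; it is surjective because the log conormal sequence already exhibits $i^*\Omega^1_{(X_F,M_F)/S^\dagger}$ as surjecting onto $\Omega^1_{(Z,M_Z)/S^\dagger}$. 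Because the map $i^*\Omega^1_{X_F/S}\to i^*\Omega^1_{(X_F,M_F)/S^\dagger}$ is itself injective, the pushout is locally free of the same rank as the target (namely $\dim X_0-c$, using part (1) and the short-exactness of the log conormal sequence), so a surjection of locally free sheaves of equal rank is an isomorphism; alternatively one checks the identification directly on the local model, where $Z$ is cut out by the $g_\ell$ inside the affine space $X_T$ and every sheaf is explicitly free.

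The main obstacle, and the only nonformal input, is the injectivity of $\mathcal{C}_{Z/X_F}\to i^*\Omega^1_{(X_F,M_F)/S^\dagger}$ in part (2): this is precisely where transversality to the strata, rather than mere smoothness of $Z$, is used. A subscheme tangent to (or contained in) a boundary stratum would have a defining equation $g_\ell=x_i$ whose differential $dx_i=x_i\DLog(f_i)$ restricts to $0$ on $Z$, collapsing the expected rank and breaking the pushout description. By contrast, part (1) is purely a matter of unwinding the definition of the inverse image log structure together with the observation that $\DLog$ makes sense for non-units.
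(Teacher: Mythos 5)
Your proposal is correct. The paper itself gives no written argument for this lemma --- it is introduced only with ``the following observations that can be proved locally'' --- so the intended proof is exactly the kind of unwinding you do in part (1): restrict to the chart $Y_{n,m}$, note that the relations $\alpha(f_i)\DLog(f_i)=d\alpha(f_i)$ degenerate to $0=0$ for the directions $i\in T$ transverse to the stratum, and conclude that those $\DLog(f_i)$ survive as free generators, matching $j^*\Omega^1_{(X_0,M)/S^\dagger}$ generator-for-generator. For part (2) you take a genuinely different and more structural route than the implied coordinate check: comparing the ordinary conormal sequence $0\to\mathcal{C}_{Z/X_F}\to i^*\Omega^1_{X_F/S}\to\Omega^1_{Z/S}\to 0$ with the log conormal sequence for the strict immersion, and observing that the pushout in the lemma is precisely $\operatorname{coker}\bigl(\mathcal{C}_{Z/X_F}\to i^*\Omega^1_{(X_F,M_F)/S^\dagger}\bigr)$, so everything reduces to left-exactness of the log conormal sequence. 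This buys a coordinate-free proof and isolates exactly where transversality (as opposed to mere smoothness of $Z$) enters --- the fiberwise injectivity of $\mathcal{C}_{Z/X_F}$ into the log cotangent sheaf, whose failure you correctly illustrate with $Z$ contained in a boundary stratum --- at the cost of invoking the standard log second fundamental sequence rather than pure bookkeeping. Two small caveats, neither fatal: your explicit local model silently drops the divisor-at-infinity components of the log structure (these contribute additional free $\DLog$ generators with no new relations, so both computations extend verbatim, but the lemma is stated for the structure \emph{with} divisor at infinity and this should be said); and your final rank-counting step in (2) is redundant, since once the log conormal sequence is short exact the identification of its cokernel with the pushout already \emph{is} the isomorphism, with no appeal to ``surjection of locally free sheaves of equal rank'' needed.
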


The second part of this lemma shows that the log differentials on $Z$ are determined by the pullback of usual differentials from $X_F$ to $Z$ and the morphism $\Omega^1_{X_F/S}\to \Omega^1_{(X_F,M_F)/S^\dagger}$. 
An analogous result holds for $\Omega^1_{(Z,M_Z)/S}$ when $(X,D)$ is a regular scheme over $S$ with a normal crossings divisor $D$ and $Z\subseteq X$ is transverse-to-strata.

Log differentials will enter into the definition of de Rham fundamental groups according to the following notion:

\begin{definition}
  Let $f\colon X\to Y$ be a morphism of scheme. A {\em shortened complex of differential forms} $\tilde{\Omega}$ on $X$ over $Y$ is the data of a commutative diagram whose entries are $\cO_X$-modules
  \[\xymatrix{
  0\ar[r]&\cO_X\ar[r]^>>>>>{d}&\Omega_{X/Y}^1\ar[d]^{j}\ar[r]^d &\Omega_{X/Y}^2\ar[d]^{j\wedge j}\\
         &             &\tilde{\Omega}^1\ar[r]^{d_{\tilde{\Omega}}} &\tilde{\Omega}^2}
  \]
  where the horizontal maps are morphisms of $f^{-1}\cO_Y$-modules and the vertical maps are morphisms of $\cO_X$-modules, 
$\Omega^1_{X/Y}\to \tilde{\Omega}^1$ is an injective morphism, $\tilde{\Omega}^2=\bigwedge^2\tilde{\Omega}^1$, and $d_{\tilde{\Omega}}$ satisfies the Leibniz rule
\[d_{\tilde{\Omega}}(f\theta)=fd_{\tilde{\Omega}}\theta+j(df)\wedge\theta\]
where $f$ and $\theta$ are local sections of $\cO_X$ and $\tilde{\Omega}^1$, respectively.
  A {\em morphism of shortened complexes} $\tilde{\Omega}\to \tilde{\Upsilon}$ is induced by a morphism of $\cO_X$-modules $\tilde{\Omega}^1\to {\tilde{\Upsilon}}^1$ such that the relevant diagram commutes.
\end{definition}

This is the part of the log de Rham complex (\cite[Section~V.2]{Ogus:logbook}) that is relevant to the construction of the  unipotent fundamental group. The natural example of such a shortened complex is $\Omega^1_{(X,M_X)/S^\dagger}\to \Omega^2_{(X,M_X)/S^\dagger}$ for $(X,M_X)\to S^\dagger$ a morphism of fine saturated log schemes. 

We can define pullbacks of shortened complexes by the pushout diagram in Lemma~\ref{l:transverselogstructure}:

\begin{definition} \label{pullbackscodf}
    Let $X\to Y$ be a smooth morphism with a shortened complex of differential forms $\tilde{\Omega}$. For a smooth subvariety $i\colon Z\hookrightarrow X$, we say a shortened complex of differential forms $\tilde{\Upsilon}$ on $Z$ over $Y$ is the {\em pullback of $\tilde{\Omega}$ by $i$} if
   $\tilde{\Upsilon}^1$ is the pushout
         \[\xymatrix{i^*\Omega^1_{X/Y}\ar[r]\ar[d]&i^*\tilde{\Omega}^1\ar[d]\\
     \Omega^1_{Z/Y}\ar[r]&\tilde{\Upsilon}^1}\]
   and $d_{\tilde{\Upsilon}}\colon\tilde{\Upsilon}^1\to \tilde{\Upsilon}^2$ is induced by $i^*{d_{\tilde{\Omega}}}$ and $d\colon \Omega^1_{Z/Y}\to \Omega^2_{Z/Y}$.
\end{definition}

\subsection{Log structures on toric varieties} 

A toric variety $\PP(\Delta)$ arises from a rational fan $\Delta$ in $N_\R$ \cite{CLS}, is smooth if $\Delta$ is unimodular, and is proper if the support of $\Delta$ is $N_\R$.
We will use the data of a partial compactification to put a log structure on $\PP(\Delta)$ according to the following lemma whose proof is a straightforward verification.

\begin{lemma}
    Let $(\Delta,\on{Id}\colon N\to N,R)$ be a trivially enriched fan with partial compactification. We define a presheaf of monoids $P'$ on $\PP(\Delta)$, as follows:
    for $U\subset \PP(\Delta)$, if $U$ is not contained in a toric affine open, set $P'(U)=\{0\}$; otherwise let $U_\sigma=\Spec k[\sigma^\vee]$ be the minimal toric affine open containing $U$, and set 
    \[P'(U)=\left(\sigma^\vee \cap \bigcap_{\rho\in R\cap \sigma} \rho^\perp \right)\cap M.\] 
    There is a morphism $\alpha\colon P'\to \cO_X$ induced by the natural inclusion
    \[\alpha\colon P'(U_\sigma)\hookrightarrow k[\sigma^\vee]\]
   Let $P\to \cO_X$ be the sheafification of the above presheaf and morphism.
    Then $(P,\alpha)$ defines a pre-log structure on $\PP(\Delta)$.
\end{lemma}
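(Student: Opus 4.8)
The plan is to verify the two defining conditions of a pre-log structure directly from the local description, reducing everything to the affine toric charts.

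First I would check that $P'$ is actually a presheaf of monoids, i.e.~that the assignment $U \mapsto P'(U)$ is compatible with restriction. The key observation is that for $U' \subseteq U$, the minimal toric affine open $U_{\sigma'}$ containing $U'$ satisfies $U_{\sigma'} \subseteq U_\sigma$, which corresponds to $\sigma' \subseteq \sigma$ as cones, hence $\sigma^\vee \subseteq \sigma'^\vee$. Moreover $R \cap \sigma \subseteq R \cap \sigma'$, so $\bigcap_{\rho \in R \cap \sigma} \rho^\perp \supseteq \bigcap_{\rho \in R \cap \sigma'} \rho^\perp$. I would observe that these two inclusions run in opposite directions, and the point is that an element $m \in P'(U_\sigma)$ lies in $\sigma^\vee \cap \bigcap_{\rho \in R\cap\sigma}\rho^\perp$; since $\sigma^\vee \subseteq \sigma'^\vee$ the condition $m \in \sigma'^\vee$ is automatic, and the extra perpendicularity constraints imposed by rays in $(R\cap\sigma')\setminus(R\cap\sigma)$ are precisely what must be checked when restricting. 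Thus the restriction map is well-defined as the inclusion of the submonoid cut out by the additional $\rho^\perp$ conditions, and each $P'(U)$ is evidently a submonoid of $M$ (closed under addition, containing $0$), since it is an intersection of a cone with linear subspaces and a lattice.

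Second I would verify that $\alpha$ is well-defined and multiplicative. For each minimal chart $U_\sigma = \Spec k[\sigma^\vee]$, the monoid $P'(U_\sigma) \subseteq \sigma^\vee \cap M$ maps into $k[\sigma^\vee]$ by sending a lattice point $m$ to the corresponding monomial $\chi^m$; this is a monoid homomorphism into the multiplicative monoid $(\cO_X, \cdot)$ because $\chi^{m_1+m_2} = \chi^{m_1}\chi^{m_2}$. Compatibility with restriction follows because monomial functions restrict to monomial functions across the inclusions $U_{\sigma'} \hookrightarrow U_\sigma$. Since sheafification is functorial and preserves the monoid structure, the sheafified $\alpha\colon P \to \cO_X$ remains a morphism of sheaves of monoids, and $(P,\alpha)$ is by definition a pre-log structure (no saturation or inverse-image condition on units is required at the pre-log stage).

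The main obstacle I expect is purely bookkeeping: confirming that ``minimal toric affine open containing $U$'' is well-defined and that the resulting $\sigma$ depends on $U$ in a way compatible with the presheaf axioms, together with checking that the definition glues correctly at the boundary where $U$ fails to lie in any single chart (handled by the convention $P'(U) = \{0\}$). I would emphasize that all the genuine content lives on the affine charts, and that the passage to sheafification is what cleanly resolves any failure of the presheaf to already be a sheaf, so the only real verification is the monoid and restriction compatibility on charts described above — which is why the paper correctly labels this a ``straightforward verification.''
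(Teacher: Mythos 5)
Your overall strategy---check the presheaf axioms on the toric charts, check that $\alpha$ is a morphism of presheaves of monoids into $(\cO_X,\cdot)$, and then sheafify (correctly noting that a pre-log structure imposes no condition on units)---is the natural one, and the second and third steps are fine as you wrote them. But the one genuinely substantive verification, compatibility with restriction, contains a direction error that makes your argument incoherent as written. From $U'\subseteq U$ you correctly deduce $U_{\sigma'}\subseteq U_{\sigma}$, i.e.\ $\sigma'$ is a face of $\sigma$, so $\sigma'\subseteq\sigma$. But then $R\cap\sigma'\subseteq R\cap\sigma$, not $R\cap\sigma\subseteq R\cap\sigma'$ as you claim; the two containments you need run in the \emph{same} direction, not ``opposite directions.'' Concretely, both $\sigma^\vee\subseteq(\sigma')^\vee$ and $\bigcap_{\rho\in R\cap\sigma}\rho^\perp\subseteq\bigcap_{\rho\in R\cap\sigma'}\rho^\perp$ hold (the latter because the intersection on the left is taken over the larger index set), whence $P'(U)\subseteq P'(U')$ as submonoids of $M$, and the restriction map is simply this inclusion. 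The presheaf axioms are then immediate, since compositions of inclusions are inclusions.

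The error matters because your resolution of the (phantom) difficulty is a non sequitur: you say the ``extra perpendicularity constraints imposed by rays in $(R\cap\sigma')\setminus(R\cap\sigma)$'' are what must be checked, and then assert without argument that the restriction map is ``the inclusion of the submonoid cut out by the additional $\rho^\perp$ conditions.'' If the containment really went the way you state, an element of $P'(U)$ would have no reason to satisfy those extra conditions, there would be no natural map $P'(U)\to P'(U')$ at all, and the presheaf---hence the lemma---would fail. So as written, the key step is both wrong in its stated containment and unproved in its conclusion. With the direction corrected, the rest of what you wrote (multiplicativity of $m\mapsto\chi^m$, compatibility of monomials under restriction of charts, and sheafification of $P'\to\cO_X$ against the already-sheafy target $\cO_X$) goes through and completes the verification.
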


The log structure induced by this pre-log structure is {\em the log structure attached to} $(\Delta,\on{Id}\colon N\to N,R)$. Morphisms of trivially enriched fans with partial compactifications induce morphisms of log schemes.

 \begin{example}
  We have the following natural examples:
  \begin{enumerate}
     \item the usual toric log structure when $R=\varnothing$, and
     \item the trivial log structure when $R$ consists of all rays of $\Delta$ and  $ \Delta $ is full-dimensional (in which case $P(U)=\{0\}$ for all $U\subset \PP(\Delta)$).
  \end{enumerate}
  The usual log structure gives a sheaf of log $1$-forms on $U_{\sigma}$ obtained by adjoining to the usual $1$-forms, $\cO_{\PP(\Delta)}$-linear combinations of symbols $\DLog(m)$ for $m\in \sigma^\vee\cap M$  subject to the relations
    \begin{enumerate}
    \item $\alpha(m)\DLog(m)=d(\alpha(m))$ (where $\alpha(m)$ is viewed as a cocharacter on the open torus in $U_{\sigma}$), and
    \item $\DLog(m_1m_2)=\DLog(m_1)+\DLog(m_2)$
    \end{enumerate}
  where the operation in monoids is written multiplicatively. We will extend $\DLog$ linearly to make sense of $\DLog(m)$ for $m\in M_{\C}$.
  If  $\PP(\Delta)$ is a smooth toric variety with its usual log structure, the sheaf of log differentials is $\Omega^1(\log D)$ where $D$ is the union of the toric divisors.
\end{example}

We will now define sheaves of differentials arising from enriched fans with partial compactifications. Here, the enriched fans alow us to enlarge $M$, matching the situation when the log structure on a toric variety is the inverse image structure of its inclusion as a torus orbit in a larger toric variety.
Recall that for a cone $\tau\in\Delta$, the orbit closure $V(\tau)$ is isomorphic to the toric variety with fan given by the star-quotient $\Delta_\tau$. When $\rho$ is a ray of $\Delta$, we may use $D_\rho$ to denote $V(\rho)$.  
The motivation is the following lemma which follows from definitions:
\begin{lemma}
    Let $(\Delta,\on{Id}\colon N\to N,R)$ be a trivially enriched fan with partial compactification such that $\Delta$ is unimodular and $R$ is a collection of rays of $\Delta$. Then, the log structure on $\PP(\Delta)$ is equivalent to the log structure induced by the normal crossings divisor $\cup_{\rho\not\in R} D_\rho$, thus is fine saturated.
\end{lemma}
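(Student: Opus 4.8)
The plan is to check the claimed equivalence locally on the affine toric charts $U_\sigma=\Spec k[\sigma^\vee]$ for the cones $\sigma\in\Delta$, which cover $\PP(\Delta)$, and then to observe that the local identifications are all restrictions of the single natural map $\alpha$ and therefore glue. Set $D\coloneqq\bigcup_{\rho\notin R}D_\rho$. Since $\Delta$ is unimodular, for each $\sigma$ I can choose a basis $e_1,\dots,e_n$ of $N$ with $\sigma=\Span_{\geq0}(e_1,\dots,e_d)$, and writing $x_i$ for the character dual to $e_i$ gives $U_\sigma=\Spec k[x_1,\dots,x_d,x_{d+1}^{\pm},\dots,x_n^{\pm}]$, in which the toric divisor $D_{\rho_i}$ attached to the ray $\rho_i=\R_{\geq0}e_i$ is cut out by $x_i=0$.

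The heart of the argument is the computation of $P'(U_\sigma)$ in these coordinates. Unwinding the definition, the condition $m\in\bigcap_{\rho\in R\cap\sigma}\rho^\perp$ says precisely that $\langle m,e_i\rangle=0$ whenever $\rho_i\in R$; thus $P'(U_\sigma)$ is the submonoid of $\sigma^\vee\cap M$ of monomials not involving $x_i$ for $\rho_i\in R$. A direct check then shows that $\alpha$ identifies $P'(U_\sigma)$ with the set of monomials in $\cO(U_\sigma)$ that are invertible on $U_\sigma\setminus D$: a monomial $x^m$ vanishes on the generic point of some $D_{\rho_i}$ with $\rho_i\in R$ exactly when $\langle m,e_i\rangle>0$, so invertibility off $D$ is equivalent to $\langle m,e_i\rangle=0$ for all $\rho_i\in R$. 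Consequently the associated log structure is generated modulo $\cO_X^*$ by $\{x_i:i\le d,\ \rho_i\notin R\}$, which is exactly the local generating set for the log structure of the normal crossings divisor $D$, cut out in this chart by $\prod_{i\le d,\ \rho_i\notin R}x_i=0$. Comparing generators gives the isomorphism of log structures on $U_\sigma$.

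For the global statement I would use the intrinsic description $M_D=j_*\cO_{X\setminus D}^*\cap\cO_X$, where $j\colon \PP(\Delta)\setminus D\hookrightarrow\PP(\Delta)$. The chart computation shows that the image of the presheaf $P'$ under $\alpha$ is, on each $U_\sigma$, exactly the monoid of monomials in $M_D$; since these generate $M_D$ together with $\cO_X^*$, sheafifying yields a canonical equality $M_{P'}=M_D$ of log structures on $\PP(\Delta)$, with no further gluing data to verify. Finally, because $\Delta$ is unimodular the full toric boundary $\bigcup_{\rho}D_\rho$ is a strict normal crossings divisor, and discarding the components indexed by $R$ leaves a sub-union that is again strict normal crossings; hence $D$ is strict normal crossings and the log structure $M_D\cong M_{P'}$ is fine and saturated.

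The only real subtlety, as opposed to routine bookkeeping, is to be sure that passing from the pre-log structure $P'$ to its associated log structure neither drops nor introduces boundary components: the monomials $x_i$ with $\rho_i\in R$ are genuine non-units and must be excluded, while the torus-direction monomials $x_j^{\pm}$ for $j>d$ are units and must be absorbed into $\cO_X^*$. The characterization $M_{P'}=j_*\cO_{X\setminus D}^*\cap\cO_X$ is what makes this transparent and simultaneously handles the gluing, so I expect it to be the cleanest route.
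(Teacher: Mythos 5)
Your proposal is correct and takes essentially the approach the paper intends: the paper gives no written proof at all (the lemma is introduced as one that ``follows from definitions''), and your computation of $P'(U_\sigma)$ in unimodular coordinates, identifying its image under $\alpha$ with the monomials invertible off $D$ and hence with the divisorial log structure $j_*\cO_{\PP(\Delta)\setminus D}^*\cap\cO_{\PP(\Delta)}$, is precisely that definitional unwinding. The only point left implicit in your write-up is injectivity (not just surjectivity) of the canonical comparison map from the log structure associated to $P'$ to $M_D$ on stalks, but this is a routine check in the same spirit: two pairs in $P'\oplus\cO^*$ with equal image in $\cO$ differ by elements of the submonoid $K$, since their monomial parts differ by a character vanishing on no coordinate hyperplane through the point.
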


We can define a sheaf and ultimately a shortened complex of differential forms from an enriched fan with partial compactification. It will be isomorphic to  a sheaf of log differentials.

\begin{definition}
  Let $(\Delta_1,\pi_{1}\colon N_1\to N'_1,R_1)$ be an enriched fan with partial compactification.
  For a morphism of enriched fans 
  \[f\colon (\Delta_1,\pi_{1}\colon N_1\to N'_1)\to (\Delta_2,\pi_{2} \colon N_2\to N'_2),\]
  with $f(R)=\{0\}$,
  we will define the sheaf $\Omega^1_{(\Delta_1,N_1 \to N'_1,R_1)/(\Delta_2,N_2\to N'_2,\varnothing)}$ (abbreviated $\Omega^1$) of {\em log differentials attached to $f$} 
  on $\PP(\Delta_1)$.
  For a cone $\sigma$ of $\Delta_1$, set 
  \[V_{1,\sigma}\coloneqq \bigcap_{\rho\in \pi_1^{-1}(\sigma) \cap R_{1}} \rho^\perp\subseteq M_{1,\R}.\]  
  The sheaf $\Omega^1$ on $\PP(\Delta_1)$ is the sheafification of the presheaf given as follows:
  for $U\subset\PP(\Delta)$, if $U$ is not contained in any toric affine open, $\Omega^1(U)\coloneqq\Omega^1_{\PP(\Delta_1)/\PP(\Delta_2)}(U)$; otherwise, if $U_\sigma$ is the minimal toric affine open containing $U$, then
  \[\Omega^1(U)\coloneqq
   (\Omega^1_{\PP(\Delta_1)/\PP(\Delta_2)}(U)\oplus (\cO_U\otimes_{\Z} (V_{1,\sigma}\cap M_1)))/K\]
where $K$ is the $\cO_X$-submodule generated by 
\[(d\alpha_{\PP(\Delta_1)}(m),0)-(0,\alpha_{\PP(\Delta_1)}(m)\otimes m)\]
for $m\in V_{1,\sigma}\cap \sigma^\vee\cap M'_1$ and by
$(0,1\otimes f(b))$ for $b\in M_2$. Set $\Omega^2=\bigwedge^2\Omega^1$. Write $j\colon\Omega^*_{\PP(\Delta_1)/\PP(\Delta_2)}\to \Omega^*$ for the morphism induced by inclusion on the first factor.
 The differential $d_{\Omega}\colon \Omega^1\to \Omega^2$ is defined by
\[d_\Omega(\theta,g\otimes m)=j(d\theta)+j(dg)\wedge (0,m).\]
\end{definition}

The above definition gives a shortened complex of differential forms.

\begin{rem}

The essential idea in the above is that for $m\in V_{1,\sigma}\cap M_1$, we adjoin $a\DLog(m)$. We then quotient by the relation $\alpha(m)\DLog(m)=d\alpha(m)$ and by pullbacks from the base.
Observe that in the above, the first set of relations come from elements of $M'_1$, not $M_1$. The motivation is the following. If one has an unenriched fan $(\Delta,\on{Id}\colon N\to N,R)$ with star-quotient $(\Delta_\tau,N\to N'\coloneqq N/N_\tau,R')$, then $m\in \tau^\vee$ restricts to a rational function on $V(\tau)$. If $m\in\tau^\vee\setminus M'_1$, then $m\not\in\tau^\perp$ and $m$ restricts to $0$ on $V(\tau)$. Thus, we have a log differential $\DLog(m)$ where the relation $\alpha(m)\DLog(m)=d\alpha(m)$ becomes the trivial relation $0\DLog(m)=d0$.
\end{rem}



\begin{rem} \label{r:maximalsubspacedata}
We can get a sense on log differentials by comparing them to an empty partial compactification.
Let $\Delta'$ be $\Delta_S$ or $\Delta_{S^\dagger}$. Let 
\[f\colon (\Delta,\pi\colon N\to N',R)\to \Delta'\]
be a morphism of enriched fans with partial compactifications.
We have an injective morphism of sheaves
\[\Omega^1_{(\Delta,\pi\colon N\to N',R)/\Delta'}\hookrightarrow \Omega^1_{(\Delta,\pi\colon N\to N',\varnothing)/\Delta'}\cong (M/f^*M_{\Delta'})\otimes \cO_{\PP(\Delta)}.\]
\end{rem}

The following lemma says that log differentials on strata can be recovered from the enriched fan by the above construction.

\begin{lemma} \label{l:reldifferentials}
  Let 
  $f\colon (\Delta_1,N_1\to N_1,R)\to (\Delta_2,N_2\to N_2,\varnothing)$
  be a morphism of trivially enriched fans with partial compactifications inducing a morphism of log toric varieties $(\PP(\Delta_1),M_{\PP(\Delta_1)})\to (\PP(\Delta_2),M_{\PP(\Delta_1)})$. 
  
    Let $\tau_1\in \Delta_1,\tau_2\in\Delta_2$ such that $\tau_2$ is the minimal cone of $\Delta_2$ containing $f(\tau_{1})$ so there is a morphism ennriched fans with partial compactifications and of log schemes 
    \begin{align*}
    (\Delta_1)_{\tau_1}&\to (\Delta_2)_{\tau_2}\\
    f\colon (V(\tau_1),M_{V(\tau_1)})&\to (V(\tau_2),M_{V(\tau_2)})
    \end{align*}
  where each orbit closure is given the inverse image log structure induced by inclusion.
 
 Then, there is a canonical isomorphism of sheaves of $\cO_{V(\tau_{1})}$-modules,
  \[\Omega^1_{(V(\tau_1),M_{V(\tau_1)})/(V(\tau_2),M_{V(\tau_2})}\cong \Omega^1_{(\Delta_1)_{\tau_1}/(\Delta_2)_{\tau_2}}\]
  that extends to an isomorphism of shortened complexes.
  This isomorphism commutes with the restriction map induced by the inclusion $\tau_1\subset \tau'_1$ when $\tau_2$ is the minimal cone in $\Delta_2$ for both $f(\tau_1)$ and $f(\tau_{1}')$. 
\end{lemma}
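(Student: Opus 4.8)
The plan is to reduce everything to a purely local computation on toric affine charts and then verify compatibility. Since the claim is about a canonical isomorphism of sheaves that extends to shortened complexes, I would work affine-locally and then check that the construction globalizes by naturality.

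\medskip

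\emph{Setting up the local model.} First I would note that the orbit closure $V(\tau_1)$ is the toric variety attached to the star-quotient fan $(\Delta_1)_{\tau_1}$ in $N_{1,\R}/N_{1,\tau_1,\R}$, and similarly for $V(\tau_2)$. The inverse image log structure on $V(\tau_1)$, by Lemma~\ref{r:inducedlogstructure} (applied with $D=\cup_{\rho\notin R}D_\rho$), is the log structure attached to the normal crossings divisor $D\cap V(\tau_1)$, which is precisely the log structure attached to the star-quotient enriched fan with its induced partial compactification $R'$. The key point is that, on a toric affine chart $U_{\bar\sigma}=\Spec k[\bar\sigma^\vee]$ of $V(\tau_1)$ (for $\bar\sigma=\overline{\sigma}$ the image of a cone $\sigma\supseteq\tau_1$), the dual lattice of the star-quotient is $\tau_1^\perp\cap M_1$, and the characters $m\in\tau_1^\perp\cap\bar\sigma^\vee\cap M_1$ are exactly those restricting to nonzero monomials, while characters in $\sigma^\vee\setminus\tau_1^\perp$ restrict to $0$. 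This is the content of the remark following the definition of $\Omega^1$, and it is what forces the relations in the enriched-fan construction to match the relations in the log-differentials construction on the stratum.

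\medskip

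\emph{Matching the two sheaves.} On such a chart I would write down both sides explicitly. The sheaf $\Omega^1_{(V(\tau_1),M_{V(\tau_1)})/(V(\tau_2),M_{V(\tau_2)})}$ is, by the definition of log differentials together with Lemma~\ref{l:transverselogstructure}(1), generated over $\cO_{V(\tau_1)}$ by the ordinary relative differentials together with symbols $\DLog(m)$ for $m$ in the relevant submonoid, modulo the relation $\alpha(m)\DLog(m)=d\alpha(m)$ and modulo pullbacks from $V(\tau_2)$. The sheaf $\Omega^1_{(\Delta_1)_{\tau_1}/(\Delta_2)_{\tau_2}}$ is defined by exactly the same presentation on the chart $U_{\bar\sigma}$, with $V_{1,\bar\sigma}=\bigcap_{\rho\in\pi^{-1}(\bar\sigma)\cap R'}\rho^\perp$ playing the role of the maximal-subspace data. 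The canonical isomorphism sends $\DLog(m)\mapsto \DLog(m)$ and ordinary differentials to ordinary differentials; I would check that the generating relations correspond bijectively, using that $R'=R\cap\bigcup_{\sigma\supseteq\tau_1}\sigma$ is exactly the set of rays surviving in the star-quotient. Compatibility with the differentials $d_\Omega$ on each side is immediate from the Leibniz-type formulas, since both are defined by $d(\theta,g\otimes m)=j(d\theta)+j(dg)\wedge(0,m)$, giving the extension to shortened complexes via $\Omega^2=\bigwedge^2\Omega^1$.

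\medskip

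\emph{Gluing and functoriality.} Having established the isomorphism chart-by-chart, I would verify that it is independent of the choice of chart, hence glues to a global isomorphism of sheaves: the transition between overlapping charts $U_{\bar\sigma}\cap U_{\bar\sigma'}$ is governed by the inclusion of the corresponding cones, and the canonical identification $\DLog(m)\mapsto\DLog(m)$ respects these. The final sentence, compatibility with the restriction map for $\tau_1\subset\tau_1'$ when $\tau_2$ is minimal for both $f(\tau_1)$ and $f(\tau_1')$, follows because the restriction map on both sides is induced by the same inclusion of star-quotient fans $(\Delta_1)_{\tau_1'}\hookrightarrow(\Delta_1)_{\tau_1}$ (equivalently the closed immersion $V(\tau_1')\hookrightarrow V(\tau_1)$), and our identification is natural in the character $m$. \textbf{The main obstacle} I expect is bookkeeping rather than conceptual: carefully tracking which sublattice of characters ($\tau_1^\perp\cap M_1$ versus the full $M_1$, intersected against $V_{1,\sigma}$ and against $\sigma^\vee$) appears in each generator and relation, and confirming that the condition ``$\tau_2$ minimal containing $f(\tau_1)$'' is exactly what makes the pullback relations from the base $(\Delta_2)_{\tau_2}$ line up with the relations $(0,1\otimes f(b))$ on the enriched-fan side. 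Once the dictionary between $\DLog$ symbols and lattice vectors is pinned down, the verification is the promised straightforward local computation.
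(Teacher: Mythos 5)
Your overall strategy---compute both sheaves affine-locally on the charts $U_{\overline{\sigma}}$ of $\PP((\Delta_1)_{\tau_1})$ for $\sigma\supseteq\tau_1$, match the generators-and-relations presentations, then check gluing and compatibility with restriction---is the paper's proof. The paper does this by restricting the defining pre-log structure to the chart: on $U_{\overline{\sigma}}$ the inverse image log structure is induced by $P_\sigma\to k[\sigma^\vee]\to k[\sigma^\vee\cap\tau_1^\perp]$ with $P_\sigma=\bigl(\sigma^\vee\cap\bigcap_{\rho\in R\cap\sigma}\rho^\perp\bigr)\cap M$, computes $\Omega^1$ from this presentation, and observes that the result is literally the presheaf defining $\Omega^1_{(\Delta_1)_{\tau_1}/(\Delta_2)_{\tau_2}}$.

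However, your setup contains a step that is both inapplicable and false. You claim, citing Lemma~\ref{r:inducedlogstructure}, that the inverse image log structure $M_{V(\tau_1)}$ is ``the log structure attached to the normal crossings divisor $D\cap V(\tau_1)$.'' That lemma does not apply here: it concerns \emph{transverse-to-strata} subschemes, and $V(\tau_1)$ is itself a stratum of the toric boundary, not transverse to it. Worse, the conclusion would contradict the very lemma you are proving. The divisorial log structure on $V(\tau_1)$ has trivial characteristic monoid at the generic point, so its log differentials span only $(\tau_1^\perp\cap M_1)_{\C}$, i.e.,~the differentials of the \emph{trivially} enriched star-quotient. The inverse image log structure is strictly larger: a character $m\in\sigma^\vee\setminus\tau_1^\perp$ restricts to $0$ on $V(\tau_1)$ but still gives a section of the monoid, so $\DLog(m)$ survives as a free generator (the relation $\alpha(m)\DLog(m)=d\alpha(m)$ degenerates to $0=0$). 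These hidden generators are exactly what the enrichment $N_1\to N_1/N_{1,\tau_1}$ records; if your identification were correct, the claimed isomorphism with $\Omega^1_{(\Delta_1)_{\tau_1}/(\Delta_2)_{\tau_2}}$ would fail, the two sides differing in rank by $\dim\tau_1$. Your second paragraph then invokes precisely the correct mechanism (characters in $\sigma^\vee\setminus\tau_1^\perp$ restricting to $0$), so the proposal is internally inconsistent. The repair is to delete the appeal to Lemma~\ref{r:inducedlogstructure} and identify $M_{V(\tau_1)}$ on each chart by restricting the pre-log structure, as above; note also that Lemma~\ref{l:transverselogstructure}(1) is stated for strict normal crossings schemes over $S^\dagger$, not for the toric-over-toric situation here, so it too cannot be cited as is---the pullback description of the differentials is what the chart computation establishes, not an input to it. With those corrections, the rest of your argument coincides with the paper's.
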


\begin{proof}
  Let $\sigma$ be a cone of $\Delta$ containing $\tau_1$ corresponding to $\overline{\sigma}$ in $(\Delta_1)_{\tau_1}$. 
   We will verify the isomorphism for the presheaves inducing each sheaf on the affine open $U_{\overline{\sigma}}$ in $\PP((\Delta_1)_{\tau_1})$.
  The log structure on $(V(\tau_1),M_{V(\tau_1)})$ is induced on $U_{\overline{\sigma}}$ by 
  \[P_\sigma\to k[\sigma^\vee]\to k[\sigma^\vee\cap\tau^\perp]\]
  where 
  \[P_\sigma=\left(\sigma^\vee \cap \bigcap_{\rho\in R\cap \sigma} \rho^\perp \right)\cap M.\]
  Then, on $U_{\overline{\sigma}}$,
  \[\Omega^1_{(V(\tau_1),M_{V(\tau_1)})/(V(\tau_2),M_{V(\tau_2)})}\coloneqq (\Omega^1_{V(\tau_1)/V(\tau_2)}\oplus (\cO_X\otimes_{\Z} P_{\sigma}))/K\]
  where $K$ is generated by the  relations coming from $(V(\tau_2),M_{V(\tau_2)})$ and from $a\DLog(a)=da$. It is immediate that this coincides exactly with the presheaf used to define $\Omega^1_{(\Delta_1)_{\tau_1}/(\Delta_2)_{\tau_2}}$.
  The isomorphism of shortened complexes is a straightforward verification.
\end{proof}

\begin{example}
  A natural example is the morphism $\Aff^1\to \{\on{pt}\}$ induced by the morphism of partial compactifications
  \[(\Delta_{\Aff^1},\Z\to \Z,\varnothing)\to \Delta_S\]
  where $\Delta_{\Aff^1}=\{0,\R_{\geq 0}\}.$
  Setting $\tau_1=\R_{\geq 0}$ and $\tau_2=\{0\}$, we get the induced morphism of stars
  $f\colon \Delta_{S^\dagger}\to\Delta_S$ and of log schemes $S^\dagger\to S$. The sheaf of log differentials $\Omega^1_{S^\dagger/S}$ is rank $1$, generated by $\DLog(e)$. This differential corresponds to $\frac{dt}{t}$ where $t$ is a coordinate on $\Aff^1$.
\end{example}

\subsection{Toric Schemes}

We will review toric schemes over a discrete valuation ring \cite{KKMS} following the exposition of \cite{NS,HelmKatz} 
Let $\cO$ be a discrete valuation ring with residue field $k$, field of fractions $K$, and uniformizer $\pi$.

Let $\Sigma$ be a completable rational polyhedral complex in $N_\R$. 
We can construct a log toric scheme over $S^\dagger$ from a rational polyhedral complex with partial compactification $(\Sigma,N,R)$ where $R$ is a set of rays of the recession fan of $\Sigma$. Let $\tSigma$ be the cone over $\Sigma$, considered as a fan in $N_\R\times\R$. By identifying $N_{\R}$ with $N_{\R}\times\{0\}$, we view $R$ as a partial compactification of $\tilde{\Sigma}$. Then, $(\tSigma,\on{Id}\colon N\times\Z\to N\times \Z,R)$  defines a log scheme 
$\PP(\tSigma)_{\ZZ}$ over $\Z$.

The projection $N\times \Z\to\Z$ induces a map of enriched fans with partial compactifications
\[(\tSigma,\on{Id}\colon N\times\Z\to N\times\Z,R)\to \Delta_{\Aff^1}=(\{0,\R_{\geq 0}\},\on{Id}\colon\Z\to \Z,\varnothing).\]
giving rise to a log morphism
\[\pi_{\Z}: \PP(\tSigma)_{\ZZ} \rightarrow \Aff^1_{\ZZ}.\]
Because $\Aff^1_{\ZZ}=\Spec \ZZ[t]$ has the log structure induced by the monoid $\N$ with $e\mapsto t$, there is a natural log morphism $\iota_{S^\dagger}\colon S^\dagger\to \Aff^1_{\ZZ}$ induced by a morphism of monoids $e\mapsto e$. Similarly, there's a morphism of log schemes $\Spec \cO\to \Aff^1$ induced by $t\mapsto \pi$.

\begin{definition}
  Define the log schemes $\cP(\Sigma)_{\cO}$ and $\cP(\Sigma)_0$ by
  \[\cP(\Sigma)_{\cO}\coloneqq \PP(\tSigma)\times_{\Aff^1} \cO,\ \cP(\Sigma)_0\coloneqq \PP(\tSigma)\times_{\Aff^1} S^\dagger.\]
\end{definition}

We can view $\cP(\Sigma)_0$ as the closed fiber of $\cP(\Sigma)_{\cO}$ with the inverse image log structure.
The generic fiber $\cP(\Sigma) \times_{\Spec \cO} \Spec K$ is isomorphic
to the toric variety over $K$ attached to the recession fan $\Sigma_0$ with trivial enriched structure $N\to N$ and partial compactification $R$. 

\begin{rem}
If $\Sigma$ is {\em integral}, i.e.,~the vertices of every polyhedron in $\Sigma$
lie in $N$, then the closed fiber of $\cP(\Sigma)_{\cO}$
is reduced.   
Adjoining a $d$th root of the uniformizer $\pi$ to $\cO$ has the effect
of rescaling $\Sigma$ by $d$; that is, if $\cO^{\prime} = \cO[\pi^{\frac{1}{d}}]$, then
the base change of the family $\cP(\Sigma) \rightarrow \cO$ is the family 
$\cP(d\Sigma)) \rightarrow \cO^{\prime}$. Hence, we can ensure that the closed fiber is reduced by base change.
\end{rem}

Because we will be  interested in degenerations of toric varieties whose special
fiber is a divisor with simple normal crossings, we will make use of the following slight extension of a deep result from \cite{KKMS}: 

\begin{prop}\cite[Prop.~2.3]{HelmKatz} \label{p:nc} 
Let $\Sigma$ be a complete rational polyhedral complex in $N$. 
There exists an integer $d$ and a subdivision $\Sigma^{\prime}$ of $d\Sigma$ such that the generic
fiber of the scheme $\cP(\tSigma^{\prime})_{\cO}$ is a smooth toric variety
and the closed fiber of $\PP(\tSigma^{\prime})$ is a divisor with simple normal crossings.  Moreover, if the recession fan $\Sigma_0$ is already simplicial and unimodular, $\Sigma^{\prime}$ can be chosen to have $\Sigma_0'=\Sigma_0$.
\end{prop}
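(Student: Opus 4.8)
The plan is to translate the two scheme-theoretic conditions into combinatorial conditions on the cone $\tSigma'$ and then to reduce the whole statement to the existence of a unimodular subdivision in the sense of Section~\ref{s:enrichedfans}, borrowing \cite{KKMS} for the hard part and supplying the extra bookkeeping for the recession fan. First I would record the dictionary. The generic fiber of $\cP(\Sigma')_\cO$ is the toric variety of the recession fan $\Sigma_0'$, which by \cite{BGS} is a genuine fan since $\Sigma$ is complete; hence the generic fiber is smooth exactly when $\Sigma_0'$ is unimodular. The components of the closed fiber are the toric divisors $D_\rho$ attached to the rays $\rho$ of $\tSigma'$ with positive last coordinate, namely the rays over the vertices of $\Sigma'$, and the multiplicity of $D_\rho$ in the closed fiber is the last coordinate of the primitive generator of $\rho$. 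Thus the closed fiber is reduced precisely when $\Sigma'$ is integral, and granting reducedness it is a simple normal crossings divisor precisely when in addition $\tSigma'$ is unimodular, since torus-invariant divisors on a smooth toric variety cross normally. In sum, the conclusion is equivalent to producing, for a suitable integer $d$, a subdivision $\Sigma'$ of $d\Sigma$ that is unimodular in the sense that $\tSigma'$ is unimodular and every vertex of $\Sigma'$ lies in $N$.

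The core existence of such a $(d,\Sigma')$ is the semistable reduction theorem of \cite{KKMS}, applied to the toric degeneration $\cP(\Sigma)_\cO\to\Spec\cO$, equivalently to the toric morphism $\PP(\tSigma)\to\Aff^1$ cut out by the last coordinate. As recorded above, base change along $t\mapsto t^d$ replaces $\Sigma$ by $d\Sigma$, and \cite{KKMS} furnishes, after such a base change, a subdivision of $\widetilde{d\Sigma}$ all of whose cones are unimodular and all of whose vertical rays have primitive generator with last coordinate $1$; the latter is exactly the integrality of the vertices, hence the reducedness of the closed fiber. The necessity of the base change is the familiar point that a unimodular subdivision of $\tSigma$ need not keep its added rays integral, and clearing denominators by $d$ is what repairs this. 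This intertwining of rescaling with subdivision is the deep input we take on faith from \cite{KKMS,HelmKatz}.

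It remains to arrange that the recession fan is untouched when it is already simplicial and unimodular (the simpliciality being automatic from unimodularity), which is the promised extension. Here I would lean on two observations. First, rescaling does not disturb the recession fan: $\recc(dP)=\recc(P)$ for every polyhedron, and in the cone picture the base change acts only in the $\Z$-direction, so the subfan of $\tSigma$ contained in $N_\R\times\{0\}$, which is exactly $\Sigma_0$, is preserved. Second, the unimodular subdivision supplied by \cite{KKMS} may be taken relative to a subcomplex that is already unimodular: the refinement is built by iterated star subdivision at the cones that violate unimodularity, and such star subdivisions add rays only in the interiors of the offending cones, so a cone that is already unimodular, together with all of its faces, is left alone. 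Since $\Sigma_0$ is a unimodular subfan sitting inside $\tSigma$ as the part at infinity, no subdivision is performed there, and we obtain $\Sigma_0'=\Sigma_0$; the generic fiber is then already the smooth toric variety of $\Sigma_0$ before any interior refinement takes place.

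The step I expect to be the main obstacle is precisely this last compatibility: one must guarantee that the integral unimodular subdivision needed to smooth the bounded interior of the complex does not propagate inward to subdivide the recession fan at infinity, nor outward to spoil integrality of the new vertices. Phrasing the construction as a subdivision relative to the unimodular subfan $\Sigma_0$, and invoking the relative form of the \cite{KKMS} construction (as used in subsequent treatments of toroidal semistable reduction), is what makes both requirements hold simultaneously; verifying that the relative star-subdivision algorithm can be run while keeping all vertices in $N$ is the only genuinely delicate point, and it is here that the choice of $d$ and the choice of subdivision must be made together rather than in sequence.
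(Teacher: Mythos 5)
The first thing to say is that the paper does not prove this proposition at all: it is imported verbatim as \cite[Prop.~2.3]{HelmKatz}, which is itself presented as a slight extension of the semistable reduction theorem of \cite{KKMS}. So there is no in-paper argument to compare against line by line; what you have written is a reconstruction of what stands behind that citation. Your combinatorial dictionary is correct and agrees with the paper's surrounding discussion: the generic fiber is the toric variety of the recession fan $\Sigma'_0$, so its smoothness is unimodularity of $\Sigma'_0$; the closed fiber is $\sum_\rho \langle (0,1),u_\rho\rangle D_\rho$ summed over the vertical rays, so reducedness is integrality of the vertices (this is exactly the paper's remark that adjoining $\pi^{1/d}$ rescales $\Sigma$ by $d$ and makes the closed fiber reduced), and granting reducedness, simple normal crossings is unimodularity of $\tSigma'$. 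Deferring the existence of the pair $(d,\Sigma')$ to \cite{KKMS} is also precisely what the cited sources do, so for the main clause your proposal matches the intended proof.

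The gap is in the ``moreover'' clause. You assert that the \cite{KKMS} refinement ``is built by iterated star subdivision at the cones that violate unimodularity,'' so that an already-unimodular subfan --- here $\Sigma_0$ sitting inside $\tSigma$ at height zero --- is never touched. That is a description of the standard toric resolution algorithm, not of the KKMS construction, and the difference is exactly where the theorem is hard: star subdivision at a primitive lattice point interior to a non-unimodular cone of $\tSigma$ will in general introduce a vertical ray whose primitive generator has last coordinate greater than $1$, i.e.\ a non-integral vertex; and one cannot simply resolve first and rescale afterwards, because the rescaling map $(x,a)\mapsto (dx,a)$ does not preserve unimodularity of mixed cones. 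This is why $d$ and the subdivision must be produced together by KKMS's genuinely intricate induction, and one cannot read off from the naive algorithm that cones of $\Sigma_0$ survive untouched. The parts of your paragraph that are solid: rescaling fixes the fan at infinity pointwise (rays at height zero map to themselves), and a unimodular cone has only unimodular faces, so it never contains an ``offending'' face. What is missing is the verification, inside the actual KKMS construction, that all subdivision points can be kept away from the subfan at infinity while keeping every new vertical ray at height one. You flag this yourself as the delicate point, but flagging it is not closing it: as written, the last sentence of the proposition rests on an unproven ``relative KKMS'' assertion. That is consistent with how the paper treats the statement --- the relative clause is part of what is being cited, not proven --- but it means your argument does not independently establish it.
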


The structure of $\cP(\Sigma)_0$ is described by the following which follows from definition:

\begin{lemma}
    Let $\Sigma$ be a completable unimodular rational polyhedral complex in $N_\R$. Let $R$ be a set of rays of the recession cone $\Sigma_0$. Then, the closed fiber $\cP(\Sigma)_0$ is a log scheme given as a strict normal crossings scheme with divisor at infinity $\bigcup_{\rho\not\in R} (D_{\rho}\cap \cP(\Sigma)_0)$.
\end{lemma}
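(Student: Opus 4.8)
The plan is to work Zariski-locally on $\cP(\Sigma)_0$, since both the strict normal crossings condition and the divisor-at-infinity log structure are local properties, and then identify the relevant local model with the one from Example~\ref{e:semistablelog}. First I would recall the construction: $\cP(\Sigma)_0 = \PP(\tSigma)\times_{\Aff^1} S^\dagger$, where $\tSigma$ is the cone over $\Sigma$ and the map to $\Aff^1$ comes from the projection $N\times\Z\to\Z$ of enriched fans. The fiber product realizes $\cP(\Sigma)_0$ as the preimage of the closed point, i.e.~the union of toric divisors $D_{\tilde P_0}$ corresponding to the recession cones $P_0 = \tP\cap(N_\R\times\{0\})$. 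Because $\Sigma$ is unimodular, $\tSigma$ is a unimodular fan, so $\PP(\tSigma)$ is a smooth toric variety and each maximal cone $\tilde\sigma$ gives an affine chart $U_{\tilde\sigma}\cong \Aff^{\dim}$ in which the coordinate functions correspond to the rays of $\tilde\sigma$.

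The key computation would be local: fix a maximal cone $\tilde\sigma$ of $\tSigma$ and examine the chart $U_{\tilde\sigma}$. The rays of $\tilde\sigma$ split into those lying in $N_\R\times\{0\}$ (the recession rays, which are the $P_0$-type cones) and the remaining rays with positive last coordinate. The function $t$ cutting out the closed fiber pulls back to a product $x_0 x_1\cdots x_m$ of exactly those coordinates corresponding to the rays mapping to the generator of the $\Z$-factor, since $\pi_\Z$ is given on characters by the projection. This exhibits $\cP(\Sigma)_0$ locally as $\Spec k[x_0,\dots,x_n]/(x_0\cdots x_m)$, matching $Y_{n,m}$ of Example~\ref{e:semistablelog}, hence it is a strict normal crossings scheme over $S^\dagger$. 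Then I would separate the toric divisors indexed by rays $\rho\in R$ from those with $\rho\notin R$: by the definition of the log structure attached to $(\tSigma,\on{Id},R)$, the rays in $R$ are precisely the ones on which the monoid $P$ is forced to vanish (we take $\rho^\perp$), so they contribute no log structure and should be read as the divisor at infinity $\bigcup_{\rho\notin R}(D_\rho\cap\cP(\Sigma)_0)$ versus the semistable directions.

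I would verify that the inverse image log structure from $\PP(\tSigma)$ along the closed embedding into the $\Aff^1$-family agrees Zariski-locally with the log structure of a strict normal crossings scheme with a divisor at infinity in the sense defined earlier; this is exactly the structure assembled in Example~\ref{e:semistablelog} together with the transverse-divisor-at-infinity data, and the comparison is the content of Lemma~\ref{r:inducedlogstructure} and the earlier lemma identifying the log structure attached to $(\Delta,\on{Id},R)$ with the normal crossings divisor $\cup_{\rho\notin R}D_\rho$. The main obstacle, though mostly bookkeeping, is keeping the two $\Z$-gradings straight: the last coordinate of $N\times\Z$ controls which divisors are semistable (summing to the $S^\dagger$-direction via $e\mapsto f_0+\cdots+f_m$), while the set $R$ controls which of the remaining divisors are at infinity rather than contributing log poles. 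Once the local model is pinned down in a single unimodular chart, the statement follows by the stated locality of all the notions involved, so no global gluing argument is needed beyond observing the charts cover $\cP(\Sigma)_0$.
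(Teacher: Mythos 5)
The paper itself offers no argument for this lemma beyond the remark that it ``follows from definition,'' so your chart-by-chart unpacking is exactly the intended proof, and its essential steps are right: unimodularity of $\tSigma$ gives smooth toric charts $U_{\tilde\sigma}$; the function $t$ pulls back to the character $\chi^{(0,1)}$, which in such a chart is the product of the coordinates dual to the rays of $\tilde\sigma$ having last coordinate $1$ (here you implicitly use that the vertices of $\Sigma$ lie in $N$, which is part of the paper's definition of a unimodular complex); the earlier lemma identifies the log structure attached to $(\tSigma,\on{Id},R\times\{0\})$ with the one induced by the normal crossings divisor $\bigcup_{\rho\notin R\times\{0\}}D_\rho$; and the fiber product log structure on $\cP(\Sigma)_0$ is then Zariski-locally the model of Example~\ref{e:semistablelog} with the horizontal divisors not in $R$ playing the role of the divisor at infinity. (One small inaccuracy: since $\Sigma$ is only completable, $\tSigma$ need not be complete, so a maximal chart is $\Aff^{k}\times(\Gm)^{\dim-k}$ rather than affine space; this is harmless because the definition of a strict normal crossings scheme with divisor at infinity only asks for a Zariski-local closed immersion into a regular scheme.)

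There is, however, a genuine error in your first paragraph that you must repair: you assert that $\cP(\Sigma)_0$ is ``the union of toric divisors $D_{\tilde P_0}$ corresponding to the recession cones $P_0=\tP\cap(N_\R\times\{0\})$.'' This is false, and it contradicts your own second paragraph. Since $t=\chi^{(0,1)}$ and $\langle(0,1),(r,0)\rangle=0$ for a recession ray while $\langle(0,1),(v,1)\rangle=1$ for the ray over a vertex $v$, the closed fiber is the union of the divisors $D_{\tP}$ for $P$ a \emph{vertex} of $\Sigma$; the divisors attached to recession rays are horizontal (flat over $\Aff^1$), and it is their intersections with $\cP(\Sigma)_0$, for rays not in $R$, that constitute the divisor at infinity. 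Were your first-paragraph identification correct, the roles of $Y'$ and $D'$ in the local model would be interchanged and the lemma as stated would fail, so this is not a typo one can ignore. Relatedly, the clause ``the rays in $R$ \dots\ should be read as the divisor at infinity'' is garbled: rays in $R$ are removed from the log structure altogether and contribute nothing, while the divisor at infinity is indexed by the recession rays \emph{not} in $R$; your displayed formula $\bigcup_{\rho\notin R}(D_\rho\cap\cP(\Sigma)_0)$ is the correct one. With these wording repairs, the proof is complete and is the argument the paper intends.
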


We can probe the closed fiber by using the inclusion-reversing bijection
between closed torus orbits in $\cP(\Sigma)_{0}$ and polyhedra $P\in\Sigma$. 
To a polyhedron $P\in\Sigma$, {\em the closed orbit $(V(P),M_{V(P)})$} in $\cP(\Sigma)_{0}$ attached to $P$ is
$(V(\tilde{P}),i_{V(\tilde{P})}^*M_{\PP(\tSigma)})\times_{\Aff^1} S^\dagger$,
that is, the log scheme given the inverse image log structure by $i_{V(\tilde{P})}\colon V(\tilde{P})\to \PP(\tSigma)$ base-changed to  $S^\dagger$. Thus, the components of $\cP(\Sigma)_0$ correspond to vertices of $\Sigma$.
The underlying scheme of $V(P)$ is the toric variety $\PP(\tSigma_{\tilde{P}})=\PP(\Sigma_P)$. The sheaf of log differentials on $V(P)$ is described by an enriched fan with partial compactification according to the following:


\begin{lemma} \label{l:logstructureonorbits} 
  Let $(\Sigma,N,R)$ be a complete polyhedral complex with partial compactification inducing a log structure on $\cP(\Sigma)_0$. Let $(V(P),M_{V(P)})$ be the closed torus orbit.
Then, the induced maps of star-quotients
  \begin{align*}
  (\tilde{\Sigma}_{\tilde{P}},N\times \Z\to (N\times\Z/(N\times\Z)_{\tilde{P}}),R')&\to \Delta_{S^\dagger}\\ 
  (\Sigma_P,N\to N/N_P,R')&\to \Delta_S
  \end{align*}
  (with domains abbreviated as $\tilde{\Sigma}_{\tilde{P}}$ and $\Sigma_P$, respectively)
  induce canonical isomorphisms of $\cO_{V(P)}$-modules,
  \[\Omega^1_{(\PP(\Sigma_P),M_{\PP(\Sigma_P)})/S^\dagger}\cong
  \Omega^1_{\tilde{\Sigma}_{\tilde{P}}/\Delta_{S^\dagger}}\cong
  \Omega^1_{\Sigma_P/\Delta_S}\]
  inducing isomorphisms of shortened complexes. Moreover, for an inclusion of polyhedra $P\subset Q$ of $\Sigma$ inducing $i\colon V(Q)\hookrightarrow V(P)$, the following diagram  of sheaves of $\cO_{V(Q)}$-modules (whose arrows are isomorphims) commutes:
 \[\xymatrix{
 i^*\Omega^1_{(\PP(\Sigma_P),M_{\PP(\Sigma_P)})/S^\dagger}\ar[r]\ar[d]&
 i^*\Omega^1_{\tilde{\Sigma}_{\tilde{P}}/\Delta_{S^\dagger}}\ar[r]\ar[d]&
i^*\Omega^1_{\Sigma_P/\Delta_S}\ar[d]\\
 \Omega^1_{(\PP(\Sigma_Q),M_{\PP(\Sigma_Q)})/S^\dagger}\ar[r]&
 \Omega^1_{\tilde{\Sigma}_{\tilde{Q}}/\Delta_{S^\dagger}}\ar[r]&
  \Omega^1_{\Sigma_Q/\Delta_S}.\\
  }\]
\end{lemma}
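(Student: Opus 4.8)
For the first isomorphism, the plan is to recognize it as a base change of Lemma~\ref{l:reldifferentials}. By definition $(V(P),M_{V(P)})=(V(\tP),i^*M_{\PP(\tSigma)})\times_{\Aff^1}S^\dagger$, so I would apply Lemma~\ref{l:reldifferentials} to the morphism of trivially enriched fans with partial compactification
\[(\tSigma,N\times\Z\to N\times\Z,R)\to\Delta_{\Aff^1}=(\{0,\R_{\geq 0}\},\Z\to\Z,\varnothing),\]
taking $\tau_1=\tP$. Since $(p,1)\in\tP$ for every $p\in P$, the image of $\tP$ under the projection $N\times\Z\to\Z$ is all of $\R_{\geq 0}$, so the minimal cone $\tau_2$ of $\Delta_{\Aff^1}$ containing it is $\R_{\geq 0}$; its star-quotient is $\Delta_{S^\dagger}$, and its orbit closure $V(\tau_2)$ is the origin of $\Aff^1_\Z$ with the standard log point structure. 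Lemma~\ref{l:reldifferentials} then gives a canonical isomorphism of shortened complexes $\Omega^1_{(V(\tP),M_{V(\tP)})/(V(\tau_2),M_{V(\tau_2)})}\cong\Omega^1_{\tSigma_{\tP}/\Delta_{S^\dagger}}$ over $\Z$. Base changing along the strict morphism $S^\dagger\to V(\tau_2)$ coming from $\Spec\C\to\Spec\Z$ identifies the left side with $\Omega^1_{(\PP(\Sigma_P),M_{\PP(\Sigma_P)})/S^\dagger}$, using that the underlying scheme of $V(P)$ is $\PP(\Sigma_P)$ and that log differentials commute with strict base change of the target, while leaving the right side unchanged.

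For the second isomorphism, I would first record that $\PP(\tSigma_{\tP})\cong\PP(\Sigma_P)$: the projection $N\times\Z\to N/N_P$ sending $(x,a)$ to $\bar x-a\bar p$ for a vertex $p$ of $P$ identifies $(N\times\Z)/(N\times\Z)_{\tP}$ with $N/N_P$. Both $\Omega^1_{\tSigma_{\tP}/\Delta_{S^\dagger}}$ and $\Omega^1_{\Sigma_P/\Delta_S}$ are then sheaves of log differentials attached to $f$ on this \emph{same} toric variety, and comparing the two presentations is a matter of unwinding definitions: each is built from the common ordinary relative differentials $\Omega^1_{\PP(\Sigma_P)/\C}$ together with the same $\DLog$-generators, the extra $\Z$-factor in the lattice $N\times\Z$ used for the $\Delta_{S^\dagger}$-presentation being exactly cancelled by the quotient by $f^*M_{\Delta_{S^\dagger}}=\Z$. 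This is the log-differential counterpart of Remark~\ref{r:triviallyenriched1forms}, and it respects the shortened-complex structure because the $\DLog$-generators are closed on both sides.

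For the functoriality statement, take $P\subset Q$, inducing $\tP\subset\tilde{Q}$ and $i\colon V(Q)\hookrightarrow V(P)$. As in the first paragraph, both $\tP$ and $\tilde{Q}$ have image $\R_{\geq 0}$ in $\Delta_{\Aff^1}$, so $\tau_2=\R_{\geq 0}$ is the common minimal target cone, and the final clause of Lemma~\ref{l:reldifferentials} gives commutativity of the left-hand square. Commutativity of the right-hand square reduces to the naturality of the identification $\PP(\tSigma_{\tP})\cong\PP(\Sigma_P)$ and of the comparison of presentations above with respect to the inclusion $\Star^\circ_{\tilde{Q}}(\tSigma)\hookrightarrow\Star^\circ_{\tP}(\tSigma)$, which is again obtained by unwinding.

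I expect the main obstacle to be the base-change bookkeeping in the first isomorphism: one must verify that the morphism $S^\dagger\to V(\tau_2)$ is strict so that relative log differentials pull back correctly, and that the minimal-target-cone hypothesis of Lemma~\ref{l:reldifferentials} holds uniformly over all faces $P$ (and compatibly as $P$ varies). Both points rest on the single observation that the cone $\tP$ over any polyhedron surjects onto the last coordinate $\R_{\geq 0}$, so that $\tau_2=\R_{\geq 0}$ throughout; once this is isolated, the remainder is a careful but routine comparison of the two differential-form presentations.
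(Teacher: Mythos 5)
Your proposal is correct and follows essentially the same route as the paper's proof: apply Lemma~\ref{l:reldifferentials} to the morphism $(\tSigma,N\times\Z\to N\times\Z,R)\to(\Delta_{\Aff^1},\Z\to\Z,\varnothing)$ with the cone $\tilde{P}$, identify $\Omega^1_{\tilde{\Sigma}_{\tilde{P}}/\Delta_{S^\dagger}}$ and $\Omega^1_{\Sigma_P/\Delta_S}$ by unwinding definitions, and deduce the commutativity of the diagram from the functoriality clause of Lemma~\ref{l:reldifferentials}. You actually supply details the paper leaves implicit (the minimal-cone identification $\tau_2=\R_{\geq 0}$, the strict base change from $\Z$ to $\C$ along $S^\dagger\to V(\tau_2)$, and the lattice identification $(N\times\Z)/(N\times\Z)_{\tilde P}\cong N/N_P$), all of which check out.
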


\begin{proof}
  Apply Lemma~\ref{l:reldifferentials} to the morphism
  \[(\tilde{\Sigma},N\times \Z\to N\times \Z,R)\to (\Delta_{\Aff^1},\Z\to\Z,\varnothing)\]
  and the cone $\tilde{P}\in\tilde{\Sigma}$ to obtain the isomorphism $\Omega^1_{(\PP(\Sigma_P),M_{\PP(\Sigma_P)})/S^\dagger}\cong
  \Omega^1_{\tilde{\Sigma}_{\tilde{P}}/\Delta_{S^\dagger}}$
   By unpacking definitions, one observes that the sheaves $\Omega^1_{\tilde{\Sigma}_{\tilde{P}}/\Delta_{S^\dagger}}$ and $\Omega^1_{\Sigma_P/\Delta_S}$ are canonically isomorphic. The commutativity of the diagram is a straightforward verification.

\end{proof}

\subsection{Subschemes of toric schemes}

We will study subvarieties of an algebraic torus, $X\subseteq (K^*)^n$, where $K$ is a discretely valued field. It is desirable to compactify the ambient space $(K^*)^n$ to a toric scheme $\cP(\Sigma)$ in such a way that $\cX\coloneqq \overline{X}$ is well-behaved. 
We will consider the case where  $X$ is  {\em sch\"{o}n} as introduced by Tevelev \cite{Tevelev:compactifications} which is automatic when $X$ has a smooth tropicalization.  Here, we will follow the exposition in \cite{LuxtonQu,HelmKatz} and also recommend \cite{Gubler:guide}.

\begin{definition} A subvariety $X\subseteq (K^*)^n$ is {\em sch\"{o}n} if there exists a toric scheme $\cP(\Sigma)_{\cO}$ such that the closure $\cX=\overline{X}$ satisfies
\begin{enumerate}
    \item the projection $\cX\to \Spec \cO$ is proper, and
    \item the group action $(\Gm)^n\times_{\cO} \cX\to \cP(\Sigma)_{\cO}$ is smooth.
\end{enumerate}
\end{definition}

The following is a straightforward extension of the analogous result in \cite{Tevelev:compactifications} (see also \cite{Hacking:homology}):

\begin{prop} \label{p:dilatesnc} Let $X\subseteq (K^*)^n$ be a sch\"{o}n subvariety, and let $\Delta$ be a projective rational fan in $\R^n$. Then there exists a positive integer $d$ and a toric scheme $\cP(\Sigma)$ (with $\Sigma$ complete and unimodular) over $\cO'=\cO[\pi^{\frac{1}{d}}]$ such that for $K'=K[\pi^{\frac{1}{d}}]$ and $\cX=\overline{X}$,
\begin{enumerate}
    \item \label{i:recfan} the recession fan of $\Sigma$ is a subfan of a refinement of $\Delta$,
    \item the generic fiber $\cP(\Sigma)_{K'}$ is a smooth toric variety,
    \item the closed fiber $\cP(\Sigma)_0$ is a strict normal crossings scheme,
    \item the generic fiber $\cX_{K'}$ is proper and smooth, and $\overline{X}\setminus X$ is a strict normal crossings divisor, and
    \item the closed fiber $X_0$ of $\cX$ is a proper normal crossings scheme that is transverse-to-strata in $\cP(\Sigma)_0$.
\end{enumerate}
\end{prop}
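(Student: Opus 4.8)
The plan is to assemble the statement from two essentially known inputs: Tevelev's theory of tropical compactifications of sch\"{o}n varieties, in the valued-field form used in \cite{LuxtonQu,HelmKatz}, which is insensitive to refinement of the fan, and the semistable reduction result recorded as Proposition~\ref{p:nc}, which produces a simple normal crossings closed fiber after a ramified base change and a subdivision. The whole difficulty is organizational: I must produce a single complete unimodular polyhedral complex $\Sigma$ that simultaneously (a) supports $\Trop(X)$, so that the sch\"{o}n machinery applies, (b) has recession fan refining $\Delta$, and (c) is of the special form produced by Proposition~\ref{p:nc}, so that $\cP(\Sigma)$ has the desired fibers. I would isolate the construction of $\Sigma$ first, and only afterward read off transversality of the closure.

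First I would build the complex. The recession fan of $\Trop(X)$ is a genuine fan by \cite{BGS}. Taking a common refinement of $\Delta$ with this recession fan, and then passing to a unimodular projective refinement (toric resolution of singularities, with projectivity restored after a further refinement), I obtain a complete unimodular projective fan $\Delta'$ refining $\Delta$ whose cones support the recession directions of $\Trop(X)$. I then complete $\Trop(X)$, after refining its polyhedral structure so that its recession fan becomes $\Delta'$, to a complete rational polyhedral complex $\Sigma$ in $\R^n$ with $\recc(\Sigma)=\Delta'$ that supports $\Trop(X)$. Because $\Delta'=\recc(\Sigma)$ is already simplicial and unimodular, the ``moreover'' clause of Proposition~\ref{p:nc} applies: there is an integer $d$ and a subdivision $\Sigma'$ of $d\Sigma$ with $\recc(\Sigma')=\Delta'$ such that, over $\cO'=\cO[\pi^{1/d}]$, the generic fiber $\cP(\Sigma')_{K'}$ is a smooth toric variety and the closed fiber $\cP(\Sigma')_0$ is a strict normal crossings scheme. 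Renaming $\Sigma'$ to $\Sigma$ yields (1), (2), and (3), since scaling by $d$ and the permitted subdivision leave the recession fan equal to $\Delta'$, a refinement of $\Delta$.

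Next I would deduce the sch\"{o}n statements (4) and (5). The crucial compatibility is that the base change $\cO\to\cO'$ rescales the valuation by $d$, so that the tropicalization computed over $K'$ equals $d\cdot\Trop_K(X)$; this is exactly why one scales by $d$, and it guarantees that the final complex $\Sigma$, and any subdivision of it, supports $\Trop_{K'}(X)$. Hence $\Sigma$ is a complex supporting the tropicalization of $X_{K'}$, and the sch\"{o}n hypothesis, being stable under refinement by \cite{Tevelev:compactifications,LuxtonQu} in the form used in \cite{HelmKatz}, guarantees that the multiplication map $(\Gm)^n\times_{\cO'}\cX\to\cP(\Sigma)$ is smooth; properness of $\cX\to\Spec\cO'$ follows from completeness of $\Sigma$. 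Smoothness of the multiplication map forces $\cX$ to meet every torus orbit of $\cP(\Sigma)$ transversally. Restricting to the smooth generic toric variety gives that $\cX_{K'}$ is proper and smooth with $\overline{X}\setminus X$ a strict normal crossings divisor, which is (4); restricting to the closed fiber gives that $X_0$ is a proper normal crossings scheme meeting the strata of $\cP(\Sigma)_0$ transversally, i.e.\ transverse-to-strata, which is (5).

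The step I expect to be the genuine obstacle is the interaction between the two inputs, rather than either input alone. One must arrange the dilation and subdivision required by Proposition~\ref{p:nc} without destroying the property that $\Sigma$ supports $\Trop(X)$, and one must know that sch\"{o}nness survives this refinement. Both are handled by the fact that refinements, together with the valuation-rescaling dilation, preserve the supporting condition and the sch\"{o}n property simultaneously; but keeping the bookkeeping consistent, and in particular verifying transversality of the closed fiber $X_0$ with the toric strata of $\cP(\Sigma)_0$ directly from smoothness of the multiplication map over $\cO'$, is where the care is needed.
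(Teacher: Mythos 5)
Your overall strategy (combine the Tevelev/Luxton--Qu tropical compactification machinery with Proposition~\ref{p:nc}) matches the logical content of the paper's proof, which cites \cite[Proposition~3.10]{HelmKatz} for everything except condition (1). But your construction of $\Sigma$ has a genuine gap at its central step: you assert that $\Trop(X)$, after refinement, can be \emph{completed} to a complete rational polyhedral complex with prescribed recession fan $\Delta'$. Completing a polyhedral complex is not a formal operation: the paper invokes \cite{BGS} only for the necessary direction (completable $\Rightarrow$ the recession cones form a fan), and extending a given complex to a complete one --- let alone one with a prescribed recession fan --- is precisely the kind of nontrivial claim that needs a proof or citation you do not supply. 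A smaller instance of the same problem occurs earlier in your construction: the common refinement of the complete fan $\Delta$ with the non-complete fan $\recc(\Trop(X))$ is supported only on $|\recc(\Trop(X))|$, so it is not complete, and your $\Delta'$ is therefore not yet constructed (for fans this can be fixed by the known fan-completion theorems, but you would have to say so). The paper sidesteps all of this by never completing anything: it starts from the Gr\"{o}bner complex $\Sigma_1$ of $X$, which is complete by construction and supports $\Trop(X)$, and only ever refines it.

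The second, related, point is projectivity. The entire content of the paper's modification of \cite{HelmKatz} is the observation that the common refinement of $\Sigma_1$ and $\Delta$ is induced by the strictly convex function $h_{\Delta}+h_{\Sigma_1}$ (using projectivity of $\Delta$ and regularity of the Gr\"{o}bner complex), so the refined complex remains regular and the remaining steps of \cite{HelmKatz} --- which run through the KKMS semistable reduction machinery underlying Proposition~\ref{p:nc} --- still apply. Your completed complex carries no such certificate: an abstract completion has no reason to admit a strictly convex function, so even granting the completion step, you have stepped outside the setting in which the cited machinery is known to operate. If you repair the completion in the natural way --- by taking the Gr\"{o}bner complex as the completion and forming common refinements with $\Delta$ --- you will essentially have reproduced the paper's argument, convexity bookkeeping included. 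The remainder of your sketch (rescaling of the tropicalization under $\cO\to\cO'$, stability of sch\"{o}nness under refinement via \cite{Tevelev:compactifications,LuxtonQu}, and transversality of $\cX$ with all torus orbits from smoothness of the multiplication map, yielding (4) and (5)) is the standard content of \cite[Proposition~3.10]{HelmKatz} and is fine at that level of detail.
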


\begin{proof}
  Except for condition (eqref{i:recfan}, this is \cite[Proposition~3.10]{HelmKatz}. We explain how to modify the proof. By the combinatorial description of ample line bundles \cite{CLS}, there is a piecewise linear function $h_{\Delta}$ that is strictly convex on $\Delta$, i.e.,~the cones of $\Delta$ are exactly $h_{\Delta}$'s domains of linearity. As a step in \cite[Proposition~3.10]{HelmKatz}, we construct $\Sigma_1$, the Gr\"{o}bner complex of $X$ \cite{Gubler:guide} which also produces a piecewise affine function $h_{\Sigma_1}$ that is strictly convex on $\Sigma_1$. Hence, $h_{\Delta}+h_{\Sigma_1}$ induces a common refinement $\Sigma'$ of $\Sigma_1$ and $\Delta'$. Now, the steps in \cite{HelmKatz} will produce $\Sigma$ as a subcomplex of a refinement of $\Sigma'$.
\end{proof}

In all of our cases below, we will let $R$ be a subset of the rays of the recession fan $\Sigma_0$.
We give $\cX$ the inverse image log structure by $\cX\hookrightarrow\PP(\Sigma)_{\cO}$. 
Then, by Remark~\ref{r:inducedlogstructure}, $\cX$ will have the log structure induced by the intersection of $\cX$ with the divisor $\cP(\Sigma)_0+\sum_{\rho_0\in (\Sigma_0)_{(1)}\setminus R} D_{\rho_0}$
where $\cP(\Sigma_0)$ is the closed fiber, considered as a divisor.
We give the closed fiber $X_0$, the inverse image log structure by $X_0\hookrightarrow\cX$. Now, Lemma~\ref{l:transverselogstructure} applies to the log structure on $X_0$.

The following lemma allow us to compare two shortened complexes on the stratum $X_F$.

\begin{lemma} \label{l:logstructureontransversetoorbits} 
  Let $(\Sigma,N,R)$ be a unimodular complex with enriched structure and partial compactification. Let $(\cP(\Sigma)_0,M)$ be the attached log scheme over $S^\dagger$.
  Let $P$ be a polyhedron of $\Sigma$. Let $X_0$ be a transverse-to-strata subscheme of $\cP(\Sigma)_0$. Then there is a canonical isomorphism of shortened complexes of differential forms on $X_P$ between    
  \begin{enumerate}
    \item the pullback complex induced by $X_P\to \cP(\Sigma)_0$ over $S^\dagger$ where $\cP(\Sigma)_0$ has log structure $M$ and 
    \item the pullback complex induced by $X_P\to \PP(\Sigma_P)$ over $S$ where the shortened complex  on $\PP(\Sigma_P)$ is induced by $(\Sigma_P,N\to N/N_P,R')$ as in Definition~\ref{d:starquotientcomplex}. 
  \end{enumerate}
  The isomorphism commutes with pullback induced by $V(Q)\hookrightarrow V(P)$ for $P\subseteq Q$.
\end{lemma}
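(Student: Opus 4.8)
The plan is to reduce both shortened complexes to explicit pushout presentations over the smooth toric variety $V(P)=\PP(\Sigma_P)$, and then to transport one onto the other using the isomorphism of shortened complexes supplied by Lemma~\ref{l:logstructureonorbits}. Here $X_P$ denotes the closed stratum $X_0\cap V(P)$. The essential geometric input is that a transverse-to-strata subscheme of a strict normal crossings pair is itself a strict normal crossings scheme, so $X_0$ is one and its closed stratum $X_P$ is regular, hence smooth; moreover $X_P=X_0\cap V(P)$ is transverse-to-strata inside the smooth toric variety $V(P)$. Write $i\colon X_P\hookrightarrow V(P)=\PP(\Sigma_P)$ for this inclusion, so that Definition~\ref{pullbackscodf} applies to $i$.

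First I would identify the first complex. Since $\cP(\Sigma)_0$ carries the log structure of a strict normal crossings scheme with divisor at infinity, $V(P)$ is one of its strata with inverse image log structure $M_{V(P)}$. Because $X_P=X_0\cap V(P)$ and inverse image log structures are functorial, the structure $M|_{X_P}$ obtained directly from $\cP(\Sigma)_0$ agrees with the one obtained by first restricting $M$ to $V(P)$ and then pulling back to the transverse-to-strata subscheme $X_P$. Part (1) of Lemma~\ref{l:transverselogstructure} identifies the restriction of the complex to the stratum with $\Omega^1_{(V(P),M_{V(P)})/S^\dagger}$, and part (2) expresses the first shortened complex on $X_P$ as the pushout of
\[
\Omega^1_{X_P/S}\longleftarrow i^*\Omega^1_{\PP(\Sigma_P)/S}\xrightarrow{\ i^*j\ } i^*\Omega^1_{(V(P),M_{V(P)})/S^\dagger},
\]
with second piece $\bigwedge^2$ and differential induced as in Definition~\ref{pullbackscodf}.

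Next I would treat the second complex. By Definition~\ref{pullbackscodf}, the pullback along $i$ of the $(\Sigma_P,N\to N/N_P,R')$-shortened complex is the pushout of
\[
\Omega^1_{X_P/S}\longleftarrow i^*\Omega^1_{\PP(\Sigma_P)/S}\xrightarrow{\ i^*j\ } i^*\Omega^1_{\Sigma_P/\Delta_S}.
\]
Now Lemma~\ref{l:logstructureonorbits} gives an isomorphism of shortened complexes $\Omega^1_{(V(P),M_{V(P)})/S^\dagger}=\Omega^1_{(\PP(\Sigma_P),M_{\PP(\Sigma_P)})/S^\dagger}\cong\Omega^1_{\Sigma_P/\Delta_S}$, the first equality holding because $V(P)$ has underlying toric variety $\PP(\Sigma_P)$ with exactly this log structure. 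As an isomorphism of shortened complexes it is, by definition, compatible with the maps $j$ out of $\Omega^1_{\PP(\Sigma_P)/S}$, so after applying $i^*$ it identifies the two spans displayed above. Since a pushout is determined up to canonical isomorphism by its span, the two pushout squares are canonically isomorphic; this yields an isomorphism of degree-one parts, and hence of the full shortened complexes after passing to $\bigwedge^2$ and the induced differentials.

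Finally, compatibility with $V(Q)\hookrightarrow V(P)$ for $P\subseteq Q$ follows by naturality: the commuting square of isomorphisms in the last part of Lemma~\ref{l:logstructureonorbits} pulls back along the corresponding inclusions of strata of $X_0$, and the pushout construction is functorial in the span. The step I expect to be the main obstacle is the bookkeeping in the identification of the first complex: verifying that the inverse image log structure $M|_{X_P}$ pulled back \emph{directly} from $\cP(\Sigma)_0$ coincides, as a shortened complex and not merely as a sheaf, with the transverse pullback from the stratum $V(P)$, so that Lemma~\ref{l:transverselogstructure}(2) applies verbatim. This is a local computation in simple normal crossings coordinates, and the care required is in matching the non-log factor $\Omega^1_{X_P/S}$ and the map $j$ on both sides.
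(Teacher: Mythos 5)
Your proof is correct and takes essentially the same approach as the paper's: first identify the two shortened complexes on the orbit closure $V(P)=\PP(\Sigma_P)$ via Lemma~\ref{l:logstructureonorbits} (the case $X_0=\cP(\Sigma)_0$), then transport that identification to $X_P$ using the pushout description from Lemma~\ref{l:transverselogstructure} and Definition~\ref{pullbackscodf}. Your write-up merely makes explicit the pushout bookkeeping and the naturality under $V(Q)\hookrightarrow V(P)$ that the paper's two-line proof leaves implicit.
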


\begin{proof}
  One first considers the case where $X_0=\cP(\Sigma)_0$ and uses Lemma~\ref{l:logstructureonorbits} to identify the two complexes on $V(P)$. Then one pulls back by $X_P$ by using Lemma~\ref{l:transverselogstructure}.  
\end{proof}

\subsection{Tropicalization of Subvarieties}

Given a subvariety  $X\subset (K^*)^n$, tropicalization \cite{MaclaganSturmfels,Gubler:guide} produces a tropical variety $\Trop(X)\subset \R^n$. The tropical variety is always supported on a complete rational polyhedral complex called a Gr\"{o}bner complex. In this way, it obtains a non-canonical structure of a balanced weighted rational polyhedral complex. Additionally, the recession cones of $\Trop(X)$ form a fan.

When $K$ is a field with trivial valuation, tropicalizations are always fans. If the closure in a toric variety $\overline{X}\subset \PP(\Delta)$ is transverse-to-strata, then $\Trop(X)$ is supported on $\Delta$.

Varieties whose tropicalizations have support equal to Bergman fans (with weight $1$ on all top-dimensional cones) are well-understood: they are subspaces. Indeed, let $(K^*)^n$ denote the open torus in the projective space $\PP_K^n$.
By \cite[Section~9]{Sturmfels:solving}, for a projective subspace $X\subset \PP_K^n$ not contained in any coordinate hyperplane with matroid $\M$, the tropicalization $\Trop(X\cap (K^*)^n)$ is the support of the Bergman fan $\Delta_\M$ where $\M$ is the matroid attached to $X$ and where every top-dimensional cone is given multiplicity $1$. The converse is a result due to Mikhalkin based on work with Sturmfels and Ziegler and written down in \cite[Prop~4.2]{KP:Realization}:

\begin{prop} \label{p:matroidtrop}
    Let $X\subset (K^*)^n$ be a subvariety. Suppose that $\Trop(X)$ is the Bergman fan of a matroid $\M$, all of whose top-dimensional cones have weight $1$. Then the closure of $X$ in $\PP^n$ is a projective subspace with matroid $\M$.
\end{prop}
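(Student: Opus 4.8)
The plan is to reduce the statement to the classical fact that an irreducible projective variety of degree one is a linear subspace, by extracting the degree of $\overline{X}$ from the tropical data. First I would record the dimension count: since tropicalization preserves dimension (see \cite{MaclaganSturmfels}), $\dim\overline{X}=\dim X=\dim\Trop(X)=r-1$, where $r$ is the rank of $\M$, so that $\overline{X}$ should be a linear subspace of projective dimension $r-1$. The heart of the argument is then a degree computation: the degree of the closure $\overline{X}\subseteq\PP^n$ equals the degree of the tropical cycle $\Trop(X)$, computed as the stable intersection of $\Trop(X)$ with a generic tropical linear space of complementary dimension. This identity is a form of the Sturmfels--Tevelev correspondence between the class of a subvariety of a torus and the tropical intersection product \cite{Tevelev:compactifications,MaclaganSturmfels}, and I would invoke it to write $\deg\overline{X}=\deg\Trop(X)$.

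Next I would compute $\deg\Trop(X)=\deg\Delta_\M$. The Bergman fan of a matroid with all top-dimensional cones carrying weight one is a tropical linear space, and such cycles have degree one: intersecting with a generic tropical hyperplane lowers the dimension by one while preserving the tropical-linear-space property, so after $r-1$ such cuts one is left with a single point of multiplicity one. Hence $\deg\overline{X}=1$.

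With the degree in hand the geometric conclusion is essentially classical. Even without assuming irreducibility, the degrees of the top-dimensional components of $\overline{X}$ are positive integers summing to $\deg\overline{X}=1$, forcing a single component; thus $\overline{X}$ is irreducible, and an irreducible projective variety of degree one is a linear subspace $V$ of dimension $r-1$. It then remains to identify the matroid. Applying the converse direction, namely Sturmfels' result \cite[\S 9]{Sturmfels:solving}, to $V$ gives $\Trop(V\cap(K^*)^n)=\Delta_{\M(V)}$, where $\M(V)$ is the matroid realized by the coordinate functions on $V$. Because $X\subseteq(K^*)^n$ meets the open torus, $\M(V)$ is loopless, and a loopless matroid is recovered from the cone structure of its Bergman fan, the rays corresponding to flats and the cones to flags of flats. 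Comparing with the hypothesis $\Trop(X)=\Delta_\M$ yields $\M(V)=\M$, completing the identification.

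The step I expect to be the main obstacle is the degree identity $\deg\overline{X}=\deg\Trop(X)$: making it precise requires the machinery of tropical compactifications and the compatibility of the tropical stable intersection product with the intersection-theoretic degree in $\PP^n$, rather than any elementary manipulation, and it is where the hypothesis that all top cones have weight one is genuinely used. Once this input and the degree-one computation for $\Delta_\M$ are granted, the passage through \emph{degree one implies linear} and the matroid bookkeeping are routine.
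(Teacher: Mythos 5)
The paper does not actually prove this proposition: it is stated as a known result, attributed to Mikhalkin building on work of Sturmfels and Ziegler, with the proof cited from \cite[Prop.~4.2]{KP:Realization}. So there is no internal proof to compare yours against. That said, your argument is the standard one---and, in outline, the argument of the cited reference---so the approach is sound: compute $\deg\overline{X}$ tropically, observe that a Bergman fan with unit weights has tropical degree one, conclude that $\overline{X}$ is an irreducible degree-one projective variety and hence a linear subspace, and then recover the matroid by combining Sturmfels' theorem \cite[\S 9]{Sturmfels:solving} with the fact that a loopless matroid is determined by the support of its Bergman fan.

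Two caveats. First, your parenthetical claim that irreducibility is not needed is overstated: $\deg\overline{X}=1$ forces a unique top-dimensional component, but it cannot see lower-dimensional components of $X$, whose tropicalizations may well be hidden inside the support of $\Delta_\M$ (for instance, a linear space together with a torus point tropicalizing to the origin). You should instead invoke the standing convention that a subvariety is irreducible, which is what the paper intends. Second, the two tropical inputs are genuine theorems and should be cited as such: the identity $\deg\overline{X}=\deg\Trop(X)$ is the correspondence between tropical stable intersection numbers and intersection-theoretic degrees, in the form given in \cite{MaclaganSturmfels} (valid over an arbitrary valued field, which matters here since $K$ is not specified); and the statement that a Bergman fan with unit weights has degree one---equivalently, your inductive claim that stable intersection with a generic tropical hyperplane preserves the property of being a tropical linear space---is itself a nontrivial result of Speyer's theory of tropical linear spaces (it also follows from Fink's characterization of degree-one tropical cycles). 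Note that since $\M$ is not known to be realizable a priori (realizability is part of the conclusion), there is no shortcut through an algebraic model for this degree computation. With these points made precise, the proof is correct.
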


\begin{definition}
Let $K$ be an arbitrary valued field. An algebraic variety $X\subseteq (K^*)^n$ is {\em tropically smooth} \cite{KS:TMNF,Shaw:matroidal} if $\Trop(X)$ is smooth (Definition~\ref{d:smoothcomplex}) with respect to some polyhedral structure.
\end{definition}

\begin{theorem} \cite[Corollary~10]{Cartwright:Grobner}
If $X\subseteq (K^*)^n$ is tropically smooth, then $X$ is sch\"{o}n.
\end{theorem}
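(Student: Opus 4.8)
The plan is to reduce schönness to a pointwise smoothness condition on initial degenerations and then feed in tropical smoothness through Mikhalkin's theorem. First I would invoke the standard initial-degeneration characterization of schön subvarieties (see e.g.~\cite{LuxtonQu,Gubler:guide}): a subvariety $X\subseteq(K^*)^n$ is schön if and only if, for every $w$ in the support of $\Trop(X)$, the initial degeneration $\operatorname{in}_w(X)$ is a smooth subscheme of the residue torus $(\C^*)^n$ (and is empty for $w\notin\Trop(X)$). This replaces the intrinsic condition that the multiplication map $(\Gm)^n\times_\cO\cX\to\cP(\Sigma)_\cO$ be smooth by a family of conditions that are purely local on the tropicalization and hence amenable to the combinatorial hypothesis.

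Next I would identify the tropicalization of each initial degeneration with a local model of $\Trop(X)$. By the structure theorem \cite{Gubler:guide,MaclaganSturmfels}, the tropicalization $\Trop(\operatorname{in}_w(X))$, taken over the trivially valued residue field and hence a weighted balanced fan, is the star (local fan) of $\Trop(X)$ at $w$ equipped with its induced multiplicities; modulo its lineality space this is precisely the star-quotient $\Sigma_P$, where $P$ is the cell of $\Trop(X)$ whose relative interior contains $w$. Because $X$ is tropically smooth, there is a polyhedral structure $\Sigma$ on $\Trop(X)$ for which every such star-quotient has support equal to that of a Bergman fan of a loopless matroid $\M$, and all top-dimensional cones carry weight $1$ (Definition~\ref{d:smoothcomplex}). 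Thus, as a weighted balanced fan, $\Trop(\operatorname{in}_w(X))$ is the Bergman fan $\Sigma(\M)$ with unit weights.

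Now I would apply Proposition~\ref{p:matroidtrop}: since $\Trop(\operatorname{in}_w(X))$ is the Bergman fan of a matroid with all top-dimensional cones of weight $1$, the closure of $\operatorname{in}_w(X)$ in $\PP^{n}$ is a projective linear subspace with matroid $\M$ (in particular it meets no coordinate hyperplane in codimension zero), and therefore $\operatorname{in}_w(X)$ is the complement of a hyperplane arrangement inside that linear space. Such a complement is smooth, so every initial degeneration is smooth, and the characterization from the first step yields that $X$ is sch\"{o}n.

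The main obstacle is the passage in the second step: one must verify not merely that $\Trop(\operatorname{in}_w(X))$ has the \emph{support} of a Bergman fan, but that it carries the correct multiplicities, since Proposition~\ref{p:matroidtrop} requires unit weights rather than just the right support set. This forces an appeal to the compatibility of tropical multiplicities with initial degenerations (the fundamental theorem of tropical geometry with weights, \cite{MaclaganSturmfels}), which also guarantees that $\operatorname{in}_w(X)$ is generically reduced and not contained in any coordinate hyperplane---exactly the hypotheses needed to conclude that it coincides with the full torus part of the linear space and is therefore smooth. Establishing this multiplicity transfer cleanly, rather than the merely set-theoretic coincidence of the star with a Bergman fan, is where the essential work lies.
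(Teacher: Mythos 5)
First, a point of comparison: the paper does not prove this statement at all; it is imported verbatim from \cite[Corollary~10]{Cartwright:Grobner}. So there is no internal argument to measure yours against, and your proposal must stand on its own as a reconstruction of Cartwright's theorem. Your skeleton is indeed the natural one (and surely close to the real proof): the Luxton--Qu criterion that $X$ is sch\"{o}n if and only if every initial degeneration $\operatorname{in}_w(X)$ is smooth \cite{LuxtonQu}, the identification of $\Trop(\operatorname{in}_w(X))$ with the star of $\Trop(X)$ at $w$, and Proposition~\ref{p:matroidtrop} to recognize linear spaces. However, you have located the ``essential work'' in the wrong place, and the step where the real difficulty lives has a genuine gap. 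The multiplicity transfer you flag as the main obstacle is standard: the equality of $\Trop(\operatorname{in}_w(X))$ with $\Star_w(\Trop(X))$ \emph{as weighted fans} is in \cite{MaclaganSturmfels,Gubler:guide}, and unit weights are built into Definition~\ref{d:smoothcomplex}. What is not standard is the scheme-theoretic conclusion. The initial degeneration is a priori only a closed subscheme of the residue torus, and tropical multiplicities are computed from lengths at the generic points of its top-dimensional (minimal) components; they are therefore completely blind to embedded primes and to nilpotents away from those generic points, and they do not by themselves preclude several irreducible components whose tropicalizations partition the maximal cones. Since smoothness over a field includes reducedness, ``generically reduced with the support of a linear space'' does not imply smooth, and the Luxton--Qu criterion cannot yet be invoked. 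Note also that Proposition~\ref{p:matroidtrop} is stated for a \emph{subvariety}, so applying it to $\operatorname{in}_w(X)$ presupposes exactly the reducedness and irreducibility that are in question. Closing the gap requires (i) irreducibility, which follows from balancing plus connectivity through codimension one of Bergman fans (no proper subset of the cones around a codimension-one face is balanced), and (ii) exclusion of embedded primes and nilpotents, e.g.\ by degenerating further to the Stanley--Reisner scheme of the flag complex of the matroid and using its shellability/Cohen--Macaulayness to propagate reducedness back to $\operatorname{in}_w(X)$. This last point is the actual content of Cartwright's theorem, and your proposal does not address it.

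A smaller, fixable imprecision: for $w$ in the relative interior of a positive-dimensional cell $P$, the star of $\Trop(X)$ at $w$ is not the star-quotient $\Sigma_P$ but rather has lineality space $N_{P,\R}$, with quotient of Bergman type. Before citing Proposition~\ref{p:matroidtrop} you should either use that $\operatorname{in}_w(X)$ is invariant under the subtorus with cocharacter lattice $N_P$ and descend to the quotient torus, or invoke the stability of Bergman supports under adding lineality as in (the proof of) Lemma~\ref{l:anystructure}; as written, your second step silently conflates the star with the star-quotient.
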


By combining results from this section:

\begin{prop} \label{p:schonlog}
Let $X\subset(K^*)^n$ be a tropically smooth variety. Let $\Delta$ be a projective fan. Then, there is a rational polyhedral complex $\Sigma$ as in Proposition~\ref{p:dilatesnc} where $\cX$ is the closure of $X$ in $\cP(\Sigma)_{\cO'}$ with $\cO'=\cO[\pi^{\frac{1}{d}}]$. Let $R$ be a set of rays of the recession fan $\Sigma_0$ of $\Sigma$, and give $\cP(\Sigma)_{\cO}$ the log structure induced from $(\Sigma,N\coloneqq\Z^n,R)$.  
The closed fiber $X_0$ is transverse-to-strata in $(\cP(\Sigma)_0,D)$ where $D$ is the divisor at infinity, $D\coloneqq \bigcup_{\rho\in(\Sigma_0)_{(1)}\setminus R}(D_\rho\cap \cP(\Sigma)_0)$. Moreover, the inverse image log structure on $X_0$ is that of a simple normal crossings scheme with divisor $D\cap X_0$ at infinity. Moreover, each stratum $X_F$ is connected.
\end{prop}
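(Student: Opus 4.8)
The plan is to assemble the proposition from the results of this section, so that every clause except connectivity is a direct citation; the connectivity of the strata is the one place where tropical smoothness must be used in an essential way.

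First I would apply \cite{Cartwright:Grobner} to deduce that the tropically smooth $X$ is sch\"{o}n, which makes Proposition~\ref{p:dilatesnc} available. Applied to the projective fan $\Delta$, it produces an integer $d$, the ramified base change $\cO'=\cO[\pi^{1/d}]$, and a complete unimodular rational polyhedral complex $\Sigma$ whose recession fan $\Sigma_0$ refines $\Delta$ and supports the recession fan of $\Trop(X)$, such that the generic fiber $\cX_{K'}$ is smooth and proper, $\cP(\Sigma)_0$ is a strict normal crossings scheme, and the closed fiber $X_0$ is a proper normal crossings scheme that is transverse-to-strata in $\cP(\Sigma)_0$. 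Fixing a set $R$ of rays of $\Sigma_0$, I equip $\cP(\Sigma)_{\cO}$ with the log structure attached to $(\Sigma,\Z^n,R)$ as in the preceding subsections, so that $D=\bigcup_{\rho\in(\Sigma_0)_{(1)}\setminus R}(D_\rho\cap\cP(\Sigma)_0)$ is the divisor at infinity.

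Next I would promote ``transverse-to-strata in $\cP(\Sigma)_0$'' to ``transverse-to-strata in the pair $(\cP(\Sigma)_0,D)$.'' Because $\Sigma$ is unimodular, $\cP(\Sigma)_0$ together with the toric divisors comprising $D$ is a strict normal crossings divisor on the toric scheme, so $(\cP(\Sigma)_0,D)$ is a normal crossings pair. Transversality of $X_0$ to every torus orbit (the sch\"{o}n condition packaged in Proposition~\ref{p:dilatesnc}) implies, by a local computation in toric coordinates, that $X_0$ meets each stratum of $\cP(\Sigma)_0\cup D$ transversely; this yields the normal crossings pair $(X_0,D\cap X_0)$ and exhibits $X_0$ as a transverse-to-strata subscheme of $(\cP(\Sigma)_0,D)$. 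Lemma~\ref{r:inducedlogstructure}, applied to this pair and subscheme, then identifies the inverse image log structure on $X_0$ with that of a strict normal crossings scheme carrying the divisor $D\cap X_0$ at infinity, proving the log-structure assertion.

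The remaining, and genuinely substantive, claim is that each closed stratum $X_F$ is connected; this is the main obstacle and the one place where tropical smoothness is indispensable. A stratum $X_F$ attached to a polyhedron $F\in\Sigma$ is the closure, in the toric variety $\PP(\Sigma_F)$, of the initial degeneration of $X$ along $F$, whose tropicalization with respect to the trivial valuation on $\C$ is the star-quotient $\Sigma_F$. By Definition~\ref{d:smoothcomplex} (equivalently Lemma~\ref{l:anystructure}), smoothness of $\Trop(X)$ forces $\Sigma_F$ to have the support of a Bergman fan $\Sigma(\M_F)$ with all top-dimensional cones of weight one. Proposition~\ref{p:matroidtrop} then identifies the projective closure of this initial degeneration with a projective linear subspace with matroid $\M_F$, which is irreducible; its intersection with the open torus is therefore a dense open subvariety of an irreducible variety, hence connected, and passing to the closure $X_F$ preserves irreducibility. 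I expect the only real care to be in matching the scheme-theoretic stratum $X_F=X_0\cap V(F)$ with this closure and in confirming it is reduced and irreducible---the former via the orbit-stratum dictionary for sch\"{o}n compactifications, and the latter from the base change of Proposition~\ref{p:dilatesnc} making the closed fiber reduced together with the linear-space identification.
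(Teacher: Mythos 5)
Your proposal is correct and takes essentially the same route as the paper, which states this proposition without a separate proof, as an immediate combination of the section's results (Cartwright's theorem that tropical smoothness implies sch\"{o}nness, Proposition~\ref{p:dilatesnc}, and Lemma~\ref{r:inducedlogstructure}). Your connectivity argument---identifying each stratum, via initial degenerations and the Bergman-fan support forced by smoothness, with a subspace compactification through Proposition~\ref{p:matroidtrop}---is exactly the identification the paper relies on and invokes explicitly later, in the proof of Proposition~\ref{p:mainmaintheorem}.
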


It is an observation of Tevelev \cite{Tevelev:compactifications,LuxtonQu} that one can choose $\Sigma$ such that its support is $\Trop(X)$. 

\section{Logarithmic Unipotent Fundamental Group} \label{s:logpi1}

\subsection{Definitions}
We will recall the theory of the log de Rham fundamental group analogous to the usual de Rham theory \cite[Section~12]{Deligne:groupefondamental}. In this section, all schemes will be over $\C$. To allow for a bit of flexibility, we will attach a fundamental group to a shortened complex of differential forms.

\begin{definition}
    Given a morphism $f\colon X\to Y$ and a shortened complex of differential forms $\tilde{\Omega}$ on $X$ over $Y$, a \emph{vector bundle with $\tilde{\Omega}$-connection} $(\cE,\nabla)$ is a  locally free sheaf $\cE$ on $X$ together with a $f^{-1}\cO_Y$-linear morphism of sheaves 
    \[\nabla\colon \cE\to \tilde{\Omega}^1\otimes \cE\]
    satisfying $\nabla(fs)=f\nabla(s) + df\otimes s$. 

    Given $(\cE,\nabla)$, one may form the operator
    \[\hat{\nabla} \colon \tilde{\Omega}^1\otimes \cE\to \tilde{\Omega}^2\otimes \cE,\ 
    \hat{\nabla}(\omega\otimes s)=d_{\tilde{\Omega}}\omega \otimes s - \omega \wedge \nabla(s).\]
    We say $\nabla$ is integrable if $\hat{\nabla}\circ\nabla=0$.
    The vector space of horizontal sections is
    \[\Gamma(X,(\cE,\nabla))\coloneqq \{s\in\Gamma(X,\cE)\mid \nabla s=0\}.\]
\end{definition}

Given a shortened complex $\tilde{\Omega}$ attached to $f\colon X\to Y$, we can define 
tensor products, duals, and $\Hom$-objects for vector bundles with integrable $\tilde{\Omega}$-connections
as in the usual case. 
The unit object $\mathbf{1}$ is given by the trivial bundle $\cO_X$ equipped with connection form $\nabla=d$. A morphism $f\colon (\cE_1,\nabla_1)\to (\cE_2,\nabla_2)$ is a horizontal vector bundle map, i.e.,~$f\colon \cE_1\to \cE_2$ such that $f(\nabla_1(s))=\nabla_2(f(s))$. There is a canonical isomorphism
\begin{equation} \label{e:classicalhoms}
\Hom(\cE_1,\cE_2)\cong \Gamma(X,(\cE_1,\nabla_1)^\vee\otimes (\cE_2,\nabla_2)).
\end{equation}
We define $\sC(X/Y,\tilde{\Omega})$, {\em the de 
Rham rigid abelian tensor category} attached to $\tilde{\Omega}$ to be that of locally free sheaves $\cE$ equipped with an integrable 
$\tilde{\Omega}$-connection $\nabla$. We write $\sC((X,M_X)/(Y,M_Y))$ when $\tilde{\Omega}$ is defined from a log morphism $f\colon (X,M_X)\to (Y,M_Y)$ and will suppress $(Y,M_Y)$ when it is understood.
We use the superscript ``$\un$'' to denote the corresponding unipotent category.

Objects in the category can be understood more explicitly when the underlying bundle is trivial, in which case the connection can be written as $\nabla=d-\theta$ for 
\[\theta\in\Gamma(\tilde{\Omega}^1\otimes \End(\cE))\]
satisfying $d\theta-\theta\wedge\theta=0$.
When $X$ is proper and both $\cE_1$ and $\cE_2$ are trivial, a morphism $f\colon (\cE_1,\nabla_1)\to (\cE_2,\nabla_2)$ is a constant map on vector bundles such that the following diagram commutes:
\[\xymatrix{\cE_1\ar[r]^f\ar[d]^{\theta_1}& \cE_2\ar[d]^{\theta_2}\\
\tilde{\Omega}^1\otimes \cE_1\ar[r]^{1\otimes f}&\tilde{\Omega}^1\otimes \cE_2}. \]
We can make use of trivializations on components of $X$: an object $(\cE,\nabla)$ of $\sC(X/Y,\tilde{\Omega})$ is {\em component-trivial} if the bundle underlying $\cE|_{X_v}$ is trivial for each irreducible component $X_v$ of $X$ (although the connection may be nontrivial).  
\begin{definition}
    A log scheme $(X,M)$ 
    is {\em locally topologically acyclic} if each stratum $X_F$ is proper and  $H^1(X_F,\cO_{X_F})=0$.
\end{definition}

\begin{lemma}
    If $(X,M)$ is locally topologically acyclic, then each unipotent bundle on $(X,M)$ is component-trivial.
\end{lemma}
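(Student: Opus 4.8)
The plan is to induct on the length $n$ of the unipotent filtration
\[\cE = \cE^0 \supset \cE^1 \supset \dots \supset \cE^{n+1} = 0\]
whose graded pieces $\cE^i/\cE^{i+1}$ are direct sums of the unit object $\mathbf{1}=(\cO_X,d)$. Fix an irreducible component $X_v$ of $X$; since $X_v$ is one of the strata, the hypothesis gives that it is proper with $H^1(X_v,\cO_{X_v})=0$. Writing $i_v\colon X_v\hookrightarrow X$ for the inclusion, I want to show that the underlying $\cO_{X_v}$-module of $i_v^*\cE$ is free. For the base case $n=0$, $\cE$ is itself a direct sum of copies of $\mathbf{1}$, so its underlying sheaf is $\cO_X^{\oplus r}$ and $i_v^*\cE\cong\cO_{X_v}^{\oplus r}$ is trivial.

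For the inductive step, consider the short exact sequence of objects
\[0 \to \cE^1 \to \cE \to \cE/\cE^1 \to 0,\]
in which $\cE/\cE^1$ is a direct sum of copies of $\mathbf{1}$ and $\cE^1$ carries the induced unipotent filtration of length $n-1$. Every term here is a locally free $\cO_X$-module, and since $\cE/\cE^1$ is locally free the sequence is locally split; hence it remains exact after applying $i_v^*$, yielding
\[0 \to i_v^*\cE^1 \to i_v^*\cE \to i_v^*(\cE/\cE^1) \to 0\]
on $X_v$. The right-hand term is $\cO_{X_v}^{\oplus r}$, and the inductive hypothesis applied to $\cE^1$ shows that the left-hand term is $\cO_{X_v}^{\oplus s}$ for some $s$.

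It remains to check that this extension of trivial bundles splits as $\cO_{X_v}$-modules; here we may forget the connection, as only triviality of the underlying bundle is at issue. Such extensions are classified by
\[\Ext^1_{\cO_{X_v}}\!\left(\cO_{X_v}^{\oplus r},\,\cO_{X_v}^{\oplus s}\right) \cong H^1(X_v,\cO_{X_v})^{\oplus rs},\]
which vanishes precisely because $(X,M)$ is locally topologically acyclic. Therefore $i_v^*\cE\cong\cO_{X_v}^{\oplus(r+s)}$ is trivial, completing the induction, and since $v$ was arbitrary the bundle is component-trivial. The only genuine input is the cohomological vanishing $H^1(X_v,\cO_{X_v})=0$; everything else is bookkeeping of the filtration. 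The point requiring the most care is that the graded pieces are honestly direct sums of $\mathbf{1}$, so that at each stage the outer terms of the restricted sequence are genuinely free and the classifying $\Ext$-group is a direct sum of copies of $H^1(X_v,\cO_{X_v})$.
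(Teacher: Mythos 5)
Your proof is correct and follows essentially the same route as the paper's: induct on the unipotent filtration, restrict the extension to a component $X_v$, and kill it via $\Ext^1(\cO_{X_v}^{\oplus r},\cO_{X_v}^{\oplus s})\cong H^1(X_v,\cO_{X_v})^{\oplus rs}=0$. The only cosmetic difference is that the paper inducts on the rank, peeling off a single copy of $\cO$ at a time, while you induct on the filtration length and peel off a whole graded piece; your explicit check that the sequence stays exact after restriction (via local splitness) is a detail the paper leaves implicit.
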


\begin{proof}
  We induct on the rank of a unipotent vector bundle $\cE$ on $(X,M)$. If the rank of $\cE$ is greater than $1$, then there is a short exact sequence of underlying vector bundles
  \[\xymatrix{
  0\ar[r]&\cE'\ar[r]&\cE\ar[r]&\cO\ar[r]&0}.\]
  We restrict these bundles to a component $X_v$.
  By induction, $\cE'|_{X_v}$ is a trivial bundle of rank $n-1$. Then, the extension is classified by 
  \[\Ext^1(\cO_{X_v},\cE'|_{X_v})\cong \Ext^1(\cO_{X_v},\cO_{X_v})^{\oplus (n-1)}\cong H^1(X_v,\cO_{X_v})^{\oplus (n-1)}\cong 0.\]
\end{proof}

\begin{definition} 
For a point $x_0\in X(\C)$, the {\em fiber functor at $x_0$} is $\omega_{x_0} \colon\sC^{\un}(X/Y,\tilde{\Omega})\to \Vect_\C$ is given by $\cE\mapsto \cE|_{x_0}$.
For $(X,M)$ locally topologically acyclic, the {\em regular sections fiber functor at a stratum $X_F$} is $\omega_F\colon\sC^{\un}(X/Y,\tilde{\Omega})\to \Vect_\C$ given by 
$\cE\mapsto \Gamma(X_F,\cE|_{X_F})$
where $\Gamma$ denotes regular (not necessarily horizontal) sections.
If $x_0\in X_{F}(\C)$, the {\em evaluation path} is the isomorphism of fiber functors 
$\delta_{Fx_0}\colon\omega_{F}\xrightarrow{\cong}\omega_{x_0}$
given by evaluating sections at $x_0$.
\end{definition}

The fiber functor $\omega_{x_0}$ is faithful. Indeed, since $\Hom((\cE_1,\Delta_1),(\cE_2,\Delta_2))=\Gamma((\cE_1,\nabla_1)^\vee\otimes (\cE_2,\nabla_2))$, it suffices to show $\on{ev}_x\colon\Gamma(\cE,\nabla)\to\cE_x$ is injective for any $(\cE,\nabla)$. This is clearly true for $\mathbf{1}$. We induct on the length of $\cE$. Given an exact sequence
\[\xymatrix{0\ar[r]&(\cE',\nabla')\ar[r]&(\cE,\nabla)\ar[r]&\mathbf{1}\ar[r]&0}\]
we have a commutative diagram whose outer vertical arrows are injective
\[\xymatrix{
0\ar[r]&\Gamma(\cE',\nabla')\ar[r]\ar[d]&\Gamma(\cE,\nabla)\ar[r]
\ar[d]&\Gamma(\mathbf{1})\ar[d]\\
0\ar[r]&\cE'_x\ar[r]&\cE_x\ar[r]
&\mathbf{1}_x.
}\]
Now, the four lemma applies. Since $\omega_{x_0}$ factors through $\omega_F$   for $x_0\in X_F(\C)$, $\omega_F$ is faithful.

\begin{rem} \label{r:restrictedconnection}
    Let $(X,M)/S$ be a smooth variety with log structure induced by a strict normal crossings divisor $D$. Let $(\cE,\nabla)$ be a vector bundle with integrable connection on $(X,M)$. If $D_v$ is a component of $D$, we can consider the following composition where the last map is given by restriction:
    \[\xymatrix{
    \cE\ar[r]^>>>>{\nabla}& \Omega^1_{(X,M)/S}\otimes \cE\ar[r]&  \Omega^1_{(X,M)/S}\otimes \cE\otimes \cO_{D_v}}.\]
    On an open set $U$ for which $D_v$ is cut out by a regular function $f$, 
    \begin{align*}
        \nabla(fs)&=df\otimes s+f\nabla s\\
           &=f\left(\frac{df}{f}\otimes s+\nabla s\right),
    \end{align*}
    and so the composition factors through $\cE\otimes \cO_{D_v}$.
    Similarly, for any stratum $D_F$ of $D$, there is a well-defined restricted connection
    \[\nabla_F \colon \cE\otimes \cO_{D_F}\to (\Omega^1_{(X,M)/S}\otimes \cE)\otimes \cO_{D_F}.\]
    If $(D_F,M_F)$ is the log scheme with inverse image log structure,  one has the
    restriction functor 
    \[\sC((X,M)/S)\to \sC((X_F,M_F)/S),\ (\cE,\nabla)\to (\cE\otimes \cO_{X_D},\nabla_F).\]
    The analogous statement is true for strata of strict normal crossings schemes.
    For $x_0\in X_F(\C)$, $\omega_{x_0}$ factors through the restriction functor, so these restriction functors are faithful.
\end{rem}

\subsection{Kato--Nakayama spaces}
When the log structure on $(X,M)$ is fine saturated, the underlying scheme of $X$ is of finite type, and the morphism $(X,M)\to S^\dagger$ is log smooth (which holds for simple normal crossings schemes), the Tannakian category $\cC^{\un}((X,M)/S^\dagger)$ and its corresponding fundamental group can be related to the unipotent completion of the classical fundamental group of the Kato--Nakayama space \cite{KatoNakayama} of $(X,M)$. 
For the log analytic space attached to $(X,M)$, the {\em Kato--Nakayama space} $X^{\log}$ is defined to be a certain topological space equipped with a sheaf $\cO_X^{\log}$. If $(X,M)$ arises as the closed fiber of a semistable degeneration, $X^{\log}$ is a topological stand-in for the generic fiber of the family. When the log structure is attached to a normal crossings divisor, $X^{\log}$, is a ``real blow-up'' at the divisor. We recommend \cite{Ogus:logbook} for more details. 
    
The Kato--Nakayama space of $S^\dagger$ is $S^1$ and $\cO_{S^\dagger}^{\log}$ is the the symmetric algebra on the pushforward of the locally constant sheaf $\underline{\R}$ under the exponential map $\R\to S^1$, that is, polynomials in the angular coordinate $\theta$.  Morphisms of log schemes $(X,M)\to S^{\dagger}$ induce continuous maps $X^{\log}\to S^1$. We write $X^{\log}_{\zeta}$ for the fiber over some $\zeta\in S^1$.

For $(X,M)/S^\dagger$, a point $x\in X(\C)$ is {\em relatively trivial} if the relative characteristic sheaf $\overline{M}_{X/S^\dagger}$ vanishes at $x$. In the case of a strict normal crossing scheme with divisor at infinity over $S^\dagger$, these would be the $\C$-points in a maximal open stratum. For such a point, there is a $S^\dagger$-morphism $S^\dagger\to (X,M)$ with $x$ as its image inducing a map $S^1\to X^{\log}$. We write $(x^{\log},\zeta)$ for the image of $\zeta\in S^1$ under this map, and $\omega_{(x^{\log},\zeta)}$ for the corresponding fiber functor.

For a topological space $Z$, let $\sC^B(Z)$ denote the Tannakian category of $\C$-local systems on $Z$.

\begin{theorem} \label{t:kniso}
    Let $\zeta\in S^1$.
    There is an equivalence of categories
    \[G\colon \sC^{\un}((X,M)/S^\dagger) \to \sC^{B,\un}(X_\zeta^{\log})\]
    where for a relatively trivial point $x\in X(\C)$, the fiber functor $\omega_x$ is taken to $\omega_{(x^{\log},\zeta)}$.
\end{theorem}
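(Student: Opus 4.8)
The plan is to realize $G$ as a unipotent, relative Riemann--Hilbert correspondence and to verify it is an equivalence by checking full faithfulness and essential surjectivity, as permitted by the criterion recalled in Section~\ref{s:tannakianfundamentalgroups}. First I would construct the functor explicitly. Let $\tau\colon X^{\log}\to X^{\an}$ be the canonical projection and $\cO_X^{\log}$ the Kato--Nakayama structure sheaf. Given a unipotent $(\cE,\nabla)$ over $S^\dagger$, I would analytify, form $\cO_X^{\log}\otimes_{\tau^{-1}\cO_X^{\an}}\tau^{-1}\cE^{\an}$ with its induced connection relative to $S^1$, and take the kernel of that connection; the relative log Poincar\'{e} lemma of Kato--Nakayama \cite{KatoNakayama,Ogus:logbook} guarantees that this kernel is a local system on $X^{\log}$, whose restriction to the fiber $X^{\log}_\zeta$ defines $G(\cE,\nabla)$. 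Because the kernel-of-connection operation is compatible with tensor products and internal $\Hom$ and sends $\mathbf{1}$ to the constant sheaf, $G$ is an exact $\C$-linear tensor functor preserving unipotence. For a relatively trivial point $x$ the relative characteristic sheaf vanishes, so the $S^\dagger$-section $S^\dagger\to (X,M)$ through $x$ identifies, near $(x^{\log},\zeta)$, the stalk of $G(\cE,\nabla)$ with $\cE|_x$; this is exactly the statement that $\omega_x$ is carried to $\omega_{(x^{\log},\zeta)}$.

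The heart of the argument is a cohomological comparison, which I regard as the main obstacle. Applying the Kato--Nakayama comparison theorem to the relative log de Rham complex over $S^\dagger$ (and proper base change along $S^1$, using properness of the strata in our applications) should yield natural isomorphisms
\[H^i_{\dR}((X,M)/S^\dagger,\cE)\cong H^i(X^{\log}_\zeta, G(\cE,\nabla))\]
for $i=0,1$. Full faithfulness then follows from the case $i=0$ applied to the internal $\Hom$-bundle $\cE_1^\vee\otimes\cE_2$: by \eqref{e:classicalhoms} the left side computes $\Hom$ in $\sC^{\un}((X,M)/S^\dagger)$, while the right side, since $G$ is a tensor functor, computes $\Hom$ of the associated local systems.

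For essential surjectivity I would induct on the length of a unipotent local system $L$ on $X^{\log}_\zeta$. Choosing a surjection $L\to\C$ whose kernel $L'$ is already of the form $G(\cE')$ by induction, the extension class lies in $\Ext^1(\C,L')\cong H^1(X^{\log}_\zeta,L')$; the case $i=1$ of the comparison identifies this with $H^1_{\dR}((X,M)/S^\dagger,\cE')\cong\Ext^1(\mathbf{1},\cE')$, so the extension is realized by a unipotent connection mapping to $L$ under $G$. Together with full faithfulness this gives the asserted equivalence of Tannakian categories, compatibly with the chosen fiber functors.

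The delicate points I expect to require genuine care are the \emph{relative} (as opposed to absolute) form of the log Poincar\'{e} lemma, which is what makes $G$ land in local systems on the fiber rather than on all of $X^{\log}$, and the passage from relative log de Rham cohomology over $S^\dagger$ to the cohomology of the single fiber $X^{\log}_\zeta$, where the properness and log smoothness hypotheses enter. The tensor, exactness, and fiber-functor compatibilities, by contrast, are formal consequences of the construction once the comparison isomorphism is in hand.
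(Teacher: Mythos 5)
Your proposal is correct in substance, but it reaches the equivalence by a genuinely different route than the paper. The paper's proof is a direct adaptation of Kato--Nakayama's proof of their Theorem (0.5): the functor $G$ is the same one you construct, but the equivalence is obtained by carrying over their explicit quasi-inverse $L\mapsto\tau_*(\cO_X^{\log}\otimes_{\C}L)$ and their $R\tau_*$ computations, with the absolute log Poincar\'{e} lemma replaced by the relative one of Fujisawa--Kato \cite{FKato:relativePL}; unipotent objects then correspond simply because $G$ is exact and sends $\mathbf{1}$ to the trivial local system. You never construct a quasi-inverse: instead you prove full faithfulness from the degree-$0$ comparison applied to internal $\Hom$ objects, and essential surjectivity by d\'{e}vissage on unipotent length, using the degree-$1$ comparison to lift extension classes. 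That is the standard technique for unipotent comparison theorems and it is sound; it buys a self-contained argument that needs the comparison only in degrees $0$ and $1$, whereas the paper's route inherits from Kato--Nakayama, essentially for free, the stronger equivalence for the whole nilpotent-residue category. Two steps in your outline need more care than you give them: (i) the Fujisawa--Kato comparison is stated for constant coefficients, so the isomorphism $H^i_{\dR}((X,M)/S^\dagger,\cE)\cong H^i(X^{\log}_\zeta,G(\cE,\nabla))$ for unipotent $\cE$, and the fact that it coincides with the map $G$ induces on $\Ext^i(\mathbf{1},-)$ (which your lifting of extension classes tacitly uses), must themselves be bootstrapped by induction on length via the five lemma, exactness of $G$, and naturality of the comparison map; (ii) your appeal to properness of the strata and proper base change imports a hypothesis absent from the theorem, which assumes only that $(X,M)$ is fine saturated, of finite type, and log smooth over $S^\dagger$ --- though since the paper's own proof leans on the same Fujisawa--Kato input, this caveat is shared rather than a defect unique to your argument.
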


The proof is a slight extension of Theorem~(0.5) of \cite{KatoNakayama} which is stated for schemes over a trivial log point and which  works with the category of vector bundles with integrable connections with nilpotent residues. One observes that the functor $G$ is exact and takes line bundle with trivial connection to a trivial one-dimensional local system, and thus preserves unipotent objects. To generalize the theorem to log schemes over $S^\dagger$, one uses the relative log Poincar\'{e} lemma of \cite{FKato:relativePL} and observes an isomorphism between the log de Rham cohomology of $(X,M)$ and the Betti cohomology of $X^{\log}_\zeta$ as per \cite[Section~4]{FKato:relativePL}. Otherwise, the proof carries through verbatim.
The statement about fiber functor follows from the definition of $G$.

\begin{corollary} \label{c:kniso}
    In the above situation, there is an isomorphism of Tannakian fundamental groups
    \[\pi_1^{\un}((X,M),x)\to \pi_1^{B,\un}(X_\zeta^{\log},(x^{\log},\zeta)).\]
\end{corollary}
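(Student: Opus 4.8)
The plan is to deduce the corollary formally from the equivalence of neutral Tannakian categories established in Theorem~\ref{t:kniso}, using the reconstruction formalism recalled in Section~\ref{s:tannakianfundamentalgroups}. Recall that for a neutral Tannakian category with a fiber functor, the fundamental group is the affine group scheme representing the functor that sends a $\C$-algebra $R$ to the group $\on{Aut}^\otimes(\omega\otimes R)$ of $R$-linear tensor automorphisms; thus the content of the corollary is simply that $G$ transports one such group of automorphisms isomorphically onto the other. First I would record the precise compatibility furnished by Theorem~\ref{t:kniso}: the functor $G$ is an equivalence intertwining fiber functors, in the sense that for a relatively trivial point $x$ there is a canonical isomorphism $\omega_{(x^{\log},\zeta)}\circ G\cong \omega_x$.

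Next I would invoke the general principle that an equivalence of neutral Tannakian categories compatible with fiber functors induces an isomorphism of the associated fundamental groups. Concretely, whiskering with $G$ defines, for each $\C$-algebra $R$, a map
\[\on{Aut}^\otimes(\omega_{(x^{\log},\zeta)}\otimes R)\longrightarrow \on{Aut}^\otimes(\omega_x\otimes R),\qquad \eta\mapsto \eta\ast G,\]
where the canonical isomorphism $\omega_{(x^{\log},\zeta)}\circ G\cong\omega_x$ is used to identify the target. Because $G$ is fully faithful and essentially surjective, precomposition with a quasi-inverse of $G$ provides a two-sided inverse, so this map is a bijection; it is manifestly a group homomorphism and natural in $R$, hence defines an isomorphism of the representing affine group schemes. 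Inverting this isomorphism yields the arrow in the stated direction $\pi_1^{\un}((X,M),x)\to \pi_1^{B,\un}(X_\zeta^{\log},(x^{\log},\zeta))$.

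I do not expect a genuine obstacle, since all the substance already lies in Theorem~\ref{t:kniso}; the only point requiring care is that this is an isomorphism of pro-algebraic groups and not merely of abstract groups. This follows because $G$ is a $\C$-linear tensor functor, so it commutes with the extension of scalars used to define the functor of points of the fundamental group, making the bijection above natural in $R$ and therefore an isomorphism of affine group schemes; since both categories are unipotent by construction, this is the desired isomorphism of pro-unipotent fundamental groups.
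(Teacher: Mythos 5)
Your proposal is correct and matches the paper's (implicit) argument: the paper states Corollary~\ref{c:kniso} without a separate proof precisely because it follows formally from Theorem~\ref{t:kniso} by the standard Tannakian principle that an equivalence of neutral Tannakian categories intertwining fiber functors induces an isomorphism of the corresponding fundamental groups. Your spelled-out functor-of-points argument, including the observation that whiskering with $G$ naturally produces the map from the Betti side to the de Rham side and must then be inverted to obtain the stated direction, is exactly the intended justification.
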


We will need the following result connecting Kato--Nakayama spaces to the fibers of a family:
\begin{theorem} \label{t:knhomotopy}
    Let $U\subset \C$ be an open set containing the origin. Let $\cX\to U$ be a proper analytic semistable family where $X_0$ is the fiber over $0$. Let $\cD\subset \cX$ be a divisor intersecting the strata of $X_0$ transversely. Give $\cX$ the log structure induced by the divisor $X_0\cup \cD$ and let $(X_0,M_{X_0})$ be the inverse image log structure so that it is that of a strict normal crossings scheme with divisor at infinity induced by $\cD\cap X_0$. Then, for any $t\in U\setminus\{0\}$ arbitrary close to $0$, there is  a homotopy equivalence between $(X_0^{\log})_\zeta$ and $\cX_t\setminus \cD_t$.

    Given a $C^1$-path $\delta$ between $0$ and $t$ in $\{s\in U\mid \|s\|\leq \|t\|\}$ with $\arg(\delta'(0))=\zeta$ and its lift $\tilde{\delta}$  in $\cX$ between a point $x_0\in X_0(\C)$ with relatively trivial log structure 
    and some $x_t\in X_t(\C)$, there is an isomorphism
    \[\pi_1((X_0^{\log})_\zeta,(x_0)_{\zeta})\cong \pi_1(\cX_t\setminus \cD_t,x_t)\]
\end{theorem}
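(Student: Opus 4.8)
The plan is to realize both spaces as fibers of a single Kato--Nakayama fibration and then to transport basepoints along a lift of $\delta$. First I would equip $U$ with the log structure at the origin, so that $U^{\log}$ is the oriented real blow-up of $U$ at $0$, a half-open cylinder whose exceptional circle is $(S^\dagger)^{\log}=S^1$, while keeping on $\cX$ the log structure induced by $X_0\cup\cD$. In the semistable local model $z_0\cdots z_k=t$ together with the transverse equation $w_1\cdots w_m=0$ cutting out $\cD$, the induced map of monoids is the diagonal $\N\to\N^{k+1}\subseteq\N^{k+1+m}$, which exhibits $f\colon(\cX,M_{\cX})\to(U,M_U)$ as log smooth, proper, exact, and saturated (reducedness of the special fiber giving saturation). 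Passing to Kato--Nakayama spaces yields a proper continuous map $f^{\log}\colon\cX^{\log}\to U^{\log}$. Since Kato--Nakayama is compatible with the strict base change $S^\dagger\to U$, the restriction of $\cX^{\log}$ over the exceptional circle is exactly $X_0^{\log}\to S^1$, whose fiber over $\zeta$ is $(X_0^{\log})_\zeta$; over a point $t\in U\setminus\{0\}$ the fiber is $(\cX_t)^{\log}$, the oriented real blow-up of $\cX_t$ along $\cD_t$, the log structure on the fiber being the residual $\cD$-part.

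Next I would invoke the topological fibration theorem for proper log smooth morphisms (relative rounding, see \cite{Ogus:logbook,KatoNakayama}): since $f$ is log smooth, proper, exact, and saturated, $f^{\log}$ is a topological fiber bundle. As $U^{\log}$ is connected, all fibers are homeomorphic, so in particular $(X_0^{\log})_\zeta\cong(\cX_t)^{\log}$. Finally, $(\cX_t)^{\log}$ is a manifold with boundary whose interior is $\cX_t\setminus\cD_t$, so the inclusion of the interior is a homotopy equivalence by the collar neighborhood theorem. Composing gives the desired homotopy equivalence
\[(X_0^{\log})_\zeta\;\cong\;(\cX_t)^{\log}\;\simeq\;\cX_t\setminus\cD_t.\]

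For the statement about fundamental groups I would make the homeomorphism of fibers canonical via parallel transport. The path $\delta$ lifts to a path $\delta^{\log}$ in $U^{\log}$ whose initial point is the point of the exceptional circle with angular coordinate $\arg(\delta'(0))=\zeta$; choosing a trivialization of the bundle $f^{\log}$ along $\delta^{\log}$ produces a homeomorphism $(X_0^{\log})_\zeta\to(\cX_t)^{\log}$, well-defined up to homotopy by the class of $\delta^{\log}$. Because $x_0$ is relatively trivial, the $S^\dagger$-morphism $S^\dagger\to(X_0,M_{X_0})$ through $x_0$ gives a canonical lift $(x_0)_\zeta$ in $(X_0^{\log})_\zeta$, and the geometric lift $\tilde\delta$ of $\delta$ through $x_0$ is precisely the transport path carrying $(x_0)_\zeta$ to $x_t$. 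Since $x_t\notin\cD_t$ (the lift $\tilde\delta$ issuing from the relatively trivial point $x_0$ remains in the complement of $\cD$), the blow-up map is a local homeomorphism near $x_t$ and the basepoint is preserved under passage to the interior $\cX_t\setminus\cD_t$. This yields the asserted isomorphism $\pi_1((X_0^{\log})_\zeta,(x_0)_\zeta)\cong\pi_1(\cX_t\setminus\cD_t,x_t)$.

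The hard part will be checking the hypotheses of the fibration theorem in the presence of both the vertical divisor $X_0$ and the horizontal divisor $\cD$: confirming that the combined log structure keeps $f$ log smooth, exact, and saturated so that relative rounding applies, and that the residual log structure on a general fiber is exactly the divisorial one along $\cD_t$. The remaining delicate point is the basepoint bookkeeping, namely verifying that the canonical lift of the relatively trivial point $x_0$, the normalization $\arg(\delta'(0))=\zeta$ of the angular coordinate, and the geometric lift $\tilde\delta$ are mutually compatible, so that transport along $\tilde\delta$ literally realizes parallel transport along $\delta^{\log}$ in the bundle $f^{\log}$.
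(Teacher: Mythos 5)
Your proposal is correct and follows essentially the same route as the paper: equip $U$ with the standard log disc structure, apply the relative rounding theorem of Nakayama--Ogus (the paper cites \cite[Proposition~5.1]{NakayamaOgus}) to conclude that $\cX^{\log}\to U^{\log}$ is a topological fiber bundle, identify the fiber over $\zeta$ in the exceptional circle with $(X_0^{\log})_\zeta$ and the fiber over $t$ with $\cX_t^{\log}$, use the standard homotopy equivalence $\cX_t\setminus\cD_t\hookrightarrow \cX_t^{\log}$, and transport basepoints along a lift of $\tilde{\delta}$ to $\cX^{\log}$. Your explicit verification of exactness and saturation in the semistable local model, and the parallel-transport bookkeeping for basepoints, simply spell out the details the paper delegates to the cited result.
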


\begin{proof}
  By passing to an open subset of $U$ and rescaling, we may suppose $U$ is an open disc and that 
  $\cX$ is a submersion over $U^*=U\setminus\{0\}$. Give $U$ the log structure of a standard 
  log disc (i.e.,~with log structure corresponding to the origin as a divisor). Then 
  $(\cX,M_{\cX})\to (U,M_U)$ satisfies the hypotheses of \cite[Proposition~5.1]{NakayamaOgus}, 
  and $\cX^{\log}\to U^{\log}$ is a fiber bundle. Consequently, there is a homotopy 
  equivalence between the fiber over $t$ and the fiber over $\zeta\in (\{0\})^{\log}\subset U^{\log}$, which 
  is exactly $(\cX_0^{\log})_\zeta$. Moreover, it is a standard result that 
  $X_t\setminus\cD_t\hookrightarrow X^{\log}_t$ is a homotopy equivalence. 

  We can lift $\tilde{\delta}$ to a path in $\cX^{\log}$ between $(x_0)_{\zeta}$ and $x_t$. This induces the isomorphism of fundamental groups.
\end{proof}

\subsection{Descent for the log unipotent de Rham category}

We will prove that the unipotent category of vector bundles with connection on a strict normal crossings scheme with a divisor at infinity is equivalent to the unipotent subcategory of a suitable descent category. To do so, we must use descent results from Appendix~\ref{a:appendix}. For general background on descent, see \cite{Vistolidescent} or \cite[Chapter~6]{BLR}. 

Let $(X_0,M)$ be a log scheme over $S^\dagger$ with the structure of a simple normal crossings scheme with divisor at infinity. Let $\Gamma$ be the dual complex of $(X_0,M)$, i.e.,~the simplicial complex whose vertices are given by the components of $X_0$ and each face corresponds to the intersection of corresponding components. For a face $F$ of $\Gamma$, let $X_F$ denote the corresponding closed stratum. Define the log $S^\dagger$-scheme $(X_F,M_F)$ by letting $M_F$ be the inverse image log structure induced by $X_F\hookrightarrow X_0$.
Observe that for an inclusion of faces $F_1\subset F_2$, there is a morphism of log $S^{\dagger}$-schemes 
 $(X_{F_2},M_{F_2})\to (X_{F_1},M_{F_1})$.

\begin{definition}
Given a strict normal crossings scheme with divisor at infinity $(X_0,M)$ such that each $X_F$ is connected, we define the Tannakian category $\sC_{\Gamma}$ over $\Gamma$ of $(X_0,M)$ as follows:
\begin{enumerate}
    \item for each face $F$ of $\Gamma$, set $\sC_F=\sC^{\un}((X_F,M_F)/S^\dagger)$, and
    \item for an inclusion of faces $F_1\subset F_2$, let $i_{F_1F_2}\colon \sC_{F_1}\to \sC_{F_2}$ be given by restriction of connections (as in Remark~\ref{r:restrictedconnection}) induced by $(X_{F_2},M_{F_2})\to (X_{F_1},M_{F_1})$.
\end{enumerate} 
  Let $\sC(\Gamma)$ be the descent category.
\end{definition}

\begin{rem}
    The stratum $(X_F,M_F)$ is considered as a log $S^\dagger$-scheme. In the case where $X_0$ arises as the closed fiber of the closure of a sch\"{o}n subvariety $X$ in a toric scheme $\cP(\Sigma)$ over $\cO$ as in Proposition~\ref{p:schonlog}, by Lemma~\ref{l:logstructureontransversetoorbits}, its shortened complex isomorphic to the pullback by $X_F\hookrightarrow\PP(\Sigma_F)$  where $\PP(\Sigma_F)$ is given the shortened complex induced by the enriched fan with partial compactification $\Sigma_F$ over $S$.
\end{rem}

We can pick a fiber functor $\omega$ on $\cC^{\un}((X_F,M_F)/S^\dagger)$ for some face $F$ to obtain a fiber functor on $\sC(\Sigma)$.

\begin{prop} \label{p:derhamdescent}
For a strict normal crossings scheme with divisor at infinity $(X_0,M)$, there is an equivalence of categories between $\sC^{\un}((X_0,M)/S^\dagger)$ and $\sC(\Gamma)^{\un}$.
\end{prop}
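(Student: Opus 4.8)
The plan is to realize the equivalence through a single restriction functor and to prove it fully faithful and essentially surjective onto the unipotent subcategory. First I would define
\[
A\colon \sC^{\un}((X_0,M)/S^\dagger)\longrightarrow \sC(\Gamma)
\]
by sending a bundle with integrable connection $(\cE,\nabla)$ to the descent datum $((\xi_v),(\phi_e))$ whose vertex objects are the restricted bundles $\xi_v=(\cE\otimes\cO_{X_v},\nabla_v)$ of Remark~\ref{r:restrictedconnection}, and whose edge isomorphisms $\phi_e$ are the canonical identifications of $\cE$ restricted to the edge stratum $X_e=X_v\cap X_w$ from either endpoint. Here I use Lemma~\ref{l:transverselogstructure}(1) to identify $\Omega^1_{(X_F,M_F)/S^\dagger}$ with the pullback $j^*\Omega^1_{(X_0,M)/S^\dagger}$, so the restricted connections land in the correct sheaf of log differentials and all $\phi_e$ are genuine identities, making the cocycle condition on $2$-faces hold on the nose. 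Since restriction of locally free sheaves along the closed immersions $X_v\hookrightarrow X_0$ is exact (the relevant sequences of locally free sheaves are locally split), $A$ is exact and $\C$-linear, carries $\mathbf{1}$ to the unit descent datum, and hence preserves unipotence, factoring through $\sC(\Gamma)^{\un}$.

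For full faithfulness I would observe that a morphism of descent data between two objects in the image of $A$ is precisely a family of horizontal morphisms $\psi_v\colon \cE_1\otimes\cO_{X_v}\to\cE_2\otimes\cO_{X_v}$ agreeing after restriction to each edge stratum $X_e$ (the gluing isomorphisms being identities). Because the irreducible components $X_v$ form a Zariski-closed cover of $X_0$ and homomorphisms of sheaves satisfy the sheaf condition, such a compatible family glues uniquely to a single morphism $\cE_1\to\cE_2$ on $X_0$; horizontality is a local condition verified on each component, so it persists on $X_0$. Together with the description $\Hom(\cE_1,\cE_2)\cong\Gamma(X_0,(\cE_1,\nabla_1)^\vee\otimes(\cE_2,\nabla_2))$ of \eqref{e:classicalhoms}, this yields the desired bijection on Hom-sets.

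The substance of the argument is essential surjectivity, which is where this statement genuinely differs from its tropical counterpart (Proposition~\ref{p:tropicaldescent}). Given a unipotent object $((\xi_v),(\phi_e))$ of $\sC(\Gamma)$ with $\xi_v=(\cE_v,\nabla_v)$, I would first glue the underlying locally free sheaves: the isomorphisms $\phi_e$ provide gluing data over the pairwise intersections $X_e$ satisfying the cocycle condition over the triple intersections recorded by the $2$-faces, and this truncated data is exactly the input to the descent theorem for locally free sheaves on strict normal crossings schemes (Appendix~\ref{a:appendix}), producing a locally free $\cE$ on $X_0$ with $\cE\otimes\cO_{X_v}\cong\cE_v$. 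Next I would glue the connections: since $\phi_e$ intertwines $\nabla_v$ and $\nabla_w$ over $X_e$, and since by Lemma~\ref{l:transverselogstructure}(1) the sheaves $\Omega^1_{(X_v,M_v)/S^\dagger}\otimes\cE_v$ are themselves restrictions of $\Omega^1_{(X_0,M)/S^\dagger}\otimes\cE$, the same descent applied to the morphism $\cE\to\Omega^1_{(X_0,M)/S^\dagger}\otimes\cE$ yields an integrable $\tilde{\Omega}$-connection $\nabla$, with integrability and the Leibniz rule checked on each component. Gluing stratum-by-stratum the filtration witnessing unipotence then shows $(\cE,\nabla)$ is unipotent on $X_0$, and by construction $A(\cE,\nabla)\cong((\xi_v),(\phi_e))$.

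The main obstacle is precisely this sheaf-gluing step: a strict normal crossings scheme is covered by its components as \emph{closed} subschemes, and closed covers do not satisfy faithfully flat descent, so the existence of the glued sheaf $\cE$ is not formal and is exactly what Appendix~\ref{a:appendix} supplies. Once the underlying sheaf is available, gluing the connection, verifying integrability, and transporting the unipotent filtration are routine local checks, and the resulting object is independent of choices up to canonical isomorphism.
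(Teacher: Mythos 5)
Your proposal is correct and takes essentially the same route as the paper: the same restriction functor, full faithfulness by gluing horizontal morphisms (equivalently, horizontal sections of the internal Hom) over the components, and essential surjectivity by invoking the descent theorem for locally free sheaves on strict normal crossings schemes (Theorem~\ref{t:amainresult}) and then gluing the connections stratum by stratum. One small caveat: since $\nabla$ is only $f^{-1}\cO$-linear and not $\cO_{X_0}$-linear, it cannot literally be ``descended'' as a module map; as in the paper, one instead glues, for each local section $s$ of $\cE$, the compatible family $\nabla_v(s|_{X_v})$ into a section of the locally free sheaf $\Omega^1_{(X_0,M)/S^\dagger}\otimes\cE$, which is precisely what your componentwise Leibniz and integrability checks amount to.
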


\begin{proof}
  Observe first that the log de Rham complex $\Omega^\bullet_{(X_0,M_X)/S^\dagger}$, considered as a complex of vector bundles restricts to each $F$ as $\Omega^\bullet_{(X_F,M_F)/S^\dagger}$ where the following diagram commutes for $F_1\subset F_2$
  \[\xymatrix{
  \Omega^i_{(X_0,M)/S^\dagger}|_{X_{F_2}}\ar[r]\ar[dr]&\Omega^i_{(X_{F_1},M_{F_1})/S^\dagger}|_{X_{F_2}}\ar[d]\\
  &\Omega^i_{(X_{F_2},M_{F_2})/S^\dagger}.
  }
  \]
  Moreover, any vector bundle $\cE$ on $X_0$ induces a collection $\{\cE_F\}$ of vector bundles related by $i_{F_1F_2}$. By restriction, the connection $\nabla$ induces a $\Omega_{(X_F,M_F)/S^{\dagger}}$-connection $\nabla_F$ on $\cE_F$. This defines the functor $\sC(X,M)\to \sC(\Sigma)$. We will show that this functor is fully faithful and essentially surjective, thus an equivalence of categories.

  To prove essential surjectivity, note that Theorem~\ref{t:amainresult} allows us to descend $\{\cE_F\}$ to a vector bundle $\cE$ on $X_0$. 
  Now, we explain how to glue connections over strata.
  Given a local section $s$ of $\cE$ whose domain intersects $X_v$, 
  consider $\nabla_v(s|_{X_v})$ as a local section of $\Omega^1_{(X_v,M|_{X_v})/S^\dagger}\otimes \cE|_{X_v}$. Because $\nabla_v$ form a system of connections whose restrictions are related by $i_{v,e}$, the collection $\{\nabla_v(s|_{X_v})\}$ glues to a section $\nabla s$ of $\Omega^1_{(X,M)/S^\dagger}\otimes \cE$.  Because, for a local regular function $f$, the collection $\{df|_{X_v}\}$ of local sections of $\Omega^1_{(X_v,M_{X_v})/S^\dagger}$  glues to $df$, $\nabla$ obeys the Leibniz rule. The connection is integrable because each $\nabla_v$ is. Thus, we obtain $(\cE,\nabla)$.

  To prove that the functor is fully faithful, note that $\Hom((\cE_1,\nabla_1),(\cE_2,\nabla_2))$ can be identified with horizontal sections of $(\cE_1,\nabla_1)^\vee\otimes (\cE_2,\nabla_2)$. These horizontal sections uniquely glue from those of 
  $\{(\cE_{1,F},\nabla_{1,F})^\vee\otimes (\cE_{2,F},\nabla_{2,F})\}$.
\end{proof}

We can specialize Proposition~\ref{p:derhamdescent} to the case of sch\"{o}n subvarieties, following Proposition~\ref{p:schonlog} in the special case that each stratum is connected.

\begin{prop} \label{p:schondescent}
Let $X\subset(K^*)^n$ be a sch\"{o}n variety. 
Pick $\Sigma$ as in Proposition~\ref{p:dilatesnc}, and let $\cX$ be the closure of $X$ in $\cP(\Sigma)_{\cO'}$ for $\cO'=\cO[\pi^{\frac{1}{d}}]$. Let $R$ be a set of rays of $\Sigma_0$ of $\Sigma$, and let $\cP(\Sigma)_{\cO}$ be given the log structure induced from $(\Sigma,N\coloneqq\Z^n,R)$ over $S^\dagger$.
Let $(X_0,M_0)$ be the closed fiber of $\cX$ with inverse image log structure.

 For $F\in \Sigma$, consider $(X_F,M_F)$ where $X_F=\cX\cap \PP_F$ equipped with inverse image log structure from $X_F\hookrightarrow \cP(\Sigma)_0$. Let $\Gamma$ be the union of the bounded cells of $\Trop(X)$. Suppose that  $X_F$ is connected for each $F\in\Gamma$.

Set $\sC_F\coloneqq \sC^{\un}((X_F,M_F)/S^\dagger)$ and for an inclusion of faces $F_1\subset F_2$, let the functor $\sC_{F_1}\to \sC_{F_2}$ be induced by pullback $X_{F_2}\hookrightarrow X_{F_1}$. Then, there is an equivalence of categories between $\sC^{\un}((X_0,M)/S^\dagger)$ and $\sC(\Gamma)^{\un}$.
\end{prop}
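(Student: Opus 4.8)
The plan is to deduce this statement from Proposition~\ref{p:derhamdescent}, whose hypotheses are supplied by Proposition~\ref{p:schonlog}; the only genuine work is combinatorial, namely to identify the index complex $\Gamma$ appearing here (the union of bounded cells of $\Trop(X)$) with the dual complex of the strict normal crossings scheme $(X_0,M_0)$ that Proposition~\ref{p:derhamdescent} takes as its input. Accordingly, I would first invoke Proposition~\ref{p:schonlog}: after choosing $\Sigma$ as in Proposition~\ref{p:dilatesnc} and, following Tevelev, arranging that the support of $\Sigma$ equals $\Trop(X)$, the closed fiber $(X_0,M_0)$ is a strict normal crossings scheme with divisor at infinity, each $(X_F,M_F)$ carries the inverse image log structure, and each stratum $X_F$ with $F\in\Gamma$ is connected. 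This is exactly the data needed to form the Tannakian category $\sC_\Gamma$ over the dual complex and to run Proposition~\ref{p:derhamdescent}.

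Next I would carry out the identification of complexes. The irreducible components of $X_0$ correspond bijectively to the vertices of $\Sigma$: the components of $\cP(\Sigma)_0$ correspond to vertices of $\Sigma$, and since $X_0$ is transverse-to-strata its components are the traces $X_v$ of these. More generally the stratum $X_F=\cX\cap\PP_F$ attached to a bounded face $F$ is the intersection of the components $X_v$ over the vertices $v$ of $F$, so a set of components of $X_0$ has nonempty intersection precisely when the corresponding vertices span a bounded face of $\Sigma$. Because $\Sigma$ supports $\Trop(X)$, these bounded faces are exactly the bounded cells of $\Trop(X)$. This produces a simplicial isomorphism between the dual complex of $(X_0,M_0)$ and $\Gamma$, matching the stratum indexed by a given face on both sides. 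The transition functors also agree: for $F_1\subset F_2$ the restriction-of-connection functor of Proposition~\ref{p:derhamdescent}, induced by $(X_{F_2},M_{F_2})\to(X_{F_1},M_{F_1})$ as in Remark~\ref{r:restrictedconnection}, is precisely the pullback along the closed immersion $X_{F_2}\hookrightarrow X_{F_1}$ used in the present statement. Feeding this into Proposition~\ref{p:derhamdescent} then yields the equivalence $\sC^{\un}((X_0,M)/S^\dagger)\cong\sC(\Gamma)^{\un}$.

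The main obstacle I anticipate is the bookkeeping around the partial compactification $R$. One must verify that removing the toric divisors $D_\rho$ with $\rho\in(\Sigma_0)_{(1)}\setminus R$ affects only the log structure on each stratum $(X_F,M_F)$ and its associated shortened complex, through the subset $R'$ attached to the star-quotient, while leaving the incidence combinatorics of the components of $X_0$ untouched; only then is the dual complex still $\Gamma$ rather than some enlargement. Finally, the connectedness of each $X_F$ for $F\in\Gamma$, assumed here and guaranteed by Proposition~\ref{p:schonlog}, is what ensures that each $\sC_F$ is a genuine neutral Tannakian category (so that $\on{End}(\mathbf{1})=\C$) and is the hypothesis that makes the descent argument valid.
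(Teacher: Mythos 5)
Your overall architecture is exactly the paper's: identify $\Gamma$ with the dual complex of $X_0$ and then quote Proposition~\ref{p:derhamdescent}. Indeed, the paper's proof consists of precisely these two steps, with the dual-complex identification outsourced to \cite[Section~4]{HelmKatz}; your observations about the transition functors and the role of connectedness also match. The problem is that in the one place where you supply original content---the identification of the dual complex with the bounded cells of $\Trop(X)$---your stated criterion is false, and the missing ingredient is exactly what the citation covers. You assert that a set of components of $X_0$ has nonempty common intersection \emph{precisely when} the corresponding vertices span a bounded face of $\Sigma$. But $\Sigma$, as produced by Proposition~\ref{p:dilatesnc}, is \emph{complete} (so your side remark about arranging $|\Sigma|=\Trop(X)$ is impossible and contradicts that choice of $\Sigma$), and a bounded face of $\Sigma$ whose vertices lie in $\Trop(X)$ need not be a cell of $\Trop(X)$. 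For example, take $\Trop(X)$ to be a tropical elliptic curve whose cycle region is triangulated inside the complete complex $\Sigma$: two cycle vertices $v_1,v_3$ joined by a diagonal edge of $\Sigma$ span a bounded face of $\Sigma$, yet $X_{v_1}\cap X_{v_3}=\cX\cap V(\widetilde{[v_1,v_3]})=\varnothing$ because that edge does not lie in $\Trop(X)$. Your follow-up sentence (``because $\Sigma$ supports $\Trop(X)$\dots'') does not repair this: supporting only means $\Trop(X)$ is a union of faces of $\Sigma$.

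What is actually needed, in both directions, is the tropical nonemptiness criterion (Tevelev/Gubler, and the content of the \cite{HelmKatz} reference): the closure $\cX$ meets the orbit attached to $\tilde{F}$ if and only if $\mathring{F}$ meets $\Trop(X)$, so $X_F\neq\varnothing$ if and only if $F$ is a cell of $\Trop(X)$. Combined with unimodularity of $\tSigma$ (which guarantees that $\bigcap_i V(\tilde{v}_i)$ is $V$ of the cone spanned by the $(v_i,1)$, a cone of $\tSigma$ exactly when the $v_i$ span a bounded face of $\Sigma$), this yields that the faces of the dual complex of $X_0$ are exactly the bounded cells of $\Trop(X)$; without it, one direction of your identification is unjustified and the other is false as stated. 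Separately, you invoke Proposition~\ref{p:schonlog} to supply the hypotheses of Proposition~\ref{p:derhamdescent}, but that proposition assumes $X$ is tropically smooth, whereas here $X$ is only sch\"{o}n; the strict normal crossings structure with divisor at infinity comes instead from Proposition~\ref{p:dilatesnc} together with Lemmas~\ref{r:inducedlogstructure} and~\ref{l:transverselogstructure}, and the connectedness of the strata $X_F$ is a hypothesis of the present proposition rather than a consequence of any cited result.
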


\begin{proof}
    The complex $\Gamma$ is the dual complex of $X_0$ by the discussion in \cite[Section~4]{HelmKatz} where it is the bounded part of the parameterizing complex.
    Because $(X_0,M)$ is a strict normal crossings scheme with divisor at infinity, there is an equivalence of categories between $\sC^{\un}((X_0,M_0)/S^\dagger)$ and $\sC(\Gamma)^{\un}$ as in Proposition~\ref{p:derhamdescent}. 
\end{proof}

\section{Correspondence Theorem} \label{s:correspondencetheorem}

\subsection{Reduction to log de Rham/tropical comparison}

In this subsection, we prove our Theorem~\ref{t:maintheoremintro} as a consequence of the following  technical result to be proven in later subsections:
\begin{prop} \label{p:mainmaintheorem}
Let $N=\Z^n$. Let $X\subseteq (K^*)^n$ be a tropically smooth subvariety, and let $\Sigma$ be a complete unimodular rational polyhedral complex supporting $\Trop(X)\subseteq N_{\R}$. Give $\Trop(X)$ the polyhedral complex 
structure induced by $\Sigma$.
Let $R$ be a collection of rational rays contained in the recession fan $\Sigma_0$.

Let $X_0$ be the closed fiber of the closure $\cX\coloneqq\overline{X}$ in $\cP(\Sigma)_{\cO}$. Suppose that $X_0$ is a transverse-to-strata subscheme of $\cP(\Sigma)_0$. 
 Give $\cP(\Sigma)_{\cO}$ the log structure induced by $(\Sigma,N,R)$, and  let $(X_0,M)$ be the inverse image log structure (over $S^\dagger$) induced by inclusion.

Then, there is an equivalence of categories
\[\sC^{\un}(\Trop(X),N,R)\cong \sC^{\un}((X_0,M)/S^\dagger).\]

For a bounded polyhedron of $P\in\Sigma$, let $\omega_{X_0,P}$ and $\omega_{\Trop(X),P}$ be the fiber functors in the respective categories. Then, there is an isomorphism of fundamental groups
\[\pi_1^{\un}((X_0,M)/S^\dagger,\omega_{X_0,P})\cong \pi_1^{\un}((\Trop(X),N,R),\omega_{\Trop(X),P})\]
\end{prop}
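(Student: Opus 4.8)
The plan is to prove both assertions at once by a Seifert--Van Kampen (descent) argument over the common index complex $\Gamma$, the $2$-skeleton of the union of the bounded faces of $\Sigma$. By Proposition~\ref{p:tropicaldescent}, $\sC^{\un}(\Trop(X),N,R)$ is the unipotent subcategory of the descent category $\sC^{\trop}(\Gamma)$ assembled from the stars $\sC_F^{\trop}\coloneqq\sC^{\un}(\Sigma_F)$; by Proposition~\ref{p:schondescent} (via Proposition~\ref{p:derhamdescent}), $\sC^{\un}((X_0,M)/S^\dagger)$ is the unipotent subcategory of the descent category $\sC^{\log}(\Gamma)$ assembled from the strata $\sC_F^{\log}\coloneqq\sC^{\un}((X_F,M_F)/S^\dagger)$. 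These are indexed by the \emph{same} $\Gamma$ because $\Sigma$ supports $\Trop(X)$ and $\Gamma$ is its bounded part. I would construct a functor $A\colon\sC^{\trop}_{\Gamma}\to\sC^{\log}_{\Gamma}$ of Tannakian categories over $\Gamma$, verify the three hypotheses of Lemma~\ref{l:descentequivalence} (equivalence on vertices, full faithfulness on edges, faithfulness on $2$-faces), and conclude an equivalence $\sC^{\trop}(\Gamma)\cong\sC^{\log}(\Gamma)$; since it respects the unit and tensor structure it restricts to the unipotent subcategories, yielding the first claim.

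To define $A_F$ on a face $F$ I would first reduce both sides to linear-algebraic data. On the log side each $X_F$ is proper with $H^1(X_F,\cO_{X_F})=0$, hence locally topologically acyclic, so by the component-triviality lemma every unipotent object is $(\cE,\nabla=d-\theta)$ with $\cE$ trivial and $\theta\in\Gamma(X_F,\tilde{\Omega}^1\otimes\End(\cE))$ nilpotent satisfying $d\theta-\theta\wedge\theta=0$. On the tropical side every unipotent bundle on a star is trivial (as $H^1$ of the star vanishes), so an object is a vector space with $\theta\in\Omega^1_{\Sigma_F}(\Sigma_F)\otimes\End$ and $\theta\wedge\theta=0$. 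By Lemma~\ref{l:logstructureontransversetoorbits} the shortened complex of $(X_F,M_F)$ over $S^\dagger$ is the pullback along $X_F\hookrightarrow\PP(\Sigma_F)$ of the shortened complex induced by the enriched fan $\Sigma_F$ over $S$, which canonically compares the ambient $1$-forms. I would use this comparison to carry the tropical $\theta$ to the corresponding log connection form; since this identification commutes with the star-quotient restriction maps (Lemmas~\ref{l:starquotientequivalence} and~\ref{l:logstructureonorbits}), the $A_F$ assemble into a functor over $\Gamma$.

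The main obstacle, and the one point where tropical smoothness is indispensable, is the vertex comparison: showing $A_v$ is an equivalence. Here $\Sigma_v$ is the Bergman fan of the matroid of the arrangement cut out on the stratum $X_v$, which by Proposition~\ref{p:matroidtrop} is a compactification of a hyperplane arrangement complement. I would show that the global log $1$-forms $\Gamma(X_v,\tilde{\Omega}^1)$ feeding unipotent connections are spanned by the $\DLog$'s of the arrangement's defining forms, are all closed — so $d\theta=0$ and log integrability collapses to $\theta\wedge\theta=0$ — and that this space with its wedge relations is the degree-$\leq 1$ part of the Orlik--Solomon algebra. By Zharkov's identification \cite{Zharkov:Orlik} the tropical de Rham complex of the Bergman fan $\Sigma_v$ is the same Orlik--Solomon algebra with the same products; hence both categories are equivalent to finite-dimensional vector spaces equipped with a nilpotent $\theta\in\OS^1\otimes\End$ satisfying $\theta\wedge\theta=0$, which is exactly the combinatorial form of Kohno's holonomy description \cite{Kohno:holonomy}. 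The edge and $2$-face hypotheses reduce to the same statement for the (lower-rank) arrangements of the corresponding strata, where only full faithfulness and faithfulness are required and follow from the injectivity of the restriction functors in Remark~\ref{r:restrictedconnection}.

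Finally, for the fundamental-group statement I would check that $A$ carries the tropical fiber functor $\omega_{\Trop(X),P}$ (evaluation on the star of $P$) to the log fiber functor $\omega_{X_0,P}$ (regular sections on $X_P$): in the linear-algebraic models both return the underlying vector space $V$, and the comparison of $1$-forms is compatible with these identifications. An equivalence of neutral Tannakian categories intertwining the chosen fiber functors induces an isomorphism of Tannakian fundamental groups by reconstruction, giving
\[
\pi_1^{\un}((X_0,M)/S^\dagger,\omega_{X_0,P})\cong\pi_1^{\un}((\Trop(X),N,R),\omega_{\Trop(X),P}).
\]
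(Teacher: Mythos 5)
Your overall architecture matches the paper's: decompose both sides over the bounded part $\Gamma$ of $\Trop(X)$ via Propositions~\ref{p:tropicaldescent} and~\ref{p:schondescent}, build a facewise tropical-pullback functor using Lemma~\ref{l:logstructureontransversetoorbits}, apply Lemma~\ref{l:descentequivalence}, and match fiber functors at the end. The vertex step via Orlik--Solomon/Zharkov/Kohno is a legitimate variant of what the paper does in Proposition~\ref{p:equivalence}, but note that its crux --- your assertion that $\Gamma(X_v,\tilde{\Omega}^1)$ is spanned by the $\DLog$'s of the arrangement forms --- is exactly the paper's Lemma~\ref{l:1formispullback}, proved there by a residue-theorem plus Hartogs argument; in your write-up it is announced (``I would show'') but never established, and it is where all the geometry enters. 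Once you have it, identifying the wedge relations with the $\OS$-relations (Brieskorn) and with the tropical $2$-forms (Zharkov) does give the vertex equivalence, provided you also account for the partial compactification $R'$ (both sides must be cut down to forms with vanishing residue/pairing along the rays of $R'$).

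The genuine gap is the edge and $2$-face step. Full faithfulness of $A_e$ does \emph{not} follow from the injectivity of the restriction functors of Remark~\ref{r:restrictedconnection}: faithfulness of restrictions gives at most faithfulness of $A_e$, whereas Lemma~\ref{l:descentequivalence} needs \emph{fullness} --- every horizontal map $A_e\xi_1\to A_e\xi_2$ of log connections on the edge stratum $X_e$ must come from a tropical horizontal map. This is a statement about forms, not about restrictions: one must show that a constant bundle map intertwining the pulled-back connections already intertwines the tropical connections, which requires that the tropical-to-log map on $1$-forms over $X_e$ is injective (detected by residues along the boundary divisors, Remark~\ref{r:uniqueness1form}) and that the ``hidden'' components valued in $M''\otimes\cO_{X_e}$ are constant (properness of $X_e$). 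This is precisely the content of the paper's Lemma~\ref{l:fullyfaithful}, which your proposal has no substitute for. Relatedly, your phrase ``the same statement for the lower-rank arrangements'' glosses over the fact that for positive-dimensional faces the enrichment $N\to N/N_P$ is nontrivial, so the edge categories are not simply lower-rank copies of the vertex situation; the hidden directions must be handled separately, as in the decomposition $\theta=\theta_s+\theta_h$ in the proof of Proposition~\ref{p:troppullback}. Without an argument of this kind the descent comparison, and hence both claimed conclusions, do not follow.
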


\begin{proof}[Proof of Theorem~\ref{t:maintheoremintro}]  
  We apply Proposition~\ref{p:dilatesnc} to produce a fan $\Sigma$ whose recession fan $\Sigma_0$ is a subfan of a refinement of $\Delta$. Because $X$ is defined over the function field of the ring of germs of holomorphic functions near the origin in $\C$, it corresponds to a family of subvarieties $\{X_t\}$ in $(\C^*)^n$ over a punctured open disc $U^*\subset\C$. 
  By sch\"{o}nness, after possibly rescaling and passing to a branched cover over the origin (corresponding to the basechange $\cO[t^{\frac{1}{d
  }}]\to\cO$), we may suppose that the closure $\overline{X}$ of $X$ in $U^*\times\PP(\Sigma_0)$ is smooth and submersive over $U^*$. Give $\cX$, the closure of $X$ in $\cP(\Sigma)_{\cO}$, the log structure attached to the strict normal crossings divisor $X_0\cup \cD$ where $X_0$ is the fiber over $0$ and 
  \[\cD\coloneqq \cX\cap\left(\bigcup_{\rho\in(\Sigma_0)_{(1)}\setminus R} D_\rho\right).\]
  Write $\mathring{X}'_t$ for the fiber of $\cX\setminus \cD$ over $t$. Note that $p\colon \PP(\Sigma_0)\to \PP(\Delta)$ induces a birational morphism 
  from $\mathring{X}'_t$ to $p(\mathring{X}'_t)=\mathring{X}_t$. 
  Hence, 
  \[p\colon \pi_1(\mathring{X}'_t,x_t)\to \pi_1(\mathring{X}_t,x_t)\]
  is an isomorphism, and 
  we may work with $\mathring{X}_t$ replaced by $\mathring{X}'_t$.
  The latter space is homotopy equivalent to ${X_{0\zeta}^{\log}}$ for $\zeta=\arg(t)$ 
  by Theorem~\ref{t:knhomotopy}. Hence, by 
  Corollary~\ref{c:kniso}, 
  \[\pi_1^{\un}(\mathring{X}_t,x_t)\cong \pi_1^{\un}((X_0,M)/S^{\dagger},x_0)\]
  for a point $x_0\in X_0(\C)$ with relatively trivial log structure. Since $x_0$ lies on $X_v$ for some $v\in\Sigma_{(0)}$, there is a canonical evaluation path between $\omega_{X_0,v}$ and the fiber functor attached to $x_0$. By applying Proposition~\ref{p:mainmaintheorem}, we, thus, obtain the isomorphism
  \[\pi_1^{\un}((X_0,M)/S^{\dagger},\omega_{X_0,v})\cong \pi_1^{\un}((\Trop(X),\Z^n,R),\omega_{\Trop(X),\omega_v}).\]
  Since the tropical fundamental group is unaffected by refinement, we have the desired conclusion.
\end{proof}


\subsection{Residues and Hyperplane Arrangements}

We first review some properties of residues of differentials with log poles (for more details, see e.g.,~\cite[Section~4.2]{Peters-Steenbrink}).

\begin{definition} Let $X$ be a smooth variety with strict normal crossings divisor $D=D_1\cup \dots D_N$. For a subset $S\subset\{1,\dots,N\}$, write 
\[D_S=\cap_{i\in S} D_i,\ 
D(S)=D_S\cap \left(\bigcup_{j\not\in S} D_j\right).\]
Here, $D(S)$ is the divisor induced by the other components of $D$ on $D_S$.
For a log $p$-form $\omega\in\Gamma(X,\Omega^p(\log D))$ and and ordered tuple $I=(i_1,\dots,i_k)$ with distinct indices $i_j$, if $D_I\neq \emptyset$,
we will define the residue on $D_I$ (considered as a stratum together with the ordering $I$ of the divisors containing it), 
\[\Res_{D_I}(\omega)\in\Gamma(D_I,\Omega^{p-k}(\log D(I))).\]
Near a point of $D_I$, pick local coordinates such that $D_{i_j}$ is given by $z_j=0$.
Decompose
\[\omega=\frac{dz_1}{z_1}\wedge \dots\wedge \frac{dz_k}{z_k}\wedge \eta+\omega'\]
where $\omega'$ does not involve all of $\left\{\frac{dz_1}{z_1},\dots,\frac{dz_k}{z_k}\right\}$
in a single wedge product.
Set $\Res_{D_I}(\omega)=\eta|_{D_I}.$
The residues globalize to a well-defined log $(p-k)$-form on $D_I$ that 
is alternating in permutations of $I$.
\end{definition}

The following is a verification in local coordinates:
\begin{prop} \label{p:residueproperties}
Let $X$ be a smooth proper variety with strict normal crossings divisor $D$. Let $\omega\in\Gamma(X,\Omega^p(\log D))$ be a log $p$-form. Then,
\begin{enumerate}
    \item \label{i:transverseresidues} for $Z\subset X$, a transverse-to-strata subvariety, for any $I$ such that $D_I\cap Z\neq \varnothing$,
       \[\Res_{D_I\cap Z}(\omega|_Z)=\Res_{D_I}(\omega)|_{D_I\cap Z}.\]
    \item \label{i:iteratedresidue} if $I,J$ are supported on disjoint subsets of $\{1,\dots,N\}$, then
    \[\Res_{D_J\cap D_I}(\Res_{D_I}(\omega))=\Res_{D_{IJ}}(\omega)\]
\end{enumerate}

In the above, we write $IJ$ for concatenation of sequences.
\end{prop}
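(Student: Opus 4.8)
Both statements are local, and since residues are defined by a local decomposition and glue to globally well-defined forms, it suffices to verify each identity in a single coordinate chart adapted to the strata in question. The plan is to choose, in each case, local coordinates in which all the divisors $D_i$ that appear---together with the subvariety $Z$ in part~(\ref{i:transverseresidues})---are simultaneously coordinate subspaces, and then to read off both sides of the claimed identity directly from the normal form defining $\Res$.

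For part~(\ref{i:transverseresidues}), I would work near a point $p\in D_I\cap Z$. Because $Z$ is transverse-to-strata, transversality of $Z$ to $D_I$ and to the deeper strata through $p$ allows a simultaneous linearization: I can pick local coordinates $z_1,\dots,z_n$ centered at $p$ with $D_{i_j}=\{z_j=0\}$ for the entries of $I=(i_1,\dots,i_k)$ and with $Z=\{z_{k+1}=\dots=z_{k+c}=0\}$, where $c=\operatorname{codim}(Z)$ and none of $z_1,\dots,z_k$ is among the coordinates cutting out $Z$. Writing $\omega=\frac{dz_1}{z_1}\wedge\dots\wedge\frac{dz_k}{z_k}\wedge\eta+\omega'$ as in the definition, the key observation is that pulling back along $Z\hookrightarrow X$ (i.e.\ setting $z_{k+1}=\dots=z_{k+c}=0$) fixes each $\frac{dz_j}{z_j}$ for $j\le k$ and cannot convert a term of $\omega'$ into one containing all of $\frac{dz_1}{z_1},\dots,\frac{dz_k}{z_k}$ simultaneously, since it does not mix the directions $z_1,\dots,z_k$ with the transverse ones. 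Thus $\omega|_Z$ is already in residue-normal form relative to $D_I\cap Z$, and $\Res_{D_I\cap Z}(\omega|_Z)=\eta|_{D_I\cap Z}=\Res_{D_I}(\omega)|_{D_I\cap Z}$.

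For part~(\ref{i:iteratedresidue}), with $I=(i_1,\dots,i_k)$ and $J=(j_1,\dots,j_l)$ supported on disjoint index sets, I would choose coordinates near a point of $D_{IJ}$ in which $D_{i_a}=\{z_a=0\}$ and $D_{j_b}=\{w_b=0\}$. Extracting the wedge $\frac{dz_1}{z_1}\wedge\dots\wedge\frac{dz_k}{z_k}$ exhibits $\Res_{D_I}(\omega)$ as a log form on $D_I$ with poles along $D(I)\supseteq D_J\cap D_I$, so the outer residue is legitimate. Because the $z$- and $w$-directions are independent, the surviving coefficient can be further decomposed along $\frac{dw_1}{w_1}\wedge\dots\wedge\frac{dw_l}{w_l}$, and $\Res_{D_J\cap D_I}(\Res_{D_I}(\omega))$ picks out precisely the coefficient of the combined wedge $\frac{dz_1}{z_1}\wedge\dots\wedge\frac{dz_k}{z_k}\wedge\frac{dw_1}{w_1}\wedge\dots\wedge\frac{dw_l}{w_l}$, restricted to $D_{IJ}$, which is $\Res_{D_{IJ}}(\omega)$.

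The computations are routine; the one point demanding care is the bookkeeping of orderings, since $\Res$ is alternating (not symmetric) in the components of the stratum. I expect the main obstacle to be confirming that the two-stage decomposition in part~(\ref{i:iteratedresidue}) is compatible---that restricting to $D_I$ first and then extracting the $w$-residue produces the same coefficient, \emph{with the same sign}, as extracting all residues at once. This follows from the well-definedness and the alternating property of $\Res$ recorded in the definition, but must be tracked explicitly so that the induced order $z_1,\dots,z_k,w_1,\dots,w_l$ matches the concatenation convention $IJ$.
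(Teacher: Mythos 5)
Your proof is correct and follows exactly the route the paper takes: the paper offers no written argument beyond declaring the proposition ``a verification in local coordinates,'' and your adapted-coordinate computations are precisely that verification. The points you single out for care---that restriction to a coordinate subspace cannot create new wedge factors of the $\frac{dz_j}{z_j}$, and that the ordering in the two-stage residue must match the concatenation convention $IJ$ so the signs agree---are indeed the only places where the routine computation requires attention.
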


\begin{rem} \label{r:uniqueness1form} For a smooth complete toric variety $\PP(\Delta)$ equipped with the standard toric log structure, the vector space of global log $1$-forms is generated by $\DLog(m)$ for $m\in M$, and hence can be identified with $M_\C$. Similarly, global log $p$-forms can be identified with $\bigwedge^p M_{\C}$. To a ray $\rho\in\Delta$, there is an associated divisor $D_\rho$. Write $u_\rho\in M$ for the primitive lattice point on $\rho$. Then, given $I=(\rho_1,\dots,\rho_k)$ and $p$-form $\omega\in \bigwedge^p M_{\C}$, the residue $\Res_{D_I}(\omega)$ is the iterated interior product
 \[\iota_{u_{\rho_k}}(\iota_{\rho_{k-1}}\dots (\iota_{u_{\rho_1}} \omega)))\in\bigwedge\nolimits^{p-k} (M(\Span_{\geq 0}(\rho_1,\dots,\rho_k)))_\C\]
 where $M(\Span_{\geq 0}(\rho_1,\dots,\rho_k)))=(\Span(\rho_1,\dots,\rho_k))^\perp.$
In particular, for a $1$-form $\DLog(m)$ and a ray $\rho$ with primitive integer vector $u_\rho$,
$\Res_{D_\rho} (\DLog(m)) = \langle m,u_{\rho}\rangle.$
\end{rem}

\subsection{Subspace  compactifications}

We will need to study log forms on compactified hyperplane arrangements complements.  

\begin{definition}
    Let $\PP(\Delta)$ be a proper smooth toric variety with open torus $T$. A transverse-to-strata subvariety $X\subset\PP(\Delta)$ is a {\em subspace compactification} if $X=\overline{X\cap T}$, and there is a $T$-equivariant compactification of $T$ to the toric variety $\PP(\Delta_n)=\PP^n$ such that the closure of $X\cap T$ in $\PP^n$ is a projective subspace $P$.
\end{definition}

We can view $P\cap T$ as the complement in $P$ of the coordinate hyperplanes of $\PP^n$, thus it is a hyperplane arrangement complement.
Give $X$ the inverse image log structure induced from the closed embedding in $\PP(\Delta)$. Write $D$ for the the strict normal crossings divisor on $X$ arising from the intersection with the toric divisors of $\PP(\Delta)$, so that log $1$-forms on $X$ are sections of $\Omega^1(\log D)$. We will prove that the restriction of these log $1$-forms to $X\cap T$ are pulled back from log $1$-forms on $\PP^n$.

\begin{definition}
Let $X\subset \PP(\Delta)$ be a transverse-to-strata subvariety of a proper smooth toric variety.
The {\em tropical span} $N_{X}\subset N_\C$ of $X$ is the $\C$-linear span in $N_\C$ of the rays $\rho$ of $\Delta$ for which $X\cap V(\rho)\neq \varnothing$. The inclusion $N_X\hookrightarrow N_{\C}$ is dual to a projection $M_{\C}\to M_{X}\coloneqq N_{X}^\vee$.     
\end{definition}

\begin{lemma} \label{l:1formispullback}
Let $X\subseteq \PP(\Delta)$ be a subspace compactification. Let $D$ be the union of the divisors of $X$ arising from intersection with the toric divisors of $\PP(\Delta).$
The restriction of any log $1$-form $\omega\in \Gamma(X,\Omega^1_{X}(\log D))$ to $X\cap T$ is the pullback of a log $1$-form on $\PP(\Delta)$, which is unique as an element of $M_X$.
\end{lemma}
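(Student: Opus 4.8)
The plan is to identify the space of global logarithmic $1$-forms on $X$ with $H^1(X\cap T,\C)$ and then read off its generators from the Orlik--Solomon description. Write $U\coloneqq X\cap T=P\cap T$, a hyperplane arrangement complement. Since $X$ is transverse-to-strata it is smooth, it is proper because $\PP(\Delta)$ is, and it is rational since it is the closure of the rational variety $U$; moreover $D$ is strict normal crossings with $U=X\setminus D$, so in particular $H^1(X,\cO_X)=0$.

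The cohomological input comes first. Because $X$ is smooth and proper and $D$ is strict normal crossings, the logarithmic de Rham spectral sequence degenerates at $E_1$ (see \cite[Chapter~4]{Peters-Steenbrink}). The term $E_1^{0,1}=H^1(X,\cO_X)$ vanishes, so the edge map gives a canonical isomorphism
\[\Gamma(X,\Omega^1_X(\log D))=E_1^{1,0}=E_\infty^{1,0}\xrightarrow{\ \sim\ }H^1(U,\C)\]
sending a log form to its de Rham class; in particular every global log $1$-form on $(X,D)$ is closed and is determined by its class in $H^1(U,\C)$. For existence I would then invoke the Orlik--Solomon theorem \cite{OS:Hyperplanes}: $H^1(U,\C)$ is spanned by the classes of the forms $d\ell_i/\ell_i$ attached to the hyperplanes of the arrangement, and these are exactly the restrictions to $U$ of the forms $\DLog(m)$ for $m$ in the character lattice $M$ of the torus $T$ shared by $\PP(\Delta)$ and $\PP^n$. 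Given $\omega\in\Gamma(X,\Omega^1_X(\log D))$, its class therefore equals the class of $\DLog(\tilde m)|_U$ for some $\tilde m\in M_\C$. As $\DLog(\tilde m)$ is a global log $1$-form on $\PP(\Delta)$ (Remark~\ref{r:uniqueness1form}) and $X$ is transverse-to-strata, $\DLog(\tilde m)|_X$ is a section of $\Omega^1_X(\log D)$ with the same class as $\omega$; by injectivity of the class map, $\omega=\DLog(\tilde m)|_X$, so $\omega|_U$ is pulled back from $\PP(\Delta)$.

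For uniqueness in $M_X$ (which also lets me take $\tilde m\in M_X=N_X^\vee$), note that $\DLog(m)|_U$ vanishes precisely when the character $m$ is constant on the irreducible variety $U$, and a standard tropical computation shows $m|_U$ is constant iff $\langle m,\cdot\rangle$ vanishes on $\Span\Trop(U)$. By Tevelev's theory of tropical compactifications, a ray $\rho$ of $\Delta$ satisfies $X\cap V(\rho)\neq\varnothing$ exactly when $\rho\subseteq\Trop(U)$, so $\Span\Trop(U)=N_X$ by definition of the tropical span. Hence $\DLog(m)|_U=0$ iff $m\in N_X^\perp$, i.e.\ iff $m\mapsto 0$ under $M_\C\to M_X$. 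Thus $m\mapsto\DLog(m)|_U$ descends to an injection $M_X\hookrightarrow\Gamma(U,\Omega^1_U)$, and the class $\bar m\in M_X$ with $\omega|_U=\DLog(\bar m)|_U$ is unique.

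The main obstacle is the cohomological step: everything rests on the identification $\Gamma(X,\Omega^1_X(\log D))\cong H^1(U,\C)$, which is precisely what forces a log $1$-form extending across $D$ to be a torus pullback rather than merely a closed form on $U$. A more hands-on alternative would compute the residues $c_\rho=\Res_{D_\rho}(\omega)$ and try to build $\bar m$ directly from $\langle\bar m,u_\rho\rangle=c_\rho$ via Remark~\ref{r:uniqueness1form} and Proposition~\ref{p:residueproperties}; but verifying that the $c_\rho$ respect the linear relations among the $u_\rho$ amounts to the relation $\sum_\rho c_\rho[D_\rho]=0$ in $H^2(X,\C)$ together with a combinatorial presentation of $H^2(X)$, which is no simpler than the Hodge-theoretic input above.
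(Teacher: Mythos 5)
Your proof is correct, but it takes a genuinely different route from the paper's. The paper argues in an elementary, residue-theoretic way: after reducing to the case where $\Delta$ refines the fan of $\PP^n$ (with $p\colon X\to P\subseteq\PP^n$ the induced birational map to the linear subspace $P$), it subtracts from $\omega$ the explicit pullback $\sum_j \Res_{D_j}(\omega)\, p^*\bigl(\tfrac{dX_{i_j}}{X_{i_j}}-\tfrac{dX_0}{X_0}\bigr)\big|_X$ indexed by the rank-one flats, kills the remaining poles by restricting to the proper transform of a generic line in $P$ and applying the residue theorem on that curve, and then uses a Hartogs argument to extend the difference to a regular $1$-form on $P\cong\PP^k$, which must vanish; uniqueness in $M_X$ falls out because $\omega$ is thereby shown to be determined by its residues. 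You instead identify $\Gamma(X,\Omega^1_X(\log D))$ with $H^1(X\cap T,\C)$ via Deligne's $E_1$-degeneration of the log de Rham spectral sequence together with $H^1(X,\cO_X)=0$ (rationality of $X$ plus birational invariance), quote Brieskorn/Orlik--Solomon to see that this cohomology is spanned by restrictions of the forms $\DLog(m)$, and obtain uniqueness from a Tevelev-type tropical computation showing that the kernel of $m\mapsto \DLog(m)|_{X\cap T}$ is exactly $N_X^{\perp}$. What your route buys: it is shorter modulo standard theorems, and it makes transparent the philosophy stated in the paper's introduction that the main comparison is ``the $H^1$ part of Orlik--Solomon.'' What the paper's route buys: it avoids invoking Hodge-theoretic degeneration and the Orlik--Solomon theorem entirely, staying within the residue toolkit (Remark~\ref{r:uniqueness1form}, Proposition~\ref{p:residueproperties}) used by the surrounding lemmas, and uniqueness requires no separate argument. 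Two small points to polish in your write-up: the individual forms $d\ell_i/\ell_i$ are not themselves defined on the projective arrangement complement (only their differences are), though your immediate reinterpretation of the spanning set as restrictions of $\DLog(m)$, $m\in M_\C$, repairs this; and the step $H^1(X,\cO_X)=0$ deserves a sentence (Hodge symmetry plus birational invariance of $H^0(\Omega^1)$ for smooth proper complex varieties) rather than ``in particular.''
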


\begin{proof}
Without loss of generality, we may suppose that $\Delta$ is a refinement of $\Delta_n$. 
  Write $p\colon\PP(\Delta)\to \PP(\Delta_n)$ for the birational morphism and $P=\overline{p(X)\cap T}\subset \PP^n$.
Pick homogeneous coordinates $\{X_0,\dots,X_n\}$ on $\PP(\Delta_n)=\PP^n$. Write $H_i$ for the hyperplane $\{X_i=0\}$.
Let \[I_0\sqcup\dots\sqcup I_k=\{0,\dots,n\}\]
be a partition such that $i,i'\in I_j$ for some $j$ if and only if $P\cap H_i=P\cap H_{i'}$. These are the rank $1$ flats of the matroid attached to $P$. Pick $i_j\in I_j$ for all $j$ such that (after possible reordering) $i_0=0$. Let $D_j\subset X$ be the proper transform of the divisor $P\cap H_{i_j}$ under $p\colon X\to P$. 

We claim that for an toric divisor $E$ of $\PP(\Delta)$, either $X\cap E=D_j$ for some $j$ or $p(X\cap E)$ is of codimension at least $2$ in $P$. Because $E$ is a toric divisor, $p(E)\subseteq H_i$ for some $i$. Hence $p(X\cap E)\subseteq P\cap H_i$. If this is an equality, then $X\cap E=D_j$ for some $j$. Otherwise, $p(X\cap E)$ is of codimension at least $2$. 

Consider log $1$-form on $\PP(\Delta)$
\[\omega'\coloneqq \omega-\sum_{j=1}^k \Res_{D_j}(\omega) p^*\left(\frac{dX_{i_j}}{X_{i_j}}-\frac{dX_0}{X_0}\right)\Big|_X.\]
Let $L$ be a generic line in $P$ and $\tilde{L}$ be its proper transform in $X$. Because $L$ only intersects the divisors $p(E)$ for which $P\cap p(E)=P\cap H_i$, the only toric divisors that $\tilde{L}$ intersects are the $D_j$'s. Then, by construction, $\omega'|_{\tilde{L}}$ only has poles along the point $\tilde{L}\cap D_0$. By the residue theorem, this residue must also vanish. Hence $\omega'$ is regular along all the $D_j$'s. 

Let $U$ be the maximal open subset of $P$ for which $p\colon p^{-1}(U)\to U$ is an isomorphism. Since $\omega'|_U$ is regular, it extends to $P$ as a regular $1$-form by a Hartogs's phenomenon argument (see, e.g.,~\cite[Theorem II.8.19]{Hartshorne}). Hence, $\omega'=0$. 

The above argument shows that the $1$-form $\omega$ is determined by its residues on $X\cap E$ for toric divisors $E$. Hence, it is unique as an element of $M_X$.
\end{proof}

\subsection{Tropical Pullback Map}

Let $(\Delta,\pi\colon N\to N',R)$ be an enriched fan with partial compactification such that $\Delta$ is complete and unimodular. Considered as an enriched fan over $\Delta_S$, it induces a shortened complex on $\PP(\Delta)$.
Then, for $X$  a closed transverse-to-strata subvariety of $\PP(\Delta)$ defined over $\C$, let $\tilde{\Omega}$ be the pullback shortened complex on $X$.
One should have in mind is the following case: $\PP(\Delta)$ is a closed orbit in some larger smooth toric variety (which has the log structure induced by the union of some toric divisors)  and is equipped with the inverse image log structure; $X$ is embedded in $\PP(\Delta)$ and is given the inverse image log structure from $\PP(\Delta)$. 

Consider the fan $\Trop(X^\circ)$ for $X^\circ=X\cap T$ where $T$ is the open torus in $\PP(\Delta)$. Since $X$ is transverse-to-strata, $\Trop(X^\circ)$ is supported on $\Delta$ and can be given the polyhedral structure from it. After placing further hypotheses on $X$, we will define the tropical pullback functor
\[F\colon \sC^{\un}(\Trop(X^\circ),\pi\colon  N\to N',R)\to \sC^{\un}(X/S,\tilde{\Omega}).\]
For convenience, given a cone $\sigma\in\Delta$, write $(X_\sigma,M_\sigma)$ for the stratum $X\cap V(\sigma)$ with inverse image log structure.

\begin{definition}
With notation as above, given a tropical $1$-form 
\[\eta\in\Gamma(\Delta,\Omega^1_{(\Delta,N\to N',R)/\Delta_S})\subseteq M_{\C}/(\Span_{\R}(\Delta))^{\perp_{M'}}_{\C}\]
pick $m\in M_{\C}$ restricting to $\eta$. The {\em tropical pullback} of $\eta$ is 
\[\omega=\DLog(m)\in \Gamma(X,\tilde{\Omega}^1).\] 
\end{definition}

Because the closed embedding $X\cap (\C^*)^n\hookrightarrow(\C^*)^n$ factors through the subtorus of $(\C^*)^n$ corresponding to $\Span_{\R}(\Delta)$, the log $1$-form $\omega$ is independent of the choice of $m$.

Given an integrable tropical connection 
\[\eta\in\Gamma(\Delta,\Omega^1_{(\Delta,N\to N',R)/\Delta_S}\otimes\End(E))=\Gamma(\Delta,\Omega^1_{(\Delta,N\to N',R)/\Delta_S})\otimes\End(E(\Trop(X^\circ)))\] 
on a trivial bundle $E$ on $\Delta$ (i.e.,~one satisfying $\eta\wedge\eta=0$), we would like to define $\theta$ as the tropical pullback of $\eta$, and define the connection $\nabla=d-\theta$. 
To ensure that $\nabla$ is integrable, we will have to impose some additional conditions on $X$ as follows:

\begin{definition} \label{d:skow}
    A closed subvariety of a smooth toric variety $X\subseteq \PP(\Delta)$ is {\em some-kind-of-wonderful} if 
    \begin{enumerate}
     \item $X$ is smooth and transverse-to-strata in $\PP(\Delta)$,
     \item $H^0(X,\Omega^1_X)=H^0(X,\Omega^2_X)=0$, and
     \item \label{i:skow3} for each toric divisor $D_\rho$ of $\PP(\Delta)$, $H^0(X_\rho,\Omega^1_{X_\rho})=0$ where $X_\rho=X\cap D_\rho$.
\end{enumerate}
\end{definition}

Subspace compactifications are some-kind-of-wonderful. Indeed, they have no regular differential forms. Moreover, the star-quotient of every face of their tropicalizations is a Bergman fan by Lemma~\ref{l:anystructure}, and so by applying  
Lemma~\ref{p:matroidtrop} to $X_\rho$, we see $X_\rho$ has no non-trivial regular differential forms.

\begin{prop} \label{p:troppullback}
  Let  $X\subset\PP(\Delta)$ be a some-kind-of-wonderful subvariety of $\PP(\Delta)$ over $\C$. Consider the enriched fan with partial compactification $(\Delta,N\to N',R)$. Give $\PP(\Delta)$ the shortened complex induced from $(\Delta,N\to N',R)$, and pull it back (Definition~\ref{pullbackscodf}) to $X$ to produce the shortened complex $\tilde{\Omega}$.
 
  Give $\Trop(X^\circ)$ the polyhedral complex structure induced from $\Delta$, and let $(E,\eta)$ is a rank $n$ tropical vector bundle with integrable connection on $(\Trop(X^\circ),N\to N',R)$. Let $\cE$ be the rank $n$ trivial bundle on $X$ attached to the vector space $E(\Trop(X^\circ))$.
  
  Then, the tropical pullback $\theta$ of $\eta$ induces an integrable $\tilde{\Omega}$-connection on $\cE$. Moreover, if $\sigma_1\subset \sigma_2$ is an inclusion of cones of $\Delta$ contained in $\Sigma$ such that the strata $X_{\sigma_1}$ and $X_{\sigma_2}$ are some-kind-of-wonderful, then the following diagram commutes:
  \[\xymatrix{
  \on{Ob} \sC^{\un}((\Trop(X^\circ),N\to N',R)_{\sigma_1})\ar[r]\ar[d]&\on{Ob} \sC^{\un}(X_{\sigma_1},\tilde{\Omega}_{\sigma_1})\ar[d]\\
  \on{Ob} \sC^{\un}((\Trop(X^\circ),N\to N',R)_{\sigma_2})\ar[r]&\on{Ob} \sC^{\un}(X_{\sigma_2},\tilde{\Omega}_{\sigma_2})
  }\]
  where $\tilde{\Omega}_{\sigma}$ is the shortened complex on $X_\sigma$ induced from the enriched fan with partial compactification  on the star-quotient $(\Delta,N\to N',R)_\sigma$.
\end{prop}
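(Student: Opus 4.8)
The plan is to establish integrability first and then read off the compatibility square as its object-level consequence. Writing the pulled-back connection as $\theta=\sum_i A_i\,\DLog(m_i)$ for $\eta=\sum_i A_i\,\eta_i$, I would begin by noting that each $\DLog(m_i)$ is closed in the shortened complex $\tilde\Omega$, so $d_{\tilde\Omega}\theta=0$ and the integrability condition $d_{\tilde\Omega}\theta-\theta\wedge\theta=0$ collapses to $\theta\wedge\theta=0$. Since $\theta\wedge\theta$ is a global section of $\tilde\Omega^2\otimes\End(\cE)$, and a log $2$-form whose residue along every codimension-one boundary divisor vanishes is regular, condition (ii) of Definition~\ref{d:skow} (namely $H^0(X,\Omega^2_X)=0$) reduces the whole problem to proving $\Res_{X_\rho}(\theta\wedge\theta)=0$ for each toric divisor $D_\rho$ with $\rho\notin R$ that meets $X$.

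For the residue I would work near $X_\rho=X\cap D_\rho$ and use Remark~\ref{r:uniqueness1form} to split $\DLog(m_i)=\langle m_i,u_\rho\rangle\tfrac{dz}{z}+\omega_i'$, with $z$ a local equation of $X_\rho$ and $\omega_i'$ regular along it; a direct wedge calculation (using $\tfrac{dz}{z}\wedge\tfrac{dz}{z}=0$) then gives $\Res_{X_\rho}(\theta\wedge\theta)=[R_\rho,\theta_\rho]$, where $R_\rho=\sum_i\langle m_i,u_\rho\rangle A_i$ is the residue endomorphism and $\theta_\rho=\theta|_{X_\rho}$. The matching tropical input is obtained by restricting $\eta$ to the open star of $\rho$ and decomposing the tropical $1$-forms there as $\C e_\rho\oplus\rho^\perp$, so that $\eta=R_\rho e_\rho+\bar\eta$ with $\bar\eta$ the star-quotient connection on $\Trop(X^\circ)_\rho$. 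On each maximal cone the two parts of $\eta\wedge\eta$, namely $[R_\rho,\bar\eta]\wedge e_\rho$ and $\bar\eta\wedge\bar\eta$, lie in the complementary summands $e_\rho\wedge\rho^\perp$ and $\bigwedge^2\rho^\perp$; hence $\eta\wedge\eta=0$ forces $[R_\rho,\bar\eta]\wedge e_\rho=0$, and since a tropical $1$-form is detected on the maximal cones where $e_\rho$ and $\rho^\perp$ are linearly independent, $e_\rho\wedge(-)$ is injective and $[R_\rho,\bar\eta]=0$.

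To transfer this to $X_\rho$ I would invoke the form-level compatibility $\theta_\rho=\Phi_\rho(\bar\eta)$, where $\Phi_\rho$ denotes the tropical pullback on $X_\rho$: the restriction of $\DLog(m_i)$ to the orbit closure $X_\rho$ is the tropical pullback of the image of $\eta_i$ in the star-quotient, which is precisely Lemma~\ref{l:logstructureonorbits} together with the independence of the tropical pullback of the chosen lift. Because the tropical pullback is $\C$-linear in the form, $[R_\rho,\theta_\rho]=\Phi_\rho([R_\rho,\bar\eta])=\Phi_\rho(0)=0$; combined with the reduction of the first paragraph this gives $\theta\wedge\theta=0$, so $\theta$ is an integrable $\tilde\Omega$-connection. (Condition~(\ref{i:skow3}), $H^0(X_\rho,\Omega^1_{X_\rho})=0$, enters through Lemma~\ref{l:1formispullback} to guarantee that log $1$-forms on the strata are pinned down by their classes in $M$, which is what makes $\Phi_\rho$ well behaved and the eventual comparison tight.)

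Finally, the commuting square I would verify directly on underlying objects. Both ways around it produce the trivial bundle on the fixed vector space $E(\Trop(X^\circ))$; the two resulting connection forms are obtained by restricting the same global forms $\DLog(m_i)$ to $X_{\sigma_2}$, so their equality is again the restriction computation of Lemma~\ref{l:logstructureonorbits}, now applied to the inclusion of star-quotients indexed by $\sigma_1\subset\sigma_2$, with Proposition~\ref{p:residueproperties}(\ref{i:transverseresidues}) ensuring compatibility of residues with the transverse-to-strata restrictions; the assumption that $X_{\sigma_1}$ and $X_{\sigma_2}$ are some-kind-of-wonderful is used only to know, by the first part, that both pullback functors are defined. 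I expect the genuine obstacle to be the tropical residue step of the second paragraph: making rigorous that the splitting of $\Omega^2$ near a ray into the $e_\rho$-part and the star-quotient part survives the image construction defining tropical $2$-forms, and that $e_\rho\wedge(-)$ is injective. Both I would settle by restricting to the maximal cones, where $\Omega^\bullet$ is an honest exterior algebra, so that the decomposition and injectivity are visible pointwise.
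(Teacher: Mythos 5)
Your reduction in the first paragraph is where the proof breaks: you treat $\theta\wedge\theta$ as if it were a section of the sheaf of honest log $2$-forms $\Omega^2_X(\log D)\otimes\End(\cE)$, for which ``all residues vanish $\Rightarrow$ regular'' plus $H^0(X,\Omega^2_X)=0$ would indeed finish the job. But the proposition concerns a general enriched fan $(\Delta,N\to N',R)$, and the whole point of the shortened complex is that $\tilde{\Omega}^1$ contains, besides the summand $\Omega^1_{(X,M_s)}$ coming from the standard toric log structure, a summand $M''\otimes\cO_X$ of formal symbols $\DLog(m)$ for $m$ in a complement $M''$ of $M'$ in $M$ (the ``hidden'' directions of the enrichment). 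Hence $\tilde{\Omega}^2$ has three pieces: $\Omega^2_{(X,M_s)}$, $\Omega^1_{(X,M_s)}\otimes M''$, and $\bigwedge\nolimits^2 M''\otimes\cO_X$, and your residue argument only controls the first. The hidden--hidden component of $\theta\wedge\theta$ lies in $\bigwedge\nolimits^2 M''\otimes\End(\cE)$ (using $H^0(X,\cO_X)=\C$); it has no poles, so all its residues vanish trivially, yet it is not an element of $H^0(X,\Omega^2_X)\otimes\End(\cE)$ and that vanishing hypothesis says nothing about it. Its vanishing can only come from the hidden component $\eta_h\wedge\eta_h=0$ of the tropical identity, which your argument never invokes. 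Likewise the mixed component, once its residues vanish, lies in $H^0(X,\Omega^1_X)\otimes M''\otimes\End(\cE)$, so you need $H^0(X,\Omega^1_X)=0$ (the other half of Definition~\ref{d:skow}, which you never use), not $H^0(X,\Omega^2_X)=0$. This is precisely how the paper organizes its proof: it decomposes $\theta=\theta_s+\theta_h$ and kills $\theta_s\wedge\theta_s$, the cross term, and $\theta_h\wedge\theta_h$ by three different arguments (residues plus Definition~\ref{d:skow}(\ref{i:skow3}); interior products $\iota_v$ for $v\in(M'')^\vee$ plus $H^0(X,\Omega^1_X)=0$; and the tropical identity directly). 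Your proof, as written, is only complete in the trivially enriched case $N=N'$, which is not enough for the application, where the strata carry star-quotient enrichments $N\to N/N_P$.

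What you do for the purely standard part is correct and is a genuinely different route from the paper's: where the paper uses iterated residues (Proposition~\ref{p:residueproperties}(\ref{i:iteratedresidue})) to see that $\Res_{X_\rho}(\theta_s\wedge\theta_s)$ is regular on $X_\rho$ and then kills it with $H^0(X_\rho,\Omega^1_{X_\rho})=0$, you compute $\Res_{X_\rho}(\theta\wedge\theta)=[R_\rho,\theta_\rho]$ directly and transport the star-quotient identity $[R_\rho,\bar{\eta}]=0$ through the compatibility $\theta_\rho=\Phi_\rho(\bar{\eta})$; your worry about injectivity of $e_\rho\wedge(-)$ is resolved exactly as you suggest, by evaluating on maximal cones containing $\rho$. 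So the repair is structural rather than local: split $\theta=\theta_s+\theta_h$ at the outset, run your residue/commutator argument on $\theta_s\wedge\theta_s$, and add the two missing arguments for the mixed and hidden components. Your last paragraph on the commuting square agrees with the paper, which disposes of it as following from the definitions.
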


\begin{proof}
  Note that $\theta$ induces a $\tilde{\Omega}$-connection, $\nabla=d-\theta$ on $\cE$.
  It suffices to prove that $\nabla$ is integrable, i.e.,~that $d\theta-\theta\wedge\theta=0$. Without loss of generality, we can consider the case $R=\varnothing$ where the sheaf of log $1$-forms on $\PP(\Delta)$ is given by $M\otimes\cO_{\PP(\Delta)}$. We can decompose the lattice $M\cong M'\oplus M''$ for some $M''$ to write the sheaf of log $1$-forms as
  \[\Omega^1_{(\PP(\Delta),M_{\PP(\Delta)})/S}\cong \Omega^1_{(\PP(\Delta),M_s)/S}\oplus (M''\otimes \cO_{\PP(\Delta)})\]
  where $(\PP(\Delta),M_s)$ has the standard toric log structure. We will write $(X,M_s)$ for the induced inverse image log structure on $X$
  Therefore, we may decompose $\theta\in\Gamma(X,\Omega_{(X,M)}^1\otimes\End(E))$
  as 
  \[\theta=\theta_s+\theta_h\]
  where 
  \[\theta_s\in\Gamma(X,\Omega_{(X,M_s)}^1\otimes\End(E)),\ 
    \theta_h\in\Gamma(X,(M''\otimes\cO_X)\otimes\End(E)).\]
  We may similarly decompose $\eta=\eta_s+\eta_h$ for
  \[\eta_s\in 
  \Gamma(\Sigma,\Omega^1_{(\Trop(X^\circ),\operatorname{Id}\colon N'\to N',\varnothing)})\otimes \End(E(\Trop(X^\circ))),\ \eta_h\in M''\otimes  \End(E(\Trop(X^\circ))).\]
  Since $d\theta=0$ by construction, we need only show $\theta\wedge\theta=0$. We can write
  \[\theta\wedge\theta=\theta_s\wedge \theta_s+\omega+\theta_h\wedge\theta_h\]
  for 
  \begin{enumerate}
      \item \label{i:i1} $\theta_s\wedge\theta_s\in \Gamma(X,\Omega_{(X,M_s)}^2\otimes\End(E))$,
      \item \label{i:i2} $\omega\in \Gamma(X,\Omega_{(X,M_s)}^1\otimes (M'\otimes \cO_X)\otimes\End(E))$, and
      \item \label{i:i3} $\theta_h\wedge\theta_h\in\Gamma(X,(\bigwedge^2 M''\otimes\cO_X)\otimes\End(E))$.
  \end{enumerate}
  Now, $(X,M_s)$ has the log structure induced by the strict normal crossings divisor $D$ given by the intersection of $X$ with the union of the toric divisors in $\PP(\Delta)$. From $\eta_s\wedge\eta_s=0$, we obtain that all residues of $\theta_s\wedge\theta_s$ on codimension $2$ strata vanish. Hence, from (\ref{i:iteratedresidue}) of Property~\ref{p:residueproperties} for any ray $\rho$ of $\PP(\Delta)$, $\Res_{X_{\rho}}(\theta_s\wedge\theta_s)$ is regular on $X_\rho$. Therefore, $\Res_{X_{\rho}}(\theta_s\wedge\theta_s)$ vanishes by Definition~\ref{d:skow}(\ref{i:skow3}), and so $\theta_s\wedge\theta_s$ is  regular on $X$. Hence, $\theta_s\wedge\theta_s=0$. 

  Similarly, from $\eta_h\wedge \eta_s+\eta_s\wedge \eta_h=0$, we obtain that for any $v\in (M'')^\vee$, the interior product $\iota_v(\omega)$ is a regular $1$-form on $X$ and hence is equal to $0$ by (\ref{i:i2}). Lastly, $\eta_h\wedge\eta_h=0$ implies $\theta_h\wedge \theta_h=0$ by definition.  

  The commutativity of the above diagram follows from definitions.
\end{proof}

We wish to show that the above association $(E,\eta)\mapsto (\cE,\theta)$ induces a functor
\[F\colon\sC^{\trop}(\Trop(X^\circ),\pi\colon N\to N',R)\to \sC^{\un}(X/S,\tilde{\Omega}))\]
of neutral Tannakian categories.
It is straightforward that $F$ respects tensor product, duality, and unit objects.
To define the functor on morphisms, by \eqref{e:tropicalhoms} and \eqref{e:classicalhoms}, it suffices to define a homomorphism
  \[\Gamma(\Trop(X^\circ),(E_1,\eta_1)^\vee\otimes (E_2,\eta_2))\to \Gamma(X,F(E_1,\eta_1)^\vee\otimes F(E_2,\eta_2))\]
  which is a consequence of the following which also demonstrates that $F$ is fully faithful:
  
  \begin{lemma} \label{l:fullyfaithful} Let $(\cE,\nabla=d-\theta)$ be the tropical pullback of $(E,\eta)$ as above. Then,
  there is a canonical isomorphism
  \[\Gamma(\Trop(X^\circ),(E,\eta))\to \Gamma(X,(\cE,\nabla))\]
  \end{lemma}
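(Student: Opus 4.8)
The plan is to compute both spaces of horizontal sections explicitly and to identify them as one and the same subspace of the vector space $V \coloneqq E(\Trop(X^\circ))$. First I would pin down the underlying section spaces. Since $X$ is closed in the complete toric variety $\PP(\Delta)$ it is proper, and (being a some-kind-of-wonderful, hence connected, subvariety) we get $\Gamma(X,\cO_X)=\C$; therefore the global sections of the trivial bundle $\cE = \cO_X\otimes_{\C} V$ are exactly the constant sections, $\Gamma(X,\cE)=V$. On the tropical side $\Trop(X^\circ)$ is a fan, so $E$ is a constant sheaf and $\Gamma(\Trop(X^\circ),E)=E(\Trop(X^\circ))=V$ as well. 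This yields a canonical identification of the ambient vector spaces, which will be the underlying map of the asserted isomorphism.

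Next I would reduce horizontality to a kernel condition. For a constant section $v\in V$ one has $\nabla v = dv-\theta v = -\theta v$, so $v$ is horizontal for $(\cE,\nabla)$ precisely when $\theta v=0$ in $\Gamma(X,\tilde{\Omega}^1)\otimes V$; tropically, $v$ is horizontal for $(E,\eta)$ precisely when $\eta v=0$ in $\Omega^1(\Trop(X^\circ))\otimes V$. Choosing a basis $\{\eta_i\}$ of tropical $1$-forms and writing $\eta=\sum_i \eta_i\otimes A_i$ with $A_i\in\End(V)$, we get $\eta v=\sum_i \eta_i\otimes A_i v$ and $\theta v=\sum_i \phi(\eta_i)\otimes A_i v$, where $\phi$ is the tropical pullback $\eta_i\mapsto \DLog(m_i)|_X$. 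Hence $\{v:\eta v=0\}=\{v:\theta v=0\}$ as soon as the images $\{\phi(\eta_i)\}$ remain linearly independent in $\Gamma(X,\tilde{\Omega}^1)$, i.e.\ as soon as $\phi$ is injective; in that case the identity map on $V$ carries one kernel isomorphically onto the other, giving the canonical isomorphism.

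The remaining and essential point is therefore the injectivity of $\phi$, which I would prove using the decomposition $M\cong M'\oplus M''$ into a standard-toric part and a hidden part exactly as in the proof of Proposition~\ref{p:troppullback}, so that $\tilde{\Omega}^1$ splits (after restriction) into $\Omega^1_{(X,M_s)/S}$ and the trivial summand $M''\otimes\cO_X$. For the standard part, Remark~\ref{r:uniqueness1form} identifies $\Res_{X_\rho}(\DLog(m))$ with the pairing $\langle m,u_\rho\rangle$, i.e.\ with the value of the tropical form on the ray $\rho$; if $\phi(\eta)$ vanishes then all these residues vanish, and the Hartogs-type argument of Lemma~\ref{l:1formispullback} together with the vanishing of regular differentials guaranteed by Definition~\ref{d:skow} forces the standard part of $\eta$ to be zero (Proposition~\ref{p:residueproperties} being used to see that the residues on codimension-one strata already determine the form). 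For the hidden part, the forms $\DLog(m)$ with $m\in M''$ span the direct summand $M''\otimes\cO_X$, so they cannot enter into any cancellation with the standard part and their linear independence over $\C$ is immediate. Combining the two parts gives injectivity of $\phi$, and hence the equality of horizontal-section spaces.

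The main obstacle is precisely this injectivity of $\phi$: one must keep the enriched (hidden) directions and the partial-compactification rays $R$ cleanly separated from the genuine toric log poles, so that a nonzero tropical $1$-form cannot pull back to zero. The residue calculus of Proposition~\ref{p:residueproperties} and Remark~\ref{r:uniqueness1form} controls the toric directions, while the some-kind-of-wonderful hypothesis $H^0(X,\Omega^1_X)=H^0(X_\rho,\Omega^1_{X_\rho})=0$ is exactly what rules out spurious regular relations in $\Gamma(X,\tilde{\Omega}^1)$; everything else is bookkeeping with the constant-section identification from the first step.
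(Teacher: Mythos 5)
Your proposal is correct, but it is organized around a different pivot than the paper's proof, so a comparison is worthwhile. The paper works directly with constant sections and checks the two implications separately: for $\eta s_0=0\Rightarrow\theta s_0=0$ it decomposes $\theta s_0=\kappa_s+\kappa_h$, kills the hidden part outright, and kills $\kappa_s$ by noting that its residues vanish, so it is a \emph{regular} $1$-form and hence zero by the hypothesis $H^0(X,\Omega^1_X)=0$ of Definition~\ref{d:skow}; for the converse it uses Proposition~\ref{p:residueproperties}, Remark~\ref{r:uniqueness1form}, and the fact that the rays of $\Trop(X^\circ)$ span $\Span_\R(\Trop(X^\circ))$. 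You instead identify both spaces of horizontal sections as kernels inside $V=E(\Trop(X^\circ))$ (using properness and connectedness to see that all global sections are constant, a point the paper leaves implicit) and reduce the whole lemma to injectivity of the pullback $\phi$ on $1$-forms plus linear algebra; the implication that the paper proves with the regular-forms argument is, in your setup, absorbed into the well-definedness of the tropical pullback (independence of the representative $m$), which the paper established when defining $\phi$. This is a valid and arguably cleaner packaging, since it isolates a reusable statement about $1$-forms. One correction, however: your proof of injectivity invokes machinery it does not need, and in a slightly circular direction. If $\phi(\eta)=0$, then the residues of its standard part vanish \emph{tautologically}, and Remark~\ref{r:uniqueness1form} converts this into $\langle m',u_\rho\rangle=0$ for every ray $\rho$ of $\Trop(X^\circ)$; since those rays span $\Span_\R(\Trop(X^\circ))$ and tropical $1$-forms are exactly functionals modulo $\Span_\R(\Trop(X^\circ))^{\perp}$, the standard part of $\eta$ is already zero — no Hartogs argument and no appeal to $H^0(X,\Omega^1_X)=0$ is required. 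Those tools (Lemma~\ref{l:1formispullback} and Definition~\ref{d:skow}) prove the \emph{stronger} statement that a log form with vanishing residues is itself zero, which is what the paper needs in its forward direction and what Proposition~\ref{p:equivalence} needs for essential surjectivity, but is not what injectivity asks for; you should replace that sentence with the one-line spanning argument. Finally, note that connectedness of $X$ is not formally a consequence of Definition~\ref{d:skow} as you assert, though it does hold for subspace compactifications and is implicitly assumed in the paper's proof as well.
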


  \begin{proof}
     Let $\cE$ be the free sheaf attached to $E_0=E(\Trop(X^\circ))$. Then, a section $s\in\Gamma(\Trop(X^\circ),(E,\eta))$ is an element $s_0\in E_0$ with $\eta s_0=0$ when considered as an element of $(M/\Span(\Trop(X^\circ))^\perp)\otimes E_0$.    
     We wish to show $\theta s_0=0$ where $s_0$ is considered as a constant section of $\cE$. 
    Decompose $\theta s_0=\kappa_s+\kappa_h$ where
     \[\kappa_s\in\Gamma(X,\Omega_{(X,M_s)}^1\otimes \cE),\ 
    \kappa_h\in\Gamma(X,(M''\otimes\cO_X)\otimes \cE).\]
    Then, $\kappa_s$ is regular by the vanishing of its residues, and hence is an element of  $\Gamma(X,\Omega^1_{X})\otimes E_0$. Hence, $\kappa_s=0$. Since $\kappa_h$ vanishes because $\eta s_0=0$, we have $\theta s_0=0$.
     
     This homomorphism is injective by definition. To prove that it is surjective, consider an element of $\Gamma(X,(\cE,\nabla))$ taken to be      
     $s_0\in E_0$ with $\theta s_0=0$. 
     Pick a representative $\eta\in M_\C\otimes\End(E_0)$, so $\eta s_0\in M_C\otimes E_0$. We must show $\eta s_0=0$ as an element of $\Gamma(\Trop(X^\circ),\Omega^1_{(\Delta,N\to N',R)/\Delta_S})\otimes E_0$. 
     Decompose $\theta s_0=\kappa_s+\kappa_h$ as above.
     Similarly, write
     $\eta s_0=\lambda_s+\lambda_h$ for
     \[\lambda_s\in M'_C\otimes E_0,\ \lambda_h\in M''_\C\otimes E_0.\] 
     Since      $\kappa_h=0$, $\lambda_h=0$.
     By Proposition~\ref{p:residueproperties}(\ref{i:transverseresidues}) for all rays $\rho$ of $\Sigma$, $\Res_{D_\rho}(\kappa_s)=\Res_{X_\rho}(\kappa_s)=0.$ Hence, for any $v\in N'_{\R}$ along $\rho$, by Remark~\ref{r:uniqueness1form}, we have the vanishing of the interior product $\iota_v(\eta s)=\iota_v(\kappa_s)=0$.  Since the vectors along the rays of $\Trop(X^\circ)$ span the vector space $\Span_{\R}(\Trop(X^\circ)$, $\kappa_s=0$. Hence $\eta s_0=0$, and thus $s_0$ is a horizontal section of $(E,\eta)$.  
       \end{proof}

\begin{prop} \label{p:equivalence}
  Let $(\Delta,\on{Id}\colon N\to N,R)$ be a trivially enriched fan with partial compactification inducing a log structure on $\PP(\Delta)$.
  Let $X$ be a subspace compactification in $\PP(\Delta)$ defined over $\C$, and let $\Delta$ induce a polyhedral structure on $\Trop(X^\circ)$. Suppose that $R$ is a union of rays of $\Trop(X^\circ)$. Then the functor on unipotent categories
    \[F\colon \sC^{\un}(\Trop(X^\circ),N,R)\to \sC^{\un}((X,M_X)/S)\]
    is essentially surjective, hence an equivalence of categories.
\end{prop}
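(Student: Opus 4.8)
The plan is to leverage the full faithfulness of $F$ (Lemma~\ref{l:fullyfaithful}) and reduce the equivalence to essential surjectivity. Since $X$ is a subspace compactification it is irreducible, smooth, and proper, and being some-kind-of-wonderful it has $H^0(X,\Omega^1_X)=0$, whence $H^1(X,\cO_X)=0$ by Hodge symmetry; thus $(X,M_X)$ is locally topologically acyclic, so any unipotent object $(\cE,\nabla)$ of $\sC^{\un}((X,M_X)/S)$ has trivial underlying bundle. Fixing a trivialization with fiber $E_0=\Gamma(X,\cE)$, I would write $\nabla=d-\theta$ for a nilpotent $\theta\in\Gamma(X,\tilde{\Omega}^1)\otimes\End(E_0)$ satisfying $d\theta-\theta\wedge\theta=0$, and aim to produce a tropical object whose image under $F$ is $(\cE,\nabla)$.

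First I would pin down the global log $1$-forms. By Lemma~\ref{l:1formispullback} restriction to $X\cap T$ embeds $\Gamma(X,\tilde{\Omega}^1)$ into $M_X$, while tropical pullback via $\DLog$ provides a section. Because the log structure attached to $(\Delta,\on{Id}\colon N\to N,R)$ has poles only along $\bigcup_{\rho\notin R}D_\rho$, a global section of $\tilde{\Omega}^1$ is regular along $D_\rho$ for $\rho\in R$, so its residue $\Res_{D_\rho}(\DLog(m))=\langle m,u_\rho\rangle$ vanishes there by Remark~\ref{r:uniqueness1form}. This exhibits a canonical isomorphism $\Gamma(X,\tilde{\Omega}^1)\cong\Gamma(\Delta,\Omega^1_{(\Delta,N\to N,R)/\Delta_S})$ given by $\DLog$, matching exactly the global tropical $1$-forms that vanish on $R$. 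Consequently $\theta=\DLog(\eta_0)$ for a unique nilpotent tropical $1$-form-valued endomorphism $\eta_0$; since these forms are $d$-closed we have $d\theta=0$, and integrability of $\nabla$ reduces to the single relation $\theta\wedge\theta=0$.

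The remaining, and principal, point is to promote $\theta\wedge\theta=0$ to the tropical integrability $\eta_0\wedge\eta_0=0$, so that $(E_0,\eta_0)$ is a bona fide object of $\sC^{\un}(\Trop(X^\circ),N,R)$ with $F(E_0,\eta_0)\cong(\cE,\nabla)$. For this I would show that the graded-algebra homomorphism $\Phi$ extending $\DLog$ from global tropical forms to global log forms is injective in degree $2$. Here I would use that $\Trop(X^\circ)$ is the Bergman fan of the matroid of the subspace (Proposition~\ref{p:matroidtrop}), so by Zharkov's identification \cite{Zharkov:Orlik} its global tropical forms realize the Orlik--Solomon algebra, while the global log forms on $X$ realize the same Orlik--Solomon algebra \cite{OS:Hyperplanes}; the degree-$2$ relations on each side are precisely the circuit relations of the matroid, cut down compatibly by the partial compactification $R$, so $\Phi$ is an isomorphism in degree $2$. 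Injectivity of $\Phi\otimes\id_{\End(E_0)}$ then forces $\eta_0\wedge\eta_0=0$ out of $\theta\wedge\theta=0$.

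I expect this degree-$2$ injectivity to be the crux, as it is exactly where the Orlik--Solomon coincidence between tropical and logarithmic $2$-forms must be invoked. A more self-contained, residue-theoretic alternative is available: a tropical $2$-form on a Bergman fan is determined by its iterated interior products along cones, and these match the iterated residues of its $\DLog$-image by Proposition~\ref{p:residueproperties}, so the vanishing of $\theta\wedge\theta$ forces the vanishing of all residues of $\eta_0\wedge\eta_0$ and hence $\eta_0\wedge\eta_0=0$. With the object $(E_0,\eta_0)$ in hand, the identity $F(E_0,\eta_0)\cong(\cE,\nabla)$ gives essential surjectivity, and together with Lemma~\ref{l:fullyfaithful} this shows $F$ is an equivalence of categories.
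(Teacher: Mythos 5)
Your proposal is correct and its skeleton is the same as the paper's: triviality of the underlying bundle from $H^1(X,\cO_X)=0$, identification of the connection matrix $\theta$ as a tropical pullback via Lemma~\ref{l:1formispullback}, vanishing of residues along $D_\rho$ for $\rho\in R$ (Remark~\ref{r:uniqueness1form}) to see that the resulting tropical form is compatible with the partial compactification, and transfer of integrability from the log side to the tropical side. The one place you diverge is the integrability step $\theta\wedge\theta=0\Rightarrow\eta\wedge\eta=0$: the paper's proof is exactly your ``residue-theoretic alternative'' --- since a tropical $2$-form is determined by its restrictions to top-dimensional cones, hence by iterated interior products along pairs of rays, and these are computed by iterated residues of the $\DLog$-image (Proposition~\ref{p:residueproperties} and Remark~\ref{r:uniqueness1form}) along the codimension-two strata $X\cap D_{\rho_1}\cap D_{\rho_2}$, which are nonempty precisely for the two-dimensional cones of $\Trop(X^\circ)$ by transversality --- so vanishing of $\theta\wedge\theta$ kills $\eta\wedge\eta$. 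Your primary route, through Zharkov's identification of tropical forms on Bergman fans with the Orlik--Solomon algebra and the classical Orlik--Solomon description of log forms on the compactified complement, is morally sound but heavier than needed: it requires verifying that the two Orlik--Solomon identifications are intertwined by $\DLog$ (and that the cutting-down by $R$ matches on both sides), a compatibility your sketch asserts rather than proves. The residue argument sidesteps that bookkeeping entirely, which is presumably why the paper adopts it; with either route in place, essential surjectivity plus Lemma~\ref{l:fullyfaithful} gives the equivalence as you say.
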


\begin{proof}
  Because $H^1(X,\cO_X)=0$, the underlying bundle of any object of $\sC^{\un}((X,M_X)/S)$ is trivial. Write the connection $\nabla=d-\theta$ for $\theta\in\Gamma(X,\Omega^1_{(X,M)/S}\otimes \End(\cE))$ satisfying $d\theta-\theta\wedge\theta=0$. By considering $\theta$ as a matrix of log $1$-forms with respect to the standard toric log structure (for which $R=\varnothing$), by Lemma~\ref{l:1formispullback}, $\theta$ is the restriction of a matrix of log $1$-forms on $\PP(\Delta)$, hence it is a
  tropical pullback of a matrix of tropical $1$-forms $\eta$. By taking residues of $\theta\wedge\theta=0$, we see $\eta\wedge\eta=0$. 
  By applying Remark~\ref{r:uniqueness1form}, we see that the interior product of $\eta$ with vectors along the rays in $R$ vanishes, and hence, $\eta$ is a matrix of tropical $1$-forms with respect to  the partial compactification $R$. 
\end{proof}

\subsection{Proof of Proposition~\ref{p:mainmaintheorem}}

\begin{proof}
  Write $\Gamma$ for the simplicial subcomplex of $\Trop(X^\circ)$ consisting of bounded faces. We define two Tannakian categories, $\sC^{\trop}_\Gamma$ and $\sC^{\dR}_\Gamma$ over $\Gamma$.
  For $P\in\Gamma$, set
  \[\sC^{\trop}_P\coloneqq\sC^{\un}((\Trop(X^\circ),N,R)_P),\ \sC^{\dR}_P\coloneqq\sC^{\un}((X_P,M_P)/S^\dagger).\]
  
  Define the natural pullbacks induced by inclusions of faces $P_1\subseteq P_2$ to obtain a category over a simplicial complex and hence descent categories.
   By Proposition~\ref{p:tropicaldescent} and Proposition~\ref{p:schondescent}, there are  equivalences of  categories
  \[\sC^{\dR}(\Gamma)^{\un}\cong \sC^{\un}((X_0,M)/S^\dagger),\ \sC^{\trop}(\Gamma)^{\un}\cong \sC(\Trop(X^\circ),N,R).\]

   Let $\tilde{\Omega}_P$ be the shortened complex on $X_P$  pulled back by $X_P\hookrightarrow \PP(\Sigma_P)$ where $\PP(\Sigma_P)$ is given the shortened complex induced by $(\Sigma_P,N\to N/N_P,R')$. 
   By Lemma~\ref{l:logstructureontransversetoorbits}, the shortened complex of differential forms on $X_P$ induced by $(X_P,M_P)/S^\dagger$ is canonically isomorphic to $\tilde{\Omega}_P$.
   Since each $X_P$ is a subspace compactification by Proposition~\ref{p:matroidtrop}, hence some-kind-of-wonderful, there is a tropical pullback functor 
   \[F_P\colon \sC_P^{\trop}=\sC^{\un}((\Trop(X^\circ),N,R)_P)\to \sC(X_P/S,\tilde{\Omega}_P)\cong \sC_P^{\dR}\]
  which induces a functor of descent categories.
  This functor is fully faithful for all $P$ by Lemma~\ref{l:fullyfaithful}. For each vertex $v$, $F_v$ is an equivalence of categories by Proposition~\ref{p:equivalence}. Therefore, it induces an equivalence of descent categories by Lemma~\ref{l:descentequivalence}.

  Because $H^1(X_P,\cO_{X_P})=0$ as a subspace compactification, $X_0$ is locally topologically acyclic. 
  Let $\omega_{X_0,P}$ be the attached regular sections functor on $\sC^{\dR}_P$ for $P\in\Gamma$. Then, it is easy to see $\omega_{X_0,P}\circ F$ is the fiber functor on $\sC^{\trop}_P$ given by $E\mapsto E(\Star^\circ_P(\sigma))$. Consequently, on descent categories, $\omega_{X_0,P}\circ F=\omega_{\Sigma,P}$. This yields the isomorphism of Tannakian fundamental groups.
\end{proof}


\appendix 

\section{Descent for simple normal crossings varieties} \label{a:appendix}
The aim of this appendix is to prove a descent theorem for labelled normal crossings. Strict normal crossings schemes are labelled normal crossings by, say \cite[Tag 0CAT]{stacks-project}. Background information on normal crossings divisors can be found in \cite[Tag 0CBN]{stacks-project}.

\begin{definition} \label{d:labellednc}
A scheme $ X $ over a field $k$ is {\em normal crossings} if there is an \'{e}tale cover $ \left\{p_\alpha\colon U_{\alpha}\to X
\right\} $ of $X $ and \'{e}tale 
\[\pi_{\alpha}\colon U_{\alpha}\to \Spec k[x_{1}, \dots, x_{n}]/(x_{1} \dots x_{n}).\] 
Write $S_i=\Spec k[x_{1}, \dots, x_{n}]/(x_{i})$.
Let $X=\bigcup_v X_v$ be the decomposition of $X$ into irreducible components.
The variety $X$ is {\em labelled normal crossings} if  there is a cover as above such that for each $U_\alpha$ and each $i$ such that $\pi_{\alpha}^{-1}(S_i)\neq\varnothing$, there exists $v$ such that 
$\pi_{\alpha}^{-1}(S_i)=
p_{\alpha}^{-1}(X_v).$
\end{definition}

The main result of this appendix is the following:

\begin{theorem} \label{t:amainresult}
Let $ X $ be a labelled simple normal crossings scheme. There is an equivalence of categories
\[ \left\{ \begin{tabular}{c}
     Descent data of vector bundles on \\
     the irreducible components of $X$ 
\end{tabular} \right\}  \simeq \left\{ \textrm{Vector 
bundles on } X \right\}. \]
\end{theorem}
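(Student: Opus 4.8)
The plan is to prove the equivalence by \emph{Milnor patching} along the closed cover of $X$ by its irreducible components $X=\bigcup_v X_v$, proceeding by induction on the number $r$ of components. First I would construct the restriction functor from vector bundles on $X$ to descent data: a vector bundle $\cE$ restricts to $\cE|_{X_v}$ on each component, and on a double intersection $X_v\cap X_w$ the two restrictions are canonically identified, with the cocycle condition on triple intersections holding tautologically. The content is to show this functor is an equivalence, and since both being locally free and the formation of the descent data are local on $X$, I would reduce to the affine case $X=\Spec A$ and then glue the resulting affine-local equivalences over an affine open cover.

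In the affine case, write $A$ (reduced, as $X$ is simple normal crossings) with minimal primes $I_1,\dots,I_r$ corresponding to the components $X_1,\dots,X_r$; the labelled hypothesis guarantees these are exactly the global components pulled back through the \'{e}tale local model of Definition~\ref{d:labellednc}. Set $J\coloneqq I_2\cap\dots\cap I_r$. Because $A$ is reduced, $I_1\cap J=\bigcap_v I_v=0$, so the standard exact sequence $0\to A/(I_1\cap J)\to A/I_1\oplus A/J\to A/(I_1+J)\to 0$ exhibits $A$ as the fiber product $A/I_1\times_{A/(I_1+J)}A/J$, a conductor (Milnor) square in which both legs to $A/(I_1+J)$ are surjective quotient maps. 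Milnor's patching theorem then yields an equivalence between finitely generated projective $A$-modules and triples consisting of a projective $A/I_1$-module, a projective $A/J$-module, and an isomorphism of their base changes to $A/(I_1+J)$; this equivalence preserves rank, hence restricts to an equivalence on vector bundles.

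Next I would run the induction. The ring $A/J$ is again reduced with minimal primes the images of $I_2,\dots,I_r$ and, as a union of components of the simple normal crossings scheme $X$, is itself labelled simple normal crossings, so the inductive hypothesis describes vector bundles on $\Spec A/J=X_2\cup\dots\cup X_r$ as descent data for $X_2,\dots,X_r$. Substituting this into the triples above and unwinding, the base-change isomorphism over $A/(I_1+J)$---whose spectrum is the intersection $X_1\cap(X_2\cup\dots\cup X_r)=\bigcup_{w\geq 2}(X_1\cap X_w)$---decomposes into isomorphisms $\cE_1|_{X_1\cap X_w}\cong \cE_w|_{X_1\cap X_w}$ for each $w\geq 2$, and compatibility on the strata $X_1\cap X_w\cap X_{w'}$ supplies exactly the remaining cocycle relations. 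Thus the iterated fiber-product category is identified with the full category of descent data on $X_1,\dots,X_r$, which closes the induction (the base case $r=1$ being trivial).

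I expect the main obstacle to be bookkeeping rather than a single hard theorem: one must verify that the scheme-theoretic intersections appearing in the conductor squares are the reduced strata, so that $A/(I_1+J)$ really is the coordinate ring of $X_1\cap(X_2\cup\dots\cup X_r)$ and the decomposition $\Spec A/J$ stays within the class of labelled simple normal crossings schemes at every stage of the induction. Equally, one must check that the \'{e}tale-local normal form of Definition~\ref{d:labellednc} is compatible with the global decomposition into components---this is precisely the role of the \emph{labelled} hypothesis, which prevents a single global component from meeting the coordinate cross in more than one sheet and thereby ensures that the local and global notions of descent data coincide. Granting these reducedness and compatibility statements, the sheaf-theoretic equivalence follows formally from the affine-local Milnor equivalences.
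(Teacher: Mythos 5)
Your proposal is correct in its essentials and its engine is the same as the paper's: induction on the number of components, with the inductive step powered by a two-factor fiber-product patching theorem (your Milnor square is exactly the role played by Ferrand's Th\'{e}or\`{e}me 2.2(iv) in the paper, Theorem~\ref{ferrandThm}), followed by an unwinding of the patching isomorphism into pairwise descent isomorphisms. The organization differs in two ways worth noting. First, you localize in the Zariski topology and peel off a \emph{global} irreducible component, forming the conductor square $A \cong A/I_1 \times_{A/(I_1+J)} A/J$ with $J = I_2 \cap \dots \cap I_r$; the paper instead reduces by \'{e}tale descent to a single labelled chart and peels off one coordinate of the explicit model $k[x_1,\dots,x_n]/(x_1\cdots x_n)$, exploiting flatness of the chart over the model. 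Second, this choice shifts where the geometric content sits. Your route needs two lemmas that the paper's flatness bookkeeping (Lemmas~\ref{l:hierarchicalRing}--\ref{descentFlatModel}) renders automatic: (i) a union of irreducible components of a labelled SNC scheme is again labelled SNC, so the inductive hypothesis applies to $\Spec A/J$; and (ii) the step where you split the single isomorphism over $A/(I_1+J)$ into isomorphisms over the $X_1\cap X_w$ satisfying cocycle conditions requires not just that $I_1+J$ is radical but that $\mathcal{O}\bigl(X_1\cap\bigcup_{w\geq 2}X_w\bigr)\to\prod_w\mathcal{O}(X_1\cap X_w)$ is the equalizer of the two maps to $\prod_{w<w'}\mathcal{O}(X_1\cap X_w\cap X_{w'})$ --- a fiber-product exactness statement that fails for general reduced schemes (e.g.\ tangential components) and is precisely where transversality, verified \'{e}tale-locally on the labelled charts, enters; it is the counterpart of the paper's Lemma~\ref{inductGluingData}, where the same issue is handled by the absence of $x_n$-torsion. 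You flag both points as bookkeeping, which is fair since they are routine \'{e}tale-local verifications, but they are the real content rather than formalities. What your approach buys is the avoidance of the initial \'{e}tale descent reduction and of the ``flat over the simple normal crossings model'' formalism; what the paper's buys is that the induction never leaves a single chart, so no statement of the form ``sub-unions of components remain SNC'' is ever needed.
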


By descent arguments with respect to the cover $\{U_{\alpha}\}$, it suffices to prove this result for a single $U_{\alpha}$.

\subsection{Descent for fiber product rings}
We include a brief descent commentary (or dysentery) for fiber product rings. Here, we will extend in a special case, via induction, ideas from \cite{Ferrand} that have been also written up in  \cite[Tag 08KG]{stacks-project}. The original treatment is sufficient for the case of two components where one does not have the cocycle condition.
Let $R$ be a ring with homomorphisms $p_i\colon R\to R_i$ for $i=1,\dots,n$. Write $R_{ij}\coloneqq R_i\otimes_R R_j$ where we identify $R_{ij}$ and $R_{ji}$. We say that $(R,R_i,p_i)$ is a {\em fiber product data for $R$} if the homomorphism 
$R\to R_1\times \dots \times R_n$ induces an isomorphism of $R$ with the subring
\[R_1\times_p \dots\times_p R_n\coloneqq\{(r_1,\dots,r_n)\in R_1\times \dots\times R_n| 1 \otimes r_j=r_i \otimes 1 \textrm{ in } R_{i} \otimes_{R} R_{j}\}.\]
This condition is equivalent to the exactness of the sequence of $R$-modules,
\[\xymatrix{0\ar[r]&R\ar[r]&\prod R_i\ar[r]&\prod_{i<j} R_{ij}
}\]
where the second arrow to $R_{ij}$ is given by equalization along $R_i\times R_j\to R_{ij}$.

\begin{definition}
Write $R=k[x_1,\dots,x_n]/(x_1\dots x_n)$, $R_i=k[x_1,\dots,x_n]/(x_i)$ and $p_i\colon R\to R_i$ the usual quotient. We say that $(S,S_i,q_i)$ is {\em flat over the simple normal crossings model} if $S$ is flat $R$-module such that there is an isomorphism $S_i\cong R_i\otimes_R S$ making $q_i\colon S\to S_i$ equal to $p_i\otimes S$. 
\end{definition}

By the exact sequence characterization of fiber product data, we immediately obtain that $(S,S_i,q_i)$ is fiber product data.

\begin{definition} {\em Descent data} for a module over $(S,S_i,q_i)$ is a collection of modules $M_i$ over $S_i$ together with $S_{ij}$-isomorphisms 
\[\varphi_{ij}\colon M_i\otimes_{S} S_j\to M_j\otimes_{S} S_i\]
with $\varphi_{ii}=\on{Id}_{M_i}$
satisfying the cocycle condition\footnote{Here and throughout we abuse notation and conflate a map
$\phi\colon M \to N $ and its induced map
$M \otimes S' \to N \otimes S' $.}
\[\varphi_{jk}\circ\varphi_{ij}=\varphi_{ik}\colon M_i\otimes_{S} S_{jk}\to M_k\otimes_{S} S_{ij}.\]

Given  descent data, we define the {\em fiber product module}
\[M_1\times_\varphi\dots\times_{\varphi}M_n=\{(m_1,\dots,m_n)| m_i\in M_i, \varphi_{ij}(m_i \otimes 1)=m_j\otimes 1\}.\]
\end{definition}

Let $P$ be one of the following criteria: flat, flat and finite, projective and finite. We will slightly generalize the following result albeit in a special case:
\begin{theorem}[{cf. \cite[{Th\'{e}or\`{e}me 2.2 (iv)}]{Ferrand}}]\label{ferrandThm}
Let $ R \to R_{1} $, $R \to R_{2} $ be a fiber product ring with one of
$ R_{i} \to R_{1} \otimes_{R} R_{2} $ surjective. Let 
$ \mathcal{C}(A) $ denote the category of $A$-modules with $P$.
Then the fiber product module functor yields an equivalence of categories
\[ \mathcal{C}(R_{1}) \times_{\mathcal{C}(R_{1} \otimes_{R} R_{2})} \mathcal{C}(R_{2})
 \simeq \mathcal{C}(R).
\]
\end{theorem}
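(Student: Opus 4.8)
The plan is to exhibit the fiber product module functor as a quasi-inverse to base change and then to check that both composites are naturally isomorphic to the identity. Write $R_{12}=R_1\otimes_R R_2$, and for a triple $(M_1,M_2,\varphi)$ abbreviate the common base change $M_{12}\coloneqq M_1\otimes_{R_1}R_{12}\cong M_2\otimes_{R_2}R_{12}$, the isomorphism being $\varphi$. The base-change functor
\[ F\colon \mathcal{C}(R)\to \mathcal{C}(R_1)\times_{\mathcal{C}(R_{12})}\mathcal{C}(R_2),\qquad M\mapsto \bigl(M\otimes_R R_1,\ M\otimes_R R_2,\ \varphi_{\mathrm{can}}\bigr), \]
sends $M$ to its two base changes together with the canonical identification of $(M\otimes_R R_i)\otimes_{R_i}R_{12}$ with $M\otimes_R R_{12}$. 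This lands in the descent category because canonical base-change isomorphisms satisfy the cocycle condition, and it preserves the property $P$ since flatness, finiteness, and projectivity are all stable under base change. The fiber product module functor $G\colon (M_1,M_2,\varphi)\mapsto M_1\times_\varphi M_2$ is the candidate inverse.

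The technical core is the defining short exact sequence of the glued module. Assuming, without loss of generality, that $R_2\to R_{12}$ is surjective, the base change $M_2\to M_{12}$ is surjective as well, so for $M\coloneqq M_1\times_\varphi M_2$ the sequence
\[ 0\to M\to M_1\oplus M_2\xrightarrow{\ \psi\ } M_{12}\to 0,\qquad \psi(m_1,m_2)=\varphi(m_1\otimes 1)-(m_2\otimes 1), \]
is exact, its kernel being the fiber product by definition. I would first use this to show that $G$ lands in $\mathcal{C}(R)$: finiteness of $M$ is immediate from the sequence, while flatness of $M$ over $R$ (and, in the projective-and-finite case, projectivity) is the step that genuinely requires the surjectivity hypothesis. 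Here one controls $\tor^R_1(M,-)$ along the cartesian square, the surjective face being precisely what prevents the gluing from introducing higher $\tor$; in the projective case one instead lifts compatible free presentations of $M_1$ and $M_2$ through the surjection, in Milnor-patching style, to realize $M$ as a direct summand of a free module.

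Next I would establish $FG\cong \on{Id}$ by proving that the natural projections induce isomorphisms $M\otimes_R R_i\xrightarrow{\ \sim\ } M_i$. Tensoring the displayed sequence with $R_i$ over $R$ and chasing the resulting $\tor$ long exact sequence, the surjectivity collapses the correction terms so that each base change recovers the original piece, and the gluing datum $\varphi$ is reproduced tautologically. The reverse isomorphism $GF\cong\on{Id}$ is the formal direction: for $M\in\mathcal{C}(R)$, tensoring the defining exact sequence $0\to R\to R_1\oplus R_2\to R_{12}\to 0$ of the fiber product ring with the flat module $M$ yields exactly the sequence above for $F(M)$, identifying $M$ with $(M\otimes_R R_1)\times_{\varphi_{\mathrm{can}}}(M\otimes_R R_2)$; note that flatness, which all three choices of $P$ include, is what makes this tensoring exact.

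I expect the main obstacle to be the preservation of $P$ by $G$, that is, showing the glued module is flat (respectively finite projective). This is the one point where the cartesian square must be a genuine Milnor square with a surjective face, and where the naive attempt to transfer flatness from the $M_i$ fails because $R_1$ and $R_2$ are not flat over $R$; the surjectivity hypothesis must be deployed carefully to kill the obstruction supported on $R_{12}$. Once this preservation and the base-change isomorphisms of the previous paragraph are in hand, $F$ and $G$ are mutually quasi-inverse equivalences, which is the assertion of the theorem.
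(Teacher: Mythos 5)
You should know at the outset that the paper does not prove this statement: Theorem~\ref{ferrandThm} is imported from Ferrand \cite{Ferrand} (Th\'{e}or\`{e}me 2.2(iv); see also \cite[Tag 08KG]{stacks-project}) and is used as a black box, serving as the base case of the induction that proves Theorem~\ref{t:SNCAffineDescent}. So your proposal has to stand on its own, and while its skeleton is the standard one, the hard steps are asserted rather than proved. What you do get right: the sequence $0\to M\to M_1\oplus M_2\xrightarrow{\psi} M_{12}\to 0$ is exact (surjectivity of $\psi$ comes from the surjective face), and the direction $GF\cong\operatorname{Id}$ really is formal, by tensoring $0\to R\to R_1\oplus R_2\to R_{12}\to 0$ with the flat module $M$.

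The gaps are in the other direction. First, for $FG\cong\operatorname{Id}$ you claim "the surjectivity collapses the correction terms" in the $\operatorname{Tor}$ long exact sequence; it does not, at least not formally. Tensoring your sequence with $R_1$ gives
\[
\operatorname{Tor}_1^R(M_{12},R_1)\to M\otimes_R R_1\to M_1\oplus M_{12}\to M_{12}\to 0,
\]
so $M\otimes_R R_1\to M_1$ is surjective with kernel the image of the connecting map, and the group $\operatorname{Tor}_1^R(M_{12},R_1)$ is typically \emph{nonzero} because $R_{12}$ is not flat over $R$: already for $R=k[x,y]/(xy)$, $R_1=k[y]$, $R_2=k[x]$, $R_{12}=k$, the periodic resolution $\cdots\xrightarrow{\,y\,}R\xrightarrow{\,x\,}R\to R_1\to 0$ gives $\operatorname{Tor}_1^R(k,R_1)\cong k\neq 0$. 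What must be shown is that the connecting homomorphism itself vanishes, and that is precisely Ferrand's key lemma; it uses flatness of $M_1$ and $M_2$ in an essential, non-formal way, and nothing in your write-up supplies it. Second, preservation of $P$ by $G$ --- which you correctly identify as the main obstacle --- is left as a description of the goal rather than an argument. In the projective case, "lifting compatible free presentations through the surjection" fails at exactly the known difficulty: the gluing isomorphism $\varphi\in\GL_n(R_{12})$ need not lift to $\GL_n(R_2)$, and Milnor's proof hinges on the trick that $\operatorname{diag}(\varphi,\varphi^{-1})$ is a product of elementary matrices (which do lift along a surjection), yielding $M\oplus\bigl(R_1^n\times_{\varphi^{-1}}R_2^n\bigr)\cong R^{2n}$; in the flat case the argument is different again and is the bulk of Ferrand's proof. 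Third, "finiteness of $M$ is immediate from the sequence" is unjustified: there is no Noetherian hypothesis, so kernels of maps of finite modules need not be finite, and $M_2$ need not even be finite as an $R$-module, since with $R_2\to R_{12}$ surjective only $R\to R_1$ is surjective, not $R\to R_2$. (Finiteness does follow easily \emph{after} the base-change isomorphisms are established, since a module $Q$ with $Q\otimes_R R_1=Q\otimes_R R_2=0$ vanishes because $\ker(R\to R_1)\cdot\ker(R\to R_2)=0$ --- but that is downstream of the hard step, not a substitute for it.)
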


We will inductively use this result to prove the following:

\begin{theorem}\label{t:SNCAffineDescent}
Let $ (S,S_{i}, q_{i}) $ be  flat over $(R,R_i,p_i)$.
 For a commutative ring $ A $, let $ \mathcal{C}(A) $ denote the categories of finite, locally free $A$-modules.
 There is an equivalence of categories
 \[ \left\{ \textrm{Descent data on } \mathcal{C}(S_{i}) \right\} \simeq
 \mathcal{C}(S) \]
 given by the fiber product module.
\end{theorem}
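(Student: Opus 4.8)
The plan is to induct on the number $n$ of irreducible components, using Theorem~\ref{ferrandThm} both as the base case and as the engine of the inductive step. For $n=2$ the model ring is $R=k[x_1,x_2]/(x_1x_2)$ with $R_1=k[x_2]$, $R_2=k[x_1]$ and $R_1\otimes_R R_2=R/(x_1,x_2)=k$; the quotient $R_1\to R_1\otimes_R R_2$ is surjective, so after base change along the flat map $R\to S$ the data $(S,S_1,S_2)$ is a fiber product for which one of the two maps to $S_1\otimes_S S_2$ is surjective, and Theorem~\ref{ferrandThm} applies directly. (The category $\mathcal{C}$ of finite locally free modules is the projective-and-finite case of the property $P$ there.)

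For the inductive step I would peel off the first component. Set $R'\coloneqq R/(x_2\cdots x_n)$ and $R''\coloneqq R/(x_1,x_2\cdots x_n)$. Since $(x_1\cdots x_n)=(x_1)\cap(x_2\cdots x_n)$ in $k[x_1,\dots,x_n]$, one checks that $R=R_1\times_{R''}R'$ and that the quotient $R_1\to R''$ is surjective. Base changing along $S$, which is flat over $R$, produces a two-term fiber product $(S,S_1,S')$ with $S'=R'\otimes_R S$, $S''=R''\otimes_R S=S_1\otimes_S S'$, and with $S_1\to S''$ still surjective, so Theorem~\ref{ferrandThm} yields
\[\mathcal{C}(S)\simeq \mathcal{C}(S_1)\times_{\mathcal{C}(S'')}\mathcal{C}(S').\]

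Both $R'$ and $R''$ are simple normal crossings models on the $n-1$ components $\{x_2=0\},\dots,\{x_n=0\}$: the ring $R'$ carries $x_1$ as a harmless extra free variable (the statement is insensitive to tensoring the model with $k[x_1]$), while $R''\cong k[x_2,\dots,x_n]/(x_2\cdots x_n)$ is the $(n-1)$-fold model exactly. By base change $S'$ and $S''$ are flat over them, and the identities $R'_i=R_i$, $R''_i=R_{1i}$ for $i\geq 2$ give $S'_i=S_i$ and $S''_i=S_{1i}$, with the double overlaps computed over $R'$ or $R''$ agreeing with those over $R$. Hence the inductive hypothesis applies, identifying $\mathcal{C}(S')$ with descent data on $\{S_i\}_{i\geq 2}$ and $\mathcal{C}(S'')$ with descent data on $\{S_{1i}\}_{i\geq 2}$.

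It then remains to reconcile the two descriptions of the category of descent data. An $n$-fold datum $(\{M_i\}_{i=1}^n,\{\varphi_{ij}\})$ decomposes into: the module $M_1$; the restricted datum $(\{M_i\}_{i\geq 2},\{\varphi_{ij}\}_{i,j\geq 2})$, which by induction glues to a module $M'$ on $S'$; and the family $\{\varphi_{1i}\}_{i\geq 2}$, which the cocycle conditions involving the index $1$ exhibit as a descent datum on $S''$ for the pair $M_1\otimes_S S''$, $M'\otimes_S S''$, hence by induction a single isomorphism $\varphi_{1'}\colon M_1\otimes_S S''\to M'\otimes_S S''$; conversely this data reconstructs the original datum functorially and compatibly with the fiber product module. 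The main obstacle I anticipate is precisely this bookkeeping: verifying that the full $n$-fold cocycle condition splits cleanly into the cocycle condition among indices $\geq 2$ together with the assertion that $\{\varphi_{1i}\}_{i\geq 2}$ is a descent datum on the overlap $S''$, and that gluing over $S''$ (itself an instance of the inductive equivalence) is strictly compatible with the Ferrand gluing over $S'$. Flatness is used throughout to ensure that the relevant base changes commute with the fiber products and preserve the locally free property.
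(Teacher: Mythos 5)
Your proposal is correct, and it takes a genuinely different route through the inductive step than the paper does. Both arguments induct on $n$ with Theorem~\ref{ferrandThm} as the base case and the engine; the divergence is in how the step from $n-1$ to $n$ is organized. The paper peels off the \emph{last} component and groups the remaining ones into the fiber-product ring $F_{n-1}=S_1\times_q\cdots\times_q S_{n-1}$, applies the inductive hypothesis once to glue $M_1,\dots,M_{n-1}$ into a module over $F_{n-1}$, and then must manufacture by hand the Ferrand descent datum over the overlap $F_{n-1}\otimes_S S_n$: this is Lemma~\ref{inductGluingData}, a composition of three isomorphisms $\psi_3\circ\psi_2\circ\psi_1$ whose last step tensors the defining exact sequence of the glued module with $S_n$ and checks exactness is preserved (the one genuinely non-formal point, where flatness over the model enters), with Lemmas~\ref{l:hierarchicalRing} and~\ref{descentFlatModel} handling the remaining bookkeeping. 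You instead peel off the \emph{first} component, realize both the union $S'$ of the remaining components and the overlap $S''=S_1\otimes_S S'$ as quotients of $S$ that are themselves flat over smaller normal crossings models, and apply the inductive hypothesis \emph{twice} --- once to $S'$ and once to $S''$. The second application is precisely what replaces Lemma~\ref{inductGluingData}: the cocycle conditions involving the index $1$ say that $\{\varphi_{1i}\}_{i\geq 2}$ is a morphism of descent data over $S''$, and full faithfulness of the inductive equivalence for $S''$ converts it into the single gluing isomorphism that Ferrand's theorem requires. What your route buys: the non-formal content is concentrated entirely in Theorem~\ref{ferrandThm}, and the inductive step becomes categorical bookkeeping. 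What it costs: you need the ring identity $S\cong S_1\times_{S''}S'$ (flat base change of $(x_1)\cap(x_2\cdots x_n)=(x_1\cdots x_n)$ in the model), which the paper's purely fiber-product formulation sidesteps; and --- the one point to be careful about --- your deferred ``bookkeeping'' only closes if the inductive statement is carried along with its inverse functor, namely that \emph{restriction} is quasi-inverse to the fiber-product functor with unit and counit the natural maps. That is exactly what identifies the restriction $M'\otimes_{S'}S''$ of the inductively glued module with the gluing over $S''$ of the restrictions $M_i\otimes_S S_{1i}$; for an abstract equivalence this compatibility is not automatic, and it is the statement the paper proves by hand as $\psi_1$ and $\psi_3$. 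Since the inductive construction (the paper's and yours alike) does produce restriction as the inverse, this is available, and your argument goes through.
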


Write 
\[S_1\times_q \dots\times_q S_{n-1}\coloneqq \{(s_1,\dots,s_{n-1})\in S_1\times \dots\times S_{n-1}| 1 \otimes s_j=s_i \otimes 1 \textrm{ in } S_{i} \otimes_{S} S_{j}\}\]
As tensor products commute with direct sums, $S_i\to S_n\otimes_S S_i$ induces a homomorphism
\[q'_1\colon S_1\times_q\dots\times_q S_{n-1}\to S_n\otimes_{S} (S_1\times_q\dots\times_q S_{n-1}).\]
Together with the natural homomorphism $q'_2\colon S_n\to S_n\otimes_{S} (S_1\times_q\dots\times_q S_{n-1})$, this induces a fiber product
$(S_1\times_p\dots \times_p S_{n-1})\times_{q'} S_n$. The following is a straightforward verification:

\begin{lemma}\label{l:hierarchicalRing} For fiber product data $(S,S_i,q_i)$ flat over the simple normal crossings model, the natural maps induce an isomorphism
\[S_1\times_q\dots \times_q S_n\to (S_1\times_q\dots \times_q S_{n-1})\times_{q'} S_n.\]
\end{lemma}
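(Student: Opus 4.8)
Lemma~\ref{l:hierarchicalRing} asserts that the simultaneous fiber product $S_1\times_q\cdots\times_q S_n$ agrees, via the natural comparison map, with the iterated fiber product $(S_1\times_q\cdots\times_q S_{n-1})\times_{q'} S_n$. Since both sides are defined as subrings (or submodules) of the product $\prod_i S_i$ cut out by compatibility conditions in the various tensor products $S_i\otimes_S S_j$, the claim is essentially that grouping the conditions hierarchically does not change the solution set.

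**Plan.** The plan is to prove this by a direct identification of the two subobjects inside $S_1\times\cdots\times S_n$. First I would write out both sides explicitly as sets of tuples. The left-hand side is
\[
\{(s_1,\dots,s_n)\mid 1\otimes s_j=s_i\otimes 1\text{ in }S_i\otimes_S S_j\text{ for all }i<j\}.
\]
The right-hand side is a fiber product of $T\coloneqq S_1\times_q\cdots\times_q S_{n-1}$ with $S_n$ over $S_n\otimes_S T$, so its elements are pairs $((s_1,\dots,s_{n-1}),s_n)$ with $(s_1,\dots,s_{n-1})\in T$ (i.e.\ satisfying the compatibility conditions for all $i<j\le n-1$) and with $q_1'(s_1,\dots,s_{n-1})=q_2'(s_n)$ in $S_n\otimes_S T$. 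The key step is to unwind this last equation: since $S_n\otimes_S T$ sits inside $\prod_{i\le n-1}(S_n\otimes_S S_i)$ — here is where flatness over the simple normal crossings model is used, so that $S_n\otimes_S(-)$ is exact and commutes with the equalizer defining $T$ — the equality $q_1'=q_2'$ is equivalent, componentwise, to $s_i\otimes 1=1\otimes s_n$ in $S_i\otimes_S S_n$ for each $i\le n-1$, i.e.\ exactly the ``$j=n$'' compatibility conditions. Combining these with the conditions already imposed by membership in $T$ recovers precisely the left-hand side.

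**Main obstacle.** The delicate point, and the only place where the hypothesis genuinely enters, is verifying that $S_n\otimes_S T$ is the fiber product $(S_n\otimes_S S_1)\times\cdots\times_{}(S_n\otimes_S S_{n-1})$ inside $\prod_i S_n\otimes_S S_i$, i.e.\ that tensoring with $S_n$ preserves the equalizer sequence defining $T$. Because $(S,S_i,q_i)$ is flat over the simple normal crossings model, the defining exact sequence
\[
\xymatrix{0\ar[r]&T\ar[r]&\prod_{i\le n-1} S_i\ar[r]&\prod_{i<j\le n-1} S_{ij}}
\]
remains exact after applying the flat functor $S_n\otimes_S(-)$, and one identifies $S_n\otimes_S S_{ij}$ with the triple tensor product $S_n\otimes_S S_i\otimes_S S_j$. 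Once this exactness is in hand the remaining identifications are formal bookkeeping: one checks that the comparison map is a ring homomorphism and a bijection on underlying tuples. I expect no serious difficulty beyond careful tracking of which tensor factors the various compatibility relations live in; the flatness hypothesis is exactly what is needed to make the one-step reduction legitimate, and the general statement then follows by the structure of the iterated construction.
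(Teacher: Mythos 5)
Your plan follows the natural route---and almost certainly the one the authors had in mind, since the paper offers no argument beyond calling the lemma a straightforward verification: realize both sides inside $\prod_i S_i$ and show that the fiber-product condition over $S_n\otimes_S T$, where $T=S_1\times_q\cdots\times_q S_{n-1}$, is equivalent to the componentwise conditions in the rings $S_i\otimes_S S_n$. But the one step carrying all the content is justified by a false claim. You assert that flatness over the simple normal crossings model makes $S_n\otimes_S(-)$ exact. The hypothesis is that $S$ is flat over $R=k[x_1,\dots,x_n]/(x_1\cdots x_n)$, i.e.\ that $S\otimes_R(-)$ is exact; it says nothing about $S_n\otimes_S(-)$. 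In fact $S_n=S/x_nS$ is essentially never flat over $S$ in this situation: already for $S=R$ one has $\operatorname{Ann}_R(x_n)=(x_1\cdots x_{n-1})$, whence the free resolution $\cdots\to R\xrightarrow{x_1\cdots x_{n-1}}R\xrightarrow{x_n}R\to R_n\to 0$ gives
\[\operatorname{Tor}_1^R(R_n,R_n)\cong R/(x_n,\,x_1\cdots x_{n-1})\neq 0.\]
So ``tensoring with $S_n$ preserves the equalizer defining $T$'' cannot be deduced from exactness of $S_n\otimes_S(-)$; it is precisely the point that requires proof. Note that you need this claim not only to identify $S_n\otimes_S T$ with an equalizer: even the well-definedness of your comparison map (that the equalities $s_i\otimes 1=1\otimes s_n$ in $S_i\otimes_S S_n$ for $i<n$ imply $1\otimes t=s_n\otimes 1$ in $S_n\otimes_S T$) already uses injectivity of the natural map $S_n\otimes_S T\to\prod_{i<n}(S_n\otimes_S S_i)$.

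The fact you need is true, but it must be proved where the flatness hypothesis genuinely applies, namely by descending to the model. Since $S\otimes_R(-)$ is exact, it commutes with the equalizer defining $T$, so $T\cong S\otimes_R T_R$ with $T_R=R_1\times_p\cdots\times_p R_{n-1}$, and likewise $S_n\otimes_S T\cong S\otimes_R(R_n\otimes_R T_R)$ and $S_n\otimes_S S_i\cong S\otimes_R R_{in}$. Hence it suffices to prove injectivity of $R_n\otimes_R T_R\to\prod_{i<n}R_{in}$, and this is an explicit monomial-ideal computation: $T_R\cong R/(x_1\cdots x_{n-1})$ (the $(n-1)$-component analogue of the exact sequence the paper already invokes to see that $(S,S_i,q_i)$ is fiber product data), so $R_n\otimes_R T_R\cong R/(x_n,x_1\cdots x_{n-1})$, and the kernel of the map to $\prod_{i<n}R/(x_n,x_i)$ is $\bigl(\bigcap_{i<n}(x_n,x_i)\bigr)/(x_n,x_1\cdots x_{n-1})$, which vanishes because $\bigcap_{i<n}(x_n,x_i)=(x_n,x_1\cdots x_{n-1})$ for these monomial ideals. (Alternatively, one can argue as in the paper's proof of Lemma~\ref{inductGluingData}: tensor the defining exact sequence with $S_n$ and check that the relevant torsion vanishes.) With that substitution your argument closes; as written, the key step has no valid justification.
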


In our context of fiber product data flat over the simple normal crossings model, we are able 
to build the fiber product module factor by factor.

\begin{lemma}\label{inductGluingData}
Suppose Theorem~\ref{t:SNCAffineDescent} is true for $n-1$.
Let $(S, S_{i}, q_{i}) $ be fiber product ring data flat over the simple normal crossings model, and let $ (M_{i}, \varphi_{ij}) $ descent data
over $ (S, S_{i}, q_{i}) $ with each $M_{i} $ finite and locally free. 
Then  $(\varphi_{ij}) $ induce an
isomorphism
\[ \psi\colon M_{n} \otimes_S \left( S_{1} \times_{p} \dots
\times_{p} S_{n-1} \right) \to \left(M_{1} \times_{\varphi} \dots \times_{\varphi} M_{n-1}
\right) \otimes_S S_{n}.\]
\end{lemma}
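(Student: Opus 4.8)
The plan is to realize both the source and the target of $\psi$ as finite locally free modules over the ring $W := S_n \otimes_S T$, where $T := S_1 \times_q \cdots \times_q S_{n-1}$, and to identify each of them, via the inductive form of Theorem~\ref{t:SNCAffineDescent}, with descent data over the $(n-1)$ intersection rings $S_{in} := S_i \otimes_S S_n$. The isomorphisms $\varphi_{ni}$ will then assemble into an isomorphism of descent data, which descends to the desired map $\psi$. This $\psi$ is exactly the gluing datum over $W$ that one feeds into the two-component Ferrand gluing (Theorem~\ref{ferrandThm}) across the hierarchical decomposition of Lemma~\ref{l:hierarchicalRing} in the outer induction.

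First I would establish the key identification
\[ W = S_n \otimes_S T \;\xrightarrow{\ \sim\ }\; S_{1n} \times_q \cdots \times_q S_{(n-1)n}. \]
By definition $T$ fits into an exact sequence $0 \to T \to \prod_{i=1}^{n-1} S_i \to \prod_{i<j} S_{ij}$, and applying $- \otimes_S S_n$ together with $S_i \otimes_S S_n = S_{in}$ and $S_{ij} \otimes_S S_n = S_{ijn}$ turns the claim into the exactness of this sequence after base change along $S \to S_n$. Since $S_n$ is \emph{not} flat over $S$, this is the one non-formal point: I would reduce it, using $S_i \cong R_i \otimes_R S$ and the flatness of $S$ over the model $R = k[x_1,\dots,x_n]/(x_1\cdots x_n)$, to the vanishing of the relevant $\mathrm{Tor}^S(S_n,-)$ groups, and in turn to the corresponding $\mathrm{Tor}$-computation over $R$ itself, which one checks directly from the explicit free resolutions of the $R_i = R/(x_i)$. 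Geometrically this just says that intersecting the union $X_1 \cup \cdots \cup X_{n-1}$ with the last component $X_n$ recovers $\bigcup_{i<n}(X_i \cap X_n)$ scheme-theoretically, so that $(W, S_{in})$ is again a fiber product datum flat over the $(n-1)$-component SNC model and the inductive hypothesis applies to it. This is the step I expect to be the main obstacle.

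Granting this, both sides are finite locally free over $W$: the source is $M_n \otimes_S T = M_n \otimes_{S_n} W$, and the target is $N \otimes_S S_n = N \otimes_T W$ with $N := M_1 \times_\varphi \cdots \times_\varphi M_{n-1}$, which is finite locally free over $T$ by the inductive hypothesis. I would then compute their projections onto each factor $S_{in}$ of $W$. For the source,
\[ (M_n \otimes_S T) \otimes_W S_{in} \cong M_n \otimes_S S_i = M_n \otimes_{S_n} S_{in}, \]
carrying the canonical gluing data over the triple intersections $S_{ijn}$; for the target, using the reconstruction property $N \otimes_T S_i \cong M_i$ furnished by the inductive hypothesis,
\[ (N \otimes_S S_n) \otimes_W S_{in} \cong M_i \otimes_S S_n = M_i \otimes_{S_i} S_{in}, \]
with gluing data $\varphi_{ij} \otimes S_n$ inherited from $N$.

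Finally I would observe that the descent isomorphisms $\varphi_{ni} \colon M_n \otimes_S S_i \to M_i \otimes_S S_n$ are precisely isomorphisms between the $S_{in}$-components just computed, and that they respect the two gluing data: over $S_{ijn}$ compatibility reads $(\varphi_{ij} \otimes S_n) \circ (\varphi_{ni} \otimes S_j) = \varphi_{nj} \otimes S_i$, which is exactly the cocycle relation $\varphi_{jk} \circ \varphi_{ij} = \varphi_{ik}$ specialized to $(i,j,k) = (n,i,j)$. Hence $(\varphi_{ni})_{i<n}$ is an isomorphism of descent data over the $\{S_{in}\}$, and under the equivalence of categories supplied by the inductive case of Theorem~\ref{t:SNCAffineDescent} it descends to an isomorphism of the corresponding $W$-modules; unwinding the two identifications shows this is the map $\psi$ induced by the $\varphi_{ij}$, completing the proof.
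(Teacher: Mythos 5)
Your argument is correct, but it routes the construction differently than the paper does. The paper builds $\psi$ as an explicit composite of three isomorphisms: first $M_n\otimes_S F_{n-1}$ (with $F_{n-1}=S_1\times_q\cdots\times_q S_{n-1}$) is identified with the fiber product of the modules $M_n\otimes_S S_i$ glued by identity maps, proved by reducing to free $M_n$ and then localizing; next $\varphi_{ni}$ is applied coordinatewise, with well-definedness coming from exactly your cocycle check; finally the fiber product of the $M_i\otimes_S S_n$ with gluing $\varphi_{ij}\otimes S_n$ is identified with $\left(M_1\times_\varphi\cdots\times_\varphi M_{n-1}\right)\otimes_S S_n$ by tensoring the defining exact sequence of the fiber product with $S_n$ and observing that exactness survives because the modules involved have no $x_n$-torsion. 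You instead package both sides as finite locally free modules over $W=S_n\otimes_S T$, observe that $(W,S_{in})$ is itself an $(n-1)$-component fiber product datum flat over the model, and let the inductive equivalence over $W$ transport the isomorphism of descent data $(\varphi_{ni})$ to the module isomorphism $\psi$. The paper's route is more hands-on, uses the inductive hypothesis only to know that $N=M_1\times_\varphi\cdots\times_\varphi M_{n-1}$ is finite locally free, and its explicit formula for $\psi$ makes Lemma~\ref{descentFlatModel} a direct verification; your route leans on the induction twice (over $T$ and over $W$) but concentrates all the commutative algebra into the single ring-level identification $W\cong S_{1n}\times_q\cdots\times_q S_{(n-1)n}$. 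For that identification, the cleanest argument is not a Tor chase but the base-change observation with which the paper opens its proof: by flatness of $S$ over $R$ one has $W\cong S\otimes_R R/(x_n,\,x_1\cdots x_{n-1})$, which is flat over $k[x_1,\dots,x_{n-1}]/(x_1\cdots x_{n-1})$, and flatness over the model already yields the fiber-product property by tensoring the model's exact sequence.

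Two small points would make your write-up airtight. First, your final step needs more than the bare statement that the fiber-product functor is an equivalence for $(T,S_i)$ and $(W,S_{in})$: it needs that its quasi-inverse is the restriction functor, i.e.,~the reconstruction isomorphisms $N\otimes_T S_i\cong M_i$ compatible with the gluing data. The statement of Theorem~\ref{t:SNCAffineDescent} does not formally assert this, but its proof does (the inverse functor is constructed as restriction), so you should either fold this into the inductive hypothesis explicitly or verify directly that the natural projection maps $N\otimes_T S_i\to M_i$ are isomorphisms. Second, since $\psi$ is subsequently used as a gluing datum in Lemma~\ref{descentFlatModel}, you should note that the isomorphism you produce agrees with the paper's explicit one: both restrict to $\varphi_{ni}$ over each $S_{in}$, and faithfulness of restriction forces such a map to be unique.
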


\begin{proof}
Write $F_{n-1} = S_{1} \times_{q} \dots \times_{q} S_{n-1}.$
We will express $\psi$ as the composition of three isomorphisms. First, observe that $S_n\otimes_S F_{n-1}$ is flat over the simple normal crossings model $K[x_1,\dots,x_{n-1}]/(x_1\dots x_{n-1})$.
Now, note that the $(S_n\otimes_S F_{n-1})$-module $M_n\otimes_S F_{n-1}$ arises from the descent data 
\[(M_n\otimes_S S_i,\ \on{Id}_{ij}\colon (M_n\otimes_S S_i)\otimes_S S_j\to (M_n\otimes_S S_j)\otimes_S S_i),\]
giving an isomorphism
\[
 \psi_{1}\colon M_{n} \otimes_S \left( S_{1} \times_{q} \dots \times_{q} S_{n-1} \right)
 \to \left(M_n \otimes_S S_{1} \right) \times_{\on{Id}} \dots \times_{\on{Id}} \left( M_{n} \otimes_S
 S_{n-1} \right).
\]
Indeed, this is evidently true for free modules, hence true for locally free ones by checking every prime.

Our second isomorphism is
\[ \psi_{2}\colon \left(M_{n} \otimes_S S_{1}\right) \times_{\on{Id}} \dots \times_{\on{Id}}
(M_{n} \otimes_S S_{n-1}) \to \left(M_{1} \otimes_S S_{n}\right)
\times_{\varphi} \dots \times_{\varphi} \left( M_{n-1} \otimes_S S_{n} \right)\]
induced on the $i$th coordinate by $ \varphi_{ni} $.
This is well-defined because following diagram commutes  
\[\xymatrix{
( M_{n} \otimes_S S_{1} ) \times_{p} 
    \dots \times_{p} (M_{n} \otimes_S S_{n-1} ) 
    \ar[r] & \prod\limits_{i=1}^{n-1} ( M_{n} \otimes_S S_{i} ) \ar[d]^{\varphi_{ni}}\ar[r]&
    \prod\limits_{i<j<n} M_{n} \otimes_S S_{ij} \ar[d]^{\varphi_{ni}\otimes S_j} \\
    \left( M_{1} \otimes_S S_{n} \right) \times_{\varphi} \dots
    \times_{\varphi} ( M_{n-1} \otimes_S S_{n} ) \ar[r] &
    \prod\limits_{i=1}^{n-1} ( M_{i} \otimes_S S_{n} ) \ar[r] &
    \prod\limits_{i < j < n} M_{i} \otimes_S S_{nj} 
}\]
which one verifies using the cocycle condition.

For the final isomorphism,
\[
\psi_{3} \colon \left( M_{1} \otimes_S S_{n} \right)\times_{\varphi} 
\dots \times_{\varphi} \left( M_{n-1} \otimes_S S_{n} \right) \to \left( M_{1} 
\times_{\varphi} \dots \times_{\varphi} M_{n-1} \right) \otimes_S S_{n},
\]
consider  the 
exact sequence arising from realizing the locally free $F_{n-1}$-module $M_1\times_{\varphi}\dots\times_{\varphi} M_{n-1}$ by descent,
\[ 0 \to M_{1} \times_{\varphi} \dots \times_{\varphi} M_{n-1} \to
M_{1} \times \dots \times M_{n-1} \to \prod_{i<j<n} M_{i} \otimes_{F_{n-1}} {S_{j}}.\]
Tensor the above exact sequence of $F_{n-1}$-modules with $S_n$ over $S$, observing that there is no $x_n$-torsion, and so it remains exact. Hence, it exhibits 
$\left( M_{1} 
\times_{\varphi} \dots \times_{\varphi} M_{n-1} \right) \otimes_S S_{n}$
as arising by descent from $(M_i\otimes_{F_n} S_n)_{i=1}^{n-1}$.
\end{proof}

The following is a straightforward verification using the above isomorphism:

\begin{lemma}\label{descentFlatModel}
Let $ (S, S_{i}, q_{i}) $ be fiber product ring data flat over the simple normal crossings
model. Let $ (M_{i}, \varphi_{ij}) $ be descent data with each $M_{i} $ finite and 
locally free. Then for each $i$,
\[ M_{1} \times_{\varphi} \dots \times_{\varphi} M_{i} = 
\left( M_{1} \times_{\varphi} \dots \times_{\varphi} M_{i-1} \right) \times_{\psi} M_{i}.
\]
\end{lemma}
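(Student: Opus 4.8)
The plan is to prove the equality by regrouping coordinates and showing that, under the ring isomorphism of Lemma~\ref{l:hierarchicalRing}, the two descent constructions impose the same gluing conditions. Write $F_{i-1}\coloneqq S_1\times_q\dots\times_q S_{i-1}$ and $N\coloneqq M_1\times_\varphi\dots\times_\varphi M_{i-1}$, an $F_{i-1}$-module. By Lemma~\ref{l:hierarchicalRing} the ring $S_1\times_q\dots\times_q S_i$ is identified with the two-factor fiber product $F_{i-1}\times_{q'}S_i$, and the isomorphism $\psi$ of Lemma~\ref{inductGluingData} (with $n$ replaced by $i$) is precisely the gluing datum for the two-factor descent problem whose factors are $N$ over $F_{i-1}$ and $M_i$ over $S_i$. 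Thus, by definition, $(M_1\times_\varphi\dots\times_\varphi M_{i-1})\times_\psi M_i$ is the module of pairs $(n,m_i)\in N\times M_i$ satisfying $\psi(m_i\otimes 1)=n\otimes 1$ in $N\otimes_S S_i$.

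First I would realize both sides inside $M_1\times\dots\times M_i$ via the tautological regrouping $(m_1,\dots,m_i)\mapsto\big((m_1,\dots,m_{i-1}),m_i\big)$. The left-hand side is cut out by the conditions $\varphi_{jk}(m_j\otimes 1)=m_k\otimes 1$ for all $1\le j,k\le i$; writing $n=(m_1,\dots,m_{i-1})$, the conditions with $j,k<i$ say exactly that $n\in N$, so the only thing left to check is that the remaining conditions, those involving the index $i$, are equivalent to the single equation $\psi(m_i\otimes 1)=n\otimes 1$.

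This is where the explicit form of $\psi$ enters, and it is the one step that is more than formal. Tracking $m_i\otimes 1$ through the factorization $\psi=\psi_3\circ\psi_2\circ\psi_1$ of Lemma~\ref{inductGluingData}: $\psi_1$ sends it to the constant tuple $(m_i\otimes 1,\dots,m_i\otimes 1)$, $\psi_2$ applies $\varphi_{ij}$ in the $j$th slot to give $(\varphi_{i1}(m_i\otimes 1),\dots,\varphi_{i,i-1}(m_i\otimes 1))$ in $\prod_{j<i}(M_j\otimes_S S_i)$, and $\psi_3$ reassembles this into $N\otimes_S S_i$, where $n\otimes 1$ corresponds to $(m_1\otimes 1,\dots,m_{i-1}\otimes 1)$. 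Hence $\psi(m_i\otimes 1)=n\otimes 1$ holds if and only if $\varphi_{ij}(m_i\otimes 1)=m_j\otimes 1$ for each $j<i$, which by the cocycle relation $\varphi_{ji}=\varphi_{ij}^{-1}$ is exactly the family of left-hand-side conditions involving $i$. Combining with $n\in N$, the regrouping bijection matches the two defining systems of equations, and being manifestly linear over $F_{i-1}\times_{q'}S_i$, it is the desired isomorphism. The main care needed is the compatibility of $\psi_3$ with testing the equation coordinate by coordinate, which is guaranteed by flatness over the simple normal crossings model exactly as in the construction of $\psi$.
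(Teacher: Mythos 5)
Your proof is correct and is exactly the ``straightforward verification using the above isomorphism'' that the paper asserts without writing out: you unwind $\psi=\psi_3\circ\psi_2\circ\psi_1$ on elements $m_i\otimes 1$ and $n\otimes 1$, and check that the single gluing equation $\psi(m_i\otimes 1)=n\otimes 1$ is equivalent to the family of conditions $\varphi_{ij}(m_i\otimes 1)=m_j\otimes 1$ for $j<i$ (using $\varphi_{ji}=\varphi_{ij}^{-1}$, which follows from the cocycle condition and $\varphi_{ii}=\mathrm{Id}$), while the conditions among indices $j,k<i$ say precisely that $(m_1,\dots,m_{i-1})$ lies in $M_1\times_\varphi\dots\times_\varphi M_{i-1}$. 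This matches the paper's intended argument, so there is nothing to add.
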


%
%

Now we prove Theorem~\ref{t:SNCAffineDescent}.

\begin{proof}
We will induct on $ n $, the number of components in the  normal crossings model. 
The base case of $n=2 $ was stated above. 

Suppose the theorem has been proven for $ n-1 $ components.
Then, given fiber product data $ (S, S_1,\dots,S_n,q_1, \dots,q_n)$ and
descent data on $ \mathcal{C}(S_{i}) $, the first $n-1$ pieces define 
descent data for $(F_{n-1},S_1,\dots,S_{n-1},q_1,\dots,q_{n-1})$.
By the inductive hypothesis, 
$ M_{1} \times_{\varphi} \dots \times_{\varphi} M_{n-1} $ is a finite, locally free 
$F_{n-1}$-module. By Lemma \ref{inductGluingData},
the $ \varphi_{ij} $ induce descent data in \[ \mathcal{C}(S_{1} \times_{q} \dots \times_{q}
S_{n-1}) \otimes_{\mathcal{C}(S_{1} \times_{q} \dots \times_{q} S_{n-1} \otimes_{S}
S_{n})} \mathcal{C}(S_{n}).\]
By Theorem \ref{ferrandThm} and Lemma \ref{descentFlatModel}, 
\[ \left( M_{1} \times_{\varphi} \dots \times_{\varphi} M_{n-1} \right)\times_{\psi}
M_{n} = M_{1} \times_{\varphi} \dots \times_{\varphi} M_{n} \]
is a finite, locally free $ S $-module. So we have a functor
\[ T\colon \left\{ \textrm{Descent data on } \mathcal{C}(S_{i}) \right\}  \simeq
\mathcal{C}(S) \]

We now construct an inverse functor.
Given an $ S $ module $M$, we get descent data by setting $ M_{i} = M \otimes_{S} S_{i} $, and $ \varphi_{ij} $ the natural map $ M_{i} \otimes_{S} S_{j} \to
M_{j} \otimes_{S} S_{i} $.
Using Theorem \ref{ferrandThm} on $ S_{1} \times_{q} \dots \times_{q}
\widehat{S_{i}} \times_{q} \dots \times_{q} S_{n} $ and $ S_{i} $,
we see that each $ M_{i} $ is finite and locally free.

One immediately verifies, using Theorem~\ref{ferrandThm}, that these functors
are inverses of one another.
\end{proof}

\bibliographystyle{plain}
\bibliography{references}

\end{document}